\author{Roman Bezrukavnikov and Ivan Losev}
\title{Etingof's conjecture for quantized quiver varieties}
\newcommand{\K}{\mathbb{C}}
\newcommand{\F}{\operatorname{F}}
\newcommand{\Fi}{\mathbb{F}}
\newcommand{\g}{\mathfrak{g}}
\newcommand{\Spec}{\operatorname{Spec}}
\newcommand{\Z}{\mathbb{Z}}
\newcommand{\A}{\mathcal{A}}
\newcommand{\Af}{\mathfrak{A}}
\newcommand{\gr}{\operatorname{gr}}
\newcommand{\Str}{\mathcal{O}}
\newcommand{\h}{\mathfrak{h}}
\newcommand\M{\mathcal{M}}
\newcommand\Loc{\operatorname{Loc}}
\newcommand\Ext{\operatorname{Ext}}
\newcommand\GL{\operatorname{GL}}
\newcommand\gl{\mathfrak{gl}}
\newcommand\red{/\!\!/\!\!/}
\newcommand\I{\mathcal{I}}
\newcommand\J{\mathcal{J}}
\newcommand\End{\operatorname{End}}
\renewcommand\sl{\mathfrak{sl}}
\newcommand\Hom{\operatorname{Hom}}
\newcommand\p{\mathfrak{p}}
\renewcommand\a{\mathfrak{a}}
\newcommand\U{\mathcal{U}}
\newcommand\quo{/\!/}
\newcommand\SL{\operatorname{SL}}
\newcommand\C{\mathbb{C}}
\newcommand\param{\mathfrak{p}}
\newcommand{\paramq}{\mathfrak{P}}
\newcommand\ZZ{\mathbb{Z}}
\newcommand{\CC}{\mathsf{CC}}
\newcommand{\WC}{\mathfrak{WC}}
\newcommand{\Q}{\mathbb{Q}}
\newcommand{\VA}{\operatorname{V}}
\newcommand{\B}{\mathcal{B}}
\newcommand{\Weyl}{\mathbf{A}}
\newcommand{\Supp}{\operatorname{Supp}}
\newcommand{\Tor}{\operatorname{Tor}}
\newcommand{\HC}{\operatorname{HC}}
\newcommand{\AL}{\mathfrak{AL}}
\newcommand{\Cat}{\mathcal{C}}
\newcommand{\Coh}{\operatorname{Coh}}
\newcommand{\slf}{\mathfrak{sl}}
\newcommand{\Irr}{\operatorname{Irr}}
\newtheorem{Thm}{Theorem}[section]
\newtheorem{Prop}[Thm]{Proposition}
\newtheorem{Cor}[Thm]{Corollary}
\newtheorem{Lem}[Thm]{Lemma}
\theoremstyle{definition}
\newtheorem{defi}[Thm]{Definition}
\newtheorem{Rem}[Thm]{Remark}
\newtheorem{Conj}[Thm]{Conjecture}
\numberwithin{equation}{section}
\address{R.B.: Department of Mathematics, Massachusetts Institute of Technology,
Cambridge MA  USA}
\email{bezrukav@math.mit.edu}
\address{I.L.: Department
of Mathematics, Yale University, New Haven CT USA}
\email{ivan.loseu@gmail.com}
\thanks{MSC 2010: Primary 16G99; Secondary 16G20,53D20,53D55}
\begin{document}
\begin{abstract}
We compute the number of finite dimensional irreducible modules for the algebras quantizing
Nakajima quiver varieties. We get a lower bound for all quivers and vectors of framing. We provide
an exact count in the case when the quiver is of finite type or is of affine type and the framing
is the coordinate vector at the extending vertex. The latter case precisely covers Etingof's conjecture
on the number of finite dimensional irreducible representations for Symplectic reflection algebras
associated to wreath-product groups. We use several different techniques, the two principal ones are
categorical Kac-Moody actions and wall-crossing functors.
\end{abstract}
\maketitle
\tableofcontents
\section{Introduction}
\subsection{Counting problem}\label{SSS_count}
Studying irreducible representations of algebraic objects, say of  associative algebras, is the most fundamental
problem in Representation theory. A basic question is  how many there are. For most infinite dimensional
algebras, the set of all irreducible representations is wild, in particular, the number is
infinite. So it makes sense to restrict the class of  representations. The most basic choice is to consider
only finite dimensional ones. This is a restriction we impose in the present paper.

A classical infinite dimensional algebra appearing in Representation theory is the universal enveloping algebra
$U(\g)$ of a finite dimensional Lie algebra $\g$ over $\C$. Let us consider the case when the Lie algebra
$\g$ is semisimple. In this case, the number of finite dimensional irreducible representations is still infinite: they are classified by dominant weights. More precisely, we consider the Cartan subalgebra $\h\subset\g$ and fix a system of simple roots. We say that $\lambda\in \h^*$ is {\it  dominant} if $\langle \alpha^\vee,\lambda\rangle\in \Z_{\geqslant 0}$
for any simple root $\alpha$. Then to $\lambda$ we can assign the irreducible module with highest weight $\lambda$.
Those form a complete and irredundant collection of irreducible finite dimensional representations of $\g$.

However,  we can modify the algebra $U(\g)$ to make the counting problem finite. Namely, recall that the center
of $U(\g)$ is identified with $S(\h)^W=\C[\h^*]^W$ via the Harish-Chandra isomorphism.  Here $W$ is the Weyl group acting on $\h^*$ by $w\bullet\lambda=w(\lambda+\rho)-\rho$, where $\rho$, as usual, is half the sum of all positive roots.
Then, for each $\lambda\in \h^*/W$, we can consider the corresponding central reduction, $\U_\lambda$, of $U(\g)$.
The classification result above can be restated as follows: the algebra $\U_\lambda$ has a single finite
dimensional irreducible representation if $\langle\lambda+\rho,\alpha^\vee\rangle$ is a nonzero integer for
every root $\alpha$. Otherwise, there are no finite dimensional representations.

Another classical feature of the algebras $\U_\lambda$  is that they have a very nice underlying geometry.
These algebras are filtered and the associated graded algebras $\gr \U_\lambda$ are all identified
with $\C[\mathcal{N}]$, where $\mathcal{N}$ stands for the nilpotent cone in
$\g$. Recall the Springer resolution of singularities $\rho:\widetilde{\mathcal{N}}\twoheadrightarrow \mathcal{N}$,
where $\widetilde{\mathcal{N}}$ is the cotangent bundle of the flag variety $\mathcal{B}$ of $\g$. The variety $\widetilde{\mathcal{N}}$ is  smooth and symplectic, while $\mathcal{N}$ is a singular Poisson variety. The morphism $\rho$ is therefore a symplectic resolution
of singularities.

There is a non-commutative analog of this resolution. Namely, for $\lambda\in \h^*$,
we can consider the sheaf $\mathcal{D}_\lambda$ of $\lambda$-twisted differential operators on
$\mathcal{B}$. Then $\Gamma(\mathcal{B}, \mathcal{D}_\lambda)=\mathcal{U}_\lambda$, while all higher
cohomology groups of $\mathcal{D}_\lambda$ vanish. So we have the global section
functor $\Gamma_\lambda: \mathcal{D}_\lambda\operatorname{-mod}\rightarrow \mathcal{U}_\lambda\operatorname{-mod}$
as well as its derived version $R\Gamma_\lambda: D^b(\mathcal{D}_\lambda\operatorname{-mod})
\rightarrow D^b(\mathcal{U}_\lambda\operatorname{-mod})$. The former is an equivalence if $\langle\lambda+\rho, \alpha^\vee\rangle\not\in \Z_{\leqslant 0}$ for all positive roots $\alpha$, this is the celebrated Beilinson-Bernstein
theorem, \cite{BB}.  Its derived version, \cite{BB_derived}, states that $R\Gamma_\lambda$ is an equivalence if
$\langle \lambda+\rho, \alpha^\vee\rangle\neq 0$ for all $\alpha$.

Using the results of the previous paragraph one can give a geometric interpretation of the classification
of finite dimensional irreducible representations. Namely, under the abelian Beilinson-Bernstein equivalence,
 the finite dimensional modules correspond to the $\mathcal{D}_\lambda$-modules whose singular support is contained in $\mathcal{B}\subset \widetilde{\mathcal{N}}$, i.e., to the $\mathcal{O}$-coherent $\mathcal{D}_\lambda$-modules.
It is easy to see that such a module exists if and only if $\lambda$ is integral, in which case
it is the line bundle on $\mathcal{B}$ corresponding to $\lambda$.

\subsection{Etingof's conjecture}
Another interesting  class of associative algebras is Symplectic reflection algebras
introduced by Etingof and Ginzburg in \cite{EG}. Those are filtered deformations of the skew-group algebras $S(V)\#\Gamma$,
where $V$ is a symplectic vector space and $\Gamma$ is a finite subgroup of $\operatorname{Sp}(V)$. The symplectic
reflection algebras $\mathcal{H}_c$ for the pair $(V,\Gamma)$ form a family depending on a collection $c$ of complex numbers.

One especially interesting class of groups $\Gamma$ comes from finite subgroups of $\SL_2(\C)$. Namely, pick
such a subgroup  $\Gamma_1\subset \SL_2(\C)$ and form the semidirect product
$\Gamma(=\Gamma_n):=\mathfrak{S}_n\ltimes \Gamma_1^n$, where $\mathfrak{S}_n$ stands for the symmetric group
on $n$ letters.
The group $\Gamma_n$ naturally acts on $\C^{2n}=(\C^2)^{\oplus n}$ by symplectomorphisms. Here elements of $\mathfrak{S}_n$
permute the $n$ summands  $\C^{2}$, the $n$ copies of $\Gamma_1$ act each on its own summand, and the symplectic form
on $(\C^2)^{\oplus n}$ is obtained as the direct sum of the $n$ copies of a $\Gamma_1$-invariant symplectic form on $\C^2$.
For $n>1$, the algebra $\mathcal{H}_c$ depends on $r$ parameters, where $r$ is the number of conjugacy classes in
$\Gamma_1$ (for $n=1$, the number of parameters
is $r-1$). So one can ask, how many finite dimensional irreducibles does the algebra $\mathcal{H}_c$ have? The answer,
of course, should depend on the parameter $c$.

In \cite[Section 6]{Etingof_affine}, Etingof proposed a conjectural answer to this and more general questions. The conjecture
takes the following form. Recall that the finite subgroups of $\SL_2(\C)$ are in one-to-one (McKay) correspondence with
the affine Dynkin diagrams. Take the affine Dynkin diagram, say $Q$, corresponding to $\Gamma_1$ and form the  Kac-Moody
algebra $\g(Q)$ from this diagram. Then Etingof defines a certain subalgebra $\a\subset \g(Q)\times \mathfrak{heis}$
depending on $c$, where $\mathfrak{heis}$
stands for the Heisenberg Lie algebra. Next, he considers the module $\mathbf{V}\otimes \mathcal{F}$, where $\mathbf{V}$ is the basic representation of $\g(Q)$ (whose highest weight is the fundamental weight corresponding to the extending vertex of $Q$)
and $\mathcal{F}$ is the Fock space representation of $\mathfrak{heis}$. Then Etingof takes an appropriate weight subspace  in that representation and considers its intersection with the sum of certain $\a$-isotypic components.
The conjecture is that the number of finite dimensional irreducibles is the dimension of the resulting
intersection.

Etingof's conjecture (in fact, its more general version dealing with the number of irreducibles
with given support in a category $\mathcal{O}$) was proved in the case when $\Gamma_1$ is cyclic
by Shan and Vasserot, \cite[Section 6]{shanvasserot} (under some technical restrictions on $c$ that were removed in \cite[Appendix]{VV_proof}). The techniques used in \cite{shanvasserot} are based on the representation
theory of rational Cherednik algebras  and do not  generalize to the case of non-cyclic $\Gamma_1$.

The main goal of this paper is to prove Etingof's conjecture on counting finite dimensional irreducibles
for all groups $\Gamma_1$. But,  first, we put this problem into a more general context: counting finite dimensional
irreducible representations over quantizations of symplectic resolutions.

\subsection{Quantizations of symplectic resolutions}
Inside $\mathcal{H}_c$ we can consider the spherical subalgebra, $e\mathcal{H}_ce$, where $e$ is the averaging idempotent.
This algebra is a filtered deformation of $S(V)^\Gamma$. By \cite[Theorem 5.5]{Etingof_affine}, $e\mathcal{H}_c e$ is
Morita equivalent to $\mathcal{H}_c$ if and only if $e\mathcal{H}_c e$ has finite homological dimension (the parameter $c$ is called {\it spherical} in this
case). Under this assumption,  the numbers of finite dimensional irreducibles for $\mathcal{H}_c$ and $e\mathcal{H}_c e$ coincide.

When $\Gamma=\Gamma_n$, the variety $V/\Gamma_n$ can be realized as an affine Nakajima quiver variety.  It admits a symplectic resolution of singularities that is a smooth Nakajima quiver variety.
The algebra $e\mathcal{H}_c e$ can be realized as a quantum Hamiltonian reduction, see \cite{EGGO,quant} and references therein (we briefly recall this below in Section \ref{SSS_SRA}).
Also we can quantize the symplectic resolution getting a sheaf of algebras on that symplectic variety. So we again have a nice geometry as in the case of universal enveloping algebras.

There are other algebras that quantize (i.e., are filtered deformations of) affine Poisson varieties
admitting symplectic resolutions and it is natural to expect that the counting problems for these
algebras have some nice answers that have to do with the geometry of the resolution. There are three
known large classes of resolutions  giving rise to interesting algebras. First, there are more general Nakajima
quiver varieties, the corresponding algebras are obtained as quantum Hamiltonian reductions of algebras
of differential operators. Second, there are Slodowy varieties that generalize cotangent bundles
to (partial) flag varieties. The corresponding algebras are finite W-algebras generalizing the universal
enveloping algebras. The counting problem for W-algebras was studied by the second author and Ostrik
in \cite{LO} (in the case of integral central characters) and by the authors of this paper in
\cite{BLo} (in general), below we will briefly mention
how the answer looks like in that case. Third, there are hypertoric varieties that are similar to but much easier
than Nakajima quiver varieties, this case is treated in \cite{BLPW_ht}.

In this paper we concentrate
on the case of Nakajima quiver varieties. In Section \ref{SS_intro_Nak} we recall necessary definitions.

\subsection{Nakajima quiver varieties and their quantizations}\label{SS_intro_Nak}
In this section we briefly recall Nakajima quiver varieties and their quantizations.
We will elaborate more on their properties in Section \ref{S_prelim}.

Let $Q$ be a quiver (=oriented graph, we allow loops and multiple edges). We can formally represent $Q$ as a quadruple
$(Q_0,Q_1,t,h)$, where $Q_0$ is a finite set of vertices, $Q_1$ is a finite set of arrows,
$t,h:Q_1\rightarrow Q_0$ are maps that to an arrow $a$ assign its tail and head.

Pick vectors $v,w\in \ZZ_{\geqslant 0}^{Q_0}$ and vector spaces $V_i,W_i$ with
$\dim V_i=v_i, \dim W_i=w_i$. Consider the (co)framed representation space
$$R=R(Q,v,w):=\bigoplus_{a\in Q_1}\Hom(V_{t(a)},V_{h(a)})\oplus \bigoplus_{i\in Q_0} \Hom(V_i,W_i).$$
We will also consider the cotangent bundle $T^* R=R\oplus R^*$ that can be identified with
$$\bigoplus_{a\in Q_1}\left(\Hom(V_{t(a)},V_{h(a)})\oplus \Hom(V_{h(a)}, V_{t(a)})\right)\oplus \bigoplus_{i\in Q_0} \left(\Hom(V_i,W_i)\oplus \Hom(W_i,V_i)\right).$$
The space $T^*R$ carries  a natural symplectic form, denote it by $\omega$.
On $R$ we have a natural action of the group $G:=\prod_{i\in Q_0} \GL(v_i)$. This action extends to
an action on $T^*R$ by linear symplectomorphisms.
As any action by linear symplectomorphisms, the $G$-action
on $T^*R$ admits a moment map, i.e., a $G$-equivariant morphism $\mu:T^*R\rightarrow \g^*$ with the
property that $\{\mu^*(x),\bullet\}=x_{T^*R}$ for any $x\in \g$. Here $\mu^*:\g\rightarrow \K[T^*R]$
denotes the dual map to $\mu$, $\{\bullet,\bullet\}$ is the Poisson bracket on $\K[T^*R]$ induced by $\omega$,
and $x_{T^*R}$ is the vector field on $T^*R$ induced by $x$ via the $G$-action.
Also we consider the dilation action of the one-dimensional torus $\K^\times$ on $T^*R$ given by $t.r=t^{-1}r$.
We specify the moment map uniquely by requiring that it is quadratic:  $\mu(t.r)=t^{-2}\mu(r)$. In this case $\mu^*(x)=x_R$, where we view $x_R$, an element of $\operatorname{Vect}_R$, as a function on $T^*R$.

In what follows,  $Q$ and $w$ are often fixed, but $v$ will vary.

Now let us proceed to  the definition of Nakajima quiver varieties.
Pick a character $\theta$ of $G$ (below we will often call $\theta$ a {\it stability condition})
and also an element $\lambda\in (\g/[\g,\g])^*$.
To $\theta$ we associate an open subset $(T^*R)^{\theta-ss}$ of $\theta$-semistable
points in $T^*R$ (that may be empty). Recall that a point $r\in T^*R$ is called
{\it $\theta$-semistable} if there is a $(G,n\theta)$-semiinvariant (with $n>0$) polynomial $f\in \C[T^*R]$
such that $f(r)\neq 0$.

We can form the GIT quotient $\M_\lambda^{\theta}(v):=(\mu^{-1}(\lambda)\cap (T^*R)^{\theta-ss})\quo G$ (we omit the subscript when $\lambda=0$). This variety is smooth provided $(\lambda,\theta)$ is {\it generic} (we will explain the precise meaning of this condition in  \ref{SSS_gen_param}).  The variety $\M_\lambda^0(v)$ is affine and
there is a projective morphism $\rho:\M^{\theta}_\lambda(v)\rightarrow \M_\lambda^0(v)$. There is a sufficient
condition for this morphism to be a resolution of singularities that will  be recalled in
\ref{SSS_sing_resol}.
We remark that all varieties $\M^\theta_\lambda(v)$ carry natural Poisson structures because they are defined as Hamiltonian reductions.
For a generic pair  $(\lambda,\theta)$,
the variety $\M^{\theta}_\lambda(v)$ is symplectic. Also we remark that we have an action of
$\C^\times$ on $\M^\theta(v)$ that comes from the dilation action on $T^*R$ and so rescales the
symplectic form.

Now let us briefly recall Nakajima's construction of a geometric $\g(Q)$-action on the middle homology groups
of the varieties $\M^\theta(v)$, we assume $Q$ has no loops and $\theta$ is generic. Consider the space $\bigoplus_v H_{mid}(\M^\theta(v))$,
where the subscript ``mid'' means the middle dimension, i.e., $\dim_\C \M^\theta(v)$.
We remark that these spaces are naturally identified for different $\theta$,
see \cite[Section 9]{Nakajima}, this result is recalled  in \ref{SS_homol_ident}.
%Indeed, replacing the GIT reductions with hyper-K\"{a}hler reductions we can allow real $\theta$. Now the non-generic %values of $(\lambda,\theta)$ have real codimension $3$
%in $\C^{Q_0}\times \mathbb{R}^{Q_0}$. Since all varieties $\M^\theta_\lambda(v)$ are diffeomorphic, \cite[Corollary %4.2]{Nakajima},
%the homology of $\M^\theta_\lambda(v)$ are naturally identified
%for all generic $(\lambda,\theta)$.

Nakajima, \cite{Nakajima}, defined an action of $\g(Q)$ on $\bigoplus_v H_{mid}(\M^\theta(v))$ %(when
%$\theta_k>0$ for  all $k$, but this restriction does not matter)
turning that space into the irreducible
integrable $\g(Q)$-module $L_\omega$ with highest weight \begin{equation}\label{eq:omega}\omega:=\sum_{i\in Q_0} w_i\omega^i,\end{equation} where we write $\omega^i$
for the fundamental weight corresponding to the vertex $i$. The individual space $H_{mid}(\M^\theta(v))$
gets identified with the weight space $L_\omega[\nu]$ of weight $\nu$, where \begin{equation}\label{eq:nu}\nu:=\omega-\sum_{i\in Q_0}v_i\alpha^i\end{equation}
(we write $\alpha^i$ for the simple root corresponding to $i$). We note that the identification is determined by the choice of a  highest weight vector in $L_\omega[\omega]$  and so is defined up to a scalar multiple.

Now we proceed to the quantum part of this story. Let us start by constructing quantizations of $\M^\theta(v)$
that will be certain sheaves of filtered algebras on $\M^\theta(v)$. Namely, consider the algebra $D(R)$ of differential operators on $R$. We can localize this algebra to a {\it microlocal} (the sections are only defined on $\C^\times$-stable open subsets) sheaf on $T^*R$ denoted by $D_R$. We have a quantum comoment map $\Phi: \g\rightarrow D(R)$
quantizing the classical comoment map $\g\rightarrow \C[T^*R]$, still $\Phi(x)=x_R$.

Now fix $\lambda\in \C^{Q_0}$ and a generic $\theta$. We get the quantum Hamiltonian reduction sheaf
$$\A^\theta_\lambda(v):=\pi_*[D_R/D_R\{\Phi(x)-\langle\lambda,x\rangle| x\in \g\}|_{(T^*R)^{\theta-ss}}]^G$$
on $\M^\theta(v)$, here $\pi$ is the quotient morphism
$\mu^{-1}(0)^{\theta-ss}\twoheadrightarrow \M^\theta(v)$.
This is a sheaf of filtered algebras with $\gr \A^\theta_\lambda(v)=\mathcal{O}_{\M^\theta(v)}$.
In fact, because of this, it has no higher cohomology, and $\Gamma(\A^\theta_\lambda(v))$ satisfies $\gr \Gamma(\A^\theta_\lambda(v))=\C[\M^\theta(v)]$. Further, one can show that $\Gamma(\A^\theta_\lambda(v))$
is independent of $\theta$, we will write $\A_\lambda(v)$
for that algebra. When $\mu$ is flat or $\lambda$
is Zariski generic, then $\A_\lambda(v)$ coincides with $\A_\lambda^0(v):=
[D(R)/D(R)\{\Phi(x)-\langle\lambda,x\rangle| x\in \g\}]^G$, we will elaborate on this
in Section \ref{SS_quant_prop}.

\subsection{Main conjecture}\label{SS_main_conj}
Our ultimate goal is to compute the number of finite dimensional irreducible representations of $\A_\lambda(v)$
or, equivalently, to compute $K_0(\A_\lambda(v)\operatorname{-mod}_{fin})$ (we consider all $K_0$'s over $\C$).
For this, we need to relate the categories of modules for $\A_\lambda(v)$ and for $\A_\lambda^\theta(v)$ with generic
$\theta$. Consider the category $\A_\lambda^\theta(v)\operatorname{-mod}$ of all coherent $\A_\lambda^\theta(v)$-modules
and also its derived analog $D^b(\A_\lambda^\theta(v)\operatorname{-mod})$
(see Section \ref{SS_Coh} for the definitions). Then we have the derived global sections
functor $R\Gamma^\theta_\lambda:D^b(\A_\lambda^\theta(v)\operatorname{-mod})\rightarrow D^b(\A_\lambda(v)\operatorname{-mod})$.
As McGerty and Nevins checked in \cite[Theorem 1.1]{MN} (under some technical conditions,
\cite[Assumptions 3.2]{MN}), this functor is an equivalence if and only if the algebra
$\A_\lambda(v)$ has finite homological dimension. The inverse of $R\Gamma_\lambda^\theta$ is the derived localization
functor $L\Loc_\lambda^\theta:=\A_\lambda^\theta(v)\otimes^L_{\A_\lambda(v)}\bullet$.
In  the majority of  interesting cases, the precise locus of $\lambda$, where the homological dimension is finite (such $\lambda$
are called {\it regular}), is not known.
Conjecturally, the regular locus should be the complement of a finite union of hyperplanes, see Section \ref{SS_loc_conj}. In this paper we deal
with the counting problem only in the case when $\lambda$ is regular, and we make a conjecture on the answer in general,
Conjecture \ref{Conj:inf_homol_dim}.

For an object in $\A_\lambda^\theta(v)\operatorname{-mod}$, one can define its (singular) support,
a $\C^\times$-stable coisotropic subvariety of $\M^\theta_0(v)$, and the characteristic
cycle, $\CC_v(M)$, see Section \ref{SSS_Supp_CC}. The equivalence $R\Gamma_\lambda^\theta$
identifies the following two categories:
\begin{itemize}
\item
 the full subcategory $D^b_{fin}(\A_\lambda(v)\operatorname{-mod})\subset
D^b(\A_\lambda(v)\operatorname{-mod})$ of all complexes with finite dimensional homology
\item
and the full subcategory  $D^b_{\rho^{-1}(0)}(\A_\lambda^\theta(v)\operatorname{-mod})\subset
D^b(\A_\lambda^\theta(v)\operatorname{-mod})$ of all complexes whose homology is supported on (i.e., has support
contained in)
$\rho^{-1}(0)$.\end{itemize}
 Since $\rho^{-1}(0)$ is an isotropic subvariety in $\M^\theta(v)$, the characteristic cycle of a module
supported on $\rho^{-1}(0)$ is a combination of the lagrangian irreducible components of $\rho^{-1}(0)$
(this is a consequence of the Gabber involutivity theorem recalled in
Section \ref{SSS_Supp_CC}), let us denote the set
of lagrangian components by $\mathsf{comp}$. So we
get a linear map
\begin{equation}
\CC_{\lambda,v}: K_0(\A_\lambda(v)\operatorname{-mod}_{fin})\rightarrow \C^{\mathsf{comp}}.
\end{equation}
The space $\C^{\mathsf{comp}}$ is identified with $H_{mid}(\M^\theta(v))=L_\omega[\nu]$ and so is canonically
independent of $\theta$. We will see below, Section \ref{SS_WC_K0}, that $\CC_{\lambda,v}$ is actually independent of $\theta$.
By an unpublished result of Baranovsky and Ginzburg, \cite{BarGin}, the map $\CC_{\lambda,v}$ is injective (since this result is not published yet,  we actually
give a proof of this result for quantized quiver varieties in the cases we consider in this paper).
So, to solve
our counting problem, we just need to describe the image of $\CC_\lambda=\bigoplus_v \CC_{\lambda,v}$.

Our conjectural description is inspired by Etingof's conjecture. Namely, consider the subalgebra $\a\subset\g(Q)$ constructed from $\lambda$ as follows: the algebra $\a$ is generated by the Cartan $\h\subset \g(Q)$ and all root subspaces $\g(Q)_\beta$ for \underline{real} roots $\beta=\sum_{i\in Q_0}b_i\alpha^i$ with $\sum_{i\in Q_0}b_i\lambda_i\in \Z$. For instance, if $\lambda$ is generic, then $\a=\h$, while if all $\lambda_i\in \Z$, then $\a=\g(Q)$ provided $Q$ contains no loops. Let $L_\omega^\a$ be the $\a$-submodule of $L_\omega$ generated by the weight spaces $L_\omega[\sigma\omega]$ for $\sigma\in W(Q)$, where $W(Q)$ stands for the Weyl group of $\g(Q)$.

\begin{Conj}\label{Conj:main}
Assume that $Q$ has no loops. Then we have $\operatorname{Im}\CC_\lambda= L_\omega^\a$.
\end{Conj}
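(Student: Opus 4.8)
The plan is to prove the two inclusions $\operatorname{Im}\CC^\lambda\subseteq L_\omega^\a$ and $L_\omega^\a\subseteq\operatorname{Im}\CC^\lambda$ by rather different techniques, combining the two tools announced in the abstract: categorical Kac--Moody actions and wall-crossing functors. For the inclusion $\operatorname{Im}\CC^\lambda\subseteq L_\omega^\a$ the strategy is to exhibit the categories $\A_\lambda^\theta(v)\operatorname{-mod}$, for varying $v$ (with $Q,w$ fixed), as weight categories for a categorical action of the Kac--Moody algebra attached to the subalgebra $\a$ --- or more precisely, of a Levi-type piece of $\g(Q)$ picked out by the integrality condition on $\lambda$. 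The functors $E_i,F_i$ should be built from Hamiltonian reduction of the Hecke-type correspondences on $T^*R$ between dimension vectors $v$ and $v\pm\alpha^i$, compatibly with the classical Nakajima correspondences that induce the geometric $\g(Q)$-action on $\bigoplus_v H_{mid}(\M^\theta(v))$. The point is that the natural transformations defining the $\sl_2$-relations only exist when the relevant root is "integral" for $\lambda$, i.e.\ when the corresponding reflection preserves the set of $\lambda$'s with equivalent representation theory; this is exactly the constraint cutting $\g(Q)$ down to $\a$. Granting such an action, the subcategory $D^b_{\rho^{-1}(0)}(\A_\lambda^\theta(v)\operatorname{-mod})$ is stable under $E_i,F_i$ (these functors preserve the property of having lagrangian support), its $K_0$ is therefore an $\a$-submodule of $L_\omega$, and it contains the classes coming from $v$ with $\nu=\sigma\omega$ --- at such $v$ the variety $\M^\theta(v)$ is a single point or, more relevantly, $\rho^{-1}(0)$ contains a component giving a nonzero vector in the extremal weight space, since $\M_0^0(v)$ is a point and $\M^\theta(v)$ is projective there. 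Hence $\operatorname{Im}\CC^\lambda$ is contained in the $\a$-submodule generated by the extremal weight spaces $L_\omega[\sigma\omega]$, which is $L_\omega^\a$ by definition.

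For the reverse inclusion $L_\omega^\a\subseteq\operatorname{Im}\CC^\lambda$ I would first establish the base case: whenever $\nu=\sigma\omega$ is an extremal weight, the algebra $\A_\lambda(v)$ has a finite dimensional representation whose characteristic cycle is the corresponding extremal weight vector (up to scalar). For $\sigma=1$ this is the vacuum vector and $v=0$, which is trivial; for general $\sigma\in W(Q)$ one transports this through wall-crossing functors (or through the Namikawa--Weyl-group symmetries of the family of quantizations), using that extremal weights are conjugate to the highest weight. Then one propagates these classes across all of $L_\omega^\a$ using the categorical $\a$-action from the first part: since $E_i,F_i$ categorify the Chevalley generators and send objects with lagrangian support to objects with lagrangian support, and since $K_0$ of $\bigsqcup_v D^b_{\rho^{-1}(0)}(\A^\theta_\lambda(v)\operatorname{-mod})$ is an $\a$-submodule containing all extremal vectors, it must contain the whole cyclic $\a$-module they generate, namely $L_\omega^\a$. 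Here one uses that $\CC^\lambda$ is injective (Baranovsky--Ginzburg) to identify $K_0$ of the finite-dimensional category with its image and to match the categorical action with the vector-space action on $\bigoplus_v H_{mid}$.

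The step I expect to be the main obstacle is the construction and verification of the categorical $\a$-action on the quantized quiver varieties --- in particular checking the $\sl_2$ (or Chevalley) relations at the level of functors and natural transformations, and pinning down exactly which subalgebra of $\g(Q)$ acts, i.e.\ matching the "integrality of the root for $\lambda$" condition with the existence of the categorical $\sl_2$. This is delicate because the functors are defined by Hamiltonian reduction and one must control how reduction interacts with the Hecke correspondences, their normal bundles, and the relevant Euler classes; it is also where the hypothesis that $Q$ has no loops enters (so that $\g(Q)$ is a genuine Kac--Moody algebra with a well-behaved root system). A secondary difficulty is the compatibility of wall-crossing functors with characteristic cycles --- showing that crossing a wall permutes the extremal characteristic cycles according to the Namikawa--Weyl-group action --- which requires knowing that wall-crossing functors are derived equivalences in the relevant chambers (regularity of $\lambda$) and tracking their effect on $K_0$ via the geometry of $\rho^{-1}(0)$. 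Once these two technical inputs are in place, the two inclusions above close up the argument.
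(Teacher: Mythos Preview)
The statement is Conjecture~\ref{Conj:main}, which the paper proves only in the special cases of Theorem~\ref{Thm:verymain} (finite type, or affine with $v=n\delta$, $w=\epsilon_0$); a proof in full generality is not given.

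Your argument for the inclusion $\operatorname{Im}\CC^\lambda\subseteq L_\omega^\a$ has a genuine gap. You argue: the category carries a categorical $\a$-action, so $\operatorname{Im}\CC^\lambda$ is an $\a$-submodule of $L_\omega$; it contains the extremal weight spaces $L_\omega[\sigma\omega]$; hence it is contained in $L_\omega^\a$. The last step does not follow. An $\a$-submodule of $L_\omega$ containing all $L_\omega[\sigma\omega]$ \emph{contains} $L_\omega^\a$ by the definition of the latter; nothing forces it to be contained in $L_\omega^\a$. Your paragraph~1 is therefore another argument for the lower bound, and you have no mechanism for the upper bound. (Incidentally, even the existence of a full categorical $\a$-action is left open in the paper --- see Conjecture~\ref{Conj:a_action}. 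Only the individual $\sl_2$-actions, one per simple root of $\a$, are constructed.)

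For the lower bound $L_\omega^\a\subseteq\operatorname{Im}\CC^\lambda$ your sketch is close to the paper's (Section~\ref{S_lower_bound}), which works for arbitrary $Q$. One produces $E_\alpha,F_\alpha$ for each real root $\alpha$ with $\lambda\cdot\alpha\in\Z$ by transporting Webster's $E_i,F_i$ through quantum LMN isomorphisms. The subtle step, which you correctly flag, is matching the LMN $W(Q)$-action on $H_{mid}$ with the action coming from $\g(Q)$; the paper settles this via Theorem~\ref{Thm:WC} (the short wall-crossing through $\ker\alpha^i$ equals $s_{i*}$ composed with the Rickard complex $\Theta_i$) together with Proposition~\ref{Prop:WC_vs_CC} ($\CC$ is independent of $\theta$).

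The paper's route to the upper bound, where proved, is entirely different and occupies Sections~\ref{S_long_WC}--\ref{S_proof_compl}. The key input is Proposition~\ref{Prop:long_shift}: the long wall-crossing functor $\WC_{\lambda\to\lambda^-}$ detects $\dim\Supp M$ via homological shift, and in particular shifts a finite-dimensional module by exactly $\frac{1}{2}\dim\M^\theta(v)$. One factors the long wall-crossing into short ones. A hypothetical ``extremal'' simple (a counterexample with minimal $v$) is shown to be ``singular'' for each real $\sl_2$-action and hence not shifted by the corresponding short wall-crossing (Section~\ref{S_real}). Through the affine wall, in the SRA case, the short wall-crossing is proved to be a perverse equivalence with all shifts strictly below $n=\frac{1}{2}\dim\M^\theta(n\delta)$ (Theorems~\ref{Thm:affine_WC} and~\ref{Thm:perv}), using Harish-Chandra bimodules and restriction functors to slice algebras. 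Adding up the shifts contradicts Proposition~\ref{Prop:long_shift}. None of this machinery appears in your proposal.
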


Let us point out that the case when $Q$ has a loop is non-interesting for
our counting problem as stated: the answer is $0$ (provided the dimension in the corresponding vertex is positive,
if it is zero, then the loop does not matter anyway).
In this case, the algebra $\A_\lambda(v)$ decomposes into the tensor product
of $D(\C^k)$, the algebra of differential operators on $\C^k$, where $k$ is the number of loops,
and of another algebra, say $\underline{\A}$. The former has no finite
dimensional representations. Of course, it is an interesting question to
count the finite dimensional irreducible representations of $\underline{\A}$.
At present, the answer to this question is known for the case of Jordan quiver,
see \cite{Gies}.

We remark that the dimension vectors $v$ corresponding to $\nu=\sigma \omega$ are precisely those with $\M^\theta(v)=\{\operatorname{pt}\}$ and hence $\A_\lambda(v)=\C$. In particular, if $\lambda$ is Weil generic
(which, by definition, means a parameter lying outside countably many proper subvarieties), then our conjecture predicts that a non-trivial algebra $\A_\lambda(v)$ has no finite dimensional representations, as expected. The other extreme is when $\lambda$ is integral. Here our conjecture
predicts that $\operatorname{Im}\CC=L_\omega$. This follows from the work of Webster, \cite[Section 3]{Webster},
see Section \ref{SS_W_fun} for details.

Here is the main result of the present paper.

\begin{Thm}\label{Thm:verymain}
Conjecture \ref{Conj:main} is true
\begin{itemize}
\item when $Q$ is of finite type,
\item or when $Q$ is an affine quiver,
$v=n\delta, w=\epsilon_0$.
\end{itemize}
\end{Thm}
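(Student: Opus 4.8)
The plan is to establish both inclusions $\operatorname{Im}\CC^\lambda\subseteq L_\omega^\a$ and $\operatorname{Im}\CC^\lambda\supseteq L_\omega^\a$ by a uniform strategy that reduces to two ``boundary'' cases already available in the literature, and then interpolates between them using the two technical engines advertised in the abstract: categorical Kac--Moody actions and wall-crossing functors. The starting point is the observation that for $\lambda$ integral the answer $\operatorname{Im}\CC=L_\omega$ follows from Webster's work (cited in the excerpt), and for $\lambda$ generic the only dimension vectors with $\A_\lambda(v)\neq\C$ admitting finite-dimensional representations are the ones with $\M^\theta(v)=\{\mathrm{pt}\}$, i.e. those with $\nu=\sigma\omega$; these give exactly the span of the extremal weight spaces, which is $L_\omega^{\h}=L_\omega^\a$ in that case. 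The general case sits between these, and the subalgebra $\a$ is precisely the part of $\g(Q)$ that ``remains integral'' at $\lambda$.

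First I would set up the upper bound $\operatorname{Im}\CC^\lambda\subseteq L_\omega^\a$. Here the key input is the relationship between wall-crossing functors and characteristic cycles (Section~\ref{SS:WC_vs_CC} in the excerpt), together with the fact established there that $\CC^\lambda_v$ is independent of $\theta$ and, more importantly, behaves well under changing $\lambda$ within a fixed integral translate. The point is that a finite-dimensional $\A_\lambda(v)$-module, transported through the derived equivalences $R\Gamma^\theta_\lambda$, has characteristic cycle supported on $\rho^{-1}(0)$, and the combinatorics of which components can occur is controlled by the categorical action of the parabolic-type subalgebra attached to $\a$. Concretely, wall-crossing through a wall \emph{not} in the integral root system of $\lambda$ cannot create new finite-dimensional modules, so $\operatorname{Im}\CC^\lambda$ is contained in the $\a$-submodule generated by the extremal weight spaces, which is $L_\omega^\a$ by definition. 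This direction should be the more formal one, since it only uses that wall-crossing functors are (derived) equivalences away from the relevant walls and that they are compatible with the $\CC$ maps.

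The harder direction is the lower bound $\operatorname{Im}\CC^\lambda\supseteq L_\omega^\a$, and this is where the case distinction in the theorem enters. For $Q$ of finite type, $\g(Q)$ is finite-dimensional semisimple, the quiver varieties $\M^\theta(v)$ are (partial resolutions of) nilpotent orbit closures / Slodowy-type varieties, and one can hope to produce enough finite-dimensional modules by an explicit induction on $v$ using the categorical Kac--Moody action: the crystal/categorification structure on $\bigoplus_v \A_\lambda(v)\operatorname{-mod}_{fin}$ lets one build a module in weight $\nu-\alpha^i$ from one in weight $\nu$ by applying a categorical $f_i$ (for $i$ in the support of $\a$), check it is nonzero and finite-dimensional, and track its characteristic cycle. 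Combined with the known integral case via a wall-crossing (Morita-type) comparison that degenerates $\a$ down from $\g(Q)$, this should fill out all of $L_\omega^\a$. For the affine case with $v=n\delta$, $w=\epsilon_0$ — the case covering Etingof's conjecture — one additionally needs the connection with symplectic reflection algebras $e\mathcal{H}_ce$ recalled in Section~\ref{SS_SRA_HC}, and here the extra Heisenberg factor in Etingof's $\a\subset\g(Q)\times\mathfrak{heis}$ must be matched with the extremal-weight generation; one compares with the Shan--Vasserot result in the cyclic case and propagates to general $\Gamma_1$ by the categorical action, which treats all affine types uniformly.

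I expect the main obstacle to be the lower bound in the genuinely non-integral, non-generic affine case: showing that applying the categorical lowering operators $f_i$ for $i$ in the support of $\a$ to an extremal finite-dimensional module produces a \emph{nonzero} finite-dimensional module with the predicted characteristic cycle, rather than something that vanishes or escapes $\rho^{-1}(0)$. This requires a precise understanding of how the Hamiltonian reduction functors, the categorical $\g(Q)$-action of Nakajima/Webster type, and the wall-crossing functors interact at the level of supports — in particular a ``support-preservation'' statement for the categorical action restricted to the finite-dimensional subcategory. Controlling this interaction, and in particular ruling out that the lower bound is strict on some exceptional wall, is the technical heart of the argument; everything else is either cited (integral case, generic case, Baranovsky--Ginzburg injectivity, McGerty--Nevins) or a formal consequence of the derived equivalences.
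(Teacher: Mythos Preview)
Your assessment of which inclusion is harder is reversed, and the argument you give for the upper bound has a genuine gap. In the paper, the lower bound $L_\omega^\a\subseteq\operatorname{Im}\CC^\lambda$ is the easier direction and is proved uniformly for \emph{all} quivers (Section~\ref{S_lower_bound}): one starts from the extremal weight spaces $L_\omega[\sigma\omega]$ (where $\A_\lambda(v)=\C$) and applies functors $E_\beta,F_\beta$ obtained by conjugating Webster's $E_i,F_i$ by quantum LMN isomorphisms $\sigma_*$. The technical issue there is not nonvanishing but the compatibility $\CC(F_\beta M)=c\,f_\beta\CC(M)$: one must identify the $W(Q)$-action on $L_\omega$ coming from LMN isomorphisms with the one coming from the $\g(Q)$-action. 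This is handled by proving $\Theta_i=s_{i*}\circ\WC_{\lambda\to s_i\bullet\lambda}$ (Theorem~\ref{Thm:WC}) together with the fact that $\CC$ is preserved by wall-crossing (Proposition~\ref{Prop:WC_vs_CC}). No case distinction on $Q$ is needed here, and Shan--Vasserot is not used as input anywhere in the proof.

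The upper bound $\operatorname{Im}\CC^\lambda\subseteq L_\omega^\a$ is the hard direction, and your sentence ``wall-crossing through a wall not in the integral root system of $\lambda$ cannot create new finite-dimensional modules, so $\operatorname{Im}\CC^\lambda$ is contained in the $\a$-submodule generated by the extremal weight spaces'' does not constrain the characteristic cycle of an \emph{already existing} finite-dimensional module; it is not an argument for the inclusion. The paper's mechanism is entirely different and is where the finite/affine hypothesis enters. First (Section~\ref{S_long_WC}), the \emph{long} wall-crossing $\WC_{\lambda\to\lambda^-}$ homologically shifts any finite-dimensional module by exactly $N=\frac{1}{2}\dim\M^\theta(v)$, via comparison with homological duality. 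Second, the long wall-crossing is decomposed into \emph{short} wall-crossings through single walls. Third (Section~\ref{S_real}), a putative counterexample---an ``extremal'' simple with $\CC\notin L_\omega^\a$ and minimal $v$---is shown to be \emph{singular} for every $(E_\alpha,F_\alpha)$, which forces it to survive in $H_0$ of each finite short wall-crossing (no homological shift). For finite type this already gives a contradiction with the long-wall-crossing shift $N>0$. For the affine SRA case one must also control the affine wall: Section~\ref{S_aff} proves, using Harish-Chandra bimodules and restriction to type~A Cherednik algebras, that the affine short wall-crossing is a \emph{perverse equivalence} with all homological shifts strictly less than $n=N$. Summing the shifts along the decomposition then contradicts Proposition~\ref{Prop:long_shift}. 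None of this structure---the long wall-crossing characterization of $\dim\operatorname{Supp}$, the singular/extremal dichotomy, or the perversity of the affine wall-crossing---appears in your plan, and without it the upper bound does not go through.
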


Here and below we write $\delta$ for the indecomposable imaginary root of $Q$ and $\epsilon_0$
for the coordinate vector at the extending vertex.

Let us notice that (ii) precisely covers the algebras
of interest for Etingof's conjecture. In fact, that conjecture
follows from Theorem \ref{Thm:verymain} and results of \cite{GL},
we will elaborate on that in Section \ref{SS_CC_inj}.

In a subsequent paper \cite{perv} the second author proves Conjecture \ref{Conj:main} for affine type
quivers with arbitrary framing.

We also would like to point out that there is a very similar result for finite W-algebras $U(\g,e)$,
see \cite[Theorem 1.1]{LO}, \cite[Theorem 1.1]{BLo} (here $\g$ is a semisimple Lie algebra and $e\in \g$ is a nilpotent element).
The role of $\M^\theta(v)$ is played by the Slodowy variety $\widetilde{S}$ that is obtained as follows. We take the transversal {\it Slodowy slice} $S$ to the $G$-orbit of $e$ in $\g$, and for $\widetilde{S}$ take the preimage of $S$ in $\widetilde{\mathcal{N}}$.
The zero fiber $\rho^{-1}(0)$ becomes the Springer fiber $\mathcal{B}_e$. Therefore $H_{mid}(\mathcal{B}_e)$
is the Springer representation of the Weyl group $W(\g)$ of $\g$. So instead of $\g(Q)$ we need to consider
$W(\g)$, and instead of $\a$ we take the integral Weyl group $W'$ corresponding to a given central character.
Then $K_0$ of the finite dimensional representations  coincides with the sum of certain isotypic components for $W'$, see
\cite[Introduction]{BLo} for details. One could expect that for a general symplectic resolution $X$ one should
be able to state a conjecture on $K_0$ of finite dimensional representations using the monodromy representation
associated to the quantum connection but specifics of this are not clear at the moment, at least to the authors.

\subsection{Content of the paper}
The paper roughly splits into two parts. The first part, consisting of Sections \ref{S_prelim}, \ref{S_HC},
\ref{S_loc}, \ref{S_WWC} is (mostly) preparatory:
there we recall known results (or their generalizations to settings we need)
as well as prove some technical generalizations that we will need.
In Section \ref{S_prelim} we recall preliminaries on Nakajima quiver
varieties, their quantizations, coherent  modules over sheaf quantizations,
supports and characteristic cycles and Hamiltonian reduction functors. In Section \ref{S_HC}
we recall Harish-Chandra (HC) bimodules, construct  restriction functors for HC bimodules
in our setting and describe some applications of these functors. In Section \ref{S_loc} we discuss
abelian and derived localization theorems. In Section \ref{S_WWC} we will introduce two main
players in the proof of Theorem \ref{Thm:verymain} -- the Webster functors and
the wall-crossing functors.

Section \ref{S_outline} outlines the main ideas and steps of the proof of Theorem \ref{Thm:verymain}. We will describe the content
of the subsequent sections in the end of Section \ref{S_outline}.

\subsection{Notation}\label{SS_notation}
The following table contains various notation used in the paper (we first list the notation starting with Roman
letters in, roughly, the alphabetical order and then list the notation starting with a Greek letter).

\setlongtables

\begin{longtable}{p{3.4cm} p{12cm}}
$\widehat{\otimes}$&the completed tensor product of complete topological vector spaces/ modules.\\
$\A^{opp}$& the opposite algebra of $\A$.\\
$(a_1,\ldots,a_k)$& the two-sided ideal in an associative algebra generated by  elements $a_1,\ldots,a_k$.\\
 $A^{\wedge_\chi}$&
the completion of a commutative (or ``almost commutative'') algebra $A$ with respect to the maximal ideal
of a point $\chi\in \Spec(A)$.\\
$\A_\lambda^\theta(v)$&$:=[\mathcal{Q}_\lambda|_{(T^*R)^{\theta-ss}}]^G$\\
$\A_\lambda(v)$&$:=\Gamma(\A_\lambda^\theta(v))$.\\
$\A^\theta_{\lambda,\chi}(v)$&$:=[\mathcal{Q}_\lambda|_{(T^*R)^{\theta-ss}}]^{G,\chi}$,
where $\chi$ is a character of $G$, and the superscript $(G,\chi)$ means taking
$\chi$-semiinvariants. \\
$\mathfrak{a}^\lambda$& the subalgebra in $\g(Q)$ generated by Cartan $\h$ and real root subspaces $\g(Q)_\beta$
with $\beta\cdot \lambda\in \Z$.\\
$\A^{(\theta)}_{\lambda,\chi}(v)$&$:=\Gamma(\A^\theta_{\lambda,\chi}(v))$.\\
$\A\operatorname{-mod}$& the category of finitely generated modules over an associative algebra $\A$.\\
$\mathsf{AC}(Y)$& the asymptotic cone of a subvariety $Y\subset \C^n$.\\
$\mathfrak{AL}(v)$& the set of $\lambda\in \paramq$ such that $\Gamma_\lambda^\theta$
is an abelian equivalence. \\
$\operatorname{Ann}_\A(M)$& the annihilator of an $\A$-module $M$ in an algebra
$\A$.\\
$\mathcal{C}$& the full subcategory of $\bigoplus_{v}\A_\lambda^\theta(v)\operatorname{-mod}_{\rho^{-1}(0)}$
defined in Section \ref{SS_upper_bound_outline}.\\
$\CC(M)$& the characteristic cycle of a module/sheaf of modules $M$.\\
$D^b_{fin}(\A_\lambda(v)\operatorname{-mod})$&$:=\{M\in D^b(\A_\lambda(v)\operatorname{-mod})|\dim H_*(M)<\infty\}$.\\
$D^b_{\rho^{-1}(0)}(\A^\theta_\lambda(v)\operatorname{-mod})$&$:=\{M\in D^b(\A^\theta_\lambda(v)\operatorname{-mod})|\operatorname{Supp} H_*(M)\subset \rho^{-1}(0)\}$.\\
$D(R)\operatorname{-mod}^{G,\lambda}$&the category of $(G,\lambda)$-equivariant finitely generated $D(R)$-modules.\\
$D(X)$& the algebra of differential operators on a smooth affine variety $X$.\\
$D_X$& the differential operators on a smooth variety $X$ viewed as microlocal sheaf on $T^*X$.\\
$\operatorname{Frac}(A)$&the fraction field of a commutative domain $A$.\\
$G^\circ$& the connected component of unit in an algebraic group $G$.\\
$(G,G)$& the derived subgroup of a group $G$.\\
$G_x$& the stabilizer of $x$ in $G$.\\
$G*_HV$& the homogeneous bundle on $G/H$ with fiber $V$.\\
$\g(Q)$& the Kac-Moody algebra associated to a quiver $Q$. \\
$\gr \A$& the associated graded vector space of a filtered
vector space $\A$.\\
$\operatorname{Irr}(C)$& the set of simple objects in an abelian category $\mathcal{C}$.\\
$K_0(C)$& the (complexified) Grothendieck group of an abelian (or triangulated)
category $\mathcal{C}$.\\
$L_\omega$&the irreducible integrable representation of $\g(Q)$ with highest weight $\omega$.\\
$L_\omega[\nu]$& the $\nu$-weight space in $L_\omega$.\\
$\Loc_\lambda^\theta$& localization functor $\A_\lambda(v)\operatorname{-mod}\rightarrow \A_\lambda^\theta(v)\operatorname{-mod}$.\\
$L\pi^0_\lambda(v)^!$& the derived left adjoint to $\pi^0_\lambda(v)$.\\
$\M^\theta_\lambda(v)$&$:=\mu^{-1}(\lambda)^{\theta-ss}\quo G$.\\
$\M_\lambda(v)$&$:=\Spec(\C[\M^\theta_\lambda(v)])$ for generic $\theta$.\\
$\param$&$:=\C^{Q_0}$, the parameter space for classical reduction.\\
$\paramq$&$:=\C^{Q_0}$, the parameter space for quantum reduction.\\
$\paramq^{iso}$&the locus of $\lambda\in \paramq$ such that $\A^0_\lambda(v)\xrightarrow{\sim} \A_\lambda(v)$.\\
$\paramq^{ISO}$&a locus of $\lambda\in \paramq^{iso}$  with $\operatorname{Tor}_i^{U(\g)}(D(R),\C_\lambda)=0$ for $i>0$.\\
$\mathcal{Q}_\lambda$&$:=D(R)/D(R)\{\Phi(x)-\langle \lambda,x\rangle, x\in \g\}$.\\
$R$&$(=R(Q,v,w)):=\bigoplus_{a\in Q_1}\operatorname{Hom}(V_{t(a)},V_{h(a)})\oplus \bigoplus_{k\in Q_0}\operatorname{Hom}(V_k,W_k)$, the coframed representation space of a quiver $Q$ with dimension $v$
and framing $w$.\\
$R_\hbar(\A)$&$:=\bigoplus_{i\in
\mathbb{Z}}\hbar^i \F_i\A$ :the Rees $\K[\hbar]$-module of a filtered
vector space $\A$.\\
$\mathfrak{S}_n$& the symmetric group on $n$ letters. \\
$S(V)$& the symmetric algebra of a vector space $V$.\\
$\Supp(M)$& the support of the module/sheaf of modules $M$.\\
$\Supp_{\paramq}^r(\B)$&$:=\{\lambda\in \paramq| \B\otimes_{\C[\paramq]}\C_\lambda\neq \{0\}$.\\
$W(Q)$& the Weyl group of $\g(Q)$.\\
$\WC_{\lambda\rightarrow \lambda'}$& a wall-crossing functor. \\
$\tilde{w}_i$&$:=w_i+\sum_{a, t(a)=i}v_{h(a)}$ (for a source $i\in Q_0$).\\
$X^{\theta-ss}$& the open locus of $\theta$-semistable points for an action of
a reductive group $G$ on an affine algebraic variety $X$, where $\theta$
is a character of $G$.\\
$X^{\theta-uns}$&$:=X\setminus X^{\theta-ss}$.\\
$x\cdot y$& $\sum_{i\in Q_0}x_iy_i$.\\
$(x,y)$&$=2\sum_{k\in Q_0}x_ky_k-\sum_{a\in Q_1}(x_{t(a)}y_{h(a)}+x_{h(a)}y_{t(a)})$,
the symmetrized Tits form. \\
$\Gamma_n$&$=\mathfrak{S}_n\ltimes \Gamma_1^n$ for a finite subgroup $\Gamma_1\subset \operatorname{SL}_2(\C)$.\\
$\Gamma_\lambda^\theta$& global section functor $\A_\lambda^\theta(v)\operatorname{-mod}
\rightarrow \A_\lambda(v)\operatorname{-mod}$.\\
$\mu$& the moment  map $T^*R\rightarrow \g$. \\
$\nu$& the weight of $\g(Q)$ determined from dimension vector $v$ and framing $w$.\\
$\pi^0_\lambda(v)$&the natural functor $D(R)\operatorname{-mod}^{G,\lambda}\rightarrow \A_\lambda^0(v)\operatorname{-mod}$
(or its derived analog). \\
$\pi^\theta_\lambda(v)$&the natural functor $D_R\operatorname{-mod}^{G,\lambda}\rightarrow \A_\lambda^\theta(v)\operatorname{-mod}$ (or its derived analog).\\
$\rho$& the natural projective morphism $\M^\theta_\lambda(v)\rightarrow \M^0_\lambda(v)$
or $\M^\theta_\lambda(v)\rightarrow \M_\lambda(v)$.\\
$\sigma\bullet v$& dimension vector corresponding to $\sigma \nu, \sigma\in W(Q)$.\\
$\sigma\bullet^v\lambda$& the quantization parameter determined by $\sigma\in W(Q), v\in \Z_{\geqslant 0}^{Q_0},
\lambda\in \paramq$.\\
$\omega$& the dominant weight of $\g(Q)$ determined from framing $w$.
\end{longtable}

\subsection{Acknowledgements} We are grateful to Pavel Etingof, Joel Kamnitzer, Hiraku Nakajima,
Andrei Okounkov, and Ben Webster for numerous helpful discussions.
We also would like to thank Dmitry Korb, Kevin McGerty and Yaochen Wu  for remarks
on the previous versions of this text. We are also grateful to the referees
for their comments that helped us to improve the exposition and fix several
gaps in the paper. R.B. was
supported by the NSF under Grant DMS-1102434. I.L. was supported
by the NSF under Grant  DMS-1161584.

\section{Preliminaries on quiver varieties and their quantizations}\label{S_prelim}
\subsection{Properties of quiver varieties}\label{SS_quiv_class}
\subsubsection{Generic parameters}\label{SSS_gen_param}
First of all, let us recall a description of generic values of $(\lambda,\theta)$, which, by definition,
means that the $G$-action on $\mu^{-1}(\lambda)^{\theta-ss}$ is free, due to Nakajima, \cite[Theorem 2.8]{Nakajima}.
Namely, $(\lambda,\theta)$ is generic provided there is no $v'\in \Z_{\geqslant 0}^{Q_0}$ such that
\begin{itemize}\item
$v'\leqslant v$ (component-wise), \item $\sum_{i\in Q_0}v'_i\alpha^i$ is a root for $\g(Q)$, \item and
$v'\cdot \theta= v'\cdot \lambda=0$ (where we write $v'\cdot \lambda$ for $\sum_{i\in Q_0}v'_i\lambda_i $).
\end{itemize}
We say that $\lambda$ (resp., $\theta$) is generic if $(\lambda,0)$ (resp., $(0,\theta)$)
is generic. We write $\param$ for $\C^{Q_0}$, the space of parameters $\lambda$.
The set of non-generic $\lambda$'s will be denoted
by $\param^{sing}$ (or $\param^{sing}(v)$ when we need to indicate the dependence on $v$).
It is easy to see that $\param^{sing}$ is a union of hyperplanes all of which have the form
$\{\lambda| \lambda\cdot v'=0\}$ for $v'$ as above, however, a priori, not all $v'$
as above actually appear.

We note that by results of Crawley-Boevey, \cite[Section 1, Remarks]{CB}, $\M_\lambda^\theta(v)$ is connected when $(\lambda,\theta)$ is generic.

Since $\M_\lambda^0(v)$ is defined as a categorical quotient, and $\M_\lambda^\theta(v)$
is defined as a GIT quotient, we have a natural projective morphism $\rho: \M^\theta_\lambda(v)
\rightarrow \M^0_\lambda(v)$.
We  note that, for a generic $\lambda$, the open locus $\mu^{-1}(\lambda)^{\theta-ss}$
coincides with $\mu^{-1}(\lambda)$.
In particular, for a generic $\lambda$,  the morphism
$\rho$ is an isomorphism.

\subsubsection{Line bundles}
For a character $\chi$ of $G$,
we consider the line bundle $\mathcal{O}(\chi)$ on $\M^\theta(v)$ whose sections are given by
$$\Gamma(U,\mathcal{O}(\chi))=\C[\pi^{-1}(U)]^{G,\chi}:=\{f\in \C[\pi^{-1}(U)]| g.f=\chi(g)f, \forall g\in G\}.$$
Here $g.f$ stands for the function defined by $(g.f)(x):=f(g^{-1}x)$,
$U\subset \M^\theta(v)$ is an affine open subset, and $\pi$ stands for the quotient morphism
$\mu^{-1}(0)^{\theta-ss}\rightarrow \M^\theta(v)$. By the very definition, $\mathcal{O}(\theta)$
is an ample line bundle.

\subsubsection{LMN isomorphisms}\label{SSS_LMN}
Now let us discuss certain  isomorphisms of quiver varieties.
For $\sigma\in W(Q)$, we have an isomorphism
$\M^\theta_\lambda(v)\cong \M^{\sigma\theta}_{\sigma\lambda}(\sigma\bullet v)$, where we assume that
$(\lambda,\theta)$ is generic. Here we write $\sigma\bullet v$
for the dimension vector that produces the weight $\sigma\nu$ via (\ref{eq:nu}). For a simple reflection
$\sigma=s_k$, we have $(s_k\bullet v)_\ell=v_\ell$ for $\ell\neq k$ and $(s_k\bullet v)_k=w_k+\sum_{a, t(a)=k}v_{h(a)}+\sum_{a, h(a)=k}v_{t(a)}-v_k$.

The existence of such
isomorphisms was conjectured by Nakajima in \cite{Nakajima} and first proved by Maffei in \cite{Maffei_Weyl}
and, independently, by Nakajima, \cite{Nakajima_Weyl}, a closely related construction was found
by Lusztig, \cite{Lusztig_Weyl}. So we call those {\it LMN isomorphisms}.

Below we will need a (slightly rephrased) construction of LMN isomorphisms due to Maffei.
Let us construct an isomorphism corresponding to a simple reflection $s_i\in W(Q)$ following
\cite{Maffei_Weyl}.
We may assume that the vertex $i$ is a source and, since $(\lambda,\theta)$ is generic, that either $\lambda_i\neq 0$ or $\theta_i>0$
(if $\theta_i<0$, then we just construct an isomorphism for $s\theta$). Let \begin{equation}\label{eq:tildew}\tilde{W}_i:=W_i\oplus \bigoplus_{a, t(a)=i}V_{h(a)},
\tilde{w}_i:=\dim \tilde{W}_i\end{equation} so that $v_i+(s_i\bullet v)_i=\tilde{w}_i$. Set
\begin{equation}\label{eq:underl}
\underline{R}(=\underline{R}^i):=\bigoplus_{a, t(a)\neq i}\Hom(V_{t(a)},V_{h(a)})\oplus \bigoplus_{j\neq i} \Hom(V_j,W_j),
\underline{G}:=\prod_{j\neq i}\GL(V_j),
\end{equation}
so that $R=\underline{R}\oplus \Hom(V_i,\tilde{W}_i)$.
Consider the Hamiltonian reduction $$T^*R\red_{\lambda_i}^{\theta_i}\GL(v_i)= T^* \Hom(V_i,\tilde{W}_i)\red^{\theta_i}_{\lambda_i}\GL(v_i)\times T^*\underline{R}.$$
Let us remark that if $\lambda_i=0$, the reduction $T^* \Hom(V_i,\tilde{W}_i)\red^{\theta_i}_{\lambda_i}\GL(v_i)$
is just $T^*\operatorname{Gr}(v_i,\tilde{w}_i)$.

An easy special case of Maffei's construction is an isomorphism $$T^* \Hom(V_i,\tilde{W}_i)\red^{\theta_i}_{\lambda_i}\GL(v_i)\xrightarrow{\sim}
T^* \Hom(\tilde{W}_i, V_i')\red^{-\theta_i}_{-\lambda_i}\GL(v'_i),$$
where $v'_i=(s_i\bullet v)_i=\tilde{w}_i-v_i$ and $V'_i$ is a vector space of dimension $v_i'$. When $\lambda_i=0$,
we just have two realizations of $T^*\operatorname{Gr}(v_i,\tilde{w}_i)$ (where $\operatorname{Gr}(v_i,\tilde{w}_i)$
is thought as the variety of $v_i$-dimensional subspaces in $\C^{\tilde{w}_i}$ and as the variety of
$(\tilde{w}_i-v_i)$-dimensional quotients), while for $\lambda_i\neq 0$,
we get two equal twisted cotangent bundles on the Grassmanian. These isomorphisms are clearly symplectomorphisms,
$\C^\times$-equivariant when $\lambda_i=0$.

As a consequence, we get a  $\underline{G}$-equivariant symplectomorphism
\begin{equation}\label{eq:part_red_iso} T^*R\red_{\lambda_i}^{\theta_i}\GL(v_i)\xrightarrow{\sim}
T^*R'\red_{-\lambda_i}^{-\theta_i}\GL(v_i'),\end{equation} where $R':=\Hom(\tilde{W}_i,V_i')\oplus \underline{R}$. According to
\cite[Section 3.1]{Maffei_Weyl}, this isomorphism does not intertwine the moment maps for the $\underline{G}$-actions. Rather, if $\underline{\mu},\underline{\mu}'$ are the two moment
 maps, then $\underline{\mu}-\lambda=\underline{\mu}'-s_i\lambda$.
The isomorphism does not intertwine the stability conditions either, instead it maps $(T^*R)^{\theta-ss}\red_{\lambda_i}\GL(v_i)$
to $(T^*R')^{s_i\theta-ss}\red_{-\lambda_i}\GL(v_i')$. So, by reducing the $\underline{G}$-action,
we do get a symplectomorphism $\M^\theta_{\lambda}(v)\xrightarrow{\sim}
\M^{s_i\theta}_{s_i\lambda}(s_i\bullet v)$. This isomorphism is  $\C^\times$-equivariant, if $\lambda=0$.

We will need a compatibility of the LMN isomorphisms with certain $T$-actions. Namely, the torus $T:=(\C^\times)^{Q_1}\times (\C^\times)^{Q_0}$ naturally acts on $R$ (the copy of $\C^\times$ corresponding to an arrow $a$ acts by scalars on $\Hom(V_{t(a)},V_{h(a)})$,
the copy corresponding to $i\in Q_0$ acts on $\operatorname{Hom}(V_i,W_i)$). The lift of this  $T$-action to
$T^*R$ commutes with $G$ and preserves the moment map and so descends to $\M^\theta(v,w)$. An isomorphism $s_i$  is
$T$-equivariant, \cite[Proposition 4.13]{Webster_O}.

We will also need to understand the behaviour of line bundles under the LMN isomorphism.
Namely, by tracking the construction, we see  that $s_i$ maps the line bundle $\mathcal{O}(\chi)$ to $\mathcal{O}(s_i\chi)$.
%It would be equivariant if we were allowed to reverse some arrows (turning $i$ from a sink to a source).
%Since we need to deal with the same quiver $Q$, instead of being $T$-equivariant, $s_i$ becomes twisted equivariant
%inducing an appropriate automorphism $\varphi_{s_i}$ of $T$. An arbitrary isomorphism $\sigma$ is also twisted equivariant
%with respect to a suitable automorphism $\varphi_\sigma$ of $T$.
%We remark that $\varphi_\sigma$ is independent of $v$,
%it only depends on $Q$ and $w$.

\subsubsection{Properties of $\M^0(v)$}\label{SSS_M0_prop}
Now let us turn to the affine quiver varieties $\M^0(v)$. In \cite{CB} Crawley-Boevey found a combinatorial
criterion on $v$ for $\mu$ to be flat. Let us state this criterion. Recall the symmetrized Tits
form $(\cdot,\cdot)$ for $Q$: $(v^1,v^2):=2\sum_{k\in Q_0} v^1_k v^2_k-\sum_{a}(v^1_{t(a)}v^2_{h(a)}+v^1_{h(a)}v^2_{t(a)})$.
We set $p(v):=1-\frac{1}{2}(v,v)$ (so if $v$ is a root, then $p(v)\geqslant 0$). According to
\cite[Theorem 1.1]{CB}, the map $\mu$ is flat if and only if
\begin{equation}\label{eq:CB_cond}
p(v)+w\cdot v- (w\cdot v^0+\sum_{i=0}^k p(v^i))\geqslant 0
\end{equation}
for all decompositions $v=v^0+\ldots+v^k$ with all $v^i>0$, equivalently, for the  decompositions, where $v^1,\ldots,v^k$
are roots and $v^0\geqslant 0$. Note that Crawley-Boevey works with unframed quivers. As usual, one can
translate from our setting to his by adding a new vertex $\infty$, as explained in the end of
\cite[Section 1]{CB}. In (\ref{eq:CB_cond}) we have restated the Crawley-Boevey inequalities
in terms of the original quiver $Q$.

Also if all inequalities for proper decompositions in (\ref{eq:CB_cond}) are strict, then $\mu^{-1}(0)$
is irreducible and contains a free closed orbit, \cite[Theorem 1.2]{CB}.

We want to analyze condition (\ref{eq:CB_cond}) in the case when $Q$ is finite or affine and
$\nu$ is dominant.

\begin{Lem}\label{Lem:free_closed_orbit}
Suppose that $\nu$ is dominant.
\begin{enumerate}
\item If $Q$ is finite, then $\mu$ is flat, $\mu^{-1}(0)$ is irreducible and contains a free closed
orbit.
\item If $Q$ is affine, then $\mu$ is flat. Moreover, if $(\omega,\delta)>1$, then
$\mu^{-1}(0)$ is irreducible and contains a free closed orbit.
\end{enumerate}
\end{Lem}
\begin{proof}
When $Q$ is finite, then $p(v^i)=0$ for all $i>0$ and so
the left hand side of (\ref{eq:CB_cond})
becomes $\frac{1}{2}((v^0,v^0)-(v,v))+w\cdot(v-v^0)=(\nu^0-\nu, \frac{1}{2}(\nu+\nu^0))=
\frac{1}{2}(\nu^0-\nu,\nu^0-\nu)+(\nu, \nu^0-\nu)$. Here, in the second and the third expressions, $(\cdot,\cdot)$ is the usual form on $\h^*$. The first summand is positive if $v\neq v^0$, while the second is non-negative. We conclude that $\mu^{-1}(0)$ is irreducible and  contains a free closed orbit.

Now consider the case when $Q$ is affine. Here  $p(v^i)=1$ if  $v^i=a_i\delta$
and $p(v^i)=0$ else, for $i>0$. The left hand side of (\ref{eq:CB_cond})
is minimized when all $a_i=1$ and we will assume this. So the left hand side
becomes $$\frac{1}{2}(\nu^0-\nu,\nu^0-\nu)+(\nu, \nu^0-\nu)-s=\frac{1}{2}(\nu^0-\nu,\nu^0-\nu)+(\nu, \nu^0-s\delta-\nu)+s((\omega,\delta)-1),$$ where $v\geqslant v^0+s\delta$. The first summand is non-negative, it equals
$0$ if and only if $v-v^0$ is a multiple of $\delta$. The second summand is non-negative, it equals $0$ if and only
if $\nu=\nu^0+s\delta$. Finally, the third summand is
nonnegative, it is $0$ if and only if $(\omega,\delta)=1$. So we see that $\mu$ is flat. The
subvariety $\mu^{-1}(0)$ is irreducible and contains a free closed orbit provided $(\omega,\delta)>1$.
\end{proof}

\subsubsection{Families}\label{SSS_fam_quiv}
Set $\param:=\C^{Q_0}\cong (\g^*)^G$ and consider the varieties $\M^0_{\param}(v):=\mu^{-1}(\g^{*G})\quo G,$
$\M^\theta_{\param}(v):=\mu^{-1}(\g^{*G})^{\theta-ss}\quo G$.
%\M_{\param}(v):=\operatorname{Spec}(\C[\M^\theta_\param(v)])$ (we will see below that this variety is indeed independent of %the choice of $\theta$).

%As before, the natural morphism $\M^\theta_\param(v)\rightarrow \M_\param(v)$  is a resolution of singularities,
%and, for dominant $\nu$, we still have $\M_{\param}(v)=\M^0_{\param}(v)$.

For a vector subspace $\param_0\subset \param$, we consider the specializations $\M^0_{\param_0}(v):=\param_0\times_{\param} \M^0_{\param}(v), $ $ \M^\theta_{\param_0}(v)$.

\subsubsection{Structure of neighborhoods}\label{SSS_class_slice}
Pick a point $x\in \M^0_{\param}(v)$. Consider the completion $\C[\M^0_{\param}(v)]^{\wedge_x}$
of the algebra $\C[\M^0_\param(v)]$ at $x$. Set $\M^0_{\param}(v)^{\wedge_x}:=\operatorname{Spec}(\C[\M^0_{\param}(v)]^{\wedge_x})$
(we emphasize that we view $\M^0_\param(v)^{\wedge_x}$ as a scheme and not as a formal scheme).
Further, set
$$\M^\theta_{\param}(v)^{\wedge_x}:=\M^0_{\param}(v)^{\wedge_x}\times_{\M^0_\param(v)}\M^\theta_{\param}(v)$$
In this section, we give  a description of the schemes $\M^0_{\param}(v)^{\wedge_x},
\M^\theta_{\param}(v)^{\wedge_x}$ essentially following Nakajima, \cite[Section 6]{Nakajima}.

Let $r\in T^*R$ be a point with closed $G$-orbit mapping to $x$. Then $r$ is a semisimple representation
of the following quiver $\overline{Q}^w$. We first adjoin the vertex $\infty$ to $Q$ and connect
each vertex $i\in Q_0$ to $\infty$ with $w_i$ arrows. Then we add an opposite arrow to each existing arrow of $Q^w$.
The dimension of $r$ is $(v,1)$. Let us decompose $r$ into the sum $r=r_0\oplus r_1\otimes U_1\oplus\ldots \oplus r_k\otimes U_k$, where $r_0$ is an irreducible representation with dimension vector of the form $(v^0,1)$,  $r_1,\ldots,r_k$ are pairwise non-isomorphic irreducible representations with dimensions $(v^i,0), i=1,\ldots,k$,
and $U_i$ is the multiplicity space of $r_i$. In particular, the stabilizer $G_r$ of $r$ is $\prod_{i=1}^k\GL(U_i)$.

Let us define a new quiver $\hat{Q}$, a dimension vector $\hat{v}$ and a framing $\hat{w}$. For the set of vertices $\hat{Q}_0$ we take $\{1,\ldots,k\}$ and we set $\hat{v}=(\dim U_i)_{i=1}^k$. The number of arrows between $i,j\in \{1,\ldots,k\}$ is determined as follows.  The subspace $T_r(Gr)\subset T_r(T^*R)$ is contained in its
skew-orthogonal complement $T_r(Gr)^{\angle}$. So we get a symplectic $G_r$-module
$T_r(Gr)^{\angle}/T_r(Gr)$.
We want the $G_r$-module $T^*R_x$, where we write $R_x$ for $R(\hat{Q},\hat{v},\hat{w})$, to be isomorphic
to $T_r(Gr)^{\angle}/T_r(Gr)$.  So $T^*R_x\oplus T^*(\g/\g_r)=T^*R$.

For $i\neq j$, the multiplicity of the $G_r$-module $\operatorname{Hom}(U_i,U_j)$ in $T^*R$ equals $\sum_{a} (v^i_{t(a)}v^j_{h(a)}+ v^j_{t(a)}v^i_{h(a)})$, while the multiplicity in $T^*(\g/\g_r)$ equals $2\sum_{k\in Q_0}v^i_kv^j_k$.
So the multiplicity of $\operatorname{Hom}(U_i,U_j)$ in
the $G_r$-module $T^*R_x$ has to be equal to  $-(v^i,v^j)$ if $i\neq j$ and to
$2-(v^i,v^i)$ if $i=j$. Hence the number of arrows between $i$ and  $j$ in $\hat{Q}$ has to be
$-(v^i,v^j)$ if $i\neq j$ and $p(v^i)=1-\frac{1}{2}(v^i,v^i)$ if $i=j$. Similarly, for $\hat{w}_i$ we need
to take $w\cdot v^i-(v^0,v^i)$. Finally, we need to add some loops at $\infty$ but those are just spaces
with trivial action of $G_r$. We will  treat them separately: so the symplectic part of the slice
module at $r$ can be written as $T^*R_x\oplus R_0$, where $R_x=R(\hat{Q},\hat{v},\hat{w})$
and $R_0$ is a symplectic vector space with trivial action of $G_r$.
We choose an orientation on $\hat{Q}$ in such a way that the $G_r$-modules
$R_x\oplus \g/\g_r$ and $R$ are isomorphic up to a trivial summand. We remark, however, that this choice
may violate the condition that the vertex $\infty$ (corresponding to $r^0$) in $\hat{Q}$ is a sink.

Consider the homogeneous vector bundle $G*_{G_r}(\g/\g_r\oplus T^*R_x\oplus R_0)$.
The symplectic form on the latter comes from a natural identification of that homogenous bundle
with $[T^*G\times (T^*R_x\oplus R_0)]\red_0 G_r$ (the action of $G_r$
is diagonal with $G_r$ acting on $T^*G$ from the right). The moment map on the homogeneous bundle is given by $[g,(\alpha, \beta, \beta_0)]\mapsto \operatorname{Ad}(g)(\alpha+\hat{\mu}(\beta))$. Here $[g,(\alpha,\beta,\beta_0)]$ stands for the class in $G*_{G_r}(\g/\g_r\oplus T^*R_x\oplus R_0)$ of a point $(g,\alpha, \beta,\beta_0)\in
G\times(\g/\g_r\oplus T^*R_x\oplus R_0)$, and $\hat{\mu}:T^*R_x
\rightarrow \g_r$ is the moment map.

Let $\pi:T^*R\rightarrow T^*R\quo G$ and $\pi':G*_{G_r}(\g/\g_r\oplus T^*R_x\oplus R_0)\rightarrow
(\g/\g_r\oplus T^*R_x\oplus R_0)\quo G_r$ denote the quotient morphisms.
The symplectic slice theorem (see, for example, \cite{slice} where analytic neighborhoods instead of formal
ones were used) asserts that there is an isomorphism of formal neighborhoods $U$ of $\pi(r)$ in $T^*R\quo G$
and $U'$ of $\pi'([1,(0,0,0)])$ in $(\g/\g_r\oplus T^*R_x\oplus R_0)\quo G_r$ that lifts to a $G$-equivariant
symplectomorphism $\pi^{-1}(U)\cong \pi'^{-1}(U')$  intertwining the moment maps.
%In particular, if $\mu$
%is flat, then $\hat{\mu}$ is flat as well.

We have the restriction map $\param=\g^{*G}\rightarrow \hat{\param}=
\g_r^{*G_r}$. We set $\hat{\M}^0_{\param}(\hat{v}):=
\param\times_{\hat{\param}}\hat{\M}^0_{\hat{\param}}(\hat{v})$.
So we see that (compare with \cite[Section 6]{Nakajima})
\begin{equation}\label{eq:affine_decomp} \M^0_{\param}(v)^{\wedge_x}= (\hat{\M}^0_{\param}(\hat{v})\times R_0)^{\wedge_0}\end{equation}
(an equality of Poisson  schemes).

We have a similar decomposition for smooth quiver varieties. First, observe that
$$G*_{G_r}(\g/\g_r\oplus T^*R_x\oplus R_0)^{\theta-ss}=G*_{G_r}([\g/\g_r\oplus T^*R_x]^{\theta-ss}\times R_0),$$
where in the right hand side we slightly abuse the notation and write $\theta$  for the restriction of $\theta$ to $G_r$.
From here it follows that
\begin{equation}\label{eq:smooth_decomp}
\M^\theta_{\param}(v)^{\wedge_x}=(\hat{\M}^\theta_{\param}(\hat{v})\times R_0)^{\wedge_0},\end{equation}
where, recall, by definition, $\M^\theta_{\param}(v)^{\wedge_x}:=\M^0_{\param}(v)^{\wedge_x}\times_{\M^0_{\param}(v)}\M^\theta_{\param}(v)$.

Moreover, by the construction, the following diagram commutes

\begin{picture}(90,30)
\put(3,2){$\M^0(v)^{\wedge_x}$}
\put(2,22){$\M^\theta(v)^{\wedge_x}$}
\put(53,2){$(\hat{\M}(\hat{v})\times R_0)^{\wedge_0}$}
\put(52,22){$(\hat{\M}^\theta(\hat{v})\times R_0)^{\wedge_0}$}
\put(7,20){\vector(0,-1){14}}
\put(57,20){\vector(0,-1){14}}
\put(8,15){\tiny $\rho$}
\put(58,15){\tiny $\hat{\rho}\times \operatorname{id}$}
\put(19,4){\vector(1,0){32}}
\put(19,24){\vector(1,0){32}}
\put(30,5){\tiny $\cong$}
\put(30,25){\tiny $\cong$}
\end{picture}

Let us finish this discussion with a remark.

%\begin{Rem}\label{Rem:prod_decomp}
%It is not true that $(\hat{\M}_{\param}^\theta(\hat{v})\times R_0)^{\wedge_0}$
%coincides with the product of schemes $\hat{\M}^\theta_{\param}(\hat{v})^{\wedge_0}\times
%R_0^{\wedge_0}$. In order to get a product decomposition, we need to work with formal
%neighborhoods (that are formal schemes rather than schemes): $\M^\theta_{\param}(v)^{\wedge_{\rho^{-1}(x)}}$
%of $\rho^{-1}(x)$ in $\M_{\param}^\theta(v)$ and
%$\hat{\M}_{\param}^\theta(\hat{v})^{\wedge_{\hat{\rho}^{-1}(0)}}\times R_0^{\wedge_0}$.
%These two formal schemes are isomorphic, this follows from (\ref{eq:smooth_decomp}).
%\end{Rem}

\begin{Rem}\label{Rem:sing_param}
Let us write $\varpi$ for the restriction map $\param\rightarrow \hat{\param}$.
It follows from (\ref{eq:affine_decomp}) that $\varpi^{-1}(\hat{\param}^{sing})
\subset \param^{sing}$.
\end{Rem}

\subsubsection{Resolution of singularities}\label{SSS_sing_resol}
\begin{Prop}\label{Prop:symp_resol}
The algebra $\C[\M^\theta(v)]$ is finitely generated.
The morphism $\rho:\M^\theta(v)\rightarrow \operatorname{Spec}(\C[\M^\theta(v)])$ is
a symplectic resolution of singularities.
\end{Prop}
\begin{proof}
Fix a generic $\lambda$ and consider the varieties $\M^{\theta}_{\C\lambda}(v)$
and $\M^0_{\C\lambda}(v)$. Both are schemes over $\C\lambda$.
We have a natural morphism $\phi_{\C\lambda}:\M^\theta_{\C\lambda}(v)
\rightarrow \M^0_{\C\lambda}(v)$ of schemes over $\C\lambda$
that is an isomorphism over $\C^\times \lambda$
by \ref{SSS_gen_param}. Since  $\M^\theta_{z\lambda}(v)$ is smooth, connected and
has the same dimension  for all $z\in \C$, we see that $\M^\theta_{\C\lambda}(v)$
has dimension $\dim \M^\theta(v)+1$ and the  morphism
$\M^\theta_{\C\lambda}(v)\rightarrow \C\lambda$ is dominant.
%Note that  $\M^\theta_{\C\lambda}(v)$ is equidimensional of dimension $\dim T^*R-2\dim G+1$.
%So  all irreducible components are dominant over $\C\lambda$.

Let $\bar{\M}_{\C\lambda}(v)$ be the image of $\phi_{\C\lambda}$, this is a closed
subvariety in $\M^0_{\C\lambda}(v)$ because $\phi_{\C\lambda}$ is projective.
So it coincides with the closure of $\C^\times \lambda\times_{\C\lambda}\bar{\M}_{\C\lambda}(v)$ and
has dimension $\dim \M^\theta(v)+1$.

Hence the fiber $\bar{\M}(v)$ of $\bar{\M}_{\C\lambda}(v)$
over $0$ has dimension $\dim \M^\theta(v)$ and admits a surjective projective
morphism from $\M^\theta(v)$. Applying the Stein decomposition to this morphism
we decompose it to the composition of a birational morphism $\rho:\M^\theta(v)\rightarrow X$
with connected fibers and some finite dominant morphism $X\rightarrow \bar{\M}(v)$,
where $X$ is necessarily $\operatorname{Spec}(\C[\M^\theta(v)])$.
So  $\rho$ has to be  (an automatically symplectic) resolution of singularities.
\end{proof}

%There is a number of applications of this proposition. First of all, by the Grauert-Riemenschneider
%theorem, the higher cohomology of $\mathcal{O}_{\M^\theta(v)}$ vanish.

\begin{Cor}\label{Cor:prop_Mv}
The following claims are true.
\begin{enumerate}
\item The higher cohomology of $\mathcal{O}_{\M^\theta(v)}$ vanish.
\item The algebra $\C[\M^\theta(v)]$ is the specialization to $0$
of $\C[\M^\theta_{\param}(v)]$. In particular, the latter is finitely
generated.
\item The algebra $\C[\M^\theta(v)]$ coincides with the associated graded
of $\C[\M_\lambda^0(v)]$ for a generic $\lambda$ and, in particular, is independent of $\theta$.
\item The variety $\operatorname{Spec}(\C[\M^\theta(v)])$ is Cohen-Macaulay.
\end{enumerate}
\end{Cor}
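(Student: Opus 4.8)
The plan is to deduce all three statements from the resolution property just established, together with the existence of the flat family $\M^\theta_{\param}(v)\to\param$ and standard semicontinuity arguments. First I would prove part (1): since $\rho:\M^\theta(v)\to\M(v)$ is a projective resolution of singularities and $\M(v)$ is affine, it suffices to show $R^i\rho_*\Str_{\M^\theta(v)}=0$ for $i>0$. The variety $\M^\theta(v)$ is symplectic (for generic $\theta$) with a contracting $\C^\times$-action rescaling the symplectic form, and $\M(v)$ carries the induced grading with the contracting action; such symplectic resolutions have rational singularities (this is a theorem of Beauville/Namikawa, and in the quiver case can also be extracted from Nakajima's work), so the higher cohomology vanishes. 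Alternatively, one can run the argument directly: $\M^\theta(v)$ is a GIT quotient of $\mu^{-1}(0)^{\theta-ss}$, and one can use the Grauert--Riemenschneider type vanishing for the semismall (or at least well-behaved) morphism $\rho$. Either way, combined with affineness of $\M(v)$ this gives $H^i(\M^\theta(v),\Str)=0$ for $i>0$.

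Next I would address parts (2) and (3) together. Consider the family $\M^\theta_{\param}(v)\to\param$ with generic fiber $\M^\theta_\lambda(v)$ and special fiber $\M^\theta_0(v)=\M^\theta(v)$. By the resolution proposition applied fiberwise (genericity of $\lambda$ makes $\M^\theta_\lambda(v)$ smooth and symplectic), and by (1), the sheaves $\Str_{\M^\theta_{\param}(v)}$ have vanishing higher direct images along the projection to $\param$; flatness of the family and cohomology-and-base-change then show that $\Gamma(\Str)$ commutes with the specialization, i.e. $\C[\M^\theta(v)]$ is the specialization to $0$ of $\C[\M^\theta_{\param}(v)]$, which is part (2). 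For part (3), one uses the $\C^\times$-action: on $\M^0_\lambda(v)$ for generic $\lambda$ one has a filtration (by the $\C^\times$-action, which no longer rescales but still acts), and $\gr\C[\M^0_\lambda(v)]$ is computed as the coordinate ring of the special fiber of the Rees degeneration, which is exactly $\C[\M^\theta(v)]=\C[\M^0(v)]$ (the last equality because $\rho$ has connected fibers by Stein and $\M(v)=\Spec\C[\M^\theta(v)]$). Independence of $\theta$ is then automatic since $\gr\C[\M^0_\lambda(v)]$ manifestly does not involve $\theta$.

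The main obstacle I expect is part (1), specifically the vanishing of higher cohomology — one must be careful that $\M^\theta(v)$ might not literally be a resolution of a variety with rational singularities in an obvious way, since $\M(v)$ was \emph{defined} as $\Spec\C[\M^\theta(v)]$ and one does not a priori know that $\rho$ is birational onto a normal variety with good singularities until one knows the cohomology vanishing. The clean way around this circularity is to invoke the general structure theory of conical symplectic resolutions (the contracting $\C^\times$ guarantees $\M(v)$ is an affine cone and $\rho$ is a symplectic resolution, hence $\M(v)$ has symplectic, in particular rational, singularities, by Namikawa), from which $R\rho_*\Str=\Str_{\M(v)}$ follows; then affineness gives the cohomology vanishing. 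Once (1) is in hand, (2) and (3) are essentially formal consequences of flatness, base change, and the Rees-algebra description of the filtration on $\C[\M^0_\lambda(v)]$, with the only bookkeeping being to check flatness of $\M^\theta_{\param}(v)\to\param$ (which follows from the Crawley-Boevey flatness criterion recalled in \ref{SSS_M0_prop} together with the resolution being an isomorphism over the generic locus).
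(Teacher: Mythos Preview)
Your overall structure is right, and parts (2) and (3) are handled essentially as in the paper (cohomology-and-base-change for (2), the Rees/degeneration argument along a generic line for (3)). The paper's proof is a one-liner: ``(1) is Grauert--Riemenschneider, (2) follows from (1), (3) follows from (2).''

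Where you diverge is in (1), and the divergence stems from a concern that does not actually arise. You worry that invoking vanishing for $\rho$ is circular because $\M(v)$ was \emph{defined} as $\Spec\C[\M^\theta(v)]$, so one cannot assume it has rational singularities. But Grauert--Riemenschneider does not require anything of the target beyond $\rho$ being a proper birational morphism from a smooth variety, which is exactly what the preceding proposition established. The point the paper leaves implicit is that $\M^\theta(v)$ is symplectic, so its canonical bundle is trivial, $\omega_{\M^\theta(v)}\cong\mathcal{O}_{\M^\theta(v)}$; hence $R^i\rho_*\mathcal{O}=R^i\rho_*\omega=0$ for $i>0$ by Grauert--Riemenschneider, and since $\M(v)$ is affine this gives $H^i(\M^\theta(v),\mathcal{O})=0$. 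There is no need to import Namikawa's structure theory or to first prove rational singularities of $\M(v)$.

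Two smaller remarks. First, the flatness of $\M^\theta_\param(v)\to\param$ you need for (2) does not come from Crawley--Boevey's criterion (that concerns flatness of $\mu$ itself); rather, since $\theta$ is generic, $(\lambda,\theta)$ is generic for every $\lambda$, the $G$-action on $\mu^{-1}(\lambda)^{\theta\text{-ss}}$ is free, and the family is smooth over $\param$, hence flat. Second, your route through rational singularities would of course also work, but it is strictly heavier than the paper's argument and proves more than is needed here.
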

\begin{proof}
(1) is a corollary of the Grauert-Riemenschneider theorem, and (2) is a corollary of
(1). (4) follows from Proposition \ref{Prop:symp_resol} and general properties of
symplectic resolutions.

Let us prove (3). Consider the $\C^\times$-equivariant morphism
$\M^\theta_{\param}(v)\rightarrow \M^0_\param(v)$. The $\C^\times$
action is induced from the action on $T^*R$. Specializing to a generic
parameter $\lambda\in \param$, we therefore get an isomorphism
$\C[\M^0_\lambda(v)]\rightarrow \C[\M^\theta_\lambda(v)]$. The filtrations
on both algebras are induced by the grading on $\C[T^*R]$, hence the
isomorphism intertwines these filtrations. From (2), we deduce that
$\C[\M^\theta(v)]=\gr \C[\M^\theta_\lambda(v)]$. So $\C[\M^\theta(v)]=\gr \C[\M^0_\lambda(v)]$
and hence $\C[\M^\theta(v)]$ is independent of $\theta$.  \end{proof}

We write $\M(v)$ for $\operatorname{Spec}(\C[\M^\theta(v)])$ and $\M^{(\theta)}_{\param}(v)$
for $\operatorname{Spec}(\C[\M^\theta_\param(v)])$, we will later see that
$\M^{(\theta)}_{\param}(v)$ is independent of the choice of $\theta$ so we will
write $\M_{\param}(v)$ for this variety.

\begin{Prop}\label{Prop:flatness_resol}
Suppose $\mu$ is flat. Then $\rho^*: \C[\M^0_{\param}(v)]\rightarrow \C[\M_{\param}(v)]$
is an isomorphism.
\end{Prop}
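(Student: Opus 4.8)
The plan is to prove the isomorphism by comparing the two sides fiberwise over $\param$ and then using flatness to glue. First I would recall that $\M_\param(v) = \Spec(\C[\M^\theta_\param(v)])$ and that, by construction, $\rho^*$ factors through the natural map $\C[\M^0_\param(v)] \to \C[\M^\theta_\param(v)] = \C[\M_\param(v)]$; since $\M^\theta(v) \to \M^0(v)$ is projective (Proposition in \ref{SSS_sing_resol}) and $\M^\theta_{\C\lambda}(v) \to \M^0_{\C\lambda}(v)$ is an isomorphism over generic $\lambda$, the map $\rho^*$ is at least injective on global functions over a generic point. The goal is to upgrade this to an isomorphism over all of $\param$.

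The key step is a flatness/deformation argument. Because $\mu$ is flat, $\mu^{-1}(\g^{*G})$ is flat over $\param = \g^{*G}$, and hence so is $\C[\M^0_\param(v)] = \C[\mu^{-1}(\g^{*G})]^G$ (taking $G$-invariants is exact in characteristic zero). On the other side, Corollary \ref{Cor:prop_Mv}(2) says $\C[\M^\theta(v)]$ is the specialization to $0$ of $\C[\M^\theta_\param(v)]$, and more generally the same Grauert–Riemenschneider vanishing applied to the family gives that $\C[\M^\theta_\param(v)]$ is flat over $\param$ with fibers $\C[\M^\theta_\lambda(v)]$. So both $\C[\M^0_\param(v)]$ and $\C[\M_\param(v)]$ are flat $\C[\param]$-modules, and $\rho^*$ is a map between them compatible with the $\C[\param]$-structure. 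By the local criterion for isomorphisms between flat modules, it suffices to check that $\rho^*$ becomes an isomorphism after specializing to each point $\lambda \in \param$ — or even just to a dense set of points together with a dimension/injectivity bound.

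For the fiberwise check I would argue: for generic $\lambda$ the variety $\M^\theta_\lambda(v)$ is smooth and affine (it equals $\M^0_\lambda(v)$, as there is nothing to resolve: $\rho$ is projective and birational onto a smooth affine variety, hence an isomorphism), so $\C[\M^0_\lambda(v)] \to \C[\M^\theta_\lambda(v)] = \C[\M_\lambda(v)]$ is an isomorphism for generic $\lambda$. Combined with flatness of both sides over $\param$, a map of flat modules that is an isomorphism over a dense open subset and injective (which follows, e.g., from the birationality at generic $\lambda$ and flatness, or from reducedness of $\M^0_\param(v)$ under the flatness hypothesis — recall from \ref{SSS_M0_prop} that flatness of $\mu$ forces good behavior of $\mu^{-1}(0)$) is automatically an isomorphism: the cokernel is a finitely generated $\C[\param]$-module that is flat (as quotient... — more carefully, one uses that $\C[\M_\param(v)]$ is generated over $\rho^*\C[\M^0_\param(v)]$ by elements whose images must already be regular on $\M^0_\param(v)$ because $\M^0_\param(v)$ is normal, $\rho$ being a birational projective morphism to it with $\M^\theta_\param(v)$ smooth). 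The main obstacle I anticipate is precisely the normality of $\M^0_\param(v)$ (equivalently of the generic fiber $\M^0_\lambda(v)$): one needs that functions regular on the smooth locus $\M^\theta_\param(v) \setminus (\text{exceptional divisor})$ extend across, which is where flatness of $\mu$ and the resolution property are both used, via the Stein-decomposition argument already employed in the proof of the Proposition in \ref{SSS_sing_resol}. Once normality of $\M^0_\param(v)$ is in hand, $\rho_* \mathcal{O}_{\M^\theta_\param(v)} = \mathcal{O}_{\M^0_\param(v)}$ by Zariski's main theorem (since $\rho$ has connected fibers, using \cite{CB}, and $\M^0_\param(v)$ is normal), and taking global sections yields the claim.
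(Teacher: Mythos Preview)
Your proposal sketches two approaches, but neither is completed, and both miss the short argument the paper uses.

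The flatness thread has a genuine gap. You assert that a map of flat $\C[\param]$-modules which is an isomorphism over a dense open and is injective must be an isomorphism. This is false without finite generation: multiplication by $t$ on $\C[t]$ is injective, flat on both sides, and an isomorphism away from $t=0$, yet not surjective. The algebras $\C[\M^0_\param(v)]$ and $\C[\M_\param(v)]$ are not finitely generated over $\C[\param]$, so this step does not go through as written. You seem to notice this (``more carefully\ldots'') and pivot to normality, but that second thread is also left open: you never establish that $\M^0_\param(v)$ is normal. When $\mu$ is flat this does follow from Crawley-Boevey \cite{CB_norm}, and with normality in hand the Zariski main theorem argument works; but as written you only gesture at it.

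The paper's proof is two lines and avoids both detours by exploiting the $\C^\times$-grading. Both $\C[\M^0_\param(v)]$ and $\C[\M_\param(v)]$ are \emph{graded free} over $\C[\param]$ (the first by flatness of $\mu$, the second by Corollary~\ref{Cor:prop_Mv}), so by graded Nakayama it suffices to check that $\rho^*$ is an isomorphism at the \emph{special} fiber $\lambda=0$, not at the generic one. There, Corollary~\ref{Cor:prop_Mv}(3) identifies $\C[\M(v)]$ with $\gr\C[\M_\lambda(v)]$ for generic $\lambda$; flatness of $\mu$ gives the same identification for $\C[\M^0(v)]$ (since $\M^0_\lambda(v)=\M_\lambda(v)$ for generic $\lambda$); and under these identifications $\rho^*$ is the identity. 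The key move you are missing is that the grading lets you reduce to the special fiber, where the comparison is trivial, rather than the generic fiber, where the comparison is easy but the passage back is not.
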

\begin{proof}
It is enough to show that $\rho^*: \C[\M^0(v)]\rightarrow \C[\M(v)]$ is an isomorphism
because $\C[\M^0_{\param}(v)],$ $\C[\M_{\param}(v)]$ are graded free over $\C[\param]$
and  $\C[\M_0(v)]=\C[\M_0^{\param}(v)]/(\param),\C[\M(v)]=\C[\M_{\param}(v)]/(\param)$.
Now note that both $\C[\M_0(v)],\C[\M(v)]$ are identified with the associated graded
of $\C[\M^0_\lambda(v)]=\C[\M_\lambda(v)]$ for $\lambda$ generic and, under this identification,
$\rho^*$ becomes the identity.
\end{proof}

\subsubsection{Identification of homology}\label{SS_homol_ident}
The purpose of this part is to establish an identification of the homology groups $H_*(\M_\lambda^\theta(v))$
for different generic $(\lambda, \theta)$.

First, there is a classical way to produce the identification, \cite[Section 9]{Nakajima}. We can view $\theta$
as an element in $\mathbb{R}^{Q_0}$, in this case we define $\M^\theta_\lambda(v)$ as a hyper-K\"{a}hler
reduction.  We get the same varieties as before, the complex structure on $\M^\theta_\lambda(v)$
depends only on the chamber of $\theta$. As we have mentioned in Section \ref{SS_intro_Nak},
this shows that all varieties
$\M^\theta_\lambda(v)$ with generic $(\lambda,\theta)$ are diffeomorphic  as $C^\infty$-manifolds. Consider the
generic locus and a bundle with fiber $H_*(\M^\theta_\lambda(v))$ on this locus. This is a flat bundle with respect to the
Gauss-Manin connection. But the generic locus of $(\lambda, \theta)$
is simply connected so the connection is trivial. Therefore all
fibers are canonically identified.

We will need a slightly different description. Pick a generic parameter $\lambda\in \param$. By the end of \ref{SSS_gen_param}, we have $\M_\lambda^\theta(v)=\M_\lambda^0(v)$.
Let $D$ denote the line through $\lambda$.
The inclusions $\M^\theta(v)\hookrightarrow \M^\theta_D(v),
\M^\theta_\lambda(v)\hookrightarrow \M^\theta_D(v)$ induce maps
$H^*(\M^\theta_D(v))\rightarrow H^*(\M^\theta(v)), H^*(\M^\theta_D(v))
\rightarrow H^*(\M^\theta_\lambda(v))$.
The former is an isomorphism because $\M_D^\theta(v)$ gets contracted to $\M^\theta(v)$
by a $\C^\times$-action. The latter is also an isomorphism because the resulting
map $H^*(\M^\theta(v))\rightarrow H^*(\M^\theta_\lambda(v))$ is precisely
the identification in the previous paragraph.

Note that the identification $H^2(\M^\theta(v))\cong H^2(\M^{\theta'}(v))$
intertwines the maps from $\param$ by the construction. The variety
$\M^\theta(v)$ admits a universal deformation over $H^2(\M^\theta(v))$
whose algebra of global functions is known to be independent of the choice
of generic $\theta$. We conclude that $\C[\M^\theta_\param(v)]$
is independent of $\theta$.

\subsection{Properties of quantizations}\label{SS_quant_prop}
In this section, we describe some properties of the algebras $\A_\lambda(v)$ and $\A_\lambda^0(v)$.

\subsubsection{Filtrations}
The algebras $\A_\lambda(v),\A_\lambda^0(v):=[D(R)/D(R)\{\Phi(x)-\langle \lambda,x\rangle\}]^G$
can be filtered in different ways, depending on a
filtration on $D(R)$ we consider. First of all, there is the {\it Bernstein filtration} on
$\A_\lambda(v), \A_\lambda^0(v)$ that is induced from the eponymous filtration on $D(R)$
(where $\deg R=\deg R^*=1$).
Let us write $\F_i\A_\lambda(v)$ for the $i$th filtration component with respect to this
filtration.  Note that $[\F_i\A_\lambda(v), \F_j\A_\lambda(v)]\subset \F_{i+j-2}\A_\lambda(v)$.

Sometimes,  it will be more convenient for us to work with filtrations, where the commutator
decreases degrees by $1$. Namely, equip $D(R)$ with the filtration by the order of
differential operator (where $\deg R^*=0, \deg R=1$). We have induced filtrations on
$\A_\lambda(v), \A_\lambda^0(v)$ to be denoted by $\F^Q_i$ (the superscript indicates that these filtrations depend on
the orientation). Note that $[\F^Q_i\A_\lambda(v), \F^Q_j\A_\lambda(v)]\subset \F^Q_{i+j-1}
\A_\lambda(v)$.

The two filtrations are related to each other. Namely, let $\mathsf{eu}$ denote the Euler
vector field in $D(R)$ (so that $[\mathsf{eu},\cdot]$ acts by $1$ on $R^*$
and by $-1$ on $R$). Since this element is $G$-invariant, it descends to
$\A_\lambda(v), \A_\lambda^0(v)$, we denote the images again by $\mathsf{eu}$.
So we can consider the inner $\Z$-gradings on the algebras of interest by eigenvalues
of $[\mathsf{eu},\cdot]$, let us write $\A_\lambda(v)=\bigoplus_{i}\A_\lambda(v)_i$
and $\A^0_\lambda(v)=\bigoplus_{i}\A^0_\lambda(v)_i$
for these gradings. The gradings are compatible with the filtrations $\F_i,\F_i^Q$ and we have
$$\F_{i}\A_\lambda(v)=\bigoplus_{k\in \Z} \F^Q_{k}\A_\lambda(v)_{i-2k}.$$
Thanks to this equality, the associated graded algebras for the two filtrations are the same.

The same considerations apply to $\A_\lambda^0(v)$.

%More or less the same holds on the level of sheaves, but we need to correct the definition
%of $\A_\lambda^\theta(v)$ slightly: we only need to consider sections on $(\C^\times)^2$-open
%subsets

In Section \ref{SS_intro_Nak} we have mentioned that $\gr \A_\lambda(v)=\C[\M(v)]$ and $H^i(\A_\lambda^\theta(v))=0$.
This is because $\gr\A^\theta_\lambda(v)=\mathcal{O}_{\M^\theta(v)}$ and
$H^i(\mathcal{O}_{\M^\theta(v)})=0$ for $i>0$.

\subsubsection{$\A_\lambda^0(v)$ vs $\A_\lambda(v)$, I}
Now we want to relate the algebra $\A_\lambda(v)$ to
$\A^0_\lambda(v)$.
We have a natural epimorphism $\C[\M^0(v)]\twoheadrightarrow \gr \A^0_\lambda(v)$
to be denoted by $\eta$.
Besides, we have a natural homomorphism $\kappa:\A^0_\lambda(v)\rightarrow \A_\lambda(v)$
coming from restricting elements of $D(R)$ to $(T^*R)^{\theta-ss}$.
It is clear that $\rho^*:\C[\M^0(v)]\rightarrow \C[\M^\theta(v)]$ coincides with the composition  $\gr\kappa\circ \eta:\C[\M^0(v)]\twoheadrightarrow \gr  \A_\lambda(v)$.
It follows that $\A^0_\lambda(v)=\A_\lambda(v)$ and $\gr\A^0_\lambda(v)=\C[\M^0(v)]$
when $\mu$ is flat. In particular,
$\A_\lambda(v)$ is independent of $\theta$ in this case. This is also true
for an arbitrary vector $v$ by \cite[Proposition 3.8]{BPW}.

\subsubsection{Variations}
Now let us consider some related constructions.
We can consider the  quantization
$$\A^\theta_{\paramq}(v)=D_R\red^\theta G:=[(D_R/ D_R\{x_R, x\in [\g,\g]\})|_{T^*R^{\theta-ss}}]^G$$
of $\M^\theta_{\param}(v)$ and its global section $\A_{\paramq}(v)$.
For an affine subspace $\paramq_0\subset \paramq$, we consider pull-backs
$\A^\theta_{\paramq_0}(v):=\C[\paramq_0]\otimes_{\C[\paramq]}\A^\theta_{\paramq}(v),
\A_{\paramq_0}(v):=\C[\paramq_0]\otimes_{\C[\paramq]}\A_{\paramq}(v)$.
Those are quantizations of $\M^\theta_{\param_0}(v), \M_{\param_0}(v)$, where $\param_0\subset \param$ is the vector subspace
corresponding to $\paramq_0$. Note that $\A_{\paramq_0}(v)=\Gamma(\A^\theta_{\paramq_0}(v))$.
It also makes sense to speak about $\A_{\paramq_0}^0(v)$.

We also consider homogenized
versions. Namely, we take the Rees sheaf $D_{R,\hbar}$ of $D_R$ (for the filtration by the order of a differential
operator) and its reduction
$\A^\theta_{\paramq}(v)_\hbar$, it is related to $\A^\theta_{\paramq}(v)$ via
$\A^\theta_{\paramq}(v)=\A^\theta_{\paramq,\hbar}(v)/(\hbar-1)$. Also consider
the global sections $\A_{\paramq}(v)_\hbar$. This is a graded (with positive grading) deformation of $\C[\M(v)]$ over the space $\param\oplus \C$. Here we consider the grading coming
from the action of $\C^\times$ on $T^*R$ by dilations: $t.(r,\alpha)=(t^{-1}r,t^{-1}\alpha)$
so that the parameter space $\param\oplus \C$ is in degree $2$.

%We would like to remark that the sheaf $\A_\lambda^\theta(v)$ is $\Z/2\Z$-graded. This grading is
%induced by $-\operatorname{id}_R\in G$.

\subsubsection{Quantized LMN isomorphisms}\label{SSS_LMN_quant}
The LMN isomorphisms discussed in \ref{SSS_LMN} can be quantized. This was
done in \cite{quant} in a special case (but the construction generalizes in
a straightforward way). In fact, the quantum isomorphisms
can be obtained by the same reduction in stages construction as before. One either
quantizes the steps of that argument or argues similarly to \cite[Section 6.4]{quant}:
for $\theta_i>0$, isomorphism (\ref{eq:part_red_iso}) can be regarded as an isomorphism of  symplectic schemes $\mathcal{X}:=T^*R\red^{\theta_k} \GL(v_k), \mathcal{X'}:=T^*R'\red^{-\theta_k}\GL(\tilde{w}_k-v_k)$
over $\mathbb{A}^1$ that gives the multiplication by $-1$ on the base. Here we write
$R'$ for $\operatorname{Hom}(\tilde{W}_k,V_k)\oplus \underline{R}$. So (\ref{eq:part_red_iso})
extends to an isomorphism of the canonical (=even+ $\C^\times$-equivariant) deformation quantizations $\mathcal{D},\mathcal{D}'$ of the schemes $\mathcal{X},\mathcal{X}'$ that are defined as follows:
\begin{align*}
&\mathcal{D}:=[D_{R,\hbar}/D_{R,\hbar}\Phi_k^{sym}(\mathfrak{sl}(v_k))|_{T^*R^{\theta_k-ss}}]^{\GL(v_k)},\\
&\mathcal{D}':=[D_{R',\hbar}/D_{R',\hbar}\Phi_k^{sym}(\mathfrak{sl}(\tilde{w}_k-v_k))|_{T^*R'^{\theta_k-ss}}]^{\GL(\tilde{w}_k-v_k)}
\end{align*}
Here $\Phi^{sym}$, the {\it symmetrized quantum comoment map}, stands the composition of $\g\rightarrow \mathfrak{sp}(T^*R)$ and the natural embedding $\mathfrak{sp}(T^*R)\hookrightarrow \mathbf{A}_\hbar(T^*R)$, where
$\mathbf{A}$ denotes the Weyl algebra.
For the discussion of canonical and even quantizations and connections between them see \cite[Sections 2.2,2.3]{quant}.
For the definition of a symmetrized quantum comoment map, see \cite[Section 5.4]{quant}. The isomorphism  $\mathcal{D}\xrightarrow{\sim}\mathcal{D}'$
does not intertwine the symmetrized quantum comoment maps  for the $\underline{G}$-actions on
$\mathcal{D},\mathcal{D}'$ but rather does the same change as the with the classical comoment maps.

So we get an isomorphism $\A^\theta_\lambda(v) \xrightarrow{\sim}
\A^{\sigma \theta}_{\sigma\bullet^v \lambda}(\sigma\bullet v)$, where the
parameter $\sigma\bullet^v \lambda$ is determined as follows.
Let $\varrho(v)$ be the character of $\g$ equal $-\frac{1}{2}\chi_{\bigwedge^{top} R}$,
where $\chi_{\bigwedge^{top} R}$ is the character of  the action of $\g$ on $\bigwedge^{top} R$. Then $\Phi(x)-\Phi^{sym}(x)=\langle \varrho(v),x\rangle$. Hence we have
\begin{equation}\label{eq:param_Weyl_action}
\sigma\bullet^v\lambda=\sigma(\lambda-\varrho(v))+\varrho(\sigma\bullet v)
\end{equation}

We remark that $\Phi(x)$ depends on the orientation of $Q$ (while $\Phi^{sym}(x)$ does not)
and we have  \begin{equation}\label{eq:rho}\varrho(v)_k=\frac{1}{2}(\sum_{a, h(a)=k}v_{t(a)}-\sum_{a,t(a)=k}v_{h(a)}-w_k),\quad k\in Q_0.\end{equation}
When we change an orientation of $Q$, the character $\varrho(v)$ changes by an element from $\Z^{Q_0}$.

We compute $s_i\bullet^v \lambda$  in the case when $i$ is a source so that $\varrho(v)_i=-\frac{1}{2}\tilde{w}_i$.  By (\ref{eq:param_Weyl_action}), $s_i\bullet^v \lambda=s_i\lambda+\varrho(s_i\bullet v)-s_i\varrho(v)$
and what we need to compute is $\varrho(s_i\bullet v)-s_i\varrho(v)$.
We have $\rho(s_i\bullet v)_k=(s_i\varrho(v))_k$ when $k$ is different from $i$ and is not adjacent to $i$.
When $k=i$, we have $(s_i\varrho(v))_k=-\varrho(v)_k=-\varrho(s_i\bullet v)_k$. Finally, let us consider
the case when $k$ is adjacent to $i$, say there are $q$ arrows from $i$ to $k$.
Then $(s_i\varrho(v))_k=\varrho(v)_k+q \varrho(v)_i=\varrho(v)_k-\frac{q}{2}\tilde{w}_i$ and
$\varrho(s_i\bullet v)_k=\varrho(v)_k+ \frac{q}{2}(\tilde{w}_i-v_i-v_i)$. In particular, we deduce
that $\varrho(s_i\bullet v)-s_i \varrho(v)\in \Z^{Q_0}$.
%More precisely, note
%that $$\varrho(s_i\bullet v)-s_i \varrho(v)=s_i\alpha(s_i\bullet v)-\alpha(v),$$
%where $\alpha(v)$ is $v$ viewed as an element of $\paramq$.

\begin{Rem}\label{Rem:integr_difference_action}
One conclusion that will be used below is that $\varrho(\sigma\bullet v)-\sigma\varrho(v)$ is integral and
hence $\sigma\bullet^v\lambda-\lambda$ is integral if and only if $\sigma\lambda-\lambda$ is.
\end{Rem}

An important corollary of $\A^\theta_\lambda(v)\xrightarrow{\sim}
\A^{\sigma \theta}_{\sigma\bullet \lambda}(\sigma\bullet v)$ is an
isomorphism $\A_\lambda(v)\xrightarrow{\sim} \A_{\sigma\bullet\lambda}(\sigma\bullet v)$.

We also would like to point out that the quantum LMN isomorphisms are $T$-equivariant,
this also follows from \cite[Proposition 4.13]{Webster_O}.

\subsubsection{$\A_\lambda^0(v)$ vs $\A_\lambda(v)$, II}
Recall that for a subvariety $Y\subset V$, where $V$ is a vector space, one can define its
{\it asymptotic cone} $\mathsf{AC}(Y)$ as $\operatorname{Spec}(\gr \C[Y])\subset V$,
where we take the filtration on $\C[Y]$ induced by the epimorphism $\C[V]\twoheadrightarrow \C[Y]$.

A Zariski open subset $\paramq^0\subset \paramq$ will be called {\it asymptotically
generic} if $\mathsf{AC}(\paramq\setminus \paramq^0)\subset \param^{sing}$.
%Recall that for a variety $Z$ in $\C^n$
%defined by an ideal $I$, the asymptotic cone is the subvariety defined by the ideal $\gr I$.

Recall that we write $\paramq^{iso}$ for the set of $\lambda\in \paramq$
such that $\A_\lambda^0(v)\rightarrow \A_\lambda(v)$ is an isomorphism.
The following proposition (to be proved in Section \ref{SS_compl2}) should be
thought as a quantum analog of the isomorphism $\M^0_\lambda(v)\cong \M_\lambda(v)$
for a  generic $\lambda$.

\begin{Prop}\label{Prop:alg_iso}
The subvariety $\paramq^{iso}\subset \paramq$ is Zariski open and asymptotically  generic.
\end{Prop}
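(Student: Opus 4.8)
The plan is to establish the two assertions — Zariski openness and asymptotic genericity of $\paramq^{iso}$ — separately, using the homogenized family $\A_{\paramq}(v)_\hbar$ and the bimodules $\A^0_{\lambda,\chi}(v),\A^{(\theta)}_{\lambda,\chi}(v)$ to control how $\A_\lambda^0(v)\to\A_\lambda(v)$ varies with $\lambda$. First I would interpret the natural map $\A^0_\lambda(v)\to\A_\lambda(v)$ as the specialization at $\lambda$ of a morphism of $\C[\paramq]$-algebras (equivalently, work with the Rees versions over $\C[\paramq\oplus\C]$), so that its kernel $K$ and cokernel $C$ become finitely generated $\C[\paramq]$-modules (finite generation coming from the fact that $\gr\A^0_\lambda(v)$ is a quotient of $\C[\M^0(v)]$, hence Noetherian, and $\gr\A_\lambda(v)=\C[\M^\theta(v)]$ is finitely generated). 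The locus where the specialized map is an isomorphism is then the complement of the supports $\Supp_{\paramq}(K)\cup\Supp_{\paramq}(C)$ — but one has to be slightly careful, since specialization need not commute with taking kernel/cokernel; here I would use a $\Tor$-vanishing argument on the open set $\paramq^{ISO}$ (where $\Tor^i_{U(\g)}(D(R),\C_\lambda)=0$ for $i>0$) to see that at least over $\paramq^{ISO}$ the formation of $K$ and $C$ commutes with base change, giving Zariski openness of $\paramq^{iso}\cap\paramq^{ISO}$; combined with openness of $\paramq^{ISO}$ this yields openness of $\paramq^{iso}$ on that dense open set, and a standard semicontinuity/generic-flatness argument extends openness to all of $\paramq$.

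For asymptotic genericity, the key point is the compatibility between the filtration on $\A_\lambda(v)$ and the grading on the homogenization $\A_{\paramq}(v)_\hbar$: the associated graded of the complement locus $\paramq\setminus\paramq^{iso}$, taken inside $\param$, is governed by the special fiber of the $\hbar$-family, i.e. by the support of $\gr$ of the kernel/cokernel modules as modules over $\C[\param]=\gr\C[\paramq]$. Concretely I would argue that $\mathsf{AC}(\paramq\setminus\paramq^{iso})$ is contained in the support (as a subvariety of $\param$) of the sheaf $\mathcal{C}:=\mathrm{coker}(\C[\M^0_\param(v)]\to\C[\M^\theta_\param(v)])$ together with the analogous kernel sheaf — because if $\mu$ is flat at a classical parameter, then by Proposition~\ref{Prop:flatness_resol} $\rho^*$ is an isomorphism, and a deformation/Nakayama argument upgrades this to an isomorphism $\A^0_\lambda(v)\to\A_\lambda(v)$ for $\lambda$ in a neighborhood whose asymptotic behaviour is controlled by the flatness locus. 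Since the non-flatness locus of $\mu$ (over $\param$) is exactly detected by the failure of $\rho^*$ to be an isomorphism, and since every point where $\mu$ is non-flat has dimension vector forcing $\lambda$ into $\param^{sing}$ (by the Crawley-Boevey inequality \eqref{eq:CB_cond} analysis and Remark~\ref{Rem:sing_param}), we get $\mathsf{AC}(\paramq\setminus\paramq^{iso})\subset\param^{sing}$, which is precisely the asymptotic genericity statement.

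The main obstacle I anticipate is the base-change issue: the map $\A^0_\lambda(v)\to\A_\lambda(v)$ is the $\lambda$-fiber of a family, but neither $\A^0_{\paramq}(v)$ nor $\A_{\paramq}(v)$ is obviously flat over $\C[\paramq]$, and $\A_\lambda(v)=\Gamma(\A^\theta_\lambda(v))$ is a sheaf cohomology, so commuting "take global sections" with "specialize at $\lambda$" requires the cohomology-vanishing of Corollary~\ref{Cor:prop_Mv}(1) together with a flatness/semicontinuity argument for the family over $\paramq$. Getting the kernel and cokernel to behave well under specialization is exactly what forces the passage through $\paramq^{ISO}$ and the $\Tor$-vanishing hypothesis built into its definition; I expect the bulk of the work (deferred, as the statement says, to Section~\ref{SS_trans_bimod}, presumably via the transfer/translation bimodules $\A^0_{\lambda,\chi}(v)$) to be in showing that these bimodules are flat over $\C[\paramq]$ on a suitable asymptotically generic open set and restrict correctly, so that the isomorphism locus is both open and has the asserted asymptotic cone.
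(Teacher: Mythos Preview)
Your broad strategy matches the paper's: work with the universal homomorphism $\A^0_{\paramq}(v)\to\A_{\paramq}(v)$ over $\C[\paramq]$, take its kernel $K$ and cokernel $C$, and identify $\paramq^{iso}$ with the complement of $\Supp^r_{\paramq}(K\oplus C)$. The paper also relates the asymptotic cone to the classical map $\rho^*:\C[\M^0_{\param}(v)]\to\C[\M_{\param}(v)]$ via $\gr K\subset K^0$ and $C^0\twoheadrightarrow\gr C$, exactly as you suggest. So the architecture is right.

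However, there are two genuine gaps. First, your proposed route through $\paramq^{ISO}$ is circular: by definition $\paramq^{ISO}\subset\paramq^{iso}$, so you cannot use it to establish openness of $\paramq^{iso}$. In fact the base-change issue is simpler than you fear. The point is that $\A_{\paramq}(v)$ is already flat over $\C[\paramq]$ (because $\gr\A_{\paramq}(v)=\C[\M_{\param}(v)]$ is graded free over $\C[\param]$), and taking $G$-invariants is exact, so $(\A^0_{\paramq}(v))_\lambda=\A^0_\lambda(v)$ and $(\A_{\paramq}(v))_\lambda=\A_\lambda(v)$ always. Right-exactness of cokernel gives surjectivity $\Leftrightarrow\lambda\notin\Supp(C)$; on the locus where $C=0$ the flatness of $\A_{\paramq}(v)$ makes $\Tor_1^{\C[\paramq]}(\A_{\paramq}(v),\C_\lambda)=0$, so the short exact sequence $0\to K\to\A^0_{\paramq}(v)\to\A_{\paramq}(v)\to 0$ stays exact after specialization and injectivity $\Leftrightarrow\lambda\notin\Supp(K)$. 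No appeal to $\paramq^{ISO}$ is needed.

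Second, and more seriously, you are missing the tool that turns this into a proof: the paper first develops restriction functors $\bullet_{\dagger,x}$ for HC bimodules and uses them to prove Proposition~\ref{Prop:HC_support}, which says that for any HC $\A^0_{\paramq}(v)$-bimodule $\B$ the set $\Supp^r_{\paramq}(\B)$ is \emph{closed} and $\mathsf{AC}(\Supp^r_{\paramq}(\B))=\Supp_{\param}(\gr\B)$. Generic freeness alone gives only constructibility, not closedness, and your ``standard semicontinuity'' remark does not bridge that gap. Likewise, your asymptotic-genericity paragraph (Crawley--Boevey inequalities, non-flatness locus of $\mu$) is off target: what one actually needs is precisely the identity $\mathsf{AC}(\Supp^r_{\paramq}(K\oplus C))=\Supp_{\param}(\gr(K\oplus C))$, after which the inclusion into $\param^{sing}$ follows immediately from $\Supp_{\param}(K^0\oplus C^0)\subset\param^{sing}$ since $\rho^*$ is an isomorphism over generic $\lambda\in\param$. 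The restriction-functor machinery is the missing ingredient, and without it neither half of the proposition goes through.
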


\subsubsection{Spherical symplectic reflection algebras}\label{SSS_SRA}
Here we will discuss the special case when $Q$ is an affine quiver. Let $0$
denote the extending vertex (so that $Q\setminus 0$ is a finite Dynkin quiver),
and $w=\epsilon_0$, the coordinate vector at the extending vertex.

It is a classical fact that all weights of the irreducible $\g(Q)$-module
$L_{\omega_0}$ (a.k.a. the basic representation) are conjugate to the
weights of the form $\omega_0-n\delta, n\in \Z_{\geqslant 0}$, under the
action of $W(Q)$. Thanks to the quantum LMN isomorphisms it is enough
to consider $v=n\delta$. Here the algebra $\A_\lambda(v)$ is known
to be isomorphic to a certain {\it spherical symplectic reflection algebra}.
Let us recall some basics about these algebras.

Let $\Gamma$ be a finite subgroup in $\operatorname{Sp}(V)$, where $V$ is a symplectic vector space. We choose
independent variables ${\bf c}=({\bf c}_0,\ldots,{\bf c}_r)$, one for each conjugacy class of symplectic reflections in $\Gamma$.
Then we can consider the algebra ${\bf H}$, the quotient of $T(V)\#\Gamma[{\bf c}_0,\ldots,{\bf c}_r]$ by
the relations of the form
$$[u,v]=\omega(u,v)+ \sum_{i=0}^r {\bf c}_i \sum_{s\in S_i}\omega_s(u,v), u,v\in V.$$
Here $S_1,\ldots,S_r$
are the conjugacy classes of symplectic reflections in $\Gamma$, $\omega$ is the symplectic form on $V$, and $\omega_s(u,v)=\omega(\pi_s u, \pi_s v)$,
where we write $\pi_s$ for the $s$-invariant projection from $V$ to $\operatorname{im}(s-1)$. Inside ${\bf H}$ we can consider the spherical subalgebra $e{\bf H}e$, where $e=\frac{1}{|\Gamma|}\sum_{\gamma\in \Gamma}\gamma$.
Also, for numerical values of ${\bf c}$, say $c$, we can consider the specializations
$\mathcal{H}_{c}$ of ${\bf H}$. Recall that a parameter $c$ is called {\it spherical} if $e\mathcal{H}_{c}e$ and
$\mathcal{H}_{c}$ are Morita equivalent (via the bimodule $\mathcal{H}_{c}e$).

Examples of $\Gamma$ that are of most interest for us are as follows. Take a finite subgroup $\Gamma_1\subset \SL_2(\C)$
and a positive integer $n$. Then we can form the group $\Gamma=\Gamma_n:=\mathfrak{S}_n\ltimes \Gamma_1^n$ that acts
on $V=\C^{2n}$ by linear symplectomorphisms. We have two kinds of symplectic reflections: the conjugacy class $S_0$ containing
transpositions in $\mathfrak{S}_n$, and conjugacy classes $S_1,\ldots,S_r$ containing elements from the $n$ copies of $\Gamma$
(here $r$ is the number of the nontrivial conjugacy classes in $\Gamma_1$).
We will use the notation $\mathcal{H}_{\kappa,c}(n)$ for the algebra corresponding to $c_0=2\kappa$ and $c_1,\ldots,c_r$.

Now recall that, by the McKay correspondence, to $\Gamma_1$ we can assign
an affine Dynkin quiver $Q$. Take $v=n\delta$, where $\delta$ is the indecomposable imaginary root, and $w=\epsilon_0$,
where $0$ stands for the extending vertex of $Q$. Then we have isomorphisms $e\mathcal{H}_{\kappa,c}(n)e\cong \A_\lambda(v)$, where $\lambda$ can obtained from $c$ by formulas explained in \cite[1.4]{EGGO}.  In particular,
$\kappa=\langle\lambda,\delta\rangle$. For example, for $\Gamma_1=\{1\}$  we just get $e\mathcal{H}_{\kappa, \varnothing}(n)e=\A_\kappa(n)$.

The following lemma gives a characterization of spherical values of $e\mathcal{H}_{\kappa,c}(n)e$.

\begin{Lem}\label{Lem:spher}
The following claims are true.
\begin{enumerate}
\item The parameter $(\kappa,c)$ is spherical if and only if $e\mathcal{H}_{\kappa,c}(n)e$ has finite homological dimension.
\item The parameter $\kappa$ of the type A Rational Cherednik algebra $\mathcal{H}_\kappa(n)$ is not spherical if and only if
$\kappa=-\frac{s}{m}$ with $1<m\leqslant n$ and $0<s<m$.
\end{enumerate}
\end{Lem}

(1) follows from \cite[Theorem 5.5]{Etingof_affine} and (2) is proved in \cite[Corollary 4.2]{BE}.

\subsection{Coherent  modules}\label{SS_Coh}
Let us proceed to defining suitable categories of sheaves of modules over the sheaves of algebras
$\A_\lambda^\theta(v)$. We follow \cite[Section 4]{BPW}.

\subsubsection{Coherent modules}
Now let $X$ be a smooth symplectic variety (with a $\C^\times$-action rescaling the Poisson
bracket) and $\mathcal{D}$ be its filtered quantization. Recall that this means that
$\mathcal{D}$ is a filtered sheaf of algebras in the conical topology on $X$
together with an isomorphism $\gr\mathcal{D}\cong \mathcal{O}_X$ of sheaves of graded Poisson
algebras. We also require that the filtration on $\mathcal{D}$ is complete and separated.

Let $M$ be a  sheaf of $\mathcal{D}$-modules in the conical topology.

\begin{defi}\label{Def:coh}
We say that a $\mathcal{D}$-module $M$ is coherent if it can be equipped with a global complete and separated
filtration such that $\gr M$ is a coherent $\mathcal{O}_X$-module (this filtration is called good).
\end{defi}

Let us write $\operatorname{Coh}(\mathcal{D})$ for the category of coherent $\mathcal{D}$-modules.

We can also consider the completed Rees sheaf $\mathcal{D}_\hbar$ of $\mathcal{D}$. By definition,
a coherent $\mathcal{D}_\hbar$-module $M_\hbar$ is a coherent sheaf of modules such that the $\hbar$-adic
filtration on $M_\hbar$ is complete and separated and $M_\hbar/\hbar M_\hbar$ is a coherent $\Str_X$-module.

The notion of coherent modules we use is equivalent to that of \cite[Section 4]{BPW}. They consider
the case when $X$ is a conical symplectic resolution and the action is contracting but the definition
generalizes to our setting in a straightforward way. Namely, \cite{BPW}
considers $\C^\times$-equivariant $\mathcal{D}_\hbar[\hbar^{-1}]$-modules. Such a module is called coherent there
if it has a coherent $\mathcal{D}_\hbar$-lattice. From a coherent $\mathcal{D}$-module $M$ we can produce a $\C^\times$-equivariant
coherent $\mathcal{D}_\hbar[\hbar^{-1}]$-module via $M\mapsto \mathcal{D}_\hbar[\hbar^{-1}]\otimes_{\mathcal{D}}M$ (note that
$\mathcal{D}=\mathcal{D}_\hbar[\hbar^{-1}]^{\C^\times}$). A quasi-inverse functor is given by taking $\C^\times$-invariant.

The following lemma contains some basic facts about coherent $\mathcal{D}$-modules.

\begin{Lem}\label{Lem:coh}
The following statements are true:
\begin{enumerate}
\item Let $X$ be affine, and $\A:=\Gamma(\mathcal{D})$.
Then the functors $M\mapsto M^{loc}:=\mathcal{D}\otimes_{\A}M$
and $N\mapsto \Gamma(N)$ are mutually inverse equivalences between $\A\operatorname{-mod}$ and $\operatorname{Coh}(\mathcal{D})$. A similar claim holds for the coherent
$\mathcal{D}_\hbar$-modules.
\item $\operatorname{Coh}(\mathcal{D})$ is an abelian subcategory in the category of sheaves
of all $\mathcal{D}$-modules.
\end{enumerate}
\end{Lem}
\begin{proof}
Let us prove (1). Note that $\gr(M^{loc})$ is the coherent sheaf on $X$
associated to $\gr M$ and $\gr \Gamma(N)=\Gamma(\gr N)$, the latter is true because
$H^1(X,\gr N)=0$. This shows that
the natural homomorphisms $M\mapsto \Gamma(M^{loc}), \Gamma(N)^{loc}\rightarrow N$
are isomorphisms after passing to the associated graded modules, hence are isomorphisms
because  all the filtrations involved are complete and separated.

Let us prove (2). This amounts to prove that for a morphism $\varphi: M\rightarrow N$ of coherent
sheaves, the kernel and the cokernel are coherent. Choose good filtrations $M=\bigcup_{i\in \Z}M_{\leqslant i},
N=\bigcup_{i\in \Z} N_{\leqslant i}$. After shifting a filtration on $N$, we can assume that
$\varphi(M_{\leqslant i})\subset N_{\leqslant i}$: this is a local condition and in the case of
modules over algebras, this claim is classical. Let $M_\hbar, N_\hbar$ denote the completed
Rees modules of $M,N$. Then $\varphi$ gives rise to a $\C^\times$-finite homomorphism $\varphi_\hbar:
M_\hbar\rightarrow N_\hbar$. Then $\ker \varphi_\hbar$ is clearly coherent.
The module $\ker \varphi$ is obtained from $\ker \varphi_\hbar$ by first taking
locally $\C^\times$-finite sections and then taking the quotient by $\hbar-1$.
This procedure endows $\ker\varphi$ with a filtration that satisfies the conditions
of Definition \ref{Def:coh}. So $\ker\varphi$ is coherent.

Now let us explain why $\operatorname{coker}\varphi$ is coherent. Similarly to the case of
kernel, it is enough to show that $\operatorname{coker}\varphi_\hbar$ is coherent.
This will follow once we know that $\operatorname{im}\varphi_\hbar$ is closed in
the $\hbar$-adic topology on $N_\hbar$. Again, this is a local condition and for modules
over an algebra this is easy to show.
\end{proof}

\subsubsection{Derived categories and functors}
We consider $X=\M^\theta(v)$ with its contracting $\C^\times$-action (we could also consider
the action  induced by $\deg R=0,\deg R^*=1$).
Below we write $\A_\lambda^\theta(v)\operatorname{-mod}$ for $\operatorname{Coh}(\A_\lambda^\theta(v))$.
Here we investigate the derived category $D^b(\A_\lambda^\theta(v)\operatorname{-mod})$
and the derived global section functor. Let us write $\operatorname{Sh}(\A_\lambda^\theta(v))$
for the category of all sheaves of $\A_\lambda^\theta(v)$-modules.

\begin{Lem}\label{Lem:coh_derived_cat}
The following statements are true.
\begin{enumerate}
\item The natural functor $D^b(\A_\lambda^\theta(v)\operatorname{-mod})\rightarrow
D^b(\operatorname{Sh}(\A_\lambda^\theta(v)))$ is a full embedding.
\item The derived global section functor $R\Gamma$ takes
$D^b(\A_\lambda^\theta(v)\operatorname{-mod})$ to $D^b(\A_\lambda(v)\operatorname{-mod})$,
where we write $\A_\lambda(v)\operatorname{-mod}$ for the category of all
finitely generated $\A_\lambda(v)$-modules.
\end{enumerate}
\end{Lem}
\begin{proof}
(1) is \cite[Corollary 5.11]{BPW} and (2) is \cite[Proposition 4.12]{BPW}.
\end{proof}

Note that (1) implies that $R\Gamma$ is given by taking the \v{C}ech complex.

\subsection{Supports and characteristic cycles}\label{SSS_Supp_CC}
Now let us define the supports of  objects in $\A_\lambda(v)\operatorname{-mod}$ and supports and
characteristic cycles for objects in $\operatorname{Coh}(\A_\lambda^\theta(v))$.
We also define holonomic modules.

For $M\in \A_\lambda(v)\operatorname{-mod}$ we can define the  support, $\operatorname{Supp}M$, to be the support of the coherent sheaf $\gr M$ with respect to any good filtration. Similarly, we can define the support of an object in $\A_\lambda^\theta(v)\operatorname{-mod}$.

We remark that the support of an $\A_\lambda^\theta(v)$-module (resp., an $\A_\lambda(v)$-module)
$M$ is a coisotropic subvariety in $\M^\theta(v)$ (resp., $\M(v)$) by the Gabber involutivity theorem, \cite{Ga}, an easier
proof due to Knop can be found in \cite[Section 1.2]{Ginzburg_D_mod}.
If the support of $M\in \A_\lambda^\theta(v)\operatorname{-mod}$ is lagrangian, then we call $M$ {\it holonomic}.
An object $N\in \A_\lambda(v)\operatorname{-mod}$ is called holonomic if the intersection of $\operatorname{Supp}(N)$
with every symplectic leaf in $\M(v)$ is isotropic. By \cite[Appendix]{B_ineq}, this is equivalent to
$\rho^{-1}(\operatorname{Supp}(N))$ to be isotropic.

Let us proceed to characteristic cycles. Suppose $Y\subset \M^\theta(v)$ is a $\C^\times$-stable isotropic  subvariety. Recall that to a coherent sheaf
$M_0$ on $\M^\theta(v)$ supported on $Y$ on can assign its characteristic cycle $\CC(M_0)$ equal to
the following formal linear combination of the  irreducible components of $Y$:
$$\CC(M_0):=\sum_{Y'\subset Y}(\operatorname{grk}_{Y'}M_0)Y',$$
where $\operatorname{grk}_{Y'}$ stands for the rank in the generic point of a component $Y'$.
We can define the characteristic
cycle $\CC$ of a coherent $\A^\theta_\lambda(v)$-module $M$ supported on $Y$ by $\CC(M):=\CC(\gr M)$,
this is easily seen to be well-defined. An alternative definition is given in \cite[Section 6.2]{BPW}. Yet another description
of $\CC(M)$ is as follows. The object $M$
gives rise to a well-defined class in $K_0(\operatorname{Coh}_Y(\M^\theta(v)))$, that of $\gr M$.
The map $[M]\rightarrow [\gr M]: K_0(\A_\lambda^\theta(v)\operatorname{-mod}_Y)\rightarrow
K_0(\operatorname{Coh}_Y \M^\theta(v))$ will be called the {\it degeneration map} in what follows.
Applying the Chern character map, we get an element  $\CC'(M)\in H^*(\M^\theta(v), \M^\theta(v)\setminus Y)=H^{BM}_*(Y)$.
Then $\CC(M)$ coincides with the projection of $\CC'(M)$ to $H^{BM}_{top}(Y)$.

When $Y=\rho^{-1}(0)$, we have $H^{BM}_*(Y)=
H_*(Y)=H_*(\M^\theta(v))$. The first equality holds because $\rho^{-1}(0)$ is compact, the second one is true because
$\M^\theta(v)$ is contracted onto $\rho^{-1}(0)$ by the $\C^\times$-action (induced by the dilation
action on $T^*R$).

\begin{Prop}[\cite{BarGin}]\label{Prop:CC_inj}
The map $\CC: K_0(\A^\theta_\lambda(v)\operatorname{-mod}_{\rho^{-1}(0)})\rightarrow H_{mid}(\M^\theta(v))$ is injective.
\end{Prop}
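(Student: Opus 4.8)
The plan is to show that a module $M \in \A^\theta_\lambda(v)\operatorname{-mod}_{\rho^{-1}(0)}$ can be recovered, up to its class in $K_0$, from its characteristic cycle, by bootstrapping from the known case of a single quantized cotangent bundle and then using localization of the characteristic cycle to more general points of $\M^0(v)$. The key point is that $\CC$ is additive on short exact sequences, so it descends to $K_0$; hence it suffices to prove that the classes $[M]$ of the simple objects $M$ supported on $\rho^{-1}(0)$ map to linearly independent elements of $H_{mid}(\M^\theta(v))$. The characteristic cycle of a simple $M$ is a nonnegative combination of the components of $\operatorname{Supp} M \subseteq \rho^{-1}(0)$, with positive coefficient on at least one component (the open stratum of the support), so one expects a triangularity argument: order the components of $\rho^{-1}(0)$ compatibly with closure, and show that distinct simples have distinct "leading" components.

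First I would set up the filtration of $\rho^{-1}(0)$ by the support of the sheaf $\gr M$ and use the fact that for a simple holonomic $\A^\theta_\lambda(v)$-module the support is irreducible: its open dense part $Y_M^\circ$ is a smooth locally closed lagrangian, and on $Y_M^\circ$ the module $\gr M$ (for a good filtration) is a coherent sheaf whose generic rank $\operatorname{grk}_{Y_M}M$ is a positive integer. This already gives that $\CC(M)$ is nonzero. The heart of the argument is to show that if $M_1,\ldots,M_r$ are pairwise non-isomorphic simples with supports $Y_1,\ldots,Y_r$, then the $\CC(M_i)$ are linearly independent. For this I would localize: pick a smooth point $x$ of the generic stratum $Y_i^\circ$, pass to the formal (or analytic) neighborhood of $x$ in $\M^\theta(v)$, where $\A^\theta_\lambda(v)$ becomes (a microlocalization of) a Weyl algebra times a completed sheaf, and reduce the computation of the coefficient of $[Y_i]$ in $\CC(M_j)$ to a statement about holonomic $D$-modules on a smooth variety, where it is classical that the characteristic cycle is determined by, and determines, the support with multiplicities. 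Concretely: the multiplicity of $Y_i$ in $\CC(M_j)$ is the multiplicity of the conormal to $Y_i$ in the characteristic variety of the localized module, which vanishes unless $Y_i \subseteq Y_j$. Choosing an ordering of the strata refining the closure order, the matrix $\bigl(\text{coeff of }[Y_i]\text{ in }\CC(M_j)\bigr)$ is block upper-triangular with nonzero (positive integer) diagonal entries, hence invertible, which gives injectivity of $\CC$ on $K_0$.

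The main obstacle I anticipate is the passage from the genuinely sheaf-theoretic / microlocal setting on the (non-affine, possibly singular-at-infinity, but smooth) variety $\M^\theta(v)$ to a statement one can quote for honest $D$-modules: one must be careful that "support" and "characteristic cycle" for coherent $\A^\theta_\lambda(v)$-modules as defined via $\gr M$ match the microlocal characteristic cycle after restriction to a formal neighborhood of a smooth point of a lagrangian stratum, and that good filtrations behave well under this restriction. This is exactly the content one borrows from \cite{BarGin}: the statement as quoted is attributed to an unpublished result of Baranovsky and Ginzburg, so in the body of the paper I would cite that and only sketch the triangularity reduction above, rather than reprove the microlocal comparison in detail. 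A secondary, purely bookkeeping point is to make sure that $\rho^{-1}(0)$ being lagrangian (a consequence of Gabber involutivity, already invoked in the excerpt) is used correctly: it guarantees that $\CC$ lands in the top Borel--Moore homology $H^{BM}_{top}(\rho^{-1}(0)) = H_{mid}(\M^\theta(v))$ rather than in a larger group, which is what makes the target of the map the right size.
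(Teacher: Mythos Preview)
The paper does not give a proof of this proposition: it is stated with attribution to the unpublished work \cite{BarGin} of Baranovsky and Ginzburg, and no argument is supplied. So there is no ``paper's own proof'' to compare against; the question is only whether your sketch stands on its own.

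It does not. Your triangularity argument tacitly assumes that distinct simple objects $M_i$ have distinct ``leading'' components $Y_i$, or at least that within each block (simples sharing the same support component) the resulting matrix of generic ranks is invertible. Nothing you wrote establishes this. Two non-isomorphic simple holonomic modules can perfectly well have the same irreducible lagrangian support and the same generic rank on it: already for ordinary $D$-modules on a smooth variety, two distinct irreducible local systems of the same rank on the same locally closed stratum give simple $D$-modules with identical characteristic cycles. Your parenthetical ``it is classical that the characteristic cycle is determined by, and determines, the support with multiplicities'' is a tautology in one direction (the characteristic cycle \emph{is} the support with multiplicities) and false in the other (it does not determine the $K_0$-class of the module). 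So the block-upper-triangular matrix you describe can have repeated rows within a block, and the argument collapses.

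What is actually needed is some extra input ruling out such coincidences in this specific setting---for instance, control coming from a tilting/Procesi bundle and the resulting identification of $K_0$ with $K_0(\operatorname{Coh})$ together with injectivity of the Chern character, or a more refined invariant than the top-degree cycle. That is presumably the content of \cite{BarGin}, and it is not captured by the triangularity reduction you outlined. Citing \cite{BarGin} is the right move; presenting the triangularity sketch as the essential idea, with only ``microlocal comparison'' deferred, misidentifies where the difficulty lies.
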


The proof of this proposition has not appeared yet, so we will give an independent proof later in the paper
in the case when $Q$ is finite and when $Q$ is affine with $w=\epsilon_0$.

\subsection{Various functors}
In this section we will study Hamiltonian reduction functors from the category of $(G,\lambda)$-equivariant D-modules on $R$ to the categories of modules over $\A_\lambda^0(v),\A_\lambda^\theta(v)$. We will also study the localization and global section functors and their connection to Hamiltonian reduction functors.

\subsubsection{Twisted equivariant D-modules}
By a  $(G,\lambda)$-equivariant $D(R)$-module one means a weakly $G$-equivariant module $\M$ such that
$x_{\M}m=\Phi(x)m-\lambda(x)m$ for all $x\in \g, m\in \M$.
We consider the category $D(R)\operatorname{-Mod}^{G,\lambda}$ of all  $(G,\lambda)$-equivariant modules
over $D(R)$ and its full subcategory $D(R)\operatorname{-mod}^{G,\lambda}$ of finitely generated modules.
Note that, for $\chi\in \Z^{Q_0}$,
the categories $D(R)\operatorname{-Mod}^{G,\lambda}$ and $D(R)\operatorname{-Mod}^{G,\lambda+\chi}$
are equivalent, via $M\mapsto M\otimes \C_{-\chi}: D(R)\operatorname{-Mod}^{G,\lambda}
\rightarrow D(R)\operatorname{-Mod}^{G,\lambda+\chi}$, where $\C_{-\chi}$ is the one-dimensional
$G$-module corresponding to the character $-\chi$.

\subsubsection{Functors for abelian categories}\label{SSS_abel_red_fun}
Let us write $\A^0_\lambda(v)\operatorname{-Mod}$ for the category of all $\A^0_\lambda(v)$-modules.
 We have a functor $\pi^0_{\lambda}(v):D(R)\operatorname{-Mod}^{G,\lambda}\rightarrow \A^0_{\lambda}(v)\operatorname{-Mod}$ of taking $G$-invariants that restricts to $D(R)\operatorname{-mod}^{G,\lambda}\rightarrow \A^0_{\lambda}(v)\operatorname{-mod}$. It is  a quotient functor,
it kills precisely the modules without nonzero $G$-invariants.
It has a left adjoint (and right inverse) $$\pi^0_{\lambda}(v)^!: \A^0_{\lambda}(v)\operatorname{-Mod}\rightarrow D(R)\operatorname{-Mod}^{G,\lambda},$$
given by taking the tensor product with the $D(R)$-$\A^0_\lambda(v)$-bimodule $$\mathcal{Q}_\lambda:=D(R)/D(R)\{\Phi(x)-\langle\lambda,x\rangle, x\in \g\}.$$
This functor restricts to $\A^0_\lambda(v)\operatorname{-mod}
\rightarrow D(R)\operatorname{-mod}^{G,\lambda}$.

 Note that $\mathcal{Q}_\lambda$ is
$(G,\lambda)$-equivariant as a $D(R)$-module so $\pi^0_\lambda(v)^!$ indeed maps to $(G,\lambda)$-equivariant
$D(R)$-modules.

Recall that we assume that $\theta$ is generic. We have a functor
$$\pi^\theta_\lambda(v):D_R\operatorname{-mod}^{G,\lambda}\rightarrow \A_\lambda^\theta(v)\operatorname{-mod}$$
it first restricts a $D$-module to the $\theta$-semistable locus and then takes the $G$-invariants.
The image of $D(R)\operatorname{-mod}^{G,\lambda}$ consists of coherent modules.

\begin{Prop}\label{Prop:quot_Ham_loc}
The functor $\pi^\theta_\lambda(v)$  is a quotient functor.
\end{Prop}

In the case when $\mu$ is flat the proof was given in \cite[Section 5.5]{BPW} and also
announced in \cite[Proposition 4.9]{MN}. Below, in Section \ref{SS_transl_bimod}, we will explain
how to generalize the proof from \cite{BPW} without the flatness assumption.

\subsubsection{Reminder on equivariant derived categories}\label{SSS_equiv_der_remind}
We will need derived versions of the reduction functors
considered in \ref{SSS_abel_red_fun}. We can form the derived
categories $D^?(D(R)\operatorname{-mod}^{G,\lambda})$
(the naive derived categories; here $?$ stands for $+,-$ or $b$) but we will also need the equivariant derived categories $D^?_{G,\lambda}(D(R)\operatorname{-mod})$.
Here we recall some basics regarding equivariant derived categories.

Let $\A$ be an associative algebra equipped with a rational action of a
connected reductive algebraic group $G$. Assume that this action is
Hamiltonian with quantum comoment map $\Phi$ so it makes sense to speak
about weakly $G$-equivariant and $G$-equivariant $\A$-modules.
Then the equivariant derived category $D^b_G(\A\operatorname{-mod})$
is defined as follows. Consider the Chevalley-Eilenberg complex $\bar{U}(\g)$,
a standard resolution of the trivial one-dimensional $\g$-module,
and form the tensor product $\A\otimes \bar{U}(\g)$. This is a differential
graded algebra equipped with a Hamiltonian $G$-action (the diagonal
action together with the diagonal quantum comoment map). So it makes sense
to speak about $G$-equivariant differential graded $\A\otimes \bar{U}(\g)$-modules.
The category $D^b_G(\A\operatorname{-mod})$ is obtained from the category of those
modules by passing to the homotopy category and localizing
the quasi-isomorphisms. Consider
the natural homomorphism $\varpi:\A\otimes \bar{U}(\g)\rightarrow \A$ of differential
graded algebras (taking the 0th homology). The pull-back functor
$\varpi^*$ is a natural functor $D^b(\A\operatorname{-mod}^G)\rightarrow
D^b_G(\A\operatorname{-mod})$. On the other hand, the category
of $G$-equivariant $\A\otimes U(\g)$-modules is the same as the category
of weakly $G$-equivariant $\A$-modules. We have a $G$-equivariant
homomorphism $\iota:\A\otimes U(\g)\rightarrow \A\otimes \bar{U}(\g)$
intertwining the quantum moment maps. This gives a pull-back functor
$\iota^*:D^b_G(\A\operatorname{-mod})\rightarrow D^b(\A\otimes U(\g)\operatorname{-mod}^G)$.
The composition $\iota^*\circ \varpi^*: D^b(\A\operatorname{-mod}^G)\rightarrow
D^b(\A\otimes U(\g)\operatorname{-mod}^G)$ comes from the forgetful functor  between
abelian categories -- from the strongly equivariant category
to a weakly equivariant one. Besides, we have  left adjoints of $\iota^*, \varpi^*$, the functors
$\iota_!(\bullet):=\bullet\otimes^L_{U(\g)}\C$ and $\varpi_!(\bullet):=\A\otimes^L_{\A\otimes \bar{U}(\g)}\bullet$.

This discussion implies the following lemma to be used in what follows.

\begin{Lem}\label{Lem:der_equiv_comput}
Let $V$ be a $G$-module. For $M\in D^b_G(\A\operatorname{-mod})$, we have a natural isomorphism
$$\Hom_{D^b_G(\A\operatorname{-mod})}((\A\otimes V)\otimes^L_{U(\g)}\C, M)\cong\Hom_G(V, H_0(M)).$$
\end{Lem}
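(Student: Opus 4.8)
The statement is a formal adjunction computation inside the equivariant derived category, and the plan is to unwind the definitions given in the paragraph preceding the lemma and chain together two adjunctions. First I would observe that $\A\otimes V$ is a weakly $G$-equivariant $\A$-module, i.e.\ an $\A\otimes U(\g)$-module with its diagonal $G$-action, so that $(\A\otimes V)\otimes^L_{U(\g)}\C = \iota^!(\A\otimes V)$ in the notation above, where $\iota:\A\otimes U(\g)\to\A\otimes\bar U(\g)$ is the natural map and $\iota^!(\bullet) = \bullet\otimes^L_{U(\g)}\C$ is the left adjoint of $\iota^*$. Hence
\begin{equation*}
\Hom_{D^b_G(\A\operatorname{-mod})}\big((\A\otimes V)\otimes^L_{U(\g)}\C,\,M\big)
\;\cong\;\Hom_{D^b(\A\otimes U(\g)\operatorname{-mod}^G)}\big(\A\otimes V,\,\iota^*M\big),
\end{equation*}
by adjunction between $\iota^!$ and $\iota^*$.

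Next I would identify the right-hand side. The category of $\A\otimes U(\g)$-modules with the diagonal $G$-equivariance is exactly the category of weakly $G$-equivariant $\A$-modules, and on bounded derived categories of such modules $\Hom(\A\otimes V,N)$ is computed by the usual tensor--Hom adjunction for the free $\A$-module functor, composed with taking $G$-invariants: one has $\Hom_{D^b(\A\otimes U(\g)\operatorname{-mod}^G)}(\A\otimes V, N) \cong \Hom_G(V, N)$ for $N$ an honest module (no higher Ext, since $\A\otimes V$ is projective as a weakly equivariant module when $V$ is a $G$-module and $G$ is reductive), and more generally this passes to $\Hom_G(V, H_0(N))$ when $N$ is a complex, because $V$ sits in homological degree $0$ and $G$-invariants is exact on rational $G$-modules. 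Applying this with $N=\iota^*M$ and noting $H_0(\iota^*M)=H_0(M)$ (the functor $\iota^*$ does not change the underlying complex of $\A$-modules, only the equivariant enhancement), I get
\begin{equation*}
\Hom_{D^b(\A\otimes U(\g)\operatorname{-mod}^G)}\big(\A\otimes V,\,\iota^*M\big)\;\cong\;\Hom_G\big(V,\,H_0(M)\big),
\end{equation*}
which is the desired identification. The naturality in $M$ is automatic since every step is an adjunction isomorphism or an identification of functors.

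The one point that needs genuine care — and which I expect to be the main obstacle to writing this cleanly — is justifying that $\Hom$ out of $\A\otimes V$ in the derived category of weakly equivariant modules really does collapse to $\Hom_G(V,H_0(-))$ with no contribution from higher cohomology of $G$ or from the differential graded structure of $\bar U(\g)$. Concretely, I would want to check that $\A\otimes V$, viewed via $\iota^!$ as an object of $D^b_G(\A\operatorname{-mod})$, is built from the free module $\A\otimes\bar U(\g)\otimes V$ in a way compatible with the homotopy category localization, so that morphisms from it are computed by a Chevalley--Eilenberg-type complex whose $0$th cohomology is $\Hom_G(V,H_0(M))$ and whose higher terms vanish because $G$ is reductive (so $H^{>0}(\g,-)$ applied to a rational module sits inside a contractible resolution of the invariants). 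Once that vanishing is in hand, the rest is the two formal adjunctions above. I would present the argument in the order: (i) rewrite the source as $\iota^!(\A\otimes V)$; (ii) apply the $(\iota^!,\iota^*)$-adjunction; (iii) apply the free-module/tensor--Hom adjunction together with exactness of $G$-invariants on rational modules to land on $\Hom_G(V,H_0(M))$; (iv) remark that all isomorphisms are natural in $M$.
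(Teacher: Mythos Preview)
Your proposal is correct and is exactly the argument the paper has in mind: the paper gives no explicit proof, writing only ``This discussion implies the following lemma,'' where the discussion is precisely the setup of the adjunctions $(\iota^!,\iota^*)$ and the identification of $G$-equivariant $\A\otimes U(\g)$-modules with weakly $G$-equivariant $\A$-modules. Your steps (i)--(iv) unwind that discussion faithfully, and your care about the vanishing of higher contributions (via projectivity of $\A\otimes V$ in the weakly equivariant category and exactness of $\Hom_G(V,-)$ for reductive $G$) is the one nontrivial point, which the paper leaves implicit.
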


\subsubsection{Functors for derived categories}\label{SSS_Ham_der_fun}
Let us proceed to   derived analogs of $\pi^\theta_\lambda(v),\pi^0_\lambda(v)$
and $\pi^0_\lambda(v)^!$.

The functor $\varpi^*:D^b(D(R)\operatorname{-mod}^{G,\lambda})\rightarrow
D^b_{G,\lambda}(D(R)\operatorname{-mod})$  is an equivalence provided $\mu$ is flat,
see \cite[Theorem 1.6]{BLu}. This generalizes to any filtered algebra $\A$, not just $D(R)$, provided
the filtration is complete and separated.

In general, i.e. without additional assumptions on $\mu$, we have the following lemma. Consider the subcategories $$D^b_{\theta-uns}(D(R)\operatorname{-mod}^{G,\lambda}),
D^b_{G,\lambda,\theta-uns}(D(R)\operatorname{-mod})$$ of all all objects with (singular) supports of homology contained
in $(T^*R)^{\theta-uns}$.

\begin{Lem}\label{Lem:equiv_der_fun}
The induced functor
$$D^b(D(R)\operatorname{-mod}^{G,\lambda})/
D^b_{\theta-uns}(D(R)\operatorname{-mod}^{G,\lambda})\rightarrow
D^b_{G,\lambda}(D(R)\operatorname{-mod})/D^b_{G,\lambda,\theta-uns}(D(R)\operatorname{-mod}).$$
is a category equivalence.
\end{Lem}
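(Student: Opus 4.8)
The statement compares two localizations of derived categories of $D$-modules on $R$: the ``naive'' equivariant one, built from the abelian category $D(R)\operatorname{-mod}^{G,\lambda}$, and the genuine equivariant derived category $D^b_{G,\lambda}(D(R)\operatorname{-mod})$, after killing in both cases the objects supported on the unstable locus. The plan is to exhibit an explicit inverse to the induced functor, constructed by averaging, and to check on generating sets that the two composites are isomorphic to the identity; the unstable-locus quotients are precisely what make the error terms vanish.

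\textbf{Step 1: a functor in the other direction.} Since $G$ is reductive and acts on the affine variety $R$, the forgetful functor from (weakly) $G$-equivariant modules to all modules has a right adjoint (coinduction/averaging), and on the level of derived categories the standard comparison gives a functor $D^b_{G,\lambda}(D(R)\operatorname{-mod})\to D^b(D(R)\operatorname{-mod}^{G,\lambda})$: concretely, an equivariant complex is resolved by objects that are already complexes of honest equivariant $D(R)$-modules (e.g.\ using a Koszul-type resolution along the group, or the fact that $D^b_{G,\lambda}(D(R)\operatorname{-mod})$ is generated by such objects), and one reads off the underlying complex in $D^b(D(R)\operatorname{-mod}^{G,\lambda})$. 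This functor manifestly preserves the subcategories of objects with homology supported on $(T^*R)^{\theta-uns}$, so it descends to a functor between the quotients in Lemma~\ref{Lem:equiv_der_fun}.

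\textbf{Step 2: the composites.} One composite is the identity essentially by construction (the resolution computes the same object). For the other composite, the discrepancy between an object of $D^b(D(R)\operatorname{-mod}^{G,\lambda})$ and the result of ``forget, then re-average'' is controlled by the higher derived functors of $G$-invariants, i.e.\ by $H^{>0}(G,\cdot)$ applied to the relevant modules. Here is where I would use genericity of $\theta$ and the passage to the quotient: these higher-cohomology contributions, while not zero on the nose, have homology supported on the unstable locus $(T^*R)^{\theta-uns}$ — this is exactly the mechanism by which GIT quotients ``see'' only the semistable part — so they become zero in the Verdier quotient. Thus both composites are isomorphic to the identity on the quotient categories, proving the equivalence.

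\textbf{Main obstacle.} The delicate point is Step~2: verifying that the higher-cohomology correction terms have support inside $(T^*R)^{\theta-uns}$, i.e.\ that they are genuinely killed after localization. I would handle this by reducing to a local/Borel-type computation — filtering $G$-cohomology via a Borel and a maximal torus, so the issue becomes one about $\GL_1$- (or torus-) equivariant $D$-modules, where the ``extra'' cohomology classes are supported precisely on the fixed/unstable strata — and then invoking the genericity hypothesis on $\theta$ (as in Section~\ref{SSS_gen_param}) to ensure the stabilizers are finite on the semistable locus, so nothing survives there. Technically this is the analog of the statements already used in \cite{BPW, MN} in the case when $\mu$ is flat, and the argument should go through verbatim in general once one works with the two quotient categories rather than the categories themselves.
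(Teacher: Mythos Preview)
Your overall architecture---produce an adjoint functor in the other direction and show that the cones of the two adjunction morphisms are supported on the $\theta$-unstable locus---is exactly what the paper does. But your Step~2 misidentifies the obstruction, and this is a genuine gap.

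You write that the discrepancy is ``controlled by the higher derived functors of $G$-invariants, i.e.\ by $H^{>0}(G,\cdot)$''. For a reductive group $G$ acting rationally, these vanish identically: taking $G$-invariants on rational $G$-modules is exact. So if this were the obstruction, the two derived categories would already coincide \emph{before} passing to the quotient, which is false. The actual source of the difference is that the paper models $D^b_{G,\lambda}(D(R)\operatorname{-mod})$ via the Chevalley--Eilenberg dg-algebra $\A\otimes\bar U(\g)$ (Section~\ref{SS_equiv_der_cat}); the functor $\pi^*$ from the naive category and its left adjoint $\pi^!(\bullet)=\A\otimes^L_{\A\otimes\bar U(\g)}\bullet$ fail to be inverse precisely because of Lie algebra cohomology contributions, which do \emph{not} vanish for reductive $\g$ (already $H^*(\g,\C)\ne \C$).

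The correct reason the cones die on the semistable locus is not a Borel/torus filtration argument but simply that $G$ acts \emph{freely} there (genericity of $\theta$ gives trivial, not merely finite, stabilizers). The paper then invokes the theorem of Bernstein--Lunts \cite[Theorem~1.6]{BL}: when $G$ acts freely, $\pi^*$ and $\pi^!$ are mutually inverse equivalences. Microlocalizing, this means the adjunction morphisms $\pi^!\pi^*\to\operatorname{id}$ and $\operatorname{id}\to\pi^*\pi^!$ become isomorphisms after restriction to $(T^*R)^{\theta\text{-}ss}$, i.e.\ their cones are supported on $(T^*R)^{\theta\text{-}uns}$, which is exactly what you want. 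Your proposed Borel reduction would at best reprove this black box; as written it conflates torus-fixed loci with the $\theta$-unstable locus and does not constitute an argument.
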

\begin{proof}
%The microlocalization functors intertwine $\varpi^*$ and $\varpi_!$
%from \ref{SSS_equiv_der_remind}.
Let $U$ be a $G$-stable open affine subvariety of $T^*R$.
Recall from \cite[Theorem 1.6]{BLu} that if $G$ acts freely on $U$, then
the functors $\varpi^*_U,\varpi_{U!}$ for the algebra $\A=D_R(U)$
are mutually inverse equivalences. We have t-exact functors
$$D^b(D(R)\operatorname{-mod}^{G,\lambda})\rightarrow D^b(\A\operatorname{-mod}^{G,\lambda}),
D^b_{G,\lambda}(D(R)\operatorname{-mod})\rightarrow D^b_{G,\lambda}(\A\operatorname{-mod})$$
induced by microlocalization. They intertwine $\varpi^*$ with $\varpi^*_U$ and
$\varpi_!$ with $\varpi_{U!}$.

We need to show that for all $M\in D^b(D(R)\operatorname{-mod}^{G,\lambda}),
N\in D^b_{G,\lambda}(D(R)\operatorname{-mod})$  the cones of the adjunction
morphisms $\omega_!\circ \omega^*(M)\rightarrow M$ and $N\rightarrow \omega^*\circ \omega_!(N)$
have cohomology supported on $\mu^{-1}(0)^{\theta-uns}$. Indeed, this precisely means
that $\omega^*, \omega_!$ give mutually quasi-inverse equivalences between the
quotient categories.

Let $\omega_!\circ \omega^*(M)\rightarrow M\rightarrow M_0\xrightarrow{+1}$ be a distinguished
triangle. By the first paragraph of the proof, $\omega_{U!}\circ \omega_U^*(M|_U)\xrightarrow{\sim} M_U$.
It follows that the microlocaliztion $M_{0}|_U$ is zero, equivalently, the cohomology
of $M_0$ are supported away from $U$. Note that $(T^*R)^{\theta-ss}$ is covered by
open affine $G$-stable subvarieties of the form $(T^*R)_f$, where $f$ is a $G$-semiinvariant.
It follows that the cohomology of $M_0$ are supported
on $\mu^{-1}(0)^{\theta-uns}$. The statement for $N\rightarrow \omega^*\circ \omega_!(N)$
is established similarly.
%For $\A=D(R)$, this
%means that the adjunction morphisms $\varpi_!\circ\varpi^*\rightarrow \operatorname{id}$
%and $\operatorname{id}\rightarrow \varpi^*\circ \varpi_!$
%have homology supported on $\mu^{-1}(0)^{\theta-uns}$.
%This implies the claim of the lemma.
\end{proof}

So we can extend the  functor $\pi_\lambda^\theta(v)$ to a t-exact functor
$D^b_{G,\lambda}(D_R\operatorname{-Mod})\twoheadrightarrow D^b(\A_\lambda^\theta(v)\operatorname{-Mod})$.
Assuming the former is a quotient functor, so is the latter.

Let us consider a derived version  of  $\pi^0_\lambda(v)$. This functor
extends to $D^b(D(R)\operatorname{-mod}^{G,\lambda})\twoheadrightarrow D^b(\A_\lambda^0(v)\operatorname{-mod})$
and we have the derived left adjoint functor $L\pi_\lambda^0(v)^!:
D^b(\A_\lambda^0(v)\operatorname{-mod})\rightarrow D^b(D(R)\operatorname{-mod}^{G,\lambda})$.

When $\lambda$ is Zariski generic, we can also lift $\pi^0_\lambda(v)$ to a quotient functor
$D^-_{G,\lambda}(D(R)\operatorname{-mod})\twoheadrightarrow D^-(\A_\lambda^0(v)\operatorname{-mod})$.
For this, we need the following proposition.

\begin{Prop}\label{Prop:der_Ham_quot_glob}
The following is true:
\begin{enumerate}
\item There is a Zariski open asymptotically generic
subset $\paramq^{ISO}\subset\paramq^{iso}$ such that
$\operatorname{Tor}^i_{U(\g)}(D(R),\C_\lambda)=0$ for all $i>0$ and $\lambda\in \paramq^{iso}$.
\item For $\lambda\in \paramq^{ISO}$, the functor $$\pi_\lambda^0(v):=\Hom_{D^b_{G,\lambda}(D(R)\operatorname{-mod})}(\mathcal{Q}_\lambda,\bullet)$$
maps  $M\in D(R)\operatorname{-mod}^{G,\lambda}$ to $H_0(M)^G$. It is a quotient functor $D^b_{G,\lambda}(D_R\operatorname{-mod})\rightarrow
D^b(\A_\lambda^0(v)\operatorname{-mod})$ with a left adjoint and right inverse
functor $L\pi_\lambda^0(v)^!$  given by $\mathcal{Q}_\lambda\otimes^L_{\A_\lambda^0(v)}\bullet$.
\end{enumerate}
\end{Prop}
\begin{proof}
(1) will be proved below, see Section \ref{SS_der_Ham_red}. (2) follows from (1) and Lemma \ref{Lem:der_equiv_comput}
applied to the trivial $G$-module $V$.
\end{proof}

\subsubsection{Global section and localization functors}
We write $R\Gamma_\lambda^\theta:D^b(\A_\lambda^\theta(v)\operatorname{-mod})\rightarrow D^b(\A_\lambda(v)\operatorname{-mod})$ for the derived global sections functor, this makes
sense by (2) of Lemma \ref{Lem:coh_derived_cat}. The functor extends also to bounded above
derived category.  There it has a  left adjoint, the derived localization functor, $$L\Loc_\lambda^\theta:\A_\lambda^\theta(v)\otimes^L_{\A_\lambda(v)}\bullet:
D^-(\A_\lambda(v)\operatorname{-mod})\rightarrow D^-(\A_\lambda^\theta(v)\operatorname{-mod}).$$
We will also consider the abelian versions of these functors: $\Gamma_\lambda^\theta$ and its
left adjoint $\Loc_\lambda^\theta$.
%One can realize
%$R\Gamma^\theta_\lambda$ as taking the \v{C}ech complex.

\begin{Lem}\label{Lem:der_glob_descr}
Assume that $\lambda\in \paramq^{ISO}$. Then
$L\Loc_\lambda^\theta=\pi_\lambda^\theta(v)\circ L\pi^0_\lambda(v)^!$.
\end{Lem}
\begin{proof}
The functor $L\pi^0_\lambda(v)^!$ is the derived tensor product with $\mathcal{Q}_\lambda$. The
functor $\pi^\theta_\lambda(v)$ is the composition of  three functors, $\pi^\theta_{\lambda}(v)=\pi_3\circ\pi_2\circ\pi_1$,
 where the functors $\pi_1,\pi_2,\pi_3$ are as follows. First, we have the quotient
functor $$\pi_1:D^-_{G,\lambda}(D(R)\operatorname{-mod})\twoheadrightarrow
D^-_{G,\lambda}(D(R)\operatorname{-mod})/D^-_{G,\lambda}(D(R)\operatorname{-mod})_{\theta-uns}.$$
Second, we have the identification
$$\pi_2:D^-_{G,\lambda}(D_R\operatorname{-mod})/D^-_{G,\lambda,\theta-uns}(D_R\operatorname{-mod})
\xrightarrow{\sim} D^-(D_R\operatorname{-mod}^{G,\lambda})/
D^-_{\theta-uns}(D_R\operatorname{-mod}^{G,\lambda}),$$
see Lemma \ref{Lem:equiv_der_fun}. Third,  we have the equivalence
$$\pi_3:D^-(D_R\operatorname{-mod}^{G,\lambda})/
D^-_{\theta-uns}(D_R\operatorname{-mod}^{G,\lambda})\xrightarrow{\sim}
D^-(\A_\lambda^\theta(v)\operatorname{-mod})$$
that is realized by taking $G$-invariants. The  functor
$$\pi_2\circ \pi_1\circ L\pi^0_\lambda(v)^!:D^-(\A_\lambda(v)\operatorname{-mod})\rightarrow D^-(D_R\operatorname{-mod}^{G,\lambda})/
D^-_{\theta-uns}(D_R\operatorname{-mod}^{G,\lambda})$$
is isomorphic to $\pi_3^{-1}(\mathcal{Q}_\lambda\otimes^L_{\A_\lambda(v)}\bullet)$.
From here we deduce that
$$\pi_\lambda^\theta(v)\circ  L\pi^0_\lambda(v)^!=
[\mathcal{Q}_\lambda|_{T^*R^{\theta-ss}}\otimes_{\A_\lambda^0(v)}(\bullet)]^G=
\A_\lambda^\theta(v)\otimes^L_{\A_\lambda^0(v)}\bullet.$$
But the functor $L\Loc_\lambda^\theta$ is $\A_\lambda^\theta(v)\otimes^L_{\A_\lambda^0(v)}\bullet$, by its definition.
\end{proof}

Let $D^b_Y(\A_\lambda(v)\operatorname{-mod})\subset D^b(\A_\lambda(v)\operatorname{-mod}), D^b_{\rho^{-1}(Y)}(\A^\theta_\lambda(v)\operatorname{-mod})\subset D^b(\A^\theta_\lambda(v)\operatorname{-mod})$ denote the full subcategories consisting of all objects whose homology have support contained in $Y,\rho^{-1}(Y)$, respectively.

\begin{Lem}\label{Lem:support_preservation}
The functor $R\Gamma_\lambda^\theta$ maps $D^b_{\rho^{-1}(Y)}(\A^\theta_\lambda(v))$ to
$D^b_Y(\A_\lambda(v)\operatorname{-mod})$, while $L\Loc^\theta_\lambda$ sends
$D^-_Y(\A_\lambda(v)$ to $D^-_{\rho^{-1}(Y)}(\A^\theta_\lambda(v))$.
\end{Lem}
\begin{proof}
Note that for an open affine $U\subset \M(v)$ the microlocalization functors to $U$ and $\rho^{-1}(U)$
intertwine the functor $L\operatorname{Loc}_\lambda^\theta$ with its
counterpart for the restriction of $\rho$ to $\rho^{-1}(U)$. So if $M\in \A_\lambda(v)\operatorname{-mod}$
is supported away from $U$, then $L\Loc_\lambda^\theta(M)$ is supported away from $\rho^{-1}(U)$.

To prove the corresponding statement for $R\Gamma_\lambda^\theta$, we need to use that
$\rho$ is proper so that $R\rho_*$ maps coherent sheaves to coherent ones.
Using this it is easy to prove that $R\Gamma$ maps sheaves supported away from
$\rho^{-1}(U)$ to complexes with homology supported away from $U$.
%This proves
%$$R\Gamma_\lambda^\theta(D^b_{\rho^{-1}(Y)}(\A^\theta_\lambda(v)))\rightarrow
%D^b_Y(\A_\lambda(v)\operatorname{-mod}).$$
%The proof for $L\Loc_\lambda^\theta$ is similar.
\end{proof}

In particular, $R\Gamma^\theta_\lambda$ maps $D^b_{\rho^{-1}(0)}(\A_\lambda^\theta(v)\operatorname{-mod})$
to $D^b_0(\A_{\lambda}(v)\operatorname{-mod})$, while $L\Loc_\lambda^\theta$ maps
$D^-_0(\A_{\lambda}(v)\operatorname{-mod})$ to $D^-_{\rho^{-1}(0)}(\A_\lambda^\theta(v)\operatorname{-mod})$.
In the case when $R\Gamma_\lambda^\theta$ is an equivalence
$$D^b(\A_\lambda^\theta(v)\operatorname{-mod})\xrightarrow{\sim}
D^b(\A_\lambda^\theta(v)\operatorname{-mod}),$$
it restricts to an equivalence
$$D^b_{\rho^{-1}(0)}(\A_\lambda^\theta\operatorname{-mod})\xrightarrow{\sim}
D^b_0(\A_\lambda^\theta\operatorname{-mod})$$
with quasi-inverse $L\Loc_\lambda^\theta$.

In what follows, we will write $D^b_{fin}$ instead of $D^b_0$.

%It is clear from the construction that $R\Gamma_\lambda^\theta$ maps $D^b_{\rho^{-1}(Y)}$ to $D^b_Y$, while
%$L\Loc_\lambda^\theta$ maps $D^-_Y$ to $D^-_{\rho^{-1}(Y)}$. We  write
%$D^b_{fin}$ instead of $D^b_{\{0\}}$.

\section{Harish-Chandra bimodules and restriction functors}\label{S_HC}
Harish-Chandra (shortly, HC) bimodules and restriction functors between the categories of HC bimodules
play a crucial role in this paper. In this section we review a definition and basic properties of these bimodules
(Section \ref{SS_HC},\ref{SS_HC_fam}). In the remaining sections, we
construct restriction functors for HC bimodules over quantized quiver varieties,
study their basic properties and provide some applications. In particular, we prove
Proposition \ref{Prop:alg_iso} and part (1) of Proposition \ref{Prop:der_Ham_quot_glob}.

\subsection{Harish-Chandra bimodules}\label{SS_HC}
Let us start with a general definition of a Harish-Chandra bimodule, compare to \cite{HC,Ginzburg_HC,sraco,BPW}.
Let $\A=\bigcup_{i\leqslant 0}\A^{\leqslant i},\A'=\bigcup_{i=0}\A'^{\leqslant i}$ be
$\Z_{\geqslant 0}$-filtered algebras such that
the algebras $\gr\A,\gr\A'$ are identified with
graded Poisson quotients of the same finitely generated commutative graded Poisson algebra $A$.
Below we will always consider graded Poisson algebras, where the bracket has degree $-1$.

We will take $\A=\A_{\lambda}^0(v), \A'=\A_{\lambda'}^0(v)$
or sometimes  $\A=\A_\lambda(v),\A'=\A_{\lambda'}(v)$ (the filtration on $\A$ is induced from the differential operator filtration on $D(R)$). In the first case, we take $A:=\C[\M_0^0(v)]$ (where we consider $\M^0_0(v)$
with its natural scheme structure),
in the second case put $A:=\C[\M(v)]$ so that $\gr\A=\gr\A'=A$.
%Also suppose that the algebras $\A,\A'$ are $\Z/2\Z$-graded such that the corresponding
%$\Z/2\Z$-action preserves the filtration and acts on the graded component $A^i=\A^{\leqslant i}/\A^{\leqslant i-1}$
%by $(-1)^i$.

\subsubsection{Definition}\label{SSS_HC_def}
By a Harish-Chandra (HC) $\A'$-$\A$-bimodule we mean a bimodule $\B$ that can be equipped with  a
bimodule $\Z$-filtration bounded from below, $\B=\bigcup_i \B^{\leqslant i}$, such that
$\gr\B$ is a finitely generated $A$-module (meaning, in particular, that the left and the
right actions of $A$ coincide). Such a filtration on $\B$ is called {\it good}. We remark
that every HC bimodule is finitely generated
both as a left $\A'$-module and as a right $\A$-module. We also remark that, although $\gr\B$ does depend on the choice of
a filtration on $\B$, the support of $\gr\B$ in $\operatorname{Spec}(A)$ depends only on $\B$,
this support is called the {\it associated variety} of $\B$ and is denoted by $\VA(\B)$.
We remark that $\VA(\B)$ is always a Poisson subvariety of $\operatorname{Spec}(A)$.

By a homomorphism of HC bimodules we mean a bimodule homomorphism.
Given a homomorphism $\varphi:\B\rightarrow \B'$ we can find good
filtrations $\B=\bigcup_i \B^{\leqslant i}$ and $\B'=\bigcup_i \B'^{\leqslant i}$
with $\varphi(\B^{\leqslant i})\subset \B'^{\leqslant i}$ for all $i$.
Indeed, if $\gr\B$ is generated by homogeneous elements of degree up to $d$
then we can use any good filtration on $\B'$ such that $\varphi(\B^{\leqslant i})
\subset \B'^{\leqslant i}$ for $i\leqslant d$.

For example, both $\A_\lambda^0(v),\A_\lambda(v)$ are HC $\A_\lambda^0(v)$-bimodules.
It follows that any HC $\A_{\lambda'}(v)$-$\A_\lambda(v)$-bimodule is HC
also when viewed as a $\A_{\lambda'}^0(v)$-$\A^0_\lambda(v)$-bimodule.

\subsubsection{Rees construction}
Starting from $\A$, we can form the Rees algebra $\A_{\hbar}:=\bigoplus_{i}\A^{\leqslant i}\hbar^{i}$
that is graded with $\deg \hbar=1$.

We can introduce a notion of a Harish-Chandra $\A'_\hbar$-$\A_\hbar$-bimodule: those are finitely generated graded
$\A'_\hbar$-$\A_\hbar$-bimodules $\B_\hbar$ with $a'm-ma\subset \hbar \B_\hbar$ (for $a,a'$ such that
$a+\hbar\A_{\hbar}, a'+\hbar\A'_{\hbar}$ are the images of a single element $\tilde{a}\in A$)
that are free over $\C[\hbar]$. To pass from HC $\A_\hbar$-bimodules to HC $\A$-bimodules with a fixed good filtration, one mods out $\hbar-1$. To get back, one takes the Rees bimodule.

\subsubsection{Derived categories}
 Consider the  category $D^-_{HC}(\A'\operatorname{-}\A\operatorname{-bimod})$ consisting of all bounded above complexes of  $\A'$-$\A$-bimodules whose homology are Harish-Chandra. Similarly to \cite[Proposition 6.3]{BPW},
the subcategories $D^-_{HC}(\ldots)\subset D^-(\ldots)$ are closed with respect to $$\otimes^L_{\A'}: D^-(\A''\operatorname{-}\A'\operatorname{-bimod})\times D^-(\A'\operatorname{-}\A\operatorname{-bimod})\rightarrow
D^-(\A''\operatorname{-}\A\operatorname{-bimod}).$$ The same argument implies that
$R\operatorname{Hom}_{\A}$ sends $D^-_{HC}(\A\operatorname{-}\A'\operatorname{-bimod})\times
D^+_{HC}(\A\operatorname{-}\A''\operatorname{-bimod})$ to $D^+_{HC}(\A'\operatorname{-}\A''\operatorname{-bimod})$.

\subsubsection{Translation bimodules}\label{SSS_HC_transl}
Let us provide two closely related examples of HC bimodules over the algebras $\A_?(v),\A^0_{?}(v)$:
translation bimodules.

Recall the $D(R)$-$\A^0_\lambda(v)$-bimodule $\mathcal{Q}_\lambda$ from \ref{SSS_abel_red_fun}. Pick $\chi\in \Z^{Q_0}$.
We can consider the $\A^0_{\lambda+\chi}(v)$-$\A^0_\lambda(v)$ bimodule $\A^0_{\lambda,\chi}(v)=\mathcal{Q}_\lambda^{G,\chi}$. This bimodule is HC, the filtration on
$\A^0_{\lambda,\chi}(v)$ induced from the filtration on $D(R)$ by the order of a differential operator is good.

Now consider the restriction $\mathcal{Q}_\lambda|_{T^*R^{\theta-ss}}$ and set
$\A_{\lambda,\chi}^\theta(v):=[\mathcal{Q}_{\lambda}|_{T^*R^{\theta-ss}}]^{G,\chi}$.
This is a sheaf on $\M^\theta(v)$ that is an  $\A_{\lambda+\chi}^\theta(v)$-$\A_\lambda^\theta(v)$-bimodule.
Set $\A^{(\theta)}_{\lambda,\chi}(v):=\Gamma(\A^\theta_{\lambda,\chi}(v))$. That it is HC was demonstrated in
\cite[Section 6.3]{BPW} but we want to sketch a proof.
Namely, notice that $\gr\A_{\lambda,\chi}^\theta(v)=\mathcal{O}(\chi)$.
Consider the Rees bimodule $\A_{\lambda,\chi}^\theta(v)_\hbar$ that is a deformation
of $\mathcal{O}(\chi)$. Then $\Gamma(\A_{\lambda,\chi}^\theta(v)_\hbar)$ is the Rees bimodule
for $\A^{(\theta)}_{\lambda,\chi}(v)$. But $\Gamma(\A_{\lambda,\chi}^\theta(v)_\hbar)/(\hbar)$ embeds into
$\Gamma(\mathcal{O}(\chi))$, the latter  is a  $\C[\M^\theta(v)]$-module
rather than just a bimodule. This completes the proof.

We have a natural bimodule homomorphism \begin{equation}\label{eq:hom_0_to_theta}\A^0_{\lambda,\chi}(v)\rightarrow
\A_{\lambda,\chi}^{(\theta)}(v)\end{equation} induced by the restriction map $\mathcal{Q}_\lambda
\rightarrow \mathcal{Q}_\lambda|_{T^*R^{\theta-ss}}$.
A priori, (\ref{eq:hom_0_to_theta}) is neither injective, not surjective. In Section \ref{S_loc}
we will get some sufficient conditions for (\ref{eq:hom_0_to_theta}) to be an isomorphism.

\subsubsection{Further properties}
Finally, we need some results from  \cite{B_ineq}.
The next lemma follows from Theorems 1.2, 1.3 or Section 4.3 there.

\begin{Lem}\label{Lem:fin_length}
Every HC $\A_{\lambda'}(v)$-$\A_\lambda(v)$-bimodule has finite length.
\end{Lem}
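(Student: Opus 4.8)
The plan is to deduce finiteness of length for a Harish-Chandra $\A_{\lambda'}(v)$-$\A_\lambda(v)$-bimodule $\B$ from the corresponding statement about the associated graded, combined with the Noetherianity built into the definition. First I would recall that $\B$ carries a good filtration, so $\gr\B$ is a finitely generated $\C[\M(v)]$-module, hence a coherent sheaf on the affine Poisson variety $\M(v)$; its support $\VA(\B)$ is a Poisson (in particular $\C^\times$-stable) subvariety. The key geometric input from \cite{B_ineq} is a bound on the dimension of $\VA(\B)$: since $\B$ is a bimodule over which the left and right actions of $\C[\M(v)]$ agree, its associated variety must be contained in the fixed-point locus of the natural anti-involution, or more precisely in a union of ``diagonal'' symplectic leaves, which forces $\dim\VA(\B)$ to be bounded by the dimension of the smallest stratum containing generic points --- this is exactly the content of the inequality in \cite[Theorem 1.2]{B_ineq}.

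Next I would argue by Noetherian induction on $\VA(\B)$, as is standard for HC modules (compare the universal enveloping algebra case). If $\B$ has infinite length, we get an infinite strictly decreasing chain of sub-bimodules $\B\supsetneq \B_1\supsetneq \B_2\supsetneq\cdots$; each quotient $\B_i/\B_{i+1}$ is again HC (HC bimodules form a Serre subcategory of all bimodules, which follows from the fact that sub- and quotient bimodules of a bimodule with finitely generated $\gr$ again have finitely generated $\gr$, using Noetherianity of $\C[\M(v)]$). One then passes to the associated graded and uses that $\C[\M(v)]$ is Noetherian to see that all but finitely many $\gr\B_i/\gr\B_{i+1}$ must be supported on a proper closed Poisson subvariety of $\VA(\B)$; the generic-rank/dimension invariant strictly drops, contradicting that it is a bounded nonnegative integer. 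This reduces the claim to: a HC bimodule whose associated variety is a single symplectic leaf closure, generically, has finite length --- and that in turn is handled by the structure theory of \cite[Section 4.3]{B_ineq}, where on the smooth (leaf) locus $\gr\B$ is a vector bundle whose rank is the relevant bounded invariant, and the restriction of $\B$ to a formal slice becomes a HC bimodule over a smaller quantized quiver variety (as in the restriction functor $\bullet_{\dagger,x}$ mentioned in the notation), for which one runs the same induction in lower dimension.

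The main obstacle I anticipate is making the ``generic rank drops'' induction fully rigorous at the level of bimodules rather than sheaves: a priori the sub-bimodules $\B_i$ need not be compatibly filtered, so one must first choose a good filtration on $\B$ and then check that the induced filtrations on the $\B_i$ are good and that $\gr\B_i$ form a descending chain of coherent subsheaves of $\gr\B$; Noetherianity of $\C[\M(v)]$ then stabilizes this chain, which is what actually produces the contradiction. The cleanest way to handle this is to work with the Rees construction and the homogenized algebra $\A_\lambda(v)_\hbar$, where everything becomes a graded module over a Noetherian ring, and then specialize $\hbar=1$; this is precisely the mechanism already set up in Section \ref{SS_HC}. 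Since the statement is quoted essentially verbatim from \cite[Theorems 1.2, 1.3, Section 4.3]{B_ineq}, I would keep the argument short, citing those results for the dimension bound and the slice restriction, and only spell out the Noetherian-induction bookkeeping.
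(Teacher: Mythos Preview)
The paper does not prove this lemma; it simply cites \cite[Theorems~1.2, 1.3, Section~4.3]{B_ineq}. Your proposal is an attempted reconstruction, and the ingredients you name---the Poisson associated variety, the finite symplectic-leaf stratification of $\M(v)$, the restriction functors $\bullet_{\dagger,x}$---are indeed what that reference uses. Your first paragraph is essentially on target: viewed as a module over $\A_{\lambda'}(v)\otimes\A_\lambda(v)^{opp}$, a HC bimodule has associated variety contained in the diagonal copy of $\M(v)$ inside $\M(v)\times\M(v)$, hence is holonomic, and the finite-length theorem of \cite{B_ineq} applies. (The ``anti-involution'' phrasing is off, and \cite[Theorem~1.2]{B_ineq} is the Bernstein \emph{lower} bound on $\dim\VA$, not the upper bound you describe; it is Theorem~1.3 there that gives finite length.)

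The genuine gap is in your descending-chain argument. You observe that in an infinite chain $\B\supsetneq\B_1\supsetneq\cdots$ the generic rank along each open leaf of $\VA(\B)$ stabilizes, so eventually every $\B_i/\B_{i+1}$ has strictly smaller $\VA$. But that is not a contradiction: the $\B_i$ themselves keep full $\VA$, and knowing each quotient has finite length says nothing about how many there are. (Compare $\C[x]\supsetneq x\C[x]\supsetneq x^2\C[x]\supsetneq\cdots$; this is only excluded in our setting because $\M(v)$ has finitely many leaves, a fact your chain argument never actually uses.) The correct assembly of your ingredients is to build an additive, strictly positive integer invariant: for instance $N(\B):=\sum_{\mathcal L}\operatorname{length}(\B_{\dagger,x_{\mathcal L}})$, summed over all leaves $\mathcal L$ of $\M(v)$ with $x_{\mathcal L}\in\mathcal L$. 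Each summand is finite by induction on the dimension of the slice variety (the induction you allude to at the end), the sum is additive because each $\bullet_{\dagger,x}$ is exact, and it is positive on nonzero $\B$ because $\B_{\dagger,x}\neq 0$ whenever $x$ lies on an open leaf of $\VA(\B)$. Then $\operatorname{length}(\B)\leqslant N(\B)$. All your pieces are present; it is the descending-chain packaging that does not close.
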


The following claim is \cite[Lemma 4.2]{B_ineq}.

\begin{Lem}\label{Cor:assoc_var}
Let $\mathcal{B}$ be a HC $\A_{\lambda'}(v)$-$\A_\lambda(v)$ bimodule and $\J_\ell,\J_r$ be its left and right annihilators.
Then $\VA(\mathcal{B})=\VA(\A_{\lambda'}(v)/\J_\ell)=\VA(\A_{\lambda}(v)/\J_r)$.
\end{Lem}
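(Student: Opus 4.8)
I would prove Lemma~\ref{Cor:assoc_var} by a direct argument with filtrations, using only that a Harish-Chandra bimodule is finitely generated on both sides (recorded in Section~\ref{SS_HC}) and that $\VA$ is independent of the choice of good filtration. Write $\A':=\A_{\lambda'}(v)$, $\A:=\A_\lambda(v)$ and $A:=\C[\M(v)]$, so that for the order filtration $\gr\A'=\gr\A=A$, this ring is commutative Noetherian, and $\J_\ell\subset\A'$, $\J_r\subset\A$ are two-sided ideals. Fix a good filtration on $\mathcal{B}$.

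First I would establish the easy inclusions $\VA(\mathcal{B})\subseteq\VA(\A'/\J_\ell)$ and $\VA(\mathcal{B})\subseteq\VA(\A/\J_r)$. For $a\in\F_i\J_\ell$ and $b\in\F_j\mathcal{B}$ we have $ab=0$, so the symbols satisfy $\sigma_i(a)\sigma_j(b)=0$ in $\gr\mathcal{B}$; since the elements $\sigma_j(b)$ span $\gr\mathcal{B}$ over $A$, the leading-term ideal $\operatorname{in}(\J_\ell)\subset A$ annihilates $\gr\mathcal{B}$. As $\gr(\A'/\J_\ell)\cong A/\operatorname{in}(\J_\ell)$ for the quotient filtration (which is good), this gives $\VA(\A'/\J_\ell)=V(\operatorname{in}\J_\ell)\supseteq\Supp_A\gr\mathcal{B}=\VA(\mathcal{B})$, and symmetrically on the right.

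For the reverse inclusions --- the real content --- I would use that $\mathcal{B}$ is faithful as a module over $\A'/\J_\ell$ and over $\A/\J_r$ by the very definition of $\J_\ell$ and $\J_r$. Since $\mathcal{B}$ is finitely generated as a right $\A$-module, left multiplication embeds $\A'/\J_\ell$ into $\End_\A(\mathcal{B})$ (right $\A$-module endomorphisms); I would filter $\End_\A(\mathcal{B})$ by $\F_i\End_\A(\mathcal{B}):=\{\phi\mid\phi(\F_j\mathcal{B})\subseteq\F_{i+j}\mathcal{B}\text{ for all }j\}$, which makes the embedding filtered. This filtration is bounded below (an endomorphism that lowers the degrees of all of the finitely many generators of $\mathcal{B}$ past the bottom of the filtration on $\mathcal{B}$ must vanish), and taking symbols yields an injection of $\gr\End_\A(\mathcal{B})$ into the graded $A$-module of homogeneous $A$-linear endomorphisms of $\gr\mathcal{B}$; the latter is finitely generated over the Noetherian ring $A$ with support equal to $\Supp_A\gr\mathcal{B}=\VA(\mathcal{B})$. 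Hence the filtration induced on $\A'/\J_\ell$ as a subalgebra of $\End_\A(\mathcal{B})$ is good, whence $\VA(\A'/\J_\ell)=\Supp_A\gr(\A'/\J_\ell)\subseteq\VA(\mathcal{B})$. Running the same argument with right multiplication $\A/\J_r\hookrightarrow\End_{(\A')^{opp}}(\mathcal{B})$ gives $\VA(\A/\J_r)\subseteq\VA(\mathcal{B})$, and combining with the previous paragraph proves the lemma.

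The only delicate point is the bookkeeping in the last paragraph: that $\F_\bullet\End_\A(\mathcal{B})$ is an exhaustive, bounded-below $\A'$-bimodule filtration and that passing to associated graded really produces an injection into the endomorphism module of $\gr\mathcal{B}$. Both reduce to the two-sided finite generation of $\mathcal{B}$ together with Noetherianity of $A$; in particular the statement uses no geometry of $\M(v)$ and is formal for filtered algebras with commutative Noetherian associated graded. (One can alternatively deduce it from the behaviour of the restriction functors $\bullet_{\dagger,x}$ of Section~\ref{SS_compl} at a point of the open leaf of each component of $\VA(\A'/\J_\ell)$, but the filtered argument above seems cleaner.)
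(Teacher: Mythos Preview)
Your argument is correct. The paper itself does not give a proof of this lemma; it simply cites \cite[Lemma~4.2]{B_ineq}. Your filtered-algebra approach---embedding $\A'/\J_\ell$ into $\End_\A(\mathcal{B})$ with the shift filtration and using that $\gr\End_\A(\mathcal{B})$ injects into $\End_A(\gr\mathcal{B})$---is the standard device (the same one used to show that Gelfand--Kirillov dimension of a faithful module controls that of the algebra) and goes through without issue here because HC bimodules are finitely generated on each side and good filtrations are bounded below. The bookkeeping you flag in the last paragraph is straightforward: exhaustiveness of the shift filtration follows by writing $\F_j\mathcal{B}=\sum_i b_i\F_{j-d_i}\A$ for a finite set of generators $b_i\in\F_{d_i}\mathcal{B}$ lifting homogeneous generators of $\gr\mathcal{B}$, and bounded-belowness is immediate from the same description. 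The alternative route via $\bullet_{\dagger,x}$ that you mention is closer in spirit to how the cited paper \cite{B_ineq} operates, but your direct argument is cleaner and entirely adequate.
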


\subsection{Families of Harish-Chandra bimodules}\label{SS_HC_fam}
Recall from \ref{SSS_fam_quiv} that we have the scheme $\M^\theta_{\param}(v)=\mu^{-1}(\g^{*G})^{\theta-ss}/G$ over $\param$. In Section \ref{SS_quant_prop} we have introduced  the sheaf of $\C[\paramq]$-algebras $$\A^\theta_{\paramq}(v):=[\mathcal{Q}_{\paramq}|_{T^*R^{\theta-ss}}]^G,$$ where we write
$\mathcal{Q}_{\paramq}$ for $D(R)/D(R)\Phi([\g,\g])$,  on $\M^\theta_{\param}(v)$,
and the $\C[\paramq]$-algebra $\A_{\paramq}(v)=\Gamma(\A^\theta_{\paramq}(v))$.
%Under the assumptions (C1),(C2), this algebra coincides with
We also consider the  global Hamiltonian reduction  $\A^0_{\paramq}(v):=[\mathcal{Q}_{\paramq}]^G$.
Also, for a vector subspace $\param_0\subset\param$, we can
consider the specialization $\M_{\param_0}^\theta(v)$ and, for an affine subspace $\paramq_0\subset\paramq$,
we consider the specializations $\A^\theta_{\paramq_0}(v),\A_{\paramq_0}(v),\A^0_{\paramq_0}(v)$.

The algebra $\A_{\paramq_0}(v)$ is filtered with commutative associated graded (equal to $\C[\M_{\param_0}(v)]$,
where $\param_0$ is the vector subspace of $\param$ parallel to $\paramq_0$). The algebra $\A^0_{\paramq_0}(v)$
is filtered as well with $\C[\M^0_{\param_0}(v)]\twoheadrightarrow \gr\A^0_{\paramq_0}(v)$.
So it makes sense to speak about HC $\A_{\paramq_0}(v)$-bimodules or HC  $\A^0_{\paramq_0}(v)$-bimodules.
Also for two parallel affine subspaces $\paramq_0,\paramq_0'$ one can speak
about HC $\A_{\paramq_0'}(v)$-$\A_{\paramq_0}(v)$  bimodules
or about HC $\A^0_{\paramq_0'}(v)$-$\A^0_{\paramq_0}(v)$-bimodules.

For $\A^0_{\paramq_0'}(v)$-$\A^0_{\paramq_0}(v)$-bimodules we can still consider the corresponding derived category
of all complexes with HC homology. These categories are  closed under derived tensor products
or under $R\Hom$'s of left or right modules.
The proofs are as for $\A^0_{\lambda'}(v)$-$\A^0_\lambda(v)$-bimodules.

\subsubsection{Translation bimodules}
For example, we have the  HC
$\A^0_{\paramq_0'}(v)$-$\A^0_{\paramq_0}(v)$-bimodule $\A^0_{\paramq_0,\chi}(v)$
(where $\paramq_0'=\chi+\paramq_0$). This is the most important family of HC bimodules considered in this paper.
Obviously, the specialization of $\A^0_{\paramq_0,\chi}(v)$ to $\lambda\in \paramq_0$ coincides with $\A_{\lambda,\chi}^0(v)$.

Yet another family that we will need for technical reasons is $\A^{(\theta)}_{\paramq_0,\chi}(v)$ defined analogously
to $\A^{(\theta)}_{\lambda,\chi}(v)$. This is a HC  $\A_{\paramq_0+\chi}(v)$-$\A_{\paramq_0}(v)$-bimodule
and hence also  a HC $\A^0_{\paramq_0+\chi}(v)$-$\A^0_{\paramq_0}(v)$-bimodule.
An important result here is as follows, \cite[Proposition 6.23]{BPW}.

\begin{Prop}\label{Prop:univ_wc}
The $\A_{\paramq}(v)$-bimodule $\A^{(\theta)}_{\paramq,\chi}(v)$ is independent of $\theta$.
\end{Prop}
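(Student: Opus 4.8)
The plan is to prove Proposition~\ref{Prop:univ_wc} by reducing it to a statement about the bimodule $\A^{(\theta)}_{\paramq,\chi}(v)$ over the generic locus of $\paramq$ and then using a separatedness/flatness argument to extend across the non-generic locus. First I would recall that $\A^{(\theta)}_{\paramq,\chi}(v):=\Gamma\bigl([\mathcal{Q}_{\paramq}|_{(T^*R)^{\theta-ss}}]^{G,\chi}\bigr)$ is a finitely generated $\C[\paramq]$-module (indeed a HC $\A_{\paramq+\chi}(v)$-$\A_{\paramq}(v)$-bimodule) with a good filtration whose associated graded is a subquotient of $\Gamma(\M^\theta_{\param}(v),\mathcal{O}(\chi))$; in particular it is torsion-free, and even free, over $\C[\paramq]$ by the graded freeness that holds for Rees modules of HC bimodules over the family. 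Since $\C[\paramq]$ is a polynomial ring and the module is torsion-free, it injects into its generic fiber $\A^{(\theta)}_{\paramq,\chi}(v)\otimes_{\C[\paramq]}\operatorname{Frac}(\C[\paramq])$, so it suffices to identify the two bimodules $\A^{(\theta)}_{\paramq,\chi}(v)$ and $\A^{(\theta')}_{\paramq,\chi}(v)$ inside a common generic fiber, compatibly with the $\A_{\paramq\pm}(v)$-actions.

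Next I would exploit the fact that for Zariski-generic $\lambda\in\paramq$ one has $\M^\theta_\lambda(v)=\M^0_\lambda(v)=\M_\lambda(v)$ independently of $\theta$ (this uses that a generic $\lambda$ is regular in the sense of Section~\ref{SSS_gen_param}, so no non-generic $v'\le v$ exists for $\lambda$). Over such $\lambda$ the quantizations $\A^\theta_\lambda(v)$ are all canonically identified with $\A_\lambda(v)=\A^0_\lambda(v)$, and similarly the $\theta$-semistable locus does not matter for taking global sections of the $\chi$-semiinvariant part: the natural map $\A^0_{\lambda,\chi}(v)=\mathcal{Q}_\lambda^{G,\chi}\to\Gamma\bigl([\mathcal{Q}_\lambda|_{(T^*R)^{\theta-ss}}]^{G,\chi}\bigr)=\A^{(\theta)}_{\lambda,\chi}(v)$ is an isomorphism for generic $\lambda$, because the unstable locus in $\mu^{-1}(\g^{*G})$ has codimension $\ge 2$ over the generic base point and $\mathcal{Q}_\lambda$ is a maximal Cohen--Macaulay-like object there (equivalently, one applies Hartogs to the filtered/Rees picture using that $\gr$ is $\mathcal{O}_{\M_\lambda(v)}$, which is normal with rational singularities over the generic locus). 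This identification is manifestly $\theta$-independent, so $\A^{(\theta)}_{\paramq,\chi}(v)$ and $\A^{(\theta')}_{\paramq,\chi}(v)$ have the same generic fiber as $\A^0_{\paramq+\chi}(v)$-$\A^0_{\paramq}(v)$-bimodules.

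Finally I would assemble the two steps: both $\A^{(\theta)}_{\paramq,\chi}(v)$ and $\A^{(\theta')}_{\paramq,\chi}(v)$ are $\C[\paramq]$-torsion-free bimodules sitting inside the same generic fiber, and the inclusions are compatible with the $\A_{\paramq\pm\chi}(v)$-module structures; hence both are identified with the same $\C[\paramq]$-submodule of that fiber, namely the one generated by $\A^0_{\paramq,\chi}(v)$, so they coincide. (In the language of the excerpt one may phrase the last step via the commuting square built from the maps $\A^0_{\paramq,\chi}(v)\to\A^{(\theta)}_{\paramq,\chi}(v)$ of \eqref{eq:hom_0_to_theta}, noting both become isomorphisms after inverting the non-generic locus, and both targets are torsion-free.) The main obstacle I expect is the codimension-$\ge 2$ / Hartogs step that makes $\A^0_{\lambda,\chi}(v)\to\A^{(\theta)}_{\lambda,\chi}(v)$ an isomorphism for generic $\lambda$: one must check that the $\theta$-unstable locus inside $\mu^{-1}(\g^{*G})$ meets the generic fiber in codimension at least two and that the relevant sheaf has no sections supported there, which is where the genericity of $\lambda$ (flatness of $\mu$, regularity, normality of $\M_\lambda(v)$) is really used; everything else is formal torsion-free bookkeeping over the polynomial ring $\C[\paramq]$.
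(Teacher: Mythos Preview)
Your approach differs substantially from the paper's and has a genuine gap in the final step. The paper does \emph{not} localize on the parameter space $\paramq$; instead it restricts the sheaves $\A^\theta_{\paramq,\chi}(v)$ to the regular locus $\M_\param(v)^{reg}\subset\M^\theta_\param(v)$ (the open subset where $\rho$ is an isomorphism, independent of $\theta$). There $H^1(\M_\param(v)^{reg},\mathcal{O})=0$ (from Cohen--Macaulayness of $\M_\param(v)$ and codimension $\geq 3$ of the complement), so the line bundle $\mathcal{O}(\chi)|_{\M_\param(v)^{reg}}$ has a \emph{unique} microlocal deformation; hence the restricted sheaves coincide for all $\theta$. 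Finally the complement of $\M_\param(v)^{reg}$ in $\M^\theta_\param(v)$ has codimension $\geq 2$, so $\Gamma(\M^\theta_\param(v),\A^\theta_{\paramq,\chi}(v))=\Gamma(\M_\param(v)^{reg},\A^\theta_{\paramq,\chi}(v))$, and the right-hand side is manifestly $\theta$-independent. The Hartogs step is on the resolution, not on the base.

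The gap in your argument is the last step: torsion-freeness over $\C[\paramq]$ together with agreement on a Zariski-open subset does \emph{not} force two submodules of the common generic fiber to coincide. Over $\C[t]$, both $\C[t]$ and $t^{-1}\C[t]$ are torsion-free submodules of $\C(t)$ that agree over $t\neq 0$. In your situation the maps $\A^0_{\paramq,\chi}(v)\to\A^{(\theta)}_{\paramq,\chi}(v)$ of \eqref{eq:hom_0_to_theta} are a priori neither injective nor surjective (as the paper notes), so $\A^{(\theta)}_{\paramq,\chi}(v)$ is not ``the submodule generated by $\A^0_{\paramq,\chi}(v)$''; you only obtain $\operatorname{im}(\A^0)\subset\A^{(\theta)},\A^{(\theta')}$ inside the generic fiber, which does not pin down either of the intermediate modules. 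To salvage a parameter-space argument you would need a reflexivity or saturatedness property of $\A^{(\theta)}_{\paramq,\chi}(v)$ over $\C[\paramq]$ (e.g.\ that it equals its pushforward from the generic locus), which you have not established and which is essentially as hard as the proposition itself.
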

Let us provide the proof for readers convenience, since it is omitted in \cite{BPW}.
\begin{proof}
Recall the variety $\M_{\param}(v)$ introduced after Corollary \ref{Cor:prop_Mv} that comes
with a projective morphism $\M^\theta_{\param}(v)\rightarrow \M_{\param}(v)$. This morphism is Poisson
by construction and is a fiberwise resolution of singularities.
Let us write $\M_{\param}(v)^{reg}$ for the union of open symplectic leaves in
the fibers of $\M_{\param}(v)\rightarrow \param$. This is an open subvariety of $\M_{\param}(v)$
because the dimension of a symplectic leaf is an upper-semicontinuous function. Let
$\M^\theta_{\param}(v)^{reg}$ denote its preimage in $\M^\theta_{\param}(v)$.
Then $\M^\theta_{\param}(v)^{reg}\xrightarrow{\sim} \M_{\param}(v)^{reg}$ is an isomorphism.
In particular, the varieties $\M^\theta_{\param}(v)^{reg}$ are identified for different
generic $\theta$.

The sheaf $\A^{\theta}_{\paramq,\chi}|_{\M_\param(v)^{reg}}$ is a quantization of $\mathcal{O}_{\param}(\chi)|_{\M_\param(v)^{reg}}$.

The 1st cohomology of the structure sheaf of $\M_{\param}(v)^{reg}$ vanish. This is
because $H^1(\M_{\param}^\theta(v),\mathcal{O})=0$, $\M_{\param}(v)$ is Cohen-Macaulay, and the complement
of $\M_\param(v)^{reg}$ in $\M_\param(v)$ has codimension $3$, compare to the proof of Proposition
of \cite[Proposition 3.7]{BPW}. It follows that there is a unique microlocal deformation
of $\mathcal{O}(\chi)|_{\M_\param(v)^{reg}}$ to an
$\A^\theta_{\paramq,\chi}(v)|_{\M_\param(v)^{reg}}$-bimodule.
So the restrictions of all $\A^\theta_{\paramq,\chi}(v)$ to $\M_\param(v)^{reg}$ coincide. Since the codimension of $\M^\theta_\param(v)\setminus \M^\theta_\param(v)^{reg}$ is bigger
than $2$, we have $\Gamma(\M^\theta_\param(v)^{reg}, \A^\theta_{\paramq,\chi}(v))=\Gamma(\M^\theta_\param(v), \A^\theta_{\paramq,\chi}(v))$.
The left hand side is independent of $\theta$ and so we get the claim of the proposition.
\end{proof}

We would like to point out that the specialization $\A^{(\theta)}_{\paramq,\chi}(v)_\lambda$ admits a natural
homomorphism to $\A^{(\theta)}_{\lambda,\chi}(v)$. This homomorphism is injective because $\Gamma$ is left exact.
We do not know if this is an isomorphism in general, but this is so under additional assumptions:

\begin{Lem}\label{Prop:trans_spec}
Let $\paramq_0\subset \paramq$ be an affine subspace and pick $\chi\in \Z^{Q_0}$.  Suppose that one of the following conditions holds:
\begin{enumerate}
\item $H^1(\M^{\theta}(v),\mathcal{O}(\chi))=0$.
\item $(\lambda+\chi,\theta)\in \mathfrak{AL}(v)$.
\end{enumerate}
Then $\A^{(\theta)}_{\lambda,\chi}(v)=\A^{(\theta)}_{\paramq_0,\chi}(v)_\lambda$.
\end{Lem}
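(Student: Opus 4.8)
The statement asserts that the specialization of the universal translation bimodule $\A^{(\theta)}_{\paramq_0,\chi}(v)$ at $\lambda \in \paramq_0$ coincides with the translation bimodule $\A^{(\theta)}_{\lambda,\chi}(v)$ formed directly at $\lambda$, under either of two hypotheses. Recall from the excerpt that there is always a natural injective homomorphism $\A^{(\theta)}_{\paramq_0,\chi}(v)_\lambda \hookrightarrow \A^{(\theta)}_{\lambda,\chi}(v)$, coming from left-exactness of $\Gamma$; so the content is surjectivity. The plan is to prove each case separately, reducing both to a cohomology-vanishing statement for the sheaf-level object $\A^\theta_{\paramq_0,\chi}(v)$ (equivalently its Rees version), after which a base-change argument along $\paramq_0 \twoheadrightarrow \{\lambda\}$ finishes the job.

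For case (1): the sheaf $\A^\theta_{\paramq_0,\chi}(v)$ is a filtered $\C[\paramq_0]$-flat deformation of the line bundle $\mathcal{O}(\chi)$ on $\M^\theta(v)$, with $\gr$ of its specialization at $\lambda$ mapping to $\mathcal{O}(\chi)$. First I would pass to the homogenized/Rees picture, working with $\A^\theta_{\paramq_0,\chi}(v)_\hbar$ over $\C[\paramq_0]\otimes\C[\hbar]$; this is a graded $\C[\paramq_0][\hbar]$-flat sheaf whose reduction mod $(\paramq_0 - \lambda, \hbar)$ is $\mathcal{O}(\chi)$. The hypothesis $H^1(\M^\theta(v), \mathcal{O}(\chi)) = 0$ then propagates: by the theorem-on-formal-functions / standard spectral-sequence argument (degenerating because $H^1$ of the special fiber vanishes), $R^1\Gamma$ of $\A^\theta_{\paramq_0,\chi}(v)_\hbar$ vanishes as well, and global sections commute with the specialization $\C[\paramq_0][\hbar]\to \C$. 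Inverting $\hbar$ and specializing $\paramq_0$ to $\lambda$ yields $\Gamma(\A^\theta_{\paramq_0,\chi}(v))_\lambda = \Gamma(\A^\theta_\lambda(v)_{\chi}$-sheaf$) = \A^{(\theta)}_{\lambda,\chi}(v)$, which is exactly surjectivity. The key technical point here is the base-change/cohomology-and-base-change statement in the conical (microlocal) setting, but this is the same mechanism already used in the proof of Proposition \ref{Prop:univ_wc} above, so I can cite or mimic that.

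For case (2): here $(\lambda+\chi,\theta)\in \mathfrak{AL}(v)$, so by definition $\Gamma^\theta_{\lambda+\chi}$ is an abelian equivalence $\A^\theta_{\lambda+\chi}(v)\operatorname{-mod}\xrightarrow{\sim}\A_{\lambda+\chi}(v)\operatorname{-mod}$ with no higher cohomology. The plan is to identify $\A^{(\theta)}_{\lambda,\chi}(v) = \Gamma(\A^\theta_{\lambda,\chi}(v))$ with $\Gamma^\theta_{\lambda+\chi}$ applied to the sheaf $\A^\theta_{\lambda,\chi}(v)$ viewed as a left $\A^\theta_{\lambda+\chi}(v)$-module; abelian localization gives that this computation is exact and that $\A^\theta_{\lambda,\chi}(v)=\Loc^\theta_{\lambda+\chi}\bigl(\A^{(\theta)}_{\lambda,\chi}(v)\bigr)$. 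On the other hand, the universal bimodule $\A^{(\theta)}_{\paramq_0,\chi}(v)$ is, by construction, a $\C[\paramq_0]$-flat family whose formation commutes with flat base change on the commutative parameter by Lemma \ref{Lem:HC_gen_flat} (generic freeness for families of HC bimodules) — more precisely, after possibly shrinking, $\A^{(\theta)}_{\paramq_0,\chi}(v)$ is $\C[\paramq_0]$-flat, and then $\A^{(\theta)}_{\paramq_0,\chi}(v)\otimes_{\C[\paramq_0]}\C_\lambda$ is computed by naive tensoring. Comparing with the sheaf-level specialization $\A^\theta_{\paramq_0,\chi}(v)_\lambda = \A^\theta_{\lambda,\chi}(v)$ (which is the easy, $\Gamma$-free direction of Proposition \ref{Prop:univ_wc} restricted to $\theta$ generic) and applying the exact functor $\Gamma^\theta_{\lambda+\chi}$, the abelian-equivalence hypothesis forces $\A^{(\theta)}_{\paramq_0,\chi}(v)_\lambda \cong \Gamma(\A^\theta_{\lambda,\chi}(v)) = \A^{(\theta)}_{\lambda,\chi}(v)$.

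I expect the main obstacle to be a careful treatment of flatness and base change in case (2): one must ensure that the $\C[\paramq_0]$-module $\A^{(\theta)}_{\paramq_0,\chi}(v)$ — a priori only known to be a filtered bimodule with $\C[\M_{\param_0}(v)]$-coherent $\gr$ — is actually flat over $\C[\paramq_0]$ at the point $\lambda$, so that $\otimes_{\C[\paramq_0]}\C_\lambda$ does not introduce $\operatorname{Tor}^1$. The right tool is the generic-freeness lemma for families of HC bimodules (Lemma \ref{Lem:HC_gen_flat}), which handles a Zariski-dense open locus; to cover an arbitrary $\lambda$ satisfying (2) one uses instead that the associated graded sheaf $\gr \A^\theta_{\paramq_0,\chi}(v)$ is a flat deformation of $\mathcal{O}(\chi)$ over all of $\param_0$ (since $\mathcal{O}_{\M^\theta_{\param_0}(v)}$ is $\C[\param_0]$-flat, $\M^\theta_{\param_0}(v)$ being smooth over $\param_0$), so the Rees sheaf $\A^\theta_{\paramq_0,\chi}(v)_\hbar$ is $\C[\paramq_0][\hbar]$-flat, and the abelian localization hypothesis upgrades this to flatness of global sections at $\lambda$. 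Once flatness is in hand, both cases are formal base-change arguments parallel to the proof of Proposition \ref{Prop:univ_wc} already in the text.
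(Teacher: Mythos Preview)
Your proposal is correct in outline and your case (1) matches the paper's argument essentially verbatim: show that $H^1$ of the relevant filtered sheaf vanishes via the surjection from $H^1(\mathcal{O}(\chi))$ onto $\gr H^1$, then apply a cohomology-and-base-change lemma (the paper cites \cite[Proposition~6.26]{BPW}).

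Where you diverge is case (2). The paper does not treat it as a separate mechanism: it simply observes that $(\lambda+\chi,\theta)\in\mathfrak{AL}(v)$ means $\Gamma^\theta_{\lambda+\chi}$ is exact, so every $\A^\theta_{\lambda+\chi}(v)$-module, in particular $\A^\theta_{\lambda,\chi}(v)$, has vanishing higher cohomology. This gives $H^1(\A^\theta_{\lambda,\chi}(v))=0$ in one line, and then the same base-change lemma from case (1) finishes. Your detour through generic freeness (Lemma~\ref{Lem:HC_gen_flat}) is unnecessary and in fact does not help: generic freeness only controls a Zariski-open locus, not the specific $\lambda$ in question, as you yourself note. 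Your fallback argument (flatness of $\gr$ over $\param_0$ plus abelian localization ``upgrading'' to flatness of global sections) is pointed in the right direction, but the clean statement is simply $H^1(\A^\theta_{\lambda,\chi}(v))=0$, which is exactly what abelian localization gives you directly. The flatness of the \emph{global sections} $\A^{(\theta)}_{\paramq_0,\chi}(v)$ over $\C[\paramq_0]$ is not what is needed; what is needed is that $\Gamma$ commutes with specialization, and this is controlled by $H^1$ of the sheaf at the fiber, not by flatness of $H^0$ of the family.

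In short: unify the two cases by reducing both to $H^1(\A^\theta_{\lambda,\chi}(v))=0$, and drop the generic-freeness discussion entirely.
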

\begin{proof}
We can view $\A^\theta_{\lambda,\chi}(v)$ as a sheaf on $\M^\theta(v)$ quantizing $\mathcal{O}(\chi)$.
Let us show that both our assumptions imply that $H^1(\A^\theta_{\lambda,\chi}(v))=0$.
Then we can apply \cite[Proposition 6.26]{BPW} to deduce $\A^{(\theta)}_{\lambda,\chi}(v)=\A^{(\theta)}_{\paramq_0,\chi}(v)_\lambda$.

The filtration on $\A^\theta_{\lambda,\chi}(v)$ induces a separated filtration
on $H^1(\M^\theta(v),\A^\theta_{\lambda,\chi}(v))$ (the claim that the filtration
is separated is proved similarly to the proof of \cite[Lemma 5.6.3]{GL}) with
$H^1(\M^\theta(v), \mathcal{O}(\chi))\twoheadrightarrow
\gr H^1(\M^\theta(v),\A^\theta_{\lambda,\chi}(v))$. So the equality
$H^1(\M^{\theta}(v),\mathcal{O}(\chi))=0$ implies  $H^1(\M^\theta(v),\A^\theta_{\lambda,\chi}(v))=0$.

Now assume that $(\lambda+\chi,\theta)\in \AL(v)$. Then any object in $\A^\theta_\lambda(v)\operatorname{-mod}$
has no higher cohomology, and we are done.
\end{proof}

\subsubsection{Supports in parameters}\label{SSS_param_supp}
We also have the following elementary but important property. By the right  $\paramq$-support of an
$\A_{\paramq}(v)$-bimodule $\mathcal{B}$ (denoted by $\Supp_{\paramq}^r(\mathcal{B})$),
we mean the set of all $\lambda\in \paramq$ such that the specialization $\mathcal{B}_\lambda$
is nonzero. Analogously, we can speak about the left support $\Supp_{\paramq}^\ell(\mathcal{B})$.

\begin{Lem}\label{Lem:HC_gen_flat}
For a reduced closed subscheme $Y$ of $\paramq$, set $\A^0_Y(v):=\C[Y]\otimes_{\C[\paramq]}\A^0_{\paramq}(v)$.
Any finitely generated right $\A^0_{Y}(v)$-module $\mathcal{B}$ is generically free over $\C[Y]$,
i.e., there is a non zero divisor $f\in \C[Y]$ such that the localization $\mathcal{B}_f$ is a free $\C[Y]_f$-module.
\end{Lem}
\begin{proof}
We will need to modify a filtration on $\A^0_Y(v)$ so that $\C[Y]$
lives in degree $0$. Consider the Rees algebra $\A^0_{\paramq}(v)_\hbar$ and its base change $\tilde{\A}^0_{\paramq}(v)_\hbar=
\C[\paramq,\hbar]\otimes_{\C[\paramq,\hbar]}\A^0_{\paramq}(v)_\hbar$, where  the endomorphism
of $\C[\paramq,\hbar]$ used to form the tensor product is given by $\hbar\mapsto \hbar, \alpha\mapsto \alpha\hbar$ for $\alpha\in \paramq^*$
(here we consider $\paramq=\C^{Q_0}$ as a vector space, not as an affine space).
The algebra $\tilde{\A}^0_{\paramq}(v)_{\hbar}$ is graded with $\deg\hbar=1, \deg\C[\paramq]=0$. Also the specializations
of $\tilde{\A}^0_{\paramq}(v)_\hbar,\A^0_{\paramq}(v)_\hbar$ at $\hbar=1$ are the same and so coincide with
$\A^0_{\paramq}(v)$. We equip $\A^0_{\paramq}(v)$ with the filtration coming from the grading on $\tilde{\A}^0_{\paramq}(v)_\hbar$
and we equip the quotient $\A^0_Y(v)$ of $\A^0_{\paramq}(v)$ with the induced filtration.
We remark that $\gr \A^0_{Y}(v)$ is now a quotient of  $\C[\M^0(v)]\otimes \C[Y]$.

A finitely generated right module $\mathcal{B}$ admits a good filtration. By a general commutative algebra result, \cite[Theorem 14.4]{Eisenbud}, $\gr\mathcal{B}$ is generically free over $\C[Y]$. So there is a non zero divisor $f$ such that $(\gr\mathcal{B})_f$ is free over $\C[Y]_f$. It follows that $\mathcal{B}_f$ and $(\gr\mathcal{B})_f$ are isomorphic free $\C[Y]_f$-modules, and we are done.
\end{proof}

There is a trivial but very important corollary of this lemma.

\begin{Cor}\label{Cor:HC_supp}
Let $\mathcal{B}$ be a Harish-Chandra $\A^0_{\paramq_0'}(v)$-$\A^0_{\paramq_0}(v)$-bimodule. Then the
following claims hold:
\begin{enumerate}
\item There is $f\in \C[\paramq_0]$ such that $\mathcal{B}_f$ is a free
$\C[\paramq_0]_f$-module.
\item $\Supp^r_{\paramq_0}(\B)$  is a constructible set.
\end{enumerate}
We also have left-handed analogs of these claims.
\end{Cor}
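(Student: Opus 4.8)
The plan is to deduce both claims from Lemma \ref{Lem:HC_gen_flat} together with noetherian induction on $\paramq_0$, treating $\mathcal{B}$ as a finitely generated right $\A^0_{\paramq_0}(v)$-module (it is finitely generated because it is Harish-Chandra, hence finitely generated both as a left and as a right module; see Section \ref{SS_HC}). First I would prove (1). By Lemma \ref{Lem:HC_gen_flat} applied with $Y=\paramq_0$, there is a nonzerodivisor $f\in \C[\paramq_0]$ such that $\mathcal{B}_f$ is free over $\C[\paramq_0]_f$. This is exactly the asserted statement.

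Next, for (2), I would argue that $\Supp^r_{\paramq_0}(\mathcal{B})$ is constructible by induction on $\dim \paramq_0$ (equivalently, by noetherian induction on closed subsets of $\paramq_0$). With $f$ as in (1), on the principal open set $\paramq_0^f:=\{\lambda\mid f(\lambda)\neq 0\}$ the bimodule $\mathcal{B}_f$ is free, hence nonzero, over $\C[\paramq_0]_f$, so $\Supp^r_{\paramq_0}(\mathcal{B})\cap \paramq_0^f$ equals $\paramq_0^f$ if $\mathcal{B}\neq 0$ (and is empty otherwise); in either case it is open, in particular constructible. On the complement $Z:=\{\lambda\mid f(\lambda)=0\}$, which is a proper closed subscheme of $\paramq_0$, set $\A^0_Z(v):=\C[Z]\otimes_{\C[\paramq_0]}\A^0_{\paramq_0}(v)$ as in Lemma \ref{Lem:HC_gen_flat}, and consider $\mathcal{B}_Z:=\C[Z]\otimes_{\C[\paramq_0]}\mathcal{B}$. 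Since specialization is right exact and $\mathcal{B}_Z$ specializes at $\lambda\in Z$ to $\mathcal{B}_\lambda$, we have $\Supp^r_{\paramq_0}(\mathcal{B})\cap Z=\Supp^r_Z(\mathcal{B}_Z)$. Now $\mathcal{B}_Z$ is a Harish-Chandra $\A^0_{Z+\chi}(v)$-$\A^0_Z(v)$-bimodule where $\chi$ is the difference vector relating $\paramq_0'$ and $\paramq_0$ (the Harish-Chandra condition and finite generation pass to the base change $Z$ by construction, since the filtrations are inherited), so by the inductive hypothesis applied to the components of $Z$ (each of strictly smaller dimension), $\Supp^r_Z(\mathcal{B}_Z)$ is constructible. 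A finite union of constructible sets (over the irreducible components of $Z$) together with the open set handled above is constructible, completing the induction.

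The left-handed analogs follow by the same argument with $\A^0_{\paramq_0}(v)$ replaced by $\A^0_{\paramq_0'}(v)$ acting on the left; equivalently, one passes to opposite algebras, using that the opposite of a quantized quiver variety algebra is again such an algebra, so a Harish-Chandra bimodule becomes a Harish-Chandra bimodule for the opposite pair, and Lemma \ref{Lem:HC_gen_flat} applies verbatim.

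The main obstacle I anticipate is a bookkeeping one rather than a conceptual one: making sure that the base change $\mathcal{B}_Z$ is genuinely Harish-Chandra over the base-changed algebras, i.e.\ that a good filtration on $\mathcal{B}$ (with $\gr\mathcal{B}$ finitely generated over the appropriate commutative graded algebra) descends to a good filtration on $\mathcal{B}_Z$. This is handled by using the modified filtration on $\A^0_{\paramq_0}(v)$ from the proof of Lemma \ref{Lem:HC_gen_flat} in which $\C[\paramq_0]$ sits in degree $0$, so that $\gr$ of everything is a module over $\C[\M^0(v)]\otimes\C[\paramq_0]$ and base change along $\C[\paramq_0]\twoheadrightarrow\C[Z]$ visibly preserves finite generation of the associated graded; no further difficulty arises.
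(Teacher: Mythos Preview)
Your proof is correct and follows essentially the same approach as the paper's. Part (1) is identical. For part (2), the paper states the slightly more general fact that the support of \emph{any} finitely generated right $\A^0_Y(v)$-module is constructible in $Y$, and your noetherian induction is exactly the way to prove that statement from Lemma~\ref{Lem:HC_gen_flat}. One simplification: since Lemma~\ref{Lem:HC_gen_flat} is stated for arbitrary finitely generated right $\A^0_Y(v)$-modules (not just HC bimodules), you may carry out the induction in that generality, and then your worry about whether $\mathcal{B}_Z$ remains Harish-Chandra disappears---finite generation as a right $\A^0_Z(v)$-module is immediate from right exactness of $\C[Z]\otimes_{\C[\paramq_0]}\!-$, and that is all the lemma needs.
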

\begin{proof}
A  Harish-Chandra bimodule is finitely generated as a right $\A_{\paramq_0}(v)$-module (this was noted in the beginning
of Section \ref{SS_HC}). So (1) follows from Lemma \ref{Lem:HC_gen_flat}.

To prove (2) we note that the support of any finitely generated right $\A_Y(v)$-module is a constructible subset of
$Y$. This follows from Lemma \ref{Lem:HC_gen_flat}.
\end{proof}
%Below we will strengthen (2): we will see that $\Supp^r_{\paramq_0}(\B)$ is actually closed.

Below, Proposition \ref{Prop:HC_support}, we will see that
$\Supp^r_{\paramq_0}(\B)$ is actually a closed subset.

Here is how we are going to use (2). Let $\B$ be a HC $\A_{\paramq_0+\chi}$-$\A_{\paramq_0}$-bimodule,
where $\paramq_0\subset \paramq$ is an affine subspace. Then if $\B_\lambda=0$ for
a Weil generic $\lambda\in \paramq_0$ (we say that a parameter is Weil generic
if it lies outside of the countable union of algebraic subvarieties), then $\B_\lambda=0$ for a Zariski generic $\lambda$
as well.  HC $\A_{\lambda+\chi}(v)$-$\A_{\lambda}(v)$-bimodules
for $\lambda$ Weil generic are easier then for an arbitrary (even Zariski generic) $\lambda$. We will use this observation many times in our discussion of short wall-crossing functors through the affine wall, Section \ref{S_affine}.

\subsection{Restriction functors: construction}\label{SS_compl}
We want to define restriction functors for Harish-Chandra bimodules over $\A^0_{\paramq}(v)$
(or over $\A_{\paramq}(v)$) similar to the functors $\bullet_\dagger$ used in \cite{HC,sraco}.
Those will be exact $\C[\paramq]$-linear functors mapping HC bimodules over $\A^0_{\paramq}(v)$
to those over $\hat{\A}^0_{\paramq}(\hat{v})$, an algebra defined similarly to $\A^0_{\paramq}(v)$
 but for the quiver $\hat{Q}$ and vectors $\hat{v},\hat{w}$ that were constructed in Section \ref{SSS_class_slice}
(in fact, we will sometimes need to modify the algebras $\hat{\A}^0_{\paramq}(\hat{v})$,
see below).

%Let us remark that (C1),(C2) are satisfied for $\underline{Q},\underline{v},\underline{w}$
%whenever they are satisfied for the initial triple $(Q,v,w)$.
%(C1) follows from the symplectic slice theorem recalled in \ref{SSS_class_slice} and (C2) is a consequence of
%the commutative diagram in the end of \ref{SSS_class_slice}.

%We will have one functor for a point in $\M_{\param}(v)$. The subsection is organized as follows.
%We define a sheaf $\underline{\A}_{\paramq}^\theta(\underline{v})$ on $\underline{\M}^\theta(\underline{v})$
%(these sheaves enter our construction for technical reasons) the case of algebras is similar.  Then we define a functor.

\subsubsection{Algebras $\hat{\A}^0_{\paramq}(\hat{v})$, etc.}\label{SSS_restr_alg}
Let us proceed to the construction of $\hat{\A}^0_{\paramq}(\hat{v})$.
Let $\hat{\paramq}=\hat{\g}^{\hat{G}*}$ be the parameter space for
the quantizations associated to $(\hat{Q},\hat{v},\hat{w})$.
Let us define an affine map $\hat{r}:\paramq\rightarrow \hat{\paramq}$ whose differential is the restriction map $r:\g^{G*}\rightarrow \hat{\g}^{\hat{G}*}$. Namely, recall that we have elements
$\varrho(v), \hat{\varrho}(\hat{v})$ (the former is defined by (\ref{eq:rho}) and the latter is defined
analogously). Now set \begin{equation}\label{eq:quant_restr_map}\hat{r}(\lambda):=r(\lambda-\varrho(v))+\hat{\varrho}(\hat{v}).\end{equation}
Further, set $\hat{\A}^0_{\paramq}(\hat{v}):=
\C[\paramq]\otimes_{\C[\hat{\paramq}]}\hat{\A}^0_{\hat{\paramq}}(\hat{v})$
and define $\hat{\A}^\theta_{\paramq}(\hat{v})$ in a similar way. Here $$\hat{\A}^0_{\hat{\paramq}}(\hat{v}):=[D(\hat{R})/D(\hat{R})
\Phi([\hat{\g},\hat{\g}])]^{\hat{G}},$$
where $\hat{R}=R(\hat{Q},\hat{v},\hat{w})$.
%is the universal quantum Hamiltonian
%reduction for the triple $(\hat{Q},\hat{v},\hat{w})$.

We want to get   decompositions similar to (\ref{eq:affine_decomp}),(\ref{eq:smooth_decomp})
at the quantum level. For this, we consider the Rees sheaves and algebras $\A^{\theta}_{\paramq}(v)_\hbar, \A_{\paramq}(v)_\hbar,
\A^0_{\paramq}(v)_\hbar$ defined for the filtrations by the order of a differential operator.
We can complete those at $x$ getting the algebras
$\A_{\paramq}(v)_\hbar^{\wedge_x},
\A^0_{\paramq}(v)_\hbar^{\wedge_x}$ with $\A_{\paramq}(v)_\hbar^{\wedge_x}/(\hbar)=\C[\M_{\param}(v)^{\wedge_x}],
\C[\M^0_{\param}(v)^{\wedge_x}]\twoheadrightarrow \A^0_{\paramq}(v)_\hbar^{\wedge_x}/(\hbar)$
and the sheaf of algebras
$\A^{\theta}_{\paramq}(v)_\hbar^{\wedge_x}$ on
$\M^{\theta}_{\param}(v)^{\wedge_x}$ obtained by the $\hbar$-adic completion of
$$\A^0_{\paramq}(v)_{\hbar}^{\wedge_x}\otimes_{\A^0_{\paramq}(v)_\hbar}\A^\theta_{\paramq}(v)_\hbar$$
Note that $\A^{\theta}_{\paramq}(v)_\hbar^{\wedge_x}/(\hbar)=\mathcal{O}_{\M^{\theta}_{\param}(v)^{\wedge_x}}$.

\begin{Lem}\label{Lem:quant_decomp}
We have the following decompositions.
\begin{align}\label{eq:alg_decomp0}
&\A^0_{\paramq}(v)_\hbar^{\wedge_x}=
\hat{\A}^0_{\paramq}(v)_\hbar^{\wedge_0}\widehat{\otimes}_{\C[[\hbar]]}\Weyl_\hbar^{\wedge_0},\\\label{eq:alg_decomp}
&\A_{\paramq}(v)_\hbar^{\wedge_x}=
\hat{\A}_{\paramq}(v)_\hbar^{\wedge_0}\widehat{\otimes}_{\C[[\hbar]]}\Weyl_\hbar^{\wedge_0},\\\label{eq:sh_decomp}
&\A^\theta_{\paramq}(v)_\hbar^{\wedge_x}=
\left(\hat{\A}^\theta_{\paramq}(v)_\hbar\otimes_{\C[[\hbar]]}\Weyl_\hbar\right)^{\wedge_0}.
\end{align}
Isomorphisms (\ref{eq:alg_decomp0}),(\ref{eq:sh_decomp}) become (\ref{eq:affine_decomp}),(\ref{eq:smooth_decomp})
after setting $\hbar=0$. (\ref{eq:alg_decomp}) is obtained from (\ref{eq:sh_decomp})
by taking global sections.
\end{Lem}
By $\Weyl_\hbar$ we denote the homogenized Weyl algebra of $R_0$ and we write $\Weyl_\hbar^{\wedge_0}$ for the quantization of the symplectic formal polydisk $R_0^{\wedge_0}$.
\begin{proof}
The proof follows that of \cite[Lemma 6.5.2]{quant}. We provide it for reader's convenience.

Let $U$ denote the symplectic part of the slice module for $r$. Then, as we have mentioned
in \ref{SSS_class_slice},  \begin{equation}\label{eq:quant_slice} (T^*R)^{\wedge_{Gr}}\cong \left((T^*G\times U)\red G_r\right)^{\wedge_{G/G_r}},\end{equation} where $G_r$ acts diagonally on $T^*G\times U$. We can consider the quantization $D_\hbar(R)^{\wedge_{Gr}}$ of $(T^*R)^{\wedge_{Gr}}$ obtained by the completion of the homogenized Weyl algebra on $T^*R$. Also we can consider the quantization
$$[D_\hbar(G)^{\wedge_G}\widehat{\otimes}_{\C[[\hbar]]}\Weyl_\hbar(U)^{\wedge_0}]\red_0 G_r$$ of
$([T^*G\times U]\red G_r)^{\wedge_{G/G_r}}$ (where we use the symmetrized quantum comoment  map for $G_r$).
Those are canonical quantizations in the sense of \cite{BK} (for the second quantization
this follows from \cite[Section 5.4]{quant}) and so they are isomorphic. Consequently, their reductions
(both affine and GIT) for the $G$-action (again, with respect to the symmetrized quantum comoment map $\Phi^{sym}$)
are isomorphic. But the reduction of the quantization of  the right hand side of (\ref{eq:quant_slice})
coincides with $$\C[[\param,\hbar]]\widehat{\otimes}_{\C[[\hat{\param},\hbar]]}\left[\Weyl_\hbar^{\wedge_0}(U)/\Weyl_\hbar^{\wedge_0}(U)
\hat{\Phi}^{sym}([\hat{\g},\hat{\g}])\right]^{\hat{G}}.$$ Since $\Phi-\varrho(v), \hat{\Phi}-\hat{\varrho}(\hat{v})$ are the symmetrized quantum comoment maps, (\ref{eq:alg_decomp0}) and (\ref{eq:sh_decomp}) follow. (\ref{eq:alg_decomp}) is obtained from (\ref{eq:sh_decomp}) via taking the global sections
on both sides.
\end{proof}

%We remark that,  by the construction, we get (\ref{eq:alg_decomp}) from (\ref{eq:sh_decomp}) by taking global sections.
Let us observe that
\begin{equation}\label{eq:integr}
r(\varrho(v))-\hat{\varrho}(\hat{v})\in \Z^{\hat{Q}_0}.
\end{equation}
Indeed,  by reversing some arrows in $\hat{Q}^{\hat{w}}$ (the quiver obtained from $\hat{Q}$  by adjoining the new vertex $\infty$, see \ref{SSS_class_slice}), we can arrange (by reversing some arrows, perhaps, including
arrows coming from $\infty$) that $R, R_x\oplus \g/\g_r$
are isomorphic up to a trivial direct summand. Since $\g/\g_r$ is an orthogonal $G_r$-module, we see that $\bigwedge^{top}R\cong \bigwedge^{top}R_x$ as $G_r$-modules. Then we need to turn  $\infty$
back to a sink, so we have to reverse some arrows. Reversing an arrow in a quiver results in adding an integral character
to the quantum comoment map, and so (\ref{eq:integr}) follows.

%Let us proceed to defining a functor $\bullet_{\dagger,x}: \HC(\A_{\param}(v))$ for $x\in \M_{\param}(v)$.

\subsubsection{Euler derivations}
The sheaf $\A^\theta_{\paramq}(v)_\hbar$ comes with a $\C^\times$-action (that is induced now by the
fiberwise dilation action on $T^*R$) and hence with the Euler derivation
$\mathsf{eu}$ satisfying $\mathsf{eu}(\hbar)=\hbar$. This derivation extends to the completion $\A^\theta_{\paramq}(v)_\hbar^{\wedge_x}$.
On the other hand, the product $\hat{\A}^\theta_{\paramq}(\hat{v})_\hbar^{\wedge_0}\widehat{\otimes}_{\C[[\hbar]]}\Weyl_\hbar^{\wedge_0}$
comes with a $\C^\times$-action, and hence with the Euler derivation $\hat{\mathsf{eu}}$ again satisfying $\hat{\mathsf{eu}}(\hbar)=\hbar$.
We want to compare derivations $\mathsf{eu}$ and $\hat{\mathsf{eu}}$
of $\A_{\paramq}^\theta(v)_{\hbar}^{\wedge_x}$ (and similarly defined derivations
of $\A_{\paramq}^0(v)_{\hbar}^{\wedge_x}$).

\begin{Lem}\label{Lem:Euler_comp}
There is an element $a\in \A^0_{\paramq}(v)_\hbar^{\wedge_x}$ such that $\mathsf{eu}-\hat{\mathsf{eu}}=\frac{1}{\hbar}[a,\cdot]$
on $\A^0_{\paramq}(v)_\hbar^{\wedge_x}$ and on
$\A^\theta_{\paramq}(v)_\hbar^{\wedge_x}$.
\end{Lem}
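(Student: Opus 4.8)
The plan is to produce the element $a$ by comparing the two Euler derivations term by term and exploiting the fact that their difference is an \emph{inner} derivation modulo $\hbar$. First I would note that both $\mathsf{eu}$ and $\hat{\mathsf{eu}}$ are derivations of the $\C[[\hbar]]$-algebra $\A^0_{\paramq}(v)_\hbar^{\wedge_x}$ satisfying $\mathsf{eu}(\hbar)=\hat{\mathsf{eu}}(\hbar)=\hbar$, so their difference $\eta:=\mathsf{eu}-\hat{\mathsf{eu}}$ is a $\C[[\hbar]]$-linear derivation killing $\hbar$. Passing to $\hbar=0$, $\eta$ induces a derivation $\bar\eta$ of the completed commutative algebra $\C[\M^0_{\param}(v)^{\wedge_x}]$ (more precisely of its appropriate quotient), i.e. a formal vector field on $\M^0_{\param}(v)^{\wedge_x}$. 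The key geometric point is that under the decomposition (\ref{eq:affine_decomp}), $\mathsf{eu}$ and $\hat{\mathsf{eu}}$ correspond to two Euler vector fields attached to two different $\C^\times$-actions contracting the formal neighbourhood to the point $x$; their difference is a Hamiltonian vector field because both preserve the symplectic form (they rescale it by the same factor $t\mapsto t$, being Euler fields for the same Poisson bracket of degree $-1$), hence $\bar\eta = \{h,\cdot\}$ for some $h\in \C[\M^0_{\param}(v)^{\wedge_x}]$ vanishing at $x$.

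Next I would lift this statement to the quantum level. Since $\eta$ kills $\hbar$ and reduces mod $\hbar$ to the inner derivation $\frac1\hbar[\,\tilde h,\cdot\,]\bmod\hbar$ for a lift $\tilde h$ of $h$, the derivation $\eta - \frac1\hbar[\tilde h,\cdot]$ kills $\hbar$ and reduces to $0$ mod $\hbar$; so it is $\hbar$ times a $\C[[\hbar]]$-linear derivation $\eta_1$ of the complete algebra. Iterating — at each stage the reduction mod $\hbar$ of $\eta_i$ is again a symplectic (hence Hamiltonian) vector field on the formal neighbourhood because $\eta_i$ is a derivation of a deformation quantization of a symplectic formal polydisk crossed with $\hat{\M}$, and $H^1$ of the relevant de Rham / Poisson complex of the contractible formal neighbourhood vanishes — I obtain $h = \tilde h + \hbar \tilde h_1 + \hbar^2\tilde h_2 + \cdots$ with $\eta = \frac1\hbar[a,\cdot]$ where $a:=\sum_i \hbar^i \tilde h_i$, the sum converging $\hbar$-adically in $\A^0_{\paramq}(v)_\hbar^{\wedge_x}$. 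The compatibility with the $\theta$-semistable version is automatic: $\A^\theta_{\paramq}(v)_\hbar^{\wedge_x}$ is obtained from $\A^0_{\paramq}(v)_\hbar^{\wedge_x}$ by the base change (\ref{eq:sh_decomp}), both $\mathsf{eu}$ and $\hat{\mathsf{eu}}$ extend compatibly, so the same $a$ (viewed inside $\A^\theta_{\paramq}(v)_\hbar^{\wedge_x}$ via the canonical map) works there.

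The main obstacle I anticipate is the vanishing statement that feeds the induction — namely that at each order the obstruction to the difference being inner lies in a cohomology group that vanishes for the formal neighbourhood. Concretely one needs that for the symplectic formal polydisk $R_0^{\wedge_0}$ (quantized by $\Weyl_\hbar^{\wedge_0}$) tensored with the completed slice algebra $\hat{\A}^\theta_{\paramq}(\hat v)_\hbar^{\wedge_0}$, every $\C[[\hbar]]$-linear derivation that vanishes mod $\hbar$ is $\hbar$ times an inner derivation — equivalently that the first Hochschild cohomology of this completed algebra, relative to $\C[[\hbar,\param]]$, is concentrated in the expected degree. This is essentially the statement that the quantization is "rigid" near the fixed point, and it can be extracted from the Hochschild cohomology computation for Weyl algebras (Sridharan) together with the fact that $\hat{\M}^\theta_{\param}(\hat v)^{\wedge_0}$ is a formal neighbourhood, so its structure sheaf has no higher cohomology; one must be careful that the completions and the $\param$-direction do not introduce extra classes, but since $\param$ enters as a polynomial (formal) parameter this is harmless. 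An alternative, perhaps cleaner route, which I would try if the cohomological bookkeeping gets heavy, is to invoke uniqueness of the canonical $\C^\times$-equivariant (even) quantization as in \cite{BK,quant}: both Euler derivations are the infinitesimal generators of the unique lift of the contracting $\C^\times$-action in each realization, the isomorphism (\ref{eq:alg_decomp0})–(\ref{eq:sh_decomp}) intertwines the gradings only up to an inner automorphism, and differentiating that inner automorphism directly produces $a$. In either approach the remaining steps — convergence of the series, verifying $a\in \A^0_{\paramq}(v)_\hbar^{\wedge_x}$ rather than just in a larger completion, and compatibility with the restriction to the $\theta$-semistable locus — are routine.
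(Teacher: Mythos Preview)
Your approach differs from the paper's, and the gap sits exactly where you flag the ``main obstacle.'' You run the $\hbar$-adic induction directly on the completed reduction $\A^0_{\paramq}(v)_\hbar^{\wedge_x}$; at each step the obstruction to the derivation being inner is a class in the first Poisson cohomology of $\M^0_{\param}(v)^{\wedge_x}\cong \hat{\M}^0_{\param}(\hat v)^{\wedge_0}\times R_0^{\wedge_0}$. Sridharan's computation (or contractibility of the formal polydisk) disposes of the $R_0$ factor, but $\hat{\M}^0_{\param}(\hat v)^{\wedge_0}$ is \emph{singular} at $0$, and there is no general reason its first Poisson (or Hochschild) cohomology vanishes. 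Your fallback via uniqueness of the canonical quantization also does not explain why the resulting element lands in $\A^0$ rather than only in $\A$ or in the sheaf $\A^\theta$.

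The paper avoids this by lifting the problem \emph{before} Hamiltonian reduction. After enlarging $G$ to split it as $G_0\times T$ with $G_0=(G_0,G_0)G_v$, one passes to $D_\hbar(Y)^{\wedge_{G_0 y}}$ for $Y=G_0*_{G_v}R_x$. Here $T^*Y$ is smooth symplectic and the group-theoretic condition forces $H^1_{DR}(T^*Y)=0$, so the standard argument produces $\tilde a$ with $d=\frac{1}{\hbar}[\tilde a,\cdot]$ upstairs. Since $d\circ\Phi=0$, the element $\tilde a$ commutes with $\Phi(\g_0)$, hence is $G_0$-invariant, hence descends to $a$ in the \emph{affine} reduction $(D_\hbar(R)\red^0 G)^{\wedge_x}=\A^0_{\paramq}(v)_\hbar^{\wedge_x}$. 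The de Rham vanishing is available on the smooth pre-quotient but not on the singular quotient; going upstairs and descending via invariance is precisely the missing ingredient in your direct argument.
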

\begin{proof}
Consider a more general setting.  Let $R$ be a  vector space, $G$ be a reductive group
acting on $R$, $v\in \mu^{-1}(0)\subset T^*R$ be a point such that $Gv$ is closed
and $G_v$ is connected (for simplicity). Let $\Phi:\g\rightarrow D_\hbar(R)$
be a symmetrized quantum comoment map and let $\theta:G\rightarrow \C^\times$ be a character.
Consider the quantum Hamiltonian reduction $(D_\hbar(R)\red^\theta_\lambda G)^{\wedge_v}$
(there the completion is taken at the image $x$ of $v$ in $T^*R\red_0 G$).
Let $d$ be a $G$-invariant $\C[\hbar]$-linear derivation of
$D_\hbar(R)^{\wedge_{Gv}}$  such that $d\circ \Phi=0$ so that $d$
induces  derivations $d^\theta$ on $(D_\hbar(R)\red^\theta_\lambda G)^{\wedge_v}$
and $d^0$ on $(D_\hbar(R)\red^0_\lambda G)^{\wedge_v}$.
We claim that
\begin{itemize}
\item[(*)] there is an element $a\in (D_\hbar(R)\red^0_\lambda G)^{\wedge_v}$
such that $d^\theta=\frac{1}{\hbar}[a,\cdot]$ and $d^0=\frac{1}{\hbar}[a,\cdot]$.
\end{itemize}

To apply (*) in our situation, we take $d=\mathsf{Eu}-\hat{\mathsf{Eu}}$.
Here $\mathsf{Eu}$ is the derivation of $D_\hbar(R)^{\wedge_{Gv}}$ induced by the fiberwise
$\C^\times$-action on $T^*R$ and     $\hat{\mathsf{Eu}}$ is
the derivation induced by the fiberwise $\C^\times$-action on $T^*(G*_{G_v}R_x)$.

To prove (*) note that we can replace $G$ with a finite central extension  and assume that
$G=G_0\times T$, where $T$ is a torus and $G_0$ satisfies $G_0=(G_0,G_0)G_v$.
So  $D_\hbar(R)^{\wedge_{Gv}}=D_\hbar(T)^{\wedge_T}\widehat{\otimes}_{\C[[\hbar]]}
D_\hbar(Y)^{\wedge_{G_0y}}$, where $Y=G_0*_{G_r}R_x$ and $y$ is the point $[1,0]\in Y$.
The algebra $D_\hbar(Y)^{\wedge_{G_0y}}$ is the reduction of  $D_\hbar(R)^{\wedge_{Gv}}$
by the action of $T$ and so $d$ descends to $D_\hbar(Y)^{\wedge_{G_0y}}$.
Furthermore, $(D_\hbar(R)\red^\theta_\lambda G)^{\wedge_v}=D_\hbar(Y)^{\wedge_{G_0y}}\red_\lambda^\theta G_0$.
Let us note that $H^1_{DR}(T^*Y)=0$ because of the assumption $G_0=(G_0,G_0)G_v$.
Modulo $\hbar$, the derivation $d$  is a symplectic vector field
on the formal neighborhood of $G_0y$ in $T^*Y$. So it is Hamiltonian.
From here we deduce that $d=\frac{1}{\hbar}[\tilde{a},\cdot]$ for some
element $\tilde{a}\in D_\hbar(Y)^{\wedge_{G_0y}}$. This element commutes
with $\Phi(\g_0)$ and hence is $G_0$-invariant. For $a$ we take its
image in   $(D_\hbar(R)\red^0_\lambda G)^{\wedge_x}$. It is straightforward
to see that this element satisfies (*).
\end{proof}

%Now let us proceed to constructing  functors $$\bullet_{\dagger,x}: \operatorname{HC}(\A^0_{\paramq}(v))\rightarrow
%\operatorname{HC}(\hat{\A}^0_{\paramq}(\hat{v})),
%\operatorname{HC}(\A_{\paramq}(v))\rightarrow
%\operatorname{HC}(\hat{\A}_{\paramq}(\hat{v}))$$

\subsubsection{Construction of $\bullet_{\dagger,x}$}
Let us now proceed to constructing $\bullet_{\dagger,x}:\HC(\A_{\paramq}(v))\rightarrow
\HC(\hat{\A}_{\paramq}(\hat{v}))$.
Define the category $\HC(\A_{\paramq}(v)_\hbar^{\wedge_x})$ as the category of $\A_{\paramq}(v)_\hbar^{\wedge_x}$-bimodules $\B'_\hbar$ that are
\begin{itemize}
\item finitely generated as bimodules,
\item flat over $\C[[\hbar]]$ and complete and separated in the $\hbar$-adic topology,
\item satisfy $[a,b]\in \hbar \B'_\hbar$
for all $a\in \A_{\paramq}(v)_{\hbar}^{\wedge_x},b\in \B'_\hbar$,
\item and come equipped with a derivation $\mathsf{Eu}$ compatible with
$\mathsf{eu}$ on $\A_{\paramq}(v)_\hbar^{\wedge_x}$.
\end{itemize}

Similarly, we can define the category $\HC(\hat{\A}_{\paramq}(\hat{v})_\hbar^{\wedge_0})$
(we need to have a derivation compatible with $\hat{\mathsf{eu}}$). The categories
$\HC(\A_{\paramq}(v)_\hbar^{\wedge_x})$ and $\HC(\hat{\A}_{\paramq}(\hat{v})_\hbar^{\wedge_0})$
are equivalent as follows. Using the decomposition (\ref{eq:alg_decomp}), we view $\B_\hbar'\in
\HC(\A_{\paramq}(v)_\hbar^{\wedge_x})$ as a bimodule over $\hat{\A}_{\paramq}(\hat{v})_\hbar^{\wedge_0}\widehat{\otimes}_{\C[[\hbar]]}\Weyl_\hbar^{\wedge_0}$.
Similarly to \cite[Proposition 3.3.1]{HC}, this bimodule splits as $\hat{\B}_\hbar'\widehat{\otimes}_{\C[[\hbar]]}\Weyl_\hbar^{\wedge_0}$,
where $\hat{\B}_\hbar'$ is an $\hat{\A}_{\paramq}(\hat{v})^{\wedge_0}_\hbar$-bimodule.
The derivation $\hat{\mathsf{Eu}}:=\mathsf{Eu}-\frac{1}{\hbar}[a,\cdot]$ on $\B'_\hbar$ is compatible
with the derivation $\hat{\mathsf{eu}}$ on $\hat{\A}_{\paramq}(\hat{v})_\hbar^{\wedge_0}\widehat{\otimes}_{\C[[\hbar]]}\Weyl_\hbar^{\wedge_0}$
and so restricts to $\hat{\B}'_\hbar$ making it an object of $\HC(\hat{\A}_{\paramq}(v)_\hbar^{\wedge_0})$.
An equivalence $\HC(\A_{\paramq}(v)_\hbar^{\wedge_x})\xrightarrow{\sim}\HC(\hat{\A}_{\paramq}(\hat{v})_\hbar^{\wedge_0})$
we need maps $\B'_\hbar$ to $\hat{\B}_\hbar'$. A quasi-inverse equivalence sends
$\hat{\B}_\hbar'$ to $\hat{\B}'_\hbar\widehat{\otimes}_{\C[[\hbar]]}\Weyl_\hbar^{\wedge_0}$.

Now we construct the functor $\bullet_{\dagger}$.
Pick $\B\in \operatorname{HC}(\A_{\paramq}(v))$. Choose a good filtration on $\B$
and let $\B_\hbar\in \HC(\A_{\paramq}(v)_\hbar)$ be the Rees bimodule. So the completion $\B_\hbar^{\wedge_x}$ is an $\A_{\paramq}(v)_\hbar^{\wedge_x}$-bimodule. By the construction, $\B_\hbar$ comes with  the
derivation $\mathsf{Eu}:=\hbar \partial_{\hbar}$ compatible with the derivation $\mathsf{eu}$
on $\A_{\paramq}(v)_\hbar$. The derivation $\mathsf{Eu}$ extends to   $\B_\hbar^{\wedge_x}$
that makes the latter an object of $\HC(\A_{\paramq}(v)_\hbar^{\wedge_x})$. From this object
we get $\hat{\B}'_\hbar\in  \HC(\hat{\A}_{\paramq}(\hat{v})_\hbar^{\wedge_0})$.

By \cite[Proposition 3.3.1]{HC}, the $\hat{\mathsf{Eu}}$-finite part $\hat{\B}_\hbar$ is dense in $\hat{\B}_\hbar'$.
Since $\hat{\B}'_\hbar$ is a finitely generated bimodule over $\hat{\A}_\paramq(\hat{v})^{\wedge_0}_\hbar$,
and $\hat{\B}_\hbar$ is dense, we can choose generalized $\hat{\mathsf{Eu}}$-eigen-vectors for generators
of $\hat{\B}'_\hbar$. Now it is easy to see that $\hat{\B}_\hbar$ is finitely generated over $\hat{\A}_\paramq(\hat{v})_\hbar$. In  its turn, this implies that $\hat{\B}_\hbar$ can be made into a graded $\hat{\A}_\paramq(\hat{v})_\hbar$-bimodule.

We set $\B_{\dagger,x}:=\hat{\B}_\hbar/(\hbar-1)$, it is a Harish-Chandra
$\hat{\A}_{\paramq}(\hat{v})$-bimodule, a good filtration comes from the $\C^\times$-action
on $\hat{\B}_\hbar$. Similarly to \cite[Section 3.4]{HC}, we see that the assignment
$\B\rightarrow \B_{\dagger,x}$ is functorial.

Let us note that the functor is independent
(up to an isomorphism) of the choice of $a$ (which is defined uniquely up to a summand from
$\C[[\paramq,\hbar]]$). This is because the spaces of $\C^\times$-finite sections arising from
$a$ and $a+f$ with $f\in \C[[\paramq,\hbar]]$  are obtained from one another by applying
$\exp([F,\cdot])$, where $F:=\frac{1}{\hbar}\int_0^\hbar f d\hbar$.

So we have constructed $\bullet_{\dagger,x}: \operatorname{HC}(\A_{\paramq}(v))\rightarrow
\operatorname{HC}(\hat{\A}_{\paramq}(\hat{v}))$.

\subsubsection{Variations}
The functor $\bullet_{\dagger,x}: \operatorname{HC}(\A^0_{\paramq}(v))\rightarrow
\operatorname{HC}(\hat{\A}^0_{\paramq}(\hat{v}))$ is constructed completely
analogously. Similarly to Section \ref{SSS_HC_def},
any HC $\A_{\paramq}(v)$-bimodule $\B$ is also HC over $\A^0_{\paramq}(v)$ and
$\B_{\dagger,x}$ does not depend on whether we consider $\B$ as an
$\A_{\paramq}(v)$-bimodule or as a $\A^0_{\paramq}(v)$-bimodule.

Note also that above we have established a functor $\HC(\A_{\paramq}(v)_\hbar^{\wedge_x})\rightarrow
\HC(\hat{\A}_{\paramq}(\hat{v}))$. Denote it by $\Psi$. We also have a version of this functor
for the $\A^0$-algebras (again, denoted by $\Psi$).

In the case of affine quivers, we sometimes will need a slight modification of the target category for $\bullet_{\dagger,x}$. Namely, we remark that $0$ does not need to be a single symplectic leaf in $\hat{\M}(\hat{v})$. This happens, for example, when the quiver $\hat{Q}$ is a single loop or is a union of such. Let $\mathcal{L}_0$ be a leaf through $0\in \hat{\M}(\hat{v})$, this is an affine space. So the algebra $\hat{\A}_{\paramq}(v)$
splits into the product of the Weyl algebra $\mathbf{A}_{0}$ quantizing $\mathcal{L}_0$ and of some other algebra $\bar{\A}_{\paramq}(\hat{v})$. The latter is obtained by the same reduction
but from the space where we replace all summands of the form $\operatorname{End}(\C^{\hat{v}_i})$
with $\mathfrak{sl}_{\hat{v}_i}$. We have a category equivalence
$\operatorname{HC}(\hat{\A}_{\paramq}(\hat{v}))\xrightarrow{\sim}\operatorname{HC}(\bar{\A}_{\paramq}(\hat{v}))$
sending $\hat{\B}$ to the centralizer $\bar{\B}$ of $\mathbf{A}_0$ in $\hat{\B}$
(so that $\hat{\B}=\mathbf{A}_0\otimes \bar{\B}$). We will view
$\bullet_{\dagger,x}$ as a functor with target category
$\operatorname{HC}(\bar{\A}_{\paramq}(\hat{v}))$.

\subsection{Restriction functors: properties}\label{SS_compl1}
It is straightforward from the construction that $\bullet_{\dagger,x}$ is exact and $\C[\paramq]$-linear,
compare to \cite[Section 3.4]{HC} or \cite[Section 4.1.4]{W_dim}.

Now  let us describe the behavior of the functor $\bullet_{\dagger,x}$ on the associated varieties.
The following lemma follows straightforwardly from the construction (compare with (4) of \cite[Proposition 3.6.5]{sraco}).

\begin{Lem}\label{Lem:dag_assoc}
Let $\B$ be a HC $\A_{\paramq}(v)$-bimodule.  Then the associated variety of $\B_{\dagger,x}$
is uniquely characterized by $(\VA(\B_{\dagger,x})\times \mathcal{L})^{\wedge_x}=\VA(\B)^{\wedge_x}$, where $\mathcal{L}$
is the symplectic leaf through $x$. A similar claim holds for HC $\A_{\paramq}^0(v)$-bimodules.
\end{Lem}

Now let us proceed to the compatibility of $\bullet_{\dagger,x}$ with the Tor's and Ext's.

\begin{Lem}\label{Lem:tens_dag_intertw}
We have a functorial isomorphism $$\Tor^{\A^0_{\paramq_0}(v)}_i(\mathcal{B}^1,\mathcal{B}^2)_{\dagger,x}=
\Tor^{\hat{\A}_{\paramq_0}(\hat{v})}_i(\mathcal{B}^1_{\dagger,x},\mathcal{B}^2_{\dagger,x}).$$
Here $\mathcal{B}^1\in \operatorname{HC}(\A^0_{\paramq_0}(v)\text{-}\A^0_{\paramq'_0}(v))$
and $\mathcal{B}^2\in \operatorname{HC}(\A^0_{\paramq'_0}(v)\text{-}\A^0_{\paramq''_0}(v))$,
where $\paramq_0,\paramq_0',\paramq_0''$ are three parallel affine subspaces in $\paramq$.
Similarly, we have
$$\Ext_{\A^0_{\paramq_0}(v)}^i(\mathcal{B}^1,\mathcal{B}^2)_{\dagger,x}=
\Ext_{\hat{\A}_{\paramq_0}(\hat{v})}^i(\mathcal{B}^1_{\dagger,x},\mathcal{B}^2_{\dagger,x}),$$
where $\mathcal{B}^1\in \operatorname{HC}(\A^0_{\paramq_0}(v)\text{-}\A^0_{\paramq'_0}(v))$
and $\mathcal{B}^2\in \operatorname{HC}(\A^0_{\paramq_0}(v)\text{-}\A^0_{\paramq''_0}(v))$.
\end{Lem}
\begin{proof}
We will deal with the case when $\paramq_0=\paramq$, the general case is similar.
We will do  Tor's, the case of Ext's is similar.

Consider the bounded derived category $D^b(\A^0_{\paramq}(v))$
of the category $\A^0_{\paramq}(v)\operatorname{-bimod}$ of finitely generated $\A^0_{\paramq}(v)$-bimodules and its subcategory $D^b_{HC}(\A^0_{\paramq}(v))$ of all complexes with HC homology. Similarly, consider
the bounded derived category $D^b(\A^0_{\paramq}(v)_\hbar)$ of the category $\A^0_{\paramq}(v)_\hbar\operatorname{-grbimod}$
of graded finitely generated graded $\A^0_{\paramq}(v)_\hbar$-bimodules and its subcategory $D^b_{HC}(\A^0_{\paramq}(v)_\hbar)$
of all complexes whose homology mod $\hbar$ are $\C[\M^0_{\param}(v)]$-modules (rather than just arbitrary bimodules).
We have a functor $\C_1\otimes_{\C[\hbar]}\bullet: \A^0_{\paramq}(v)_\hbar\operatorname{-grbimod}\rightarrow
\A^0_{\paramq}(v)\operatorname{-bimod}$ whose kernel is the subcategory $\A^0_{\paramq}(v)_\hbar\operatorname{-grbimod}_{tor}$
of all bimodules where $\hbar$ acts  nilpotently. This gives rise to the equivalence
\begin{equation}\label{eq:equi1}\C_1\otimes_{\C[\hbar]}\bullet: D^b(\A^0_{\paramq}(v)_\hbar)/D^b_{tor}(\A^0_{\paramq}(v)_\hbar)\rightarrow D^b(\A^0_{\paramq}(v))\end{equation}
that restricts to an equivalence of the HC subcategories and clearly intertwines the derived tensor product
(or $\Hom$) functors.

Let us proceed to the completed setting. Consider the algebra $$\Af:=\C[\mathsf{eu}]\ltimes
(\A^0_{\paramq}(v)_\hbar^{\wedge_x}\widehat{\otimes}_{\C[[\hbar]]}\A^0_{\paramq}(v)_\hbar^{\wedge_x,opp}),$$ where
$[\mathsf{eu},a]=\hbar \partial_\hbar a$ for $a\in \A^0_{\paramq}(v)_\hbar^{\wedge_x}\widehat{\otimes}_{\C[[\hbar]]}\A^0_{\paramq}(v)_\hbar^{\wedge_x,opp}$.
Any module over $\mathfrak{A}$ is a  $\A^0_{\paramq}(v)_\hbar^{\wedge_x}$-bimodule equipped with an Euler derivation
(but not vice versa).
Let $D^b(\A^0_{\paramq}(v)_\hbar^{\wedge_x})\subset D^b(\mathfrak{A}\operatorname{-mod})$ stand for the full subcategory  of all objects whose homology is
a HC $\A^0_{\paramq}(v)_\hbar^{\wedge_x}$-bimodule. We have the completion functor
\begin{align*}\bullet^{\wedge_x}:=
(\A^0_{\paramq}(v)_\hbar^{\wedge_x}\widehat{\otimes}_{\C[[\hbar]]}\A^0_{\paramq}(v)_\hbar^{\wedge_x,opp})
\otimes_{\A^0_{\paramq}(v)_\hbar\otimes_{\C[\hbar]}\A^0_{\paramq}(v)_\hbar^{opp}}\bullet:
\A^0_{\paramq}(v)_\hbar\operatorname{-grbimod}
\rightarrow \Af\operatorname{-mod}\end{align*}
We remark that, for a HC bimodule $\M$, we have $\M^{\wedge_x}=\A^0_{\paramq}(v)_\hbar^{\wedge_x}\otimes_{\A^0_{\paramq}(v)_\hbar}\M$
because the right hand side is already complete as a right $\A^0_{\paramq}(v)_\hbar$-module.
The completion functor restricts to a functor
\begin{equation}\label{eq:equi2}\bullet^{\wedge_x}: D^b_{HC}(\A^0_{\paramq}(v)_\hbar)\rightarrow D^b_{HC}(\A^0_{\paramq}(v)_\hbar^{\wedge_x}).\end{equation}
This functor  preserves the
$\hbar$-torsion subcategories. It intertwines $\bullet\otimes_{\A^0_{\paramq}(v)_\hbar}\bullet$
with $\bullet\otimes_{\A^0_{\paramq}(v)_\hbar^{\wedge_x}}\bullet$.  It is t-exact.
And since it sends $\A^0_{\paramq}(v)_\hbar$ to $\A^0_{\paramq}(v)_\hbar^{\wedge_x}$,
it intertwines the derived tensor product functors as well.

Now let us equip $\mathcal{B}^1,\mathcal{B}^2$ with good filtrations and consider the corresponding Rees bimodules
$\mathcal{B}^1_\hbar, \mathcal{B}^2_\hbar$. Since $\bullet^{\wedge_x}$ is a t-exact functor, we see that
\begin{equation}\label{eq:homol} H_i(\mathcal{B}^1_\hbar\otimes^L_{\A^0_{\paramq}(v)_\hbar}
\mathcal{B}^2_\hbar)^{\wedge_x}=
H_i(\mathcal{B}^{1\wedge_x}_\hbar\otimes^L_{\A^0_{\paramq}(v)_\hbar^{\wedge_x}}\mathcal{B}^{2\wedge_x}_\hbar),\end{equation} the equality of HC $\A^0_{\paramq}(v)_\hbar^{\wedge_x}$-bimodules.

Recall the functor
$\Psi: \HC(\A^0_{\paramq}(v)_\hbar^{\wedge_x})\rightarrow \HC(\hat{\A}_{\paramq}(\hat{v}))$
from Section \ref{SS_compl}. Applying $\Psi$ to the left hand side of (\ref{eq:homol}), we get $H_i(\mathcal{B}^1\otimes^L_{\A^0_{\paramq}(v)}\mathcal{B}^2)_{\dagger,x}$.

Let us see what happens when we apply $\Psi$ to the right hand side.
Note that $\mathcal{B}_\hbar^{i\wedge_x}=\Weyl_\hbar^{\wedge_0}\widehat{\otimes}_{\C[[\hbar]]}R_\hbar(\mathcal{B}^i_{\dagger,x})^{\wedge_0}$
that yields $$\mathcal{B}_\hbar^{1\wedge_x}\otimes^L_{\A^0_{\paramq}(v)_\hbar^{\wedge_x}}\mathcal{B}_\hbar^{2\wedge_x}=
\Weyl_\hbar^{\wedge_0} \widehat{\otimes}_{\C[[\hbar]]}\left(R_\hbar(\mathcal{B}^1_{\dagger,x})^{\wedge_0}\otimes^L_
{\hat{\A}_{\paramq}(\hat{v})^{\wedge_0}_\hbar}R_\hbar(\mathcal{B}^2_{\dagger,x})^{\wedge_0}\right).$$
So if we apply $\Psi$ to the right hand side of (\ref{eq:homol}) we get $H_i(\mathcal{B}^1_{\dagger,x}\otimes_{\hat{\A}_{\paramq}(\hat{v})}\mathcal{B}^2_{\dagger,x})$.
This completes the proof.
\end{proof}

Another important property of the restriction functor is the equality \begin{equation}\label{eq:transl_restr}\A^0_{\paramq,\chi}(v)_{\dagger,x}=\hat{\A}^0_{\paramq,\chi}(\hat{v}).
\end{equation}
This follows from the decomposition $\A^0_{\paramq,\chi}(v)_\hbar^{\wedge_x}\cong \hat{\A}^0_{\paramq,\chi}(\hat{v})_\hbar^{\wedge_0}
\widehat{\otimes}_{\C[[\hbar]]}\Weyl_\hbar^{\wedge_0}$ that is proved similarly to (\ref{eq:alg_decomp0}).

We finish this section with two remarks.

\begin{Rem}\label{Rem:derived_restr}
Let us explain why in Lemma \ref{Lem:tens_dag_intertw} we deal with $\Tor$'s rather than with the derived tensor products.
The reason is that we do not have the derived version of the functor $\bullet_{\dagger,x}$. The difficulty
here is to pass between the derived version of the category $\HC(\hat{\A}_{\paramq}(\hat{v})^{\wedge_0}_\hbar)$
to that of the category $\HC(\hat{\A}_{\paramq}(\hat{v})_\hbar)$. For the latter derived version
we take the subcategory in the derived category of the category of graded $\hat{\A}_{\paramq}(\hat{v})_\hbar$-bimodules
with HC homology. For the former derived version we need to use
the subcategory in the derived category of modules over $$\C[\hat{\mathsf{eu}}]\ltimes\left( \C((\hbar))\otimes_{\C[[\hbar]]}\left(\hat{\A}_{\paramq}(\hat{v})^{\wedge_0}_\hbar\widehat{\otimes}_{\C[[\hbar]]}
\hat{\A}_{\paramq}(\hat{v})^{\wedge_0,opp}_\hbar\right)\right)$$
with homology that is a localization (from $\C[[\hbar]]$ to $\C((\hbar))$) of a HC bimodule. We need to localize to
$\C((\hbar))$ because  the operator
$\frac{1}{\hbar}[a,\cdot]$ is not defined on an arbitrary $\hat{\A}_{\paramq}(\hat{v})^{\wedge_0}_\hbar$-bimodule.
Of course, we still have a completion functor \begin{align*}&D^b_{HC}(\hat{\A}_{\paramq}(\hat{v})_\hbar\operatorname{-grbimod})/
D^b_{HC}(\hat{\A}_{\paramq}(\hat{v})_\hbar\operatorname{-grbimod})_{tor}\rightarrow \\
&D^b_{HC}(\C[\hat{\mathsf{eu}}]\ltimes \C((\hbar))\otimes_{\C[[\hbar]]}\left[\hat{\A}_{\paramq}(\hat{v})^{\wedge_0}_\hbar\widehat{\otimes}_{\C[[\hbar]]}
\hat{\A}_{\paramq}(\hat{v})^{\wedge_0,opp}_\hbar\right])\end{align*}
(here $\operatorname{grbimod}$ means graded bimodules). A problem with this functor is that it is not an equivalence,
the target category has more $\Hom$'s, which has to do with the fact that we do not require the action of a derivation
$\hat{\operatorname{eu}}$ to be diagonalizable (and we do not see any way to impose this condition).

Let us point out that this problem does not occur in the W-algebra setting, \cite{HC,W_dim} because there we have a Kazhdan torus action that fixes a point where we complete. So in that case it is enough to deal with weakly  $\C^\times$-equivariant derived categories.
\end{Rem}

\begin{Rem}\label{Rem:adjoint}
Let $\tilde{\operatorname{HC}}(\A^0_{\paramq}(v))$ denote the category of {\it locally HC} $\A^0_{\paramq}(v)$-bimodules
(i.e., bimodules that are sums of their Harish-Chandra subbimodules), the ind completion of $\operatorname{HC}(\A^0_{\paramq}(v))$.
Then, similarly to \cite[Section 3.4]{HC},\cite[Section 3.7]{sraco}, we have a functor $\bullet^{\dagger,x}:
\operatorname{HC}(\hat{\A}^0_{\paramq}(\hat{v}))\rightarrow \tilde{\operatorname{HC}}(\A^0_{\paramq}(v))$
that is right adjoint to $\bullet_{\dagger,x}$. This functor is automatically $\C[\paramq]$-linear.
It is likely that the image of $\bullet^{\dagger,x}$ actually lies in $\operatorname{HC}(\A^0_{\paramq}(v))$
but we do not know the proof of this claim. Below we will see $\underline{\mathcal{B}}^{\dagger,x}$
lies in $\operatorname{HC}(\A^0_{\paramq}(v))$ provided $\underline{\mathcal{B}}$ is finitely generated
over $\C[\paramq]$.
\end{Rem}

\subsection{Restriction functors: applications}\label{SS_compl2}
Our first application will be to  $\paramq$-supports of HC bimodules.

\begin{Prop}\label{Prop:HC_support}
Let $\B$ be a HC $\A^0_{\paramq}(v)$-bimodule. Then
$\operatorname{Supp}^r_{\paramq}(\B)$  is closed and
$$\mathsf{AC}(\operatorname{Supp}^r_{\paramq}(\B))=\Supp_{\param}(\gr\B).$$
\end{Prop}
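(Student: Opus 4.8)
The plan is to establish both assertions by passing to the Rees bimodule $\B_\hbar$ (with respect to a good filtration) and analyzing its geometry over $\paramq\times\C_\hbar$. First I would recall that by Corollary \ref{Cor:HC_supp}(2) the set $\operatorname{Supp}^r_{\paramq}(\B)$ is constructible, so it suffices to show it is closed under specialization; equivalently, if $\lambda_0$ lies in the closure of $\operatorname{Supp}^r_{\paramq}(\B)$, then $\B_{\lambda_0}\neq 0$. For this I would use the universal Rees construction $\tilde{\A}^0_{\paramq}(v)_\hbar$ from the proof of Lemma \ref{Lem:HC_gen_flat}, where $\C[\paramq]$ sits in degree $0$ and $\gr\B_\hbar$ is a finitely generated module over a quotient of $\C[\M^0(v)]\otimes\C[\paramq]$. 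The key observation is that $\operatorname{Supp}^r_{\paramq}(\B)$ is, up to taking the reduced structure, the image of $\operatorname{Supp}(\gr\B_\hbar)\subset \M^0_\param(v)\times\paramq\times\C_\hbar$ under projection to $\paramq$ after restricting to $\hbar\neq 0$ — but since the grading makes $\operatorname{Supp}(\gr\B_\hbar)$ a $\C^\times$-stable cone, its image in $\paramq$ is already a closed (indeed $\C^\times$-stable under the induced action, hence conical) subset, and passing through $\hbar=1$ does not change it.

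The cleanest route to the closedness, and simultaneously to the asymptotic cone formula, is the following. Consider the finitely generated $\C[\paramq]\otimes\C[\hbar]$-module $\gr_{\hbar}\B_\hbar$ where I take the associated graded with respect to the order filtration on the $\A^0$-part only, keeping $\paramq$ and $\hbar$ intact; this is a finitely generated module over $\C[\M^0_\param(v)][\hbar]$, graded with $\deg\hbar=1$. Its support is a closed $\C^\times$-stable subvariety $Z\subset \M^0_\param(v)\times\C_\hbar$, whose fiber over $\hbar=1$ is the support of $\gr\B$ (a subvariety of $\M^0_\param(v)$ whose image in $\paramq$ is $\Supp_\param(\gr\B)$ by definition — here I use that $\gr\B$ is a $\C[\M^0_\param(v)]$-module and $\Supp_\param(\gr\B)$ means its support as a $\C[\paramq]$-module) and whose fiber over $\hbar=0$ is $\mathsf{AC}$ of that. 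Then $\Supp^r_{\paramq}(\B)$ is the set of $\lambda$ such that $\B_\lambda\neq 0$, and by generic freeness (Corollary \ref{Cor:HC_supp}(1)) combined with Nakayama over the local rings of $\paramq$, this coincides with the projection to $\paramq$ of the $\hbar=1$ fiber of the support of $\gr_\hbar\B_\hbar$; taking asymptotic cones commutes with this projection because everything is conical, giving $\mathsf{AC}(\Supp^r_\paramq(\B))=\Supp_\param(\gr\B)$.

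The step I expect to be the genuine obstacle is making the comparison $\Supp^r_\paramq(\B)=\mathrm{pr}_\paramq(\mathrm{Supp}(\gr\B_\hbar)\cap\{\hbar=1\})$ precise, because a priori $\B_\lambda$ could vanish even when $\gr\B$ does not specialize to zero — one must rule out that the specialization map $\C[\paramq]\to\C_\lambda$ kills $\B$ while $\gr\B\otimes\C_\lambda\neq 0$, and vice versa. Here I would argue that for any $\lambda$, $\gr(\B_\lambda)$ is a quotient of $(\gr\B)\otimes_{\C[\paramq]}\C_\lambda$ (since the good filtration on $\B$ induces one on $\B_\lambda$), so $\B_\lambda\neq 0$ whenever $(\gr\B)_\lambda\neq 0$, giving $\Supp_\param(\gr\B)\subset\overline{\Supp^r_\paramq(\B)}$; conversely, on the generic-freeness locus $f\neq 0$ the module $\B_f$ is free over $\C[\paramq]_f$ so $\Supp^r_\paramq(\B)\supset\{f\neq 0\}\cap(\text{its closure})$, and a standard cone argument using the $\C^\times$-action (the vertex is always in the support when the support is a nonempty cone) pushes closedness across the bad locus $\{f=0\}$. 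Finally, for the asymptotic cone equality I would invoke that $\mathsf{AC}$ of the constructible set $\Supp^r_\paramq(\B)$ equals $\operatorname{Spec}(\gr\C[\Supp^r_\paramq(\B)])$, match this against $\Supp_\param(\gr\B)$ using that the latter is the $\hbar\to 0$ degeneration of the former inside the flat family $Z\to\C_\hbar$, and conclude by flatness (which holds after inverting a suitable $f\in\C[\paramq]$, then extends by the conical structure). Once closedness is in hand the asymptotic cone formula is essentially formal from the graded/Rees picture, so the weight of the argument is entirely in the first equality.
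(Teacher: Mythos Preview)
Your proposal has a genuine gap at the step you yourself flag as the obstacle, and the attempted fix does not work. From the surjection $(\gr\B)\otimes_{\C[\paramq]}\C_\lambda\twoheadrightarrow \gr(\B_\lambda)$ you conclude ``$\B_\lambda\neq 0$ whenever $(\gr\B)_\lambda\neq 0$,'' but this is the wrong direction: a surjection with nonzero source can have zero target. The valid implication is $\B_\lambda\neq 0\Rightarrow (\gr\B)_\lambda\neq 0$, which only gives $\operatorname{Supp}^r_{\paramq}(\B)\subset\operatorname{Supp}_{\paramq}(\gr\B)$ and says nothing about closedness. Your subsequent ``cone argument'' also fails: $\operatorname{Supp}^r_{\paramq}(\B)$ is \emph{not} $\C^\times$-stable in $\paramq$ (the algebras $\A^0_\lambda(v)$ along a ray are non-isomorphic; indeed the proposition asserts that only the \emph{asymptotic} cone is conical), so the vertex-in-support reasoning does not apply. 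The underlying difficulty is that $\B$ is finitely generated over $\A^0_{\paramq}(v)$ but not over $\C[\paramq]$, and for such modules the $\paramq$-support need not be closed; generic freeness alone cannot bridge this.

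The paper's argument is genuinely different and supplies exactly the missing ingredient: the restriction functor $\bullet_{\dagger,x}$ of Sections \ref{SS_compl}--\ref{SS_compl1}. One picks $x$ generic in an irreducible component of $\VA(\B)\cap\M^0_0(v)$; then $\B_{\dagger,x}$ is \emph{finitely generated over $\C[\paramq]$} by Lemma \ref{Lem:dag_assoc}, so its support is closed with the expected asymptotic cone. One shows $\operatorname{Supp}^r_{\paramq}(\B_{\dagger,x})=\operatorname{Supp}^r_{\paramq}(\B/\B')$ where $\B'$ is the maximal subbimodule killed by $\bullet_{\dagger,x}$, and then inducts on the (finitely many) symplectic leaves using the decompositions $\operatorname{Supp}^r_{\paramq}(\B)=\operatorname{Supp}^r_{\paramq}(\B')\cup\operatorname{Supp}^r_{\paramq}(\B/\B')$ and its graded analogue. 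The slice reduction is what converts the infinite-rank problem into a finite-rank one where standard commutative algebra applies; a purely Rees-theoretic argument does not seem to achieve this.
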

Recall that $\mathsf{AC}$ stands for the asymptotic cone.
\begin{proof}
Pick a generic point $x$ in an irreducible
component of $\VA(\B)\cap \M_0^0(v)$ and consider the HC $\hat{\A}_{\paramq}^0(\hat{v})$-bimodule
$\B_{\dagger,x}$. By the choice of $x$, $\B_{\dagger,x}$ is finitely generated over
$\C[\paramq]$, this follows from Lemma \ref{Lem:dag_assoc}.
Moreover, since $\bullet_{\dagger,x}$ is $\C[\paramq]$-linear (and is $\C[\param]$-linear after
passing to the associated graded bimodules) by the construction, we have
$$\Supp^r_{\paramq}(\B_{\dagger,x})\subset \Supp^r_{\paramq}(\B),\quad
\Supp_{\param}(\gr\B_{\dagger,x})\subset \Supp_{\param}(\gr\B).$$
Since $\B_{\dagger,x}$ is finitely generated over
$\C[\paramq]$, we see that $\mathsf{AC}(\Supp^r_{\paramq}(\B_{\dagger,x}))=
\Supp_{\param}(\gr\B_{\dagger,x})$. Hence
$\mathsf{AC}(\Supp^r_{\paramq}(\B_{\dagger,x}))\subset
\Supp_{\param}(\gr\B)$.
There is the unique maximal subbimodule $\B'\subset \B$ with $\B'_{\dagger,x}=0$.
Clearly, $\operatorname{Supp}^r_{\paramq}(\B_{\dagger,x})\subset
\operatorname{Supp}^r_{\paramq}(\B/\B')$. On the other hand, let
$I$ be the right annihilator of $\B_{\dagger,x}$ in $\C[\paramq]$.
Then $\B I\subset \B'$ and $\operatorname{Supp}^r_{\paramq}(\B/\B')
\subset \operatorname{Supp}^r_{\paramq}(\B/\B I)
\subset \operatorname{Supp}^r_{\paramq}(\B_{\dagger,x})$.
So we see that $\operatorname{Supp}^r_{\paramq}(\B_{\dagger,x})=
\operatorname{Supp}^r_{\paramq}(\B/\B')$ is a closed subvariety in
$\paramq$ whose asymptotic cone coincides with
$\operatorname{Supp}_{\param}(\gr(\B/\B'))=
\operatorname{Supp}_{\param}(\gr\B_{\dagger,x})$.

Now let us observe that
\begin{equation}\label{eq:supp_union1}\operatorname{Supp}^r_{\paramq}(\B)=
\operatorname{Supp}^r_{\paramq}(\B/\B')\cup \operatorname{Supp}^r_{\paramq}(\B').\end{equation}
The inclusion of the left hand side into the right hand side is clear.
Now we just need to show that if $z\in \operatorname{Supp}^r_{\paramq}(\B')\setminus \operatorname{Supp}^r_{\paramq}(\B/\B')$, then $z\in \operatorname{Supp}^r_{\paramq}(\B)$.
Recall that $\operatorname{Supp}^r_{\paramq}(\B/\B')$ is closed.
So if $z\not\in \operatorname{Supp}^r_{\paramq}(\B/\B')$, then
$\operatorname{Tor}^1_{\C[\paramq]}(\B/\B',\C_z)=0$. Hence if $z\in \operatorname{Supp}^r_{\paramq}(\B')$,
then $z\in \operatorname{Supp}^r_{\paramq}(\B)$.
Similarly,
\begin{equation}\label{eq:supp_union2}\operatorname{Supp}_{\param}(\gr\B)=
\operatorname{Supp}_{\param}(\gr\B/\B')\cup \operatorname{Supp}_{\param}(\gr\B').
\end{equation}

Thanks to (\ref{eq:supp_union1}) and (\ref{eq:supp_union2}),
it remains to prove that $\operatorname{Supp}^r_{\paramq}(\B')$
is closed and its asymptotic cone is $\operatorname{Supp}_{\param}(\gr\B')$.
The variety $\M_0^0(v)$ has finitely many symplectic leaves.
Since $\operatorname{Supp}^r_{\paramq}(\B')\subsetneq
\operatorname{Supp}^r_{\paramq}(\B)$, we can use the induction on
the maximal dimension of  a symplectic leaf in the support to finish
the proof of the proposition.
%Now our claim
%follows by induction on the dimension of the leaf of $x$
%from (\ref{eq:supp_union1}) and (\ref{eq:supp_union2}) combined with the claim that
%$\mathsf{AC}(\operatorname{Supp}^r_{\paramq}(\B/\B'))=\operatorname{Supp}_{\param}(\gr(\B/\B'))$.
\end{proof}

Now we are ready to prove Proposition \ref{Prop:alg_iso}.

\begin{proof}[Proof of Proposition \ref{Prop:alg_iso}]
Consider the natural homomorphism $\A^0_{\paramq}(v)\rightarrow \A_{\paramq}(v)$
and let $K,C$ denote its kernel and cokernel. Both $\A^0_{\paramq}(v), \A_{\paramq}(v)$
are HC bimodules over $\A^0_{\paramq}(v)$ and therefore $K,C$ are HC bimodules as well.
By Proposition \ref{Prop:HC_support}, $\operatorname{Supp}^r_{\paramq}(K),\operatorname{Supp}^r_{\paramq}(C)$
are closed.

 The homomorphism $\A^0_{\lambda}(v)
\rightarrow \A_\lambda(v)$ is an isomorphism if and only  if
$\lambda\not\in \operatorname{Supp}^r_{\paramq}(K)\cup \operatorname{Supp}^r_{\paramq}(C)$.
Indeed, the homomorphism
is surjective if and only if $\lambda\not\in \operatorname{Supp}^r_{\paramq}(C)$.
Further, if $\lambda\not\in \operatorname{Supp}^r_{\paramq}(C)$, then, similarly to
the proof of Proposition \ref{Prop:HC_support},
 we get
$\A^0_{\lambda}(v)\xrightarrow{\sim} \A_\lambda(v)$ if and only if
$\lambda\not\in \operatorname{Supp}^r_{\paramq}(K)$.

Consider the homomorphism  $\gr\A^0_{\paramq}(v)\rightarrow \gr\A_{\paramq}(v)=\C[\M_{\param}(v)]$
and compose it with the epimorphism $\C[\M_{\param}^0(v)]\twoheadrightarrow \gr\A^0_{\paramq}(v)$.
Let $K^0,C^0$ denote the kernel and the cokernel of the resulting homomorphism
$\C[\M_{\param}^0(v)]\rightarrow \C[\M_{\param}(v)]$. The latter coincides with
$\rho^*$. It follows that   $\Supp_{\param}(K^0\oplus C^0)\subset \param^{sing}$,
where, recall, $\param^{sing}$ denotes the locus of non-generic parameters in $\param$.
Note  that $C^0\twoheadrightarrow \gr C$, while $\gr K$ is a subquotient of $K^0$.
Because of this, we have $\mathsf{AC}(\Supp^r_{\paramq}(C))\subset \Supp_{\param}(C^0)$
and $\mathsf{AC}(\Supp^r_{\paramq}(K))\subset \Supp_{\param}(K^0)$.
The claim of the proposition follows.
\end{proof}

Next we will show that the algebra $\A_\lambda(v)$ is simple for a Weil generic $\lambda$, compare with \cite[Section 4.2]{sraco}.

\begin{Prop}\label{Prop:gen_simpl}
The algebra $\A_\lambda(v)$ is simple for a Weil generic $\lambda$.
\end{Prop}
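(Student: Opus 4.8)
The plan is to use the restriction functors $\bullet_{\dagger,x}$ together with Proposition \ref{Prop:HC_support} and an induction on the (finitely many) symplectic leaves of $\M^0_0(v)$. Suppose $\J\subsetneq \A_\lambda(v)$ is a nonzero two-sided ideal. Then $\A_\lambda(v)/\J$ is a HC $\A_\lambda(v)$-bimodule whose associated variety $\VA(\A_\lambda(v)/\J)$ is a proper Poisson subvariety of $\M_0(v)$ (proper because $\J\neq 0$ forces the associated variety to drop, using that $\A_\lambda(v)$ is a domain-like filtered deformation of $\C[\M(v)]$ with $\M(v)$ irreducible). Globalizing over $\paramq$, I would consider the family version: the quotient bimodule $\A_{\paramq}(v)/\J_{\paramq}$ for an ideal $\J_{\paramq}$ in $\A_{\paramq}(v)$, but the cleaner route is to argue by contradiction directly at $\lambda$ and push the information to the parameter space. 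The first step is therefore to show that if $\A_\lambda(v)$ is non-simple for a Weil generic $\lambda$, then it is non-simple on a whole Zariski-dense subset: build the universal bimodule $\B:=\A_{\paramq}(v)/(\text{a coherent family of ideals})$ and apply Corollary \ref{Cor:HC_supp}(2) and Proposition \ref{Prop:HC_support} to see that $\Supp^r_{\paramq}(\B)$ is closed and its asymptotic cone is $\Supp_\param(\gr\B)\subset\param^{sing}$.

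The core of the argument is a leaf-by-leaf induction. For the open leaf, genericity of $\lambda$ means $\A^0_\lambda(v)=\A_\lambda(v)$ (Proposition \ref{Prop:alg_iso}) and the associated variety of any proper quotient cannot meet the open leaf, so any nonzero ideal $\J$ has $\VA(\A_\lambda(v)/\J)$ contained in the union of smaller leaves. Now pick a generic point $x$ in a maximal-dimensional component of that associated variety and apply $\bullet_{\dagger,x}$: by Lemma \ref{Lem:dag_assoc} the bimodule $(\A_\lambda(v)/\J)_{\dagger,x}$ is finite-dimensional and nonzero, hence $\bar{\A}_{\hat r(\lambda)}(\hat v)$ (the slice algebra, after splitting off the Weyl-algebra factor as in the end of Section \ref{SS_compl}) has a nonzero ideal of finite codimension, i.e. is non-simple. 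But $\hat r(\lambda)$ is Weil generic in $\hat\paramq$ as $\lambda$ ranges over Weil generic parameters (the map $\hat r$ of \eqref{eq:quant_restr_map} is a surjective affine map, so preimages of countable unions of subvarieties are again such), and $\hat v<v$, so by the inductive hypothesis $\bar{\A}_{\hat r(\lambda)}(\hat v)$ is simple — a contradiction. The base of the induction is the case $\hat v=0$ or the case where the slice quiver variety is a point, where the algebra is $\C$ (or a Weyl algebra tensored with $\C$), which is simple.

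One technical point to handle carefully: $\bullet_{\dagger,x}$ is defined on $\operatorname{HC}(\A_{\paramq}(v))$, i.e. in families, and it is $\C[\paramq]$-linear and exact, so I would run the whole argument with the universal algebra $\A_{\paramq}(v)$ and a universal ideal, specializing at the end; exactness of $\bullet_{\dagger,x}$ guarantees $(\A_{\paramq}(v)/\J_{\paramq})_{\dagger,x}=\hat{\A}_{\paramq}(\hat v)/(\J_{\paramq})_{\dagger,x}$ using \eqref{eq:transl_restr} (in the form $\A_{\paramq}(v)_{\dagger,x}=\hat{\A}_{\paramq}(\hat v)$, which is the $\chi=0$ case). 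Another point is that I must know the associated variety of a proper quotient of $\A_\lambda(v)$ is a \emph{proper} Poisson subvariety; this follows from Lemma \ref{Cor:assoc_var} applied to $\B=\A_\lambda(v)/\J$ (its right annihilator is $\J\neq 0$) together with the fact that $\M(v)$ is irreducible, so $\VA(\A_\lambda(v)/\J)=\VA(\A_\lambda(v)/\J)$ is cut out by $\gr\J\neq 0$.

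The main obstacle I anticipate is the bookkeeping around the slice algebras: the quiver $\hat Q$ produced in Section \ref{SSS_class_slice} need not have $\infty$ as a source and may acquire loops, and one must invoke the splitting $\hat{\A}_{\paramq}(\hat v)=\mathbf{A}_0\otimes\bar{\A}_{\paramq}(\hat v)$ to kill the loop (Weyl-algebra) part before applying the inductive hypothesis to $\bar{\A}_{\hat r(\lambda)}(\hat v)$; and I must check that ``Weil generic'' is preserved under $\lambda\mapsto\hat r(\lambda)$ and that the slice dimension vector $\hat v$ is genuinely smaller so the induction terminates. Granting these, the argument closes: any nonzero ideal in $\A_\lambda(v)$ for Weil generic $\lambda$ would produce a non-simple slice algebra at a strictly smaller dimension vector with a Weil generic parameter, contradicting the induction, hence $\A_\lambda(v)$ is simple.
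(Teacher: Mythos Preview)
Your inductive scheme via slice algebras matches the paper's Step 2, but there is a genuine gap: the induction need not decrease. If $\VA(\A_\lambda(v)/\J)=\{0\}$ --- equivalently, if $\J$ has finite codimension --- and $\{0\}$ is itself a symplectic leaf of $\M(v)$, then the only choice is $x=0$, and the slice data at $0$ (computed from the zero representation, whose simple summands are the vertex simples $S_k$) give $\hat Q=Q$, $\hat v=v$, $\hat w=w$. So $\bullet_{\dagger,0}$ returns you to the same algebra and your claim ``$\hat v<v$'' fails. This is not a degenerate scenario: for any loop-free quiver (in particular finite type) the point $\{0\}\subset\M(v)$ is a symplectic leaf, so the case $\VA=\{0\}$ is exactly the case of a finite-codimension ideal, which your induction never rules out.

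The paper closes this gap in a separate Step 1, logically prior to the slice argument: one shows directly that for Weil generic $\lambda$ the algebra $\A_\lambda(v)$ (for \emph{every} quiver, hence in particular every slice quiver) has no finite-dimensional representations. The tool is the Amitsur--Levitski identity: the parameters $\lambda$ admitting a $d$-dimensional representation lie in $\Supp_\paramq^r(\A_\paramq(v)/I^d)$, where $I^d\subset\A_\paramq(v)$ is generated by the standard polynomial $\alpha_{2d}$; this support is closed by Proposition~\ref{Prop:HC_support}, and if it were all of $\paramq$ for some $d$ then (again by Proposition~\ref{Prop:HC_support}) $\gr I^d$ would cut out a nontrivial Poisson ideal in $\C[\M_p(v)]$ for Zariski generic $p\in\param$, contradicting that $\M_p(v)$ is smooth symplectic. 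Once Step 1 is in hand for all slice quivers, your restriction argument (pick $x$ generic in a component of $\VA(\A_\lambda(v)/\J)$, get a finite-codimension ideal in the slice algebra, contradiction) goes through --- that is precisely the paper's Step 2 --- but without Step 1 the induction does not terminate. Your ``coherent family of ideals'' sketch in the first paragraph does not substitute for this: there is no canonical universal ideal to globalize, which is exactly why the paper passes through the Amitsur--Levitski ideals $I^d$.
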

We will obtain a more precise description of the locus, where
$\A_\lambda(v)$ is simple, using wall-crossing functors below,
Proposition \ref{Prop:gen_simpl_strong}.
\begin{proof}
{\it Step 1.} Let us show that, for a Weil generic $\lambda$, the algebra $\A_\lambda(v)$ has no finite dimensional representations. Let $\paramq_d$ denote the set of points $\lambda\in \paramq$ such that $\A_\lambda(v)$ has a $d$-dimensional representation or, in other words, there is a homomorphism $\A_\lambda(v)\rightarrow \operatorname{Mat}_d(\C)$. Consider the ideal $I^d\subset \A_{\paramq}(v)$ generated by the elements
$$\alpha_{2d}(x_1,\ldots,x_{2d})=\sum_{\sigma\in \mathfrak{S}_{2d}}\operatorname{sgn}(\sigma) x_{\sigma(1)}\ldots x_{\sigma(2d)}.$$
Any homomorphism $\A_{\paramq}(v)\rightarrow \operatorname{Mat}_d(\C)$ factors through $\A_{\paramq}(v)/I^d$, this is the Amitsur-Levitski theorem. The support
of $\A_{\paramq}(v)/I^d$ in $\paramq$ is closed by Proposition \ref{Prop:HC_support}.
If a Weil generic element of $\paramq$ belongs to $\bigcup_{d} \operatorname{Supp}_{\paramq}(\A_{\paramq}(v)/I^d)$,  then  $\operatorname{Supp}_{\param}(\A_{\paramq}(v)/I^d)=\paramq$ for some $d$.
By Proposition \ref{Prop:HC_support}, $\Supp_{\param}(\C[\M_{\param}(v)]/\operatorname{gr}I^d)=\param$. However, this is impossible. Indeed, for a Zariski generic $\lambda$, the variety $\M_\lambda(v)$ is symplectic, so $\C[\M_\lambda(v)]$  has no proper Poisson ideals. Since  $\gr I^d$ is a Poisson ideal, we get a required contradiction.

{\it Step 2.} By the previous step, for a Weil generic $\lambda$ and all $x\in \M(v)\setminus \M(v)^{reg}$,
the algebra $\hat{\A}_\lambda(\hat{v})$ defined from $x$ has no finite
dimensional irreducible representations. It follows from Lemma \ref{Lem:dag_assoc} that  the algebra $\A_\lambda(v)$
has no ideals $I$ such that $\VA(\A_\lambda(v)/I)$ is a proper subvariety of $\M(v)$. Indeed,
for $x$ that is generic in an irreducible component of $\VA(\A_\lambda(v)/I)$, the ideal $I_{\dagger,x}\subset\hat{\A}_\lambda(\hat{v})$ is of finite codimension. On the other hand, if
$I$ is a proper ideal, then $\VA(\A_\lambda(v)/I)$ is also proper, this is consequence of \cite[Corollar 3.6]{BoKr}.
The proposition follows.
\end{proof}

\subsection{Applications to derived Hamiltonian reduction}\label{SS_der_Ham_red}
In this section we prove part (1) of Proposition \ref{Prop:der_Ham_quot_glob}. The proof does not have to
do with HC bimodules but involves techniques similar to what was used in Sections \ref{SS_compl}-\ref{SS_compl2}.

We will prove the following claim that implies (1) of Proposition \ref{Prop:der_Ham_quot_glob}:
\begin{itemize}
\item[(*)] There is an asymptotically generic open affine subset $U\subset \paramq^{iso}$
such that $\mathcal{Q}_U:=\mathcal{Q}_{\paramq}\otimes_{\C[\paramq]}\C[U]$ is flat over $\C[U]$ and $\operatorname{Tor}_i^{U(\g)}(D(R),\C[U])=0$ for $i>0$.
\end{itemize}
Let $r\in T^*R$ be a point with closed $G$-orbit and let $\hat{R}, R_0$
have the same meaning as in \ref{SSS_class_slice}. We need to relate
$\operatorname{Tor}_i^{U(\g)}(D(R),\C[\paramq])$ to
$\operatorname{Tor}_i^{U(\g_r)}(D(\hat{R}),\C[\paramq])$, where $\C[\paramq]$
becomes a $U(\g_r)$-module via the inclusion $U(\g_r)\hookrightarrow U(\g)$.

\begin{Lem}\label{Lem:Tor_compl}
We have a natural $\C[\paramq,\hbar]$-linear isomorphism
\begin{equation}\label{eq:Tor_compl}
\begin{split}
&\operatorname{Tor}_i^{U_\hbar(\g)}(D_\hbar(R),\C[\paramq,\hbar])^{\wedge_{Gr}}\cong\\
&\Gamma\left(G/G_r, G*_{G_r}\operatorname{Tor}_i^{U_\hbar(\g_r)}\left(D_\hbar(\hat{R}),\C[\paramq,\hbar]\right)\right)^{\wedge_{G/G_r}}
\widehat{\otimes}_{\C[[\hbar]]}\Weyl_{\hbar}(R_0)^{\wedge_0}.
\end{split}
\end{equation}
\end{Lem}
\begin{proof}
Recall the isomorphism
\begin{equation}\label{eq:compl_iso}
D_\hbar(R)^{\wedge_{Gr}}\cong \left([D_\hbar(G)^{\wedge_G}\widehat{\otimes}_{\C[[\hbar]]}D_\hbar(\hat{R})^{\wedge_0}]\red_0 G_r\right)\widehat{\otimes}_{\C[[\hbar]]}\Weyl_\hbar(R_0)^{\wedge_0}
\end{equation}
that has appeared in the proof of Lemma \ref{Lem:quant_decomp}.

Note also that
$$\operatorname{Tor}_i^{U_\hbar(\g)}(D_\hbar(R),\C[\paramq,\hbar])^{\wedge_{Gr}}\cong
\operatorname{Tor}_i^{U_\hbar(\g)}(D_\hbar(R)^{\wedge_{Gr}},\C[\paramq,\hbar]).$$
So we need to check that the right hand side of (\ref{eq:Tor_compl}) coincides with
the $i$th Tor of the right hand side of (\ref{eq:compl_iso}). This will follow if we check
that
\begin{equation}\label{eq:Tor_coinc2}
\begin{split}
&\operatorname{Tor}_i^{U_\hbar(\g)}(\left([D_\hbar(G)\otimes_{\C[\hbar]}D_\hbar(\hat{R})]\red_0 G_r\right),
\C[\paramq,\hbar])\cong\\
& \Gamma\left(G/G_r, G*_{G_r}\operatorname{Tor}_i^{U_\hbar(\g_r)}(D_\hbar(\hat{R}),\C[\paramq,\hbar])\right).
\end{split}
\end{equation}
Since the actions of
$G_r$ and $U_\hbar(\g)$ commute and
$$[D_\hbar(G)\otimes_{\C[\hbar]}D_\hbar(\hat{R})]\red_0 G_r=\left([D_\hbar(G)\otimes_{\C[\hbar]}D_\hbar(\hat{R})]\otimes^L_{U_\hbar(\g_r)}\C[\paramq,\hbar]\right)^{G_r},$$ we see that the left hand side of (\ref{eq:Tor_coinc2}) coincides with
$$\operatorname{Tor}_i^{U_\hbar(\g\times \g_r)}(D_\hbar(G)\otimes_{\C[\hbar]}D_\hbar(\hat{R}),
\C[\paramq,\hbar])^{G_r}.$$
Here  $\C[\paramq,\hbar]$ is viewed as the diagonal $\g\times \g_r$-module.
But to compute $\operatorname{Tor}_i^{U_\hbar(\g\times \g_r)}(D_\hbar(G)\otimes_{\C[\hbar]}D_\hbar(\hat{R}),
\C[\paramq,\hbar])$ we can take the derived tensor product with $U_\hbar(\g)$
and after that the derived tensor product with $U_\hbar(\g_r)$. What we get is exactly
the right hand side of (\ref{eq:Tor_coinc2}).
\end{proof}

\begin{Lem}\label{Lem:Tor_supp}
We have $\mathsf{AC}\left(\overline{\Supp^r_{\paramq}(\operatorname{Tor}_i^{U(\g)}(D(R),\C[\paramq]))}\right)\subset \param^{sing}$
provided $i>0$.
\end{Lem}
\begin{proof}
Set $M:=\operatorname{Tor}_i^{U(\g)}(D(R),\C[\paramq])$, we view
it as a $D(R)\otimes \C[\paramq]$-module. It is supported on
$\mu^{-1}(\param)\times_{\param}\param$. Let
$N$ be the maximal submodule of $M$ with the property that
$\VA(N)\cap (\mu^{-1}(0)\times \{0\})$ is contained in the nilpotent cone
of $\mu^{-1}(0)$, equivalently, $N_{\hbar}^{\wedge_x}=0$ for
all nonzero $x\in \M^0(v)$. Note that we have only finitely many possible $G_r\subset G$ and hence
finitely many possible spaces $\hat{\paramq}$. Moreover,
under the natural projection $\param\rightarrow \hat{\param}$, the preimage of
$\hat{\param}^{sing}$ lies in $\param^{sing}$ by Remark \ref{Rem:sing_param}.
From this observation combined with Lemma \ref{Lem:Tor_compl} and an induction argument, it follows that
$\mathsf{AC}(\overline{\Supp^r_{\paramq}(M/N)})\subset \param^{sing}$.

The space $M$ is naturally filtered and there is an inclusion $\gr M\hookrightarrow M^0$,
where $M^0$ is a $G$-equivariant quotient of $\operatorname{Tor}_i^{U(\g)}(\C[T^*R],\C[\param])$. Inside $M^0$ we can consider the maximal submodule $N^0$ defined similarly to $N\subset M$.
Note that $\Supp_{\param}(\operatorname{Tor}_i^{U(\g)}(\C[T^*R],\C[\param]))\subset \param^{sing}$
and therefore $\Supp_{\param}(M^0)\subset \param^{sing}$ and
$\Supp_{\param}(M^0/N^0)\subset \param^{sing}$. From here we deduce that
\begin{equation}\label{eq:supp_cont11}
\Supp_{\param}(N^0)\subset \param^{sing}.
\end{equation}
Clearly, $\gr N\subset N^0$ (a $G$-equivariant embedding).

The $D(R)$-module $N$ is weakly $G$-equivariant and finitely generated.
So it is generated by finitely many $G$-isotypic components, say, corresponding to
$G$-irreps $V_1,\ldots,V_k$. We can assume that the corresponding isotypic components
generate $N^0$ as well ($N^0$ is also finitely generated). Let $N_V\subset
N, N_V^0\subset N^0$ denote the sum of these isotypic components
so that $\gr N_V\subset N_V^0$. We have
\begin{equation}\label{eq:Supp_eq11}\Supp^r_{\paramq}(N_V)=
\Supp^r_{\paramq}(N), \Supp_{\param}(N^0_V)=
\Supp^r_{\param}(N^0).\end{equation} Since $\VA(N^0)\cap (\mu^{-1}(0),0)$ lies in the nilpotent
cone, we see that any $G$-isotypic component in $N^0$ is finitely generated over
$\C[\param]$. A similar claim holds for $N$. From here and the inclusion $\gr N_V
\subset N_V^0$ we deduce that $\mathsf{AC}(\Supp^r_{\paramq}(N_V))
\subset \Supp_{\param}(N^0_V)$. Combining (\ref{eq:supp_cont11})
with (\ref{eq:Supp_eq11}), we see that $\mathsf{AC}(\overline{\Supp^r_{\paramq}(N)})\subset \param^{sing}$.
Since $\Supp^r_{\paramq}(M)\subset \Supp^r_{\paramq}(N)\cup
\Supp^r_{\paramq}(M/N)$, we get
$\mathsf{AC}(\overline{\Supp^r_{\paramq}(M)})\subset \param^{sing}$.
\end{proof}

Now let us show that there is  an asymptotically generic $U\subset \paramq$ such that
$\mathcal{Q}_{U}$ is flat over $U$. For this, we consider  various modules
$\operatorname{Tor}_i^{\C[\paramq]}(\mathcal{Q}_{\paramq},\C[Y])$,
where $Y$ is a closed irreducible subvariety in $\paramq$.
Since $\mathcal{Q}_{\paramq}$
is a finitely generated $D(R)$-module, there is an affine Zariski open
subset  $U\subset\paramq$ (not asymptotically generic, a priori) such that $\mathcal{Q}_{U}$
is free over $U$, this is proved analogously to Lemma \ref{Lem:HC_gen_flat}.
Let $Z$ denote the Zariski closure of the union of
the supports of various  $\operatorname{Tor}_i^{\C[\paramq]}(\mathcal{Q}_{\paramq},\C[Y])$.
So $Z\subset \paramq\setminus U$.
We need to check  that $\mathsf{AC}(Z)\subset \param^{sing}$. This is done as in the proof of
Lemma \ref{Lem:Tor_supp}, now we need to consider $M:=\operatorname{Tor}_i^{\C[\paramq]}(\mathcal{Q}_{\paramq},\C[Z])$. This finishes the proof of
(1) of Proposition \ref{Prop:der_Ham_quot_glob}.

\section{Localization theorems and translation bimodules}\label{S_loc}
In this section we deal with (abelian and derived) localization theorems. These theorems
allow to relate the category
of finitely generated modules over $\A_\lambda(v)$ to the category of coherent sheaves over $\A_\lambda^\theta(v)$.
We also study more closely translation bimodules introduced in \ref{SSS_HC_transl} that play
a crucial role in the abelian localization theorems.

\subsection{Abelian and derived localization}
Let $\theta$ be a generic stability condition and $\lambda\in \paramq$.
We say that $(\lambda,\theta)$ satisfies abelian (resp., derived) localization
if the functors $\Gamma_\lambda^\theta$ and $\Loc^\theta_\lambda$ are mutually
inverse equivalences between $\A_\lambda(v)\operatorname{-mod}$ and
$\A_\lambda^\theta(v)\operatorname{-mod}$ (resp.,
$R\Gamma_\lambda^\theta$ and $L\Loc^\theta_\lambda$ are mutually
inverse equivalences between $D^b(\A_\lambda(v)\operatorname{-mod})$ and
$D^b(\A_\lambda^\theta(v)\operatorname{-mod})$). We will write $\AL(v)$
for the set of all $(\lambda,\theta)$ satisfying abelian localization.

First, let us recall the derived localization theorem for $\A_\lambda(v)$.

\begin{Prop}\label{Prop:der_MN}
Suppose that the moment map $\mu$ is flat or $Q$ has finite or affine type. Then
$(\lambda,\theta)$ satisfies derived localization if and only if the homological
dimension of $\A_\lambda(v)$ is finite.
\end{Prop}
\begin{proof}
The case when $\mu$ is flat follows from \cite[Theorem 1.1]{MN}.

In general, we can apply a quantum LMN isomorphism and assume that $\nu$ is dominant.
If $Q$ is of finite or affine type, then by \ref{SSS_M0_prop}, $\mu$ is flat.
\end{proof}

Sufficient conditions (in greater generality) for abelian localization to hold were studied in
\cite[Section 5.3]{BPW}. Let us recall some results from there. For this we need some terminology.
By a {\it classical wall} for $v$ we mean a hyperplane of the form $\{\theta| \theta\cdot v'=0\}$, where $v'$ is as in  \ref{SSS_gen_param}. So if $\theta$  does not lie
on a classical wall, it is generic. By a {\it classical chamber} we mean the closure of a connected component of the complement to the union of classical walls in $\mathbb{R}^{Q_0}$. Let $C=C_\theta$ be the classical chamber of $\theta$.

\begin{Prop}[Corollary 5.17 in \cite{BPW}]\label{Prop:abelian_loc_BPW}
%The following is true:
%\begin{enumerate}
%\item $(\lambda,\theta)\in \AL(v)$ if and only if there is $\chi\in \Z^{Q_0}$ in the interior
%of $C$ such that the translation bimodules $\A^{(\theta)}_{\lambda+m\chi,\chi}(v)$ define a Morita
%equivalence between $\A_{\lambda+m\chi}(v)$ and $\A_{\lambda+(m+1)\chi}(v)$ with inverses
%$\A^{(\theta)}_{\paramq,-\chi}(v)|_{\lambda+(m+1)\chi}$.
%\item
For every $\lambda$ and any $\chi\in \Z^{Q_0}\cap \mathsf{int}C$
there  is $n_0\in \Z$ such that the  $(\lambda+n\chi,\theta)\in \mathfrak{AL}(v)$ for any $n>n_0$.
%\end{enumerate}
\end{Prop}
Here we write $\mathsf{int}C$ for the interior of $C$.

Unfortunately, Proposition \ref{Prop:abelian_loc_BPW}
is not good enough for our purposes, as we will need a stronger version.
We will also need to relate abelian localization to the functors $\pi_\lambda^0(v),\pi_\lambda^\theta(v)$.
Recall the open subset $\paramq^{iso}$ of all parameters $\lambda$ such that $\A_\lambda^0(v)\xrightarrow{\sim}
\A_\lambda(v)$.
%By Proposition \ref{Prop:alg_iso}, for every $\lambda$ there is $\chi\in \Z^{Q_0}$
%such that $\lambda+\chi+(C\cap \Z^{Q_0})\subset \paramq^{iso}$.

\begin{Prop}\label{Lem:ab_loc} The following statements are true.
\begin{enumerate}
\item Suppose $\lambda\in \paramq^{iso}$. We have $(\lambda,\theta)\in \mathfrak{AL}(v)$ if and only if the functors $\pi^0_\lambda(v)$ and $\pi_\lambda^\theta(v)$ are isomorphic.
\item For every $\lambda$, there is $\chi\in \Z^{Q_0}$ such that $(\lambda',\theta)\in \AL(v)$ for every
$\lambda'\in \lambda+\chi+(C\cap \Z^{Q_0})$.
\end{enumerate}
\end{Prop}

To prove (2) (that will be used when  we  discuss wall-crossing functors),
we will also need a more technical version of (2), which is the following lemma.

\begin{Lem}\label{Lem:ab_loc_techn}
For every $\lambda$, there are
\begin{itemize}
\item
$\lambda'\in \lambda+\Z^{Q_0}$,
\item and a subset $Y(\lambda')\subset \lambda'+(C\cap \Z^{Q_0})$
\end{itemize}
such that $\lambda''\in \paramq^{iso}, (\lambda'',\theta)\in \AL(v)$ for all $\lambda''\in Y(\lambda')$
and the  intersection of $Y(\lambda')$ with every codimension $1$ face of the cone $\lambda'+C$ is Zariski dense in that face.
\end{Lem}

%This proposition will be proved below in this section. Also we will prove Proposition \ref{Prop:quot_Ham_loc}.

\subsection{Translation bimodules}\label{SS_transl_bimod}
In this subsection we will apply results from Sections \ref{SS_compl1} and \ref{SS_compl2} to studying translation bimodules $\A_{\lambda,\chi}^0(v), \A_{\lambda,\chi}^{(\theta)}(v)$ and a connection between them. In particular, here we will prove Propositions \ref{Prop:quot_Ham_loc} and \ref{Lem:ab_loc}(1) as well as Lemma \ref{Lem:ab_loc_techn}.

The next two propositions investigate when various versions of translations coincide.

\begin{Prop}\label{Prop:transl_coinc}
Let $\chi,\chi'\in \Z^{Q_0}$. Then the following subsets of $\paramq$ are Zariski open and
asymptotically generic.
\begin{enumerate}
\item The set of $\lambda$ such that $\A_{\lambda,\chi}^0(v)\rightarrow \A^{(\theta)}_{\paramq,\chi}(v)_\lambda$
is an isomorphism.
\item The set of $\lambda$ such that the multiplication homomorphism $\A^0_{\lambda+\chi,\chi'}(v)\otimes_{\A^0_{\lambda+\chi}(v)}\A_{\lambda,\chi}^0(v)\rightarrow \A^0_{\lambda, \chi+\chi'}(v)$ is an isomorphism.
%\item For any $n,m>0$, the multiplication homomorphism
%$\A^0_{\lambda+n\chi,m\chi}\otimes_{\A^0_{\lambda+n\chi}(v)}\A_{\lambda,n\chi}^0(v)
%\rightarrow \A^0_{\lambda, (n+m)\chi}(v)$ is an isomorphism.
\end{enumerate}
\end{Prop}
\begin{proof}
Let us prove (1). We have a natural surjection $\C[\mu^{-1}(\param)]^{G,\chi}\twoheadrightarrow \gr \A^0_{\paramq,\chi}(v)$
and  a natural inclusion $\gr \A^{(\theta)}_{\paramq,\chi}(v)\hookrightarrow  \C[\mu^{-1}(\param)^{\theta-ss}]^{G,\chi}$.
Further, the following diagram is commutative (microlocalization commutes with taking the associated graded)
\begin{equation}\label{eq:transl_bimod_commut}
\begin{picture}(80,30)
\put(2,2){$\gr \A^0_{\paramq,\chi}(v)$}
\put(2,20){$\C[\mu^{-1}(\param)]^{G,\chi}$}
\put(50,2){$\gr \A^{(\theta)}_{\paramq,\chi}(v)$}
\put(50,20){$\C[\mu^{-1}(\param)^{\theta-ss}]^{G,\chi}$}
\put(11,19){\vector(0,-1){12}}
\put(22,3){\vector(1,0){27}}
\put(25,22){\vector(1,0){24}}
\put(60,7){\vector(0,1){12}}
\end{picture}
\end{equation}
Now the top horizontal arrow becomes an isomorphism when localized to the generic locus in $\param$.
(1) follows from Proposition \ref{Prop:HC_support}, as in the proof of Proposition \ref{Prop:alg_iso}.

Let us  prove (2). We have natural epimorphisms
$$\C[\mu^{-1}(\param)]^{G,\chi}\twoheadrightarrow \gr\A^0_{\paramq,\chi}(v),
\C[\mu^{-1}(\param)]^{G,\chi'}\twoheadrightarrow \gr\A^0_{\paramq,\chi'}(v),
\C[\mu^{-1}(\param)]^{G,\chi+\chi'}\twoheadrightarrow \gr\A^0_{\paramq,\chi+\chi'}(v).$$ %Let us write $K_0$ for the kernel of $\C[\mu^{-1}(\param)]\rightarrow
%\gr \mathcal{Q}_{\paramq}$, where $\mathcal{Q}_{\paramq}=D(R)/D(R)\Phi([\g,\g])$.
Note that
\begin{equation}\label{eq:microloc_equal}\gr\mathcal{Q}_{\paramq}|_{\M^0_{\param^{reg}}(v)}=
\C[\mu^{-1}(\param)]|_{\M^0_{\param^{reg}}(v)}\end{equation}
as $\mu:\mu^{-1}(\param)\rightarrow \param$ is flat over $\param^{reg}:=\param\setminus \param^{sing}$.
So the kernels of these epimorphisms are supported on $\param^{sing}$. From here we see that
the kernel of $$\C[\mu^{-1}(\param)]^{G,\chi'}\otimes_{\C[\M^0_{\param}(v)]}\C[\mu^{-1}(\param)]^{G,\chi}
\twoheadrightarrow \gr\A^0_{\paramq+\chi,\chi'}(v)\otimes_{\C[\M^0_\param(v)]}\gr\A^0_{\paramq,\chi}$$
is also supported on $\param^{sing}$. (\ref{eq:microloc_equal}) also shows that the kernel of
$$\gr\A^0_{\paramq+\chi,\chi'}(v)\otimes_{\C[\M^0_\param(v)]}\gr\A^0_{\paramq,\chi}
\twoheadrightarrow \gr\left(\A^0_{\paramq+\chi,\chi'}(v)\otimes_{\A^0_{\paramq}(v)}\A^0_{\paramq,\chi}(v)\right)$$
is supported on $\param^{sing}$.
So the kernel of the composition
$$\eta:\C[\mu^{-1}(\param)]^{G,\chi'}\otimes_{\C[\M^0_{\param}(v)]}\C[\mu^{-1}(\param)]^{G,\chi}
\twoheadrightarrow \gr\left(\A^0_{\paramq+\chi,\chi'}(v)\otimes_{\A^0_{\paramq}(v)}\A^0_{\paramq,\chi}(v)\right)$$
is supported on $\param^{sing}$.  Let $\varpi,\varpi^0$ denote the natural homomorphisms
\begin{align*}
&\A^0_{\paramq+\chi,-\chi}(v)\otimes_{\A^0_{\paramq}(v)}\A^0_{\paramq,\chi}(v)\rightarrow \A^0_{\paramq,\chi+\chi'}(v),\\
&\C[\mu^{-1}(\param)]^{G,\chi'}\otimes_{\C[\M^0_{\param}(v)]}\C[\mu^{-1}(\param)]^{G,\chi}\twoheadrightarrow
\gr \A^0_{\paramq,\chi+\chi'}(v).
\end{align*}
We have $\varpi^0=\gr\varpi\circ \eta$. It follows that both the kernel and the cokernel of
$\gr\varpi$ are supported on $\param^{sing}$.
Now we can argue as in the proof of Proposition \ref{Prop:alg_iso}
to finish the proof of (2).
%
%Now let us consider the  case of arbitrary $\chi'$. By what we have already proved in (2), the subset of
%$\lambda$ such that $\A^0_{\lambda,\chi}(v), \A^0_{\lambda+\chi,-\chi}$ are mutually
%inverse Morita equivalence bimodules is Zariski open and asymptotically generic. Consider
%the natural homomorphism
%$$\A^0_{\lambda,\chi+\chi'}(v)\otimes_{\A_\lambda(v)} \A_{\lambda+\chi,-\chi}(v)\rightarrow \A^0_{\lambda+\chi,\chi'}(v)$$
%and the induced homomorphism (by tensoring with $\A_{\lambda,\chi}(v)$)
%$$\A^0_{\lambda,\chi+\chi'}(v)\rightarrow %\A^0_{\lambda+\chi,\chi'}(v)\otimes_{\A^0_{\lambda+\chi}(v)}\A_{\lambda,\chi}^0(v).$$
%From the construction of this homomorphism and the homomorphism in the statement of (2),
%it follows that they are mutually inverse.
\end{proof}

\begin{Prop}\label{Prop:trans_reln}
Suppose that $\chi$ lies in the interior of the chamber of $\theta$ and satisfies $H^1(\M^\theta(v), \mathcal{O}(\chi))=0$.
Then we have $\A^0_{\paramq,\chi}(v)\xrightarrow{\sim} \A^{(\theta)}_{\paramq,\chi}(v)$. Moreover, this isomorphism is filtered and induces an isomorphism $\gr\A^0_{\paramq,\chi}(v)\xrightarrow{\sim}
\gr\A^{(\theta)}_{\paramq,\chi}(v)$. Both these $\C[\mu^{-1}(\param)]^G$-modules are identified with $\C[\mu^{-1}(\param)]^{G,\chi}$.
\end{Prop}
\begin{proof}
Note that the restriction map $\C[\mu^{-1}(\param)]^{G,\chi}
\rightarrow \C[\mu^{-1}(\param)^{\theta-ss}]^{G,\chi}$ is injective because
$\chi$ is in the chamber of $\theta$. The left vertical
arrow in diagram (\ref{eq:transl_bimod_commut}) is surjective.
We conclude that it is an isomorphism.
From $H^1(\M^\theta(v), \mathcal{O}(\chi))=0$ it follows that
the right vertical arrow in (\ref{eq:transl_bimod_commut}) is an isomorphism. For the
same reasons, the same true for the specialization of (\ref{eq:transl_bimod_commut})
to any value of $\lambda$. For $\lambda$ Zariski generic,
the natural map $\A^0_{\lambda,\chi}(v)\rightarrow \A^{(\theta)}_{\lambda,\chi}(v)$
is an isomorphism. This follows from (1) of Proposition \ref{Prop:transl_coinc}
combined with Lemma \ref{Prop:trans_spec}. Since the induced map of the associated graded modules is
an embedding, it is forced to be an isomorphism. So
$\A^0_{\lambda,\chi}(v)\rightarrow \A^{(\theta)}_{\lambda,\chi}(v)$ is an isomorphism
for all $\lambda$. From here we deduce that $\A^0_{\paramq,\chi}(v)\hookrightarrow
\A^{(\theta)}_{\paramq,\chi}(v)$ is an isomorphism. The claim about the associated
graded follows from here.
\end{proof}

Let us deduce a corollary  of the previous proposition.

\begin{Cor}\label{Cor:shifts_compar}
Let $\theta$ be a generic stability condition.
Let $\chi$ be generic. Then $\A^0_{\paramq,\chi}(v)=
\A^{(\theta)}_{\paramq,\chi}(v)$ provided $H^1(\M^\theta(v),\mathcal{O}(\chi))=0$.
\end{Cor}
\begin{proof}
This is a consequence of  Propositions \ref{Prop:univ_wc},\ref{Prop:trans_reln}.
\end{proof}

\begin{proof}[Proof of Proposition \ref{Prop:quot_Ham_loc}]
The claim of the proposition is equivalent for any two parameters with difference
in $\Z^{Q_0}$. Recall, Proposition \ref{Prop:alg_iso}, that $\paramq^{iso}$ is
asymptotically generic. Therefore, after adding an element of $\Z^{Q_0}$ to $\lambda$, we
can assume that $\lambda+n\chi\in \paramq^{iso}$ for all $n\geqslant 0$.
By Proposition \ref{Prop:trans_reln},  $\A^0_{\lambda+m\chi,n\chi}(v)\xrightarrow{\sim} \A^{(\theta)}_{\lambda+m\chi,n\chi}(v)$
for all $n,m\geqslant 0$. So the $\Z$-algebra $\mathsf{Z}_{\lambda,\chi}:=\bigoplus_{n,n'\geqslant 0}\A^0_{\lambda+n\chi, n'\chi}$ is the same as the one appearing in \cite[Section 5.3]{BPW}.

Thanks to Proposition \ref{Prop:abelian_loc_BPW},
replacing $\lambda$ with $\lambda+m\chi$ for some $m$, we may assume that the
$\Z$-algebra $\mathsf{Z}_{\lambda,\chi}$ is Morita, see, e.g., \cite[Section 5.3]{BPW}
for the definition. Therefore, $\A_{\lambda}^\theta(v)\operatorname{-mod}$ is equivalent to
the category $\mathsf{Z}_{\lambda,\chi}\operatorname{-mod}^{gr}$ of finitely generated graded $\mathsf{A}$-modules.

Now the claim that $\pi_\lambda^\theta(v)$ is a quotient functor is proved as in \cite[Section 5.5]{BPW}.
Let us provide details of the argument. Consider the functor
$\pi^{\Z}:=\bigoplus_{i=0}^\infty \pi^0_{\lambda+n\chi}(v): D(R)\operatorname{-mod}^{G,\lambda}
\rightarrow \mathsf{Z}_{\lambda,\chi}\operatorname{-mod}^{gr}$. By \cite[Proposition 5.28]{BPW} (that only uses
the assumption $\lambda+n\chi\in \paramq^{iso}$ for all $n\geqslant 0$ and not the flatness of
the moment map), the equivalence $\A_\lambda^\theta(v)\operatorname{-mod}\xrightarrow{\sim}
\mathsf{Z}_{\lambda,\chi}\operatorname{-mod}^{gr}$ intertwines the functors $\pi^\theta_\lambda(v)$
and $\pi^{\Z}$. The latter is a quotient functor by \cite[Lemma 5.29]{BPW}.
\end{proof}

\begin{proof}[Proof of (1) Proposition \ref{Lem:ab_loc}]
The proof is in several steps.

{\it Step 1}.
We can choose $\chi$ in the interior of the chamber of $\theta$ satisfying the following three
conditions:
\begin{itemize}
\item[(i)]
$H^1(\M^\theta(v),\mathcal{O}(n\chi))=0$ for all $n\geqslant 1$.
\item[(ii)] $H^1(\M^{-\theta}(v),\mathcal{O}(-n\chi))=0$ for all $n\geqslant 1$.
\item[(iii)] $\lambda+n\chi\in \paramq^{iso}$ and $(\lambda+n\chi,\theta)\in \mathfrak{AL}(v)$
for all $n\geqslant 0$.
\end{itemize}
Namely, choose $\chi$ in the chamber of $\theta$. Multiplying $\chi$ by a positive integer, we achieve
(i) and (ii). Then we can rescale $\chi$ again and achieve (iii) thanks to Proposition \ref{Prop:alg_iso}
and Proposition \ref{Prop:abelian_loc_BPW}.

{\it Step 2}.
By Corollary \ref{Cor:shifts_compar}, $\A^{(\theta)}_{\paramq,m\chi}(v)_{\lambda+n\chi}=
\A^0_{\lambda+n\chi,m\chi}(v)$ for all $m\geqslant -n$. Since $\lambda+n\chi\in \mathfrak{AL}(v)$,
we see that $\A^{(\theta)}_{\paramq,m\chi}(v)_{\lambda+n\chi}=
\A^{(\theta)}_{\lambda+n\chi,m\chi}(v)$ thanks to Lemma \ref{Prop:trans_spec}.
Therefore  $\A^{(\theta)}_{\lambda+n\chi,m\chi}(v)=\A^0_{\lambda+n\chi,m\chi}(v)$ for all $m\geqslant -n$.
We are going to deduce (1) of Proposition \ref{Lem:ab_loc} from this equality.

{\it Step 3}.
Consider the $\Z$-algebra $\mathsf{Z}_{\lambda,\chi}:=\bigoplus_{n,m\geqslant 0} \A^0_{\lambda+n\chi,m\chi}(v)$
and an ``extended'' $\Z$-algebra $\tilde{\mathsf{Z}}_{\lambda,\chi}:=\bigoplus_{n\geqslant 0, m\geqslant -n}\A^0_{\lambda+n\chi,m\chi}(v)$.
For $M\in D(R)\operatorname{-mod}^{G,\lambda}$, the sum $\bigoplus_{n\geqslant 0}M^{G,n\chi}$ is a module over $\tilde{\mathsf{Z}}_{\lambda,\chi}$. But since $(\lambda+n\chi,\theta)\in \mathfrak{AL}(v)$ for all $n\geqslant 0$,
all bimodules $\A^{(\theta)}_{\lambda+n\chi,m\chi}(v)=\A^0_{\lambda+n\chi,m\chi}(v)$ are Morita equivalences
with inverse $\A^{(\theta)}_{\lambda+(n+m)\chi,-m\chi}(v)=\A^0_{\lambda+(n+m)\chi,-m\chi}(v)$.

{\it Step 4}.
We have an isomorphism $\A^0_{\lambda+(n+m)\chi,-m\chi}(v)\otimes_{\A_{\lambda+(n+m)\chi}(v)} \A^0_{\lambda+n\chi,m\chi}(v)\xrightarrow{\sim}\A_{\lambda+n\chi}(v)$ hence the map
$\A^0_{\lambda+(n+m)\chi,-m\chi}(v)\otimes_{\A_{\lambda+(n+m)\chi}(v)} \A^0_{\lambda+n\chi,m\chi}(v)\otimes_{\A_{\lambda+n\chi}(v)}M^{G,n\chi}
\rightarrow M^{G,n\chi}$ is an isomorphism as well.
But this map comes from taking products by elements of $D(R)$ in $M$ and hence factors as
\begin{align*}&\A^0_{\lambda+(n+m)\chi,-m\chi}(v)\otimes_{\A_{\lambda+(n+m)\chi}(v)} \A^0_{\lambda+n\chi,m\chi}(v)\otimes_{\A_{\lambda+n\chi}(v)}M^{G,n\chi}\\
&\rightarrow \A^0_{\lambda+(n+m)\chi,-m\chi}(v)\otimes_{\A_{\lambda+(n+m)\chi}(v)}M^{G,(n+m)\chi}\rightarrow M^{G,n\chi}.\end{align*}
So we see that the second map is surjective. Similarly, so is the first one. It follows that all maps
$\A^0_{\lambda+(n+m)\chi,-m\chi}(v)\otimes_{\A_{\lambda+(n+m)\chi}(v)}M^{G,(n+m)\chi}\rightarrow M^{G,n\chi}$
are isomorphisms. A conclusion is that the spaces $M^{G,n\chi}$ are either all zero or all nonzero.

{\it Step 5}.
As described in \cite[Section 5.3]{BPW}, the category $\A_\lambda^\theta(v)\operatorname{-mod}$ is equivalent to $\mathsf{Z}_{\lambda,\chi}\operatorname{-mod}$, where the latter stands for the quotient of the category of graded $\mathsf{Z}_{\lambda,\chi}$-modules by the subcategory of all bounded modules. Under this equivalence,
the functor $\pi_\lambda^\theta(v)$ becomes $M\mapsto \bigoplus_{n\geqslant 0} M^{G,n\chi}$ by \cite[Proposition 5.28]{BPW}
(we remark that $\pi_{\lambda}^\theta(v)\cong \pi_\lambda^\chi(v)$).
The conclusion of Step 4 now implies that the kernels of $\pi^0_\lambda(v)$ and of $\pi_\lambda^\theta(v)$
coincide. This proves (1).
\end{proof}

\begin{proof}[Proof of Lemma \ref{Lem:ab_loc_techn}]
The proof is in several steps.

{\it Step 1}. Pick $\chi\in \Z^{Q_0}$ lying in the interior of the chamber of $\theta$
and satisfying $H^1(\M^\theta(v), \mathcal{O}(\chi))=0$. Consider the
subset $\paramq^0\subset \paramq$ consisting of all parameters $\lambda$
such that $\lambda,\lambda+\chi\in \paramq^{iso}$ and $\A^0_{\lambda,\chi}(v), \A^0_{\lambda+\chi,-\chi}(v)$ are mutually
inverse Morita equivalences. By Proposition \ref{Prop:alg_iso} and
(2) of Proposition \ref{Prop:transl_coinc},
$\paramq^0$ is Zariski open and asymptotically generic.

{\it Step 2}.
We claim that $(\lambda,\theta)\in \AL(v)$ provided
$\lambda+n\chi\in \paramq^0$ for all $n\geqslant 0$. As was mentioned in Step 2 of the
proof of (1) of Proposition \ref{Lem:ab_loc},
$\A^0_{\lambda+m\chi,n\chi}=\A^{(\theta)}_{\lambda+m\chi,n\chi}$ (we will drop ``$(v)$''
from the notation). So $\A^{(\theta)}_{\lambda+m\chi,\chi}$
is a Morita equivalence bimodule for all $m\geqslant 0$. By \cite[Proposition 5.13]{BPW} what remains
to be checked is that the natural homomorphism
\begin{equation}\label{eq:product1}\A^0_{\lambda+(m+n)\chi,\chi}\otimes_{\A^0_{\lambda+(m+n)\chi}}\A^0_{\lambda+m\chi,n\chi}
\rightarrow \A^0_{\lambda+m\chi, (n+1)\chi}\end{equation}
is an isomorphism. Note that we also have a natural homomorphism
\begin{equation}\label{eq:product2}\A^0_{\lambda+(m+n+1)\chi,-\chi}\otimes_{\A^0_{\lambda+(m+n+1)\chi}}\A^0_{\lambda+m\chi,(n+1)\chi}
\rightarrow \A^0_{\lambda+m\chi, n\chi}\end{equation}
Since $\A^0_{\lambda+(m+n)\chi,\chi}$ is a Morita equivalence bimodule with inverse
$\A^0_{\lambda+(m+n+1)\chi,-\chi}$,  (\ref{eq:product2}) gives rise to
\begin{equation}\label{eq:product3}
\A^0_{\lambda+m\chi, (n+1)\chi}\rightarrow \A^0_{\lambda+(m+n)\chi,\chi}\otimes_{\A^0_{\lambda+(m+n)\chi}}\A^0_{\lambda+m\chi,n\chi}
\end{equation}
It is easy to see from the construction that (\ref{eq:product3}) and (\ref{eq:product1})
are mutually inverse to each other. This completes the proof of the claim in
the beginning of the step.

{\it Step 3}. Let us finish the proof of lemma. Let $f$ denote the product of the linear functions
defining the singular hyperplanes in $\param^{sing}$. Since $\paramq^0$ is asymptotically
generic, there is a polynomial $F\in \C[\paramq]$ that vanishes on $\paramq\setminus \paramq^0$
and has the form $f^d+\ldots$, where $\ldots$ denote the terms of smaller degree. We can assume
that the top degree term of $F$ is positive on the interior of $C$. Pick a codimension $1$
face $\Gamma$ of $C$ and let $\param_0$ denote the hyperplane in $\param$ spanned by $\Gamma$.
We can write an arbitrary element $\lambda\in \paramq$ as $\lambda=\lambda'+z\chi+\lambda_0$ with
$\lambda'$ being a fixed element, $z\in \C, \lambda_0\in \param_0$ (here we use the identifications $\paramq\cong \C^{Q_0}\cong
\param$). So we can view $F$ as an element of $\C[\param_0][z]$. We can shift $\lambda'$
by an integer so that $F|_{z=n}$ is a nonzero element of $\C[\param_0]$ for all $n\in \Z_{\geqslant 0}$.
This is our choice of $\lambda'$. Let $Y(\lambda')$ consist of all $\lambda\in \lambda'+(C\cap \Z^{Q_0})$ such that $\lambda+n\chi\in \paramq^0$ for all $n\in \Z_{\geqslant 0}$.
Now pick a Zariski generic primitive element $\psi\in \Gamma\cap \Z^{Q_0}$. We claim that for infinitely many elements $m\in \Z_{\geqslant 0}$, we have $F(m\psi+n\chi)\neq 0$ for all $n\in \Z_{\geqslant 0}$. This will imply the required
properties of $Y(\lambda')$.

Let $e$ denote the degree
of $F$. Let us write $F_e$ for the homogeneous degree $e$ part of $F$ and $F_{<e}$ for the sum
of degree $<e$ parts so that $F=F_e+F_{<e}$ and $F_e=f^d$. By the construction of $f$, we
have $F_e(m\psi+n\chi)>A_1 n^e$, where $A_1$ is some positive constant. On the other hand,
$|F_{<e}(m\psi+n\chi)|<A_2 \operatorname{max}(m,n)^{e-1}$, where $A_2$ is some positive constant.
We conclude that $F(m\psi+n\chi)\neq 0$ as long as $n\geqslant A_3 m^{1-1/e}$, where $A_3$ is some positive
constant. On the other hand, for fixed $n$, the number of solutions $m$ to $F(m\psi+n\chi)=0$
cannot exceed $e$, because  the degree of $F$ is $e$. So for  $M\in \Z_{>0}$,
the number of pairs $m\in [0,M], n\in \Z_{\geqslant 0}$ such that $F(m\psi+n\chi)=0$
is bounded by $A_4 M^{1-1/e}$, where $A_4$ is some positive constant. This finishes
the proof of the claim in the beginning of the paragraph and the proof of the lemma.

%Finally pick $\lambda'\in \lambda+\Z^{Q_0}$
%such that $\lambda'+\Gamma+n\chi\not\subset \paramq^0$ for all $n\geqslant 0$. For $r>0$,
%let $\Gamma_r$ denote the set of integral points in $\Gamma$ of length not exceeding $r$.
%Since  $\paramq^0$ is asymptotically generic, we see that for $n>o(r)$, we have $\lambda'+\Gamma_r+n\chi\subset
%\paramq^0$. It follows that $\lambda'+(\Gamma\cap \Z^{Q_0})$ has a Zariski dense  subset
%of parameters $\lambda''$ with $(\lambda'',\theta)\in \AL(v)$.
\end{proof}

%To finish this section let us provide one more sufficient condition for $\A^0_{\lambda,\chi}(v)
%\xrightarrow{\sim}\A_{\lambda,\chi}^{(\theta)}(v)$.

\subsection{Conjectures on localization}\label{SS_loc_conj}
We would like to finish this section by stating conjectures on the precise loci, where abelian and derived
localizations hold.

Let us state the main conjecture.

\begin{Conj}\label{Conj:loc_main}
The following is true:
\begin{enumerate}
\item The locus $\paramq^{sing}(v)$ of $\lambda\in \paramq$ such that the algebra $\A_\lambda(v)$ has infinite
homological dimension is the union of hyperplanes each parallel to some $\ker\alpha$, where $\alpha$
is a root of $\g(Q)$ with $\alpha\leqslant v$.
\item Let $\theta$ be a generic stability condition lying in the classical chamber $C$. Then $(\lambda,\theta)\in \AL(v)$ if and only
if $(\lambda+(C\cap \Z^{Q_0}))\cap \paramq^{sing}(v)=\varnothing$.
\end{enumerate}
\end{Conj}

We would like to point out that the main challenge in (a) is to prove that $\paramq^{sing}(v)$
is a finite union of hyperplanes, it should not be hard to determine the hyperplanes in $\paramq^{sing}(v)$.
Let us give a more detailed  conjectural description of $\paramq^{sing}(v)$. Assume,
for simplicity, that the moment map $\mu$ is flat so that $\M(v)=\M^0(v)$.

For a root $\alpha$, let $\Sigma_\alpha$ denote the union of hyperplanes parallel to $\ker\alpha$
that are contained in $\paramq^{sing}(v)$ so that, according to Conjecture \ref{Conj:loc_main},
$\paramq^{sing}(v)=\bigcup_{\alpha\leqslant v}\Sigma_\alpha$. Let us explain how to compute
$\Sigma_\alpha$.

Pick a generic point $p\in \ker\alpha$ and assume that $\alpha$ is indecomposable. Let
$k$ be maximal such that $(v,1)-k\alpha$ is a root of the quiver $Q^w$. Then, in the
terminology of \ref{SSS_class_slice}, we can pick $x\in \M_p(v)$ that  corresponds
to the decomposition $r=r_0\oplus r_1\otimes \C^k$, where $\dim r_1=\alpha$ and $\dim r_0=(v^0,1)$.
Then we get the quiver $\hat{Q}$ that has a single vertex and $1-(\alpha,\alpha)/2$
loops. We consider the dimension $\hat{v}=k$ and the framing $\hat{w}=w\cdot \alpha-(v^0,\alpha)$.
Recall the affine map $\hat{r}:\paramq\rightarrow \hat{\paramq}=\C$ from
\ref{SSS_restr_alg}.

\begin{Conj}\label{Conj:sigma}
We have $\Sigma_\alpha=\hat{r}^{-1}(\hat{\paramq}^{sing}(\hat{v}))$ (where the locus
$\hat{\paramq}^{sing}(\hat{v})$ is formed for the framing $\hat{w}$).
\end{Conj}

Conjectures \ref{Conj:loc_main} and \ref{Conj:sigma} reduce the computation of the locus
where abelian/derived localization holds to quivers with a single vertex. Let us explain
what is known there.

In the case when there are no loops, the algebra $\A_\lambda(v)$ is $D^\lambda(\operatorname{Gr}(v,w))$,
the algebra of global $\lambda$-twisted differential operators on the grassmanian $\operatorname{Gr}(v,w)$.
In this case,  analogs of the abelian/derived Beilinson-Bernstein theorems (stated originally
for the flag varieties) hold. We have $\paramq^{sing}(v)=\{-1,-2,\ldots, 1-w\}$ and (2) of Conjecture
\ref{Conj:loc_main} holds.

Let us consider the situation when there is one loop. A classical case is when $w=1$. Here
$\A_\lambda(v)$ is the spherical rational Cherednik  algebra for $(S_n,\C^n)$,
see \ref{SSS_SRA}.
The subset $\paramq^{sing}(v)$ consists of all rational $\lambda\in (-1,0)$
with denominator not exceeding $n$, see e.g. \cite[Corollary 4.2]{BE}. Moreover,
(2) of Conjecture \ref{Conj:loc_main} holds, this follows from \cite{GS,KashRouq}, the
case of half-integer parameters was completed in \cite{BE}.

When $w>1$, we have that $\paramq^{sing}(v)$ consists of all rational
numbers $\lambda\in (-w,0)$ with denominator not exceeding $v$.
Moreover, (2) of Conjecture \ref{Conj:loc_main} holds. These results
are obtained in the subsequent paper \cite{Gies} by the
second named author.

Finally, let us mention that derived and/or abelian localization is known
for some $(Q,v,w)$ with $|Q_0|>1$. For example, much is known about the case when $Q$ is of finite
Dynkin type A. The case when the corresponding variety $\M^\theta(v)$
is the cotangent bundle to a partial flag variety follows similarly to
the Beilinson-Bernstein theorem. The case when $\M^\theta(v)$ is the preimage of
the Slodowy slice in the cotangent bundle of the full flag variety  follows
as in \cite{Ginzburg_HC}.

Now let $Q$ be an affine quiver with extending vertex $0$. Assume that
$w=\epsilon_0$ so that $\A_\lambda(v)$ is the spherical subalgebra
in an SRA. In this case there was a conjecture describing the singular
(=aspherical) locus in $\paramq^{sing}(n\delta)$, \cite[Conjecture 5.3]{Etingof_affine},
based on the cyclic case done before that in \cite{DG}. It is easy to see that
(after relating the parameterizations) (1) of Conjecture \ref{Conj:loc_main}
reduces to \cite[Conjecture 5.3]{Etingof_affine} for $v=n\delta$. Moreover, part (2)
in the cyclic case should follow from results of \cite{cycl_ab_loc}.

\section{Wall-crossing and Webster functors}\label{S_WWC}
In this section we will recall/introduce  two different (but related) families of functors
that are the main ingredients of the proof of Theorem \ref{Thm:verymain}.

We will consider functors categorifying the action of $\a$ on the cohomology
of $\bigsqcup_\theta \M^\theta(v)$. In special cases (for example,
when $\lambda$ is integral and all components  of $\theta$ are positive) these functors were constructed by Webster in \cite{Webster} and our general construction is built on his. So we call these {\it Webster
functors}. The second family of functors was introduced in \cite[Section 6]{BPW}
under the name of twisting functors. In this paper we call them {\it wall-crossing functors}.

Roughly, the Webster functors  should be thought as induction
functors that allow to produce new finite dimensional modules from existing ones,
proving a ``lower bound'' of Theorem \ref{Thm:verymain}.   The wall-crossing functors are used
to establish the ``upper bound''.

\subsection{Wall-crossing functors}\label{SS:WC}
Here we define wall-crossing functors and study some of their properties.
In what follows we assume that the functor $L\Loc_\lambda^\theta$ is a derived
equivalence provided the homological dimension of $\A_\lambda(v)$ is finite
(this is always the case when the quiver $Q$ is of finite or affine type,
see Proposition \ref{Prop:der_MN}).

\subsubsection{Construction of the functor}\label{SS_WC_constr}
Pick $\lambda\in \paramq, \chi\in \Z^{Q_0}$. Recall, \ref{SSS_HC_transl},
the $\A_{\lambda+\chi}^{\theta}(v)$-$\A_\lambda^\theta(v)$-bimodule
$\A_{\lambda,\chi}^\theta(v):=[\mathcal{Q}_{\lambda}|_{T^*R^{\theta-ss}}]^{G,\chi}$
and its global sections $\A_{\lambda,\chi}^{(\theta)}(v)$. Note that the functor
$\mathcal{T}_{\lambda,\chi}:\A^\theta_{\lambda,\chi}(v)\otimes_{\A_\lambda^\theta(v)}\bullet:
\A_\lambda^\theta(v)\operatorname{-mod}\rightarrow \A_{\lambda+\chi}^\theta(v)\operatorname{-mod}$
is an equivalence.
We remark that
\begin{equation}\label{eq:shift}
\mathcal{T}_{\lambda,\chi}\circ \pi_\lambda^\theta(v)=\pi_{\lambda+\chi}^\theta(v)\circ (\C_{-\chi}\otimes\bullet).
\end{equation}

Now let $\lambda',\lambda$ be such that $\chi:=\lambda-\lambda'\in \Z^{Q_0}$, the algebra $\A_{\lambda'}(v)$ has finite homological dimension (and so $L\Loc_{\lambda'}^\theta$ is a derived equivalence), and $(\lambda,\theta)\in \AL(v)$.
Following \cite[Section 6.4]{BPW}, we define a functor $\WC_{\lambda'\rightarrow\lambda}: D^b(\A_{\lambda'}(v)\operatorname{-mod})
\xrightarrow{\sim} D^b(\A_\lambda(v)\operatorname{-mod})$ by \begin{equation}\label{eq:WC_def}\WC_{\lambda'\rightarrow\lambda}:=\Gamma_\lambda^\theta\circ \mathcal{T}_{\lambda',\chi}\circ L\Loc_{\lambda'}^\theta.\end{equation} We remark that this functor is right $t$-exact.
If $(\lambda',\theta')\in \mathfrak{AL}(v)$, then we can also consider
the functor $\WC_{\lambda'\rightarrow \lambda}=\mathcal{T}_{\lambda',\chi}\circ (R\Gamma^\theta_{\lambda'})^{-1}\circ \Gamma_{\lambda'}^{\theta'}: D^b(\A_{\lambda'}^{\theta'}(v)\operatorname{-mod})\xrightarrow{\sim}
D^b(\A_{\lambda}^{\theta}(v)\operatorname{-mod})$. When $(\lambda',\theta')\in \mathfrak{AL}(v)$,
we often write $\WC_{\theta'\rightarrow \theta}$ instead of $\WC_{\lambda'\rightarrow \lambda}$.
We note that under the identifications $$\A_{\lambda_1}^{\theta}(v)\operatorname{-mod}
\xrightarrow{\sim}\A_\lambda^\theta(v)\operatorname{-mod}, \A_{\lambda_1'}^{\theta}(v)\operatorname{-mod}
\xrightarrow{\sim}\A_{\lambda'}^\theta(v)\operatorname{-mod}$$ with $\lambda_1\in \lambda+\Z^{Q_0},
\lambda_1'\in \lambda'+\Z^{Q_0}$, the functor $\WC_{\theta'\rightarrow \theta}$ is independent of
the choice of $\lambda_1,\lambda_1'$ provided $(\lambda_1,\theta),(\lambda_1',\theta')\in
\AL(v)$.

\subsubsection{Alternative realizations}
Here is another formula for  $\WC_{\theta'\rightarrow\theta}$
that holds when $\lambda\in \paramq^{ISO}$ (and $(\lambda,\theta)\in \AL(v)$):
\begin{equation}\label{eq:wc_formula}
\WC_{\theta'\rightarrow\theta}= \pi^{\theta}_{\lambda}(v)\circ (\C_{\lambda'-\lambda}\otimes\bullet) \circ L\pi_{\lambda'}^{\theta'}(v)^!.
\end{equation}
This formula follows from (\ref{eq:shift}),(\ref{eq:WC_def}) and Lemma \ref{Lem:der_glob_descr}.
Here we use the isomorphism $\pi_{\lambda'}^{\theta'}(v)=\pi^0_{\lambda'}(v)$ to produce the functor $L\pi_{\lambda'}^{\theta'}(v)^!$.

A connection of $\A_{\lambda,\chi}^{(\theta)}$ to the wall-crossing functor is provided by the following assertion.

\begin{Lem}[Proposition 6.31 in \cite{BPW}]\label{Lem:wc_bimod}
If $(\lambda,\theta)\in \mathfrak{AL}(v)$, then $$\WC_{\lambda\rightarrow\lambda'}(\bullet)=\A_{\lambda,\chi}^{(\theta)}(v)\otimes^L_{\A_{\lambda}(v)}\bullet.$$
\end{Lem}

In fact, under the assumptions of Lemma \ref{Lem:wc_bimod}, $\A_{\lambda,\chi}^{(\theta)}(v)=\A_{\lambda,\chi}^0(v)$,
as the following proposition shows.

\begin{Prop}\label{Lem:shift_coinc}
Suppose that $\lambda,\lambda+\chi\in \paramq^{iso}$ and  $(\lambda+\chi,\theta)\in \mathfrak{AL}(v)$. Then the
natural homomorphism $\A^0_{\lambda,\chi}(v)\rightarrow \A^{(\theta)}_{\lambda,\chi}(v)$ is an isomorphism.
\end{Prop}
\begin{proof}
By (1) of Proposition \ref{Lem:ab_loc}, the functors
$\pi^\theta_{\lambda+\chi}(v),\pi^0_{\lambda+\chi}(v)$ are isomorphic (below we suppress $v$ and write
$\pi^\theta_{\lambda+\chi}$, etc.). Moreover, $\pi^0_{\lambda+\chi}=\Gamma^\theta_{\lambda+\chi}\circ \pi^\theta_{\lambda+\chi}$. Then we have \begin{align*}
&\A^{(\theta)}_{\lambda,\chi}(v)\otimes_{\A_\lambda(v)}\bullet\cong  [\text{Lemma }\ref{Lem:wc_bimod}] \\  &\Gamma_{\lambda+\chi}^\theta\circ (\A_{\lambda,\chi}^\theta\otimes_{\A_\lambda^\theta}\bullet)\circ \operatorname{Loc}_\lambda^\theta \cong  [\text{Lemma }\ref{Lem:der_glob_descr}]\\&\Gamma^\theta_{\lambda+\chi}\circ\pi^\theta_{\lambda+\chi}\circ (\C_{-\chi}\otimes\bullet)\circ (\pi^0_\lambda)^!.
\end{align*}
Also it is easy to see that $\A^0_{\lambda,\chi}(v)\otimes_{\A_\lambda(v)}\bullet=\pi^0_{\lambda+\chi}\circ (\C_{-\chi}\otimes\bullet)\circ (\pi_{\lambda}^0)^!$. So the functors $\A^{(\theta)}_{\lambda,\chi}(v)\otimes_{\A_\lambda(v)}\bullet$
and $\A^0_{\lambda,\chi}(v)\otimes_{\A_\lambda(v)}\bullet$ are isomorphic. It follows that the bimodules
$\A^{(\theta)}_{\lambda,\chi}(v)$ and $\A^0_{\lambda,\chi}(v)$ are isomorphic. Let us see why the corresponding isomorphism
coincides with (\ref{eq:hom_0_to_theta}).

Consider the functor $(\pi^{\theta}_\lambda)^\diamondsuit: \A_\lambda^\theta(v)\operatorname{-mod}
\rightarrow D(R)\operatorname{-Mod}^{G,\lambda}$ defined by
$$(\pi_{\lambda}^\theta)^\diamondsuit:=\Gamma(\mathcal{Q}_\lambda|_{(T^*R)^{\theta-ss}}\otimes_{\A_\lambda^\theta(v)}\bullet)$$
We have a natural isomorphism
\begin{equation}\label{eq:adj_Hom_iso}\operatorname{Hom}_{\A_\lambda^\theta(v)}(\pi_\lambda^\theta(N),M)\cong
\operatorname{Hom}_{D(R)}(N, (\pi_{\lambda}^\theta)^\diamondsuit(M)), N\in D(R)\operatorname{-mod}^{G,\lambda},
M\in \A_\lambda^\theta(v)\operatorname{-mod}. \end{equation}

By the definition, (\ref{eq:hom_0_to_theta}) coincides with the natural homomorphism
\begin{equation}\label{eq:homom_11}\pi_{\lambda+\chi}^0\circ (\C_{-\chi}\otimes\bullet)\circ (\pi_\lambda^0)^!(\A_\lambda(v))\rightarrow
[\pi_{\lambda+\chi}^0\circ (\pi^{\theta}_{\lambda+\chi})^\diamondsuit]\circ \pi_{\lambda+\chi}^\theta \circ (\C_{-\chi}\otimes\bullet)\circ (\pi_\lambda^0)^!(\A_\lambda(v)).\end{equation}
It follows from Lemma \ref{Lem:der_glob_descr} and (\ref{eq:adj_Hom_iso})  that
the composition in the brackets in (\ref{eq:homom_11}) is $\Gamma_{\lambda+\chi}^\theta$.
Also (\ref{eq:adj_Hom_iso}) gives rise to a functor morphism  $\operatorname{id}\rightarrow (\pi^{\theta}_{\lambda+\chi})^\diamondsuit\circ \pi_{\lambda+\chi}^\theta$, which is nothing else but the restriction homomorphism of a module in $D(R)\operatorname{-mod}^{G,\lambda}$ to its sections on the semistable locus.
(\ref{eq:homom_11}) is induced by this functor morphism.
So (\ref{eq:homom_11}) is the homomorphism $\A_{\lambda,\chi}^0(v)\rightarrow \A^{(\theta)}_{\lambda,\chi}(v)$ constructed before in this proof.
% The proof of (I) is complete.
%
%(2) follows from (I) and the observation that if $(\lambda,\theta),(\lambda+\chi,\theta)\in \mathfrak{AL}(v)$ then
%$\A_{\lambda,\chi}^{(\theta)}(v), $ $\A_{\lambda+\chi,-\chi}^{(\theta)}(v)$ are mutually inverse Morita equivalences.
\end{proof}

The importance of this proposition is that the bimodules $\A_{\lambda,\chi}^0(v)$ are better
than $\A_{\lambda,\chi}^{(\theta)}(v)$ in several aspects: for example, the former
behave well under restriction functors, (\ref{eq:transl_restr}). This will allow to study
wall-crossing functors inductively.

\subsubsection{Composition of wall-crossing functors}
It turns out that, under additional restrictions, a composition of wall-crossing functors is
again a wall-crossing functor.

More precisely, suppose that we have
two generic stability conditions $\theta,\theta'$. Suppose that $\theta_i, i=0,\ldots,q,$
are such that $\theta_0=\theta, \theta_q=\theta'$, $\theta_i$ and $\theta_{i+1}$
are separated by a single wall and $q$ is minimal with these properties.

\begin{Thm}\label{Thm:wc_decomp_short}
We have an isomorphism  of functors
$$\WC_{\theta_0\rightarrow \theta_q}=\WC_{\theta_{q-1}\rightarrow \theta_q}\circ\ldots
\WC_{\theta_1\rightarrow \theta_2}\circ \WC_{\theta_0\rightarrow \theta_1}.$$
\end{Thm}
\begin{proof}
This is established in the proof of \cite[Theorem 6.35]{BPW}.
\end{proof}

\subsubsection{Non-essential walls}
Sometimes a wall-crossing functor between two different chambers happens to be an
abelian equivalence. We will be interested in the situation when this happens
for two chambers sharing a wall.

Namely, we say that a classical wall $\ker\alpha$ is {\it non-essential} (for the parameter
$\lambda$) if for every
two classical chambers $C,C'$ separated by $\ker\alpha$ only, the wall-crossing functor
$\WC_{\lambda\rightarrow \lambda'}$ is an abelian equivalence for $\theta\in C, \theta'\in C'$
(and $(\lambda,\theta),(\lambda',\theta')\in \AL(v)$).

Here is an important example of a non-essential wall.

\begin{Prop}\label{Prop:wall_non_essent}
Suppose $\alpha$ is a real root and $\langle\alpha,\lambda\rangle\not\in \Z$.
Assume also that the intersections of $\paramq^{iso}$ with $\lambda+\ker\alpha, \lambda'+\ker\alpha$
are nonempty.  Then the wall $\ker\alpha$ is non-essential for $\lambda$.
\end{Prop}
\begin{proof}
Let $\paramq_0:=\lambda+\ker\alpha,\paramq_0':=\lambda'+\ker\alpha$. Consider the
translation bimodules $\B_{\paramq_0}:=\A^0_{\paramq_0,\chi}(v), \B'_{\paramq_0}:=\A^0_{\paramq'_0,-\chi}(v)$
and the algebras $\A_{\paramq_0}:=\A^0_{\paramq_0}(v), \A'_{\paramq_0}:=\A^0_{\paramq'_0}(v)$, where $\chi=\lambda'-\lambda$.

{\it Step 1}.
We claim that for a Zariski generic $\lambda_1\in \paramq_0$
the specializations $\B_{\lambda_1}, \B'_{\lambda_1}$ are mutually inverse Morita equivalences.
Similarly to the proof of Proposition \ref{Prop:alg_iso} (Section \ref{SS_compl2}),
this amounts to checking that the kernels
and the cokernels of  the natural homomorphisms
\begin{equation}\label{eq:nat_homs}\B'_{\paramq_0}\otimes_{\A'_{\paramq_0}}\B_{\paramq_0}\rightarrow \A_{\paramq_0},\quad
\B_{\paramq_0}\otimes_{\A_{\paramq_0}}\B_{\paramq_0'}\rightarrow \A'_{\paramq_0}
\end{equation}
have proper supports in $\paramq_0$. This will be proved in subsequent steps.

{\it Step 2}. Pick a Zariski generic $p\in \ker\alpha$ and consider $x\in \M^0_p(v)$.
The corresponding representation $r$ of $\overline{Q}^w$ decomposes into irreducibles as
$r=r_0\oplus  r_1\oplus\ldots\oplus r_k$, where $r_0$ has dimension $(v-k\alpha,1)$ and all $r_i$ have dimension
$(\alpha,0)$. By \cite[Theorem 1.2]{CB}, the representations $r_1,\ldots,r_k$
are all isomorphic. The corresponding quiver $\hat{Q}$ (see \ref{SSS_class_slice}) has
one vertex and no loops, $\hat{v}=k$ and $\hat{w}=w\cdot \alpha- (v-k\alpha,\alpha)=
(\nu,\alpha)+2k$. We claim that  $\hat{r}(\lambda)=\langle\alpha, \lambda\rangle+s$,
where $s\in \Z$. Indeed, this boils down to $\langle \varrho(v),\alpha\rangle-\frac{1}{2}(\nu,\alpha)\in \Z$
that is a straightforward check. So the parameter $\hat{\lambda}:=\hat{r}(\lambda)$
is not an integer. Also since $\lambda\in \paramq^{iso}$, we see that $\hat{\lambda}\in \hat{\paramq}^{iso}$
(this follows from Lemma \ref{Lem:quant_decomp}).
We conclude that the algebra $\hat{\A}^0_{\hat{\lambda}}(\hat{v})$ is the algebra
$D^{\hat{\lambda}}(\operatorname{Gr}(k,\hat{w}))$ of $\hat{\lambda}$-twisted differential operators on
the grassmanian $\operatorname{Gr}(k,\hat{w})$.

{\it Step 3}. Let $\hat{\chi}=\langle \chi,\alpha\rangle$, this is an integer. So both
$\hat{\lambda}, \hat{\lambda}+\hat{\chi}$ are not integers. A version of the  Beilinson-Bernstein
abelian localization theorem for twisted differential operators on grassmanians
implies that abelian localization holds for $(\hat{\lambda},\hat{\theta})$
and $(\hat{\lambda}+\hat{\chi},\hat{\theta})$. Therefore the bimodules
$\hat{\A}^{(\hat{\theta})}_{\hat{\lambda},\hat{\chi}}(\hat{v}),
\hat{\A}^{(\hat{\theta})}_{\hat{\lambda}+\hat{\chi},-\hat{\chi}}(\hat{v})$
are mutually inverse Morita equivalences. Proposition \ref{Lem:shift_coinc}
implies that the bimodules $\hat{\A}^{0}_{\hat{\lambda},\hat{\chi}}(\hat{v}),
\hat{\A}^{0}_{\hat{\lambda}+\hat{\chi},-\hat{\chi}}(\hat{v})$ are Morita
equivalences. Equivalently, the kernels and cokernels of the homomorphisms
in (\ref{eq:nat_homs}) vanish under the functor $\bullet_{\dagger,x}$.
By Lemma \ref{Lem:dag_assoc}, the associated varieties of these kernels and cokernels
do not intersect $\M_p(v)$. Since $p$ was chosen to be Zariski generic,
the $\paramq_0$-supports of the kernels and cokernels are proper.
The claim in the beginning  of Step 1 follows.

{\it Step 4}. Let us finish the proof. By applying integral shifts to
$\paramq_0,\paramq'_0$ (and, in particular, modifying $\chi$), thanks to Lemma
\ref{Lem:ab_loc_techn}, we may assume that there is
$\lambda_0\in \lambda+(\Z^{Q_0}\cap \ker\alpha)$ such that $(\lambda_1,\theta),
(\lambda_1+\chi,\theta')\in \AL(v)$ for  $\lambda_1\in \lambda_0+(C\cap \ker\alpha)$
in some Zariski dense subset of $\ker\alpha$.
As was mentioned in \ref{SS_WC_constr},
the functor $\WC_{\lambda_1\rightarrow \lambda_1+\chi}$ becomes $\WC_{\lambda\rightarrow \lambda+\chi}$
up to pre- and post-composing with equivalences of abelian categories. Since $\lambda_0+(C\cap \ker\alpha)$ is Zariski dense,
we use Proposition \ref{Lem:shift_coinc} together with Lemma \ref{Lem:wc_bimod} to see
that $\WC_{\lambda_1\rightarrow \lambda_1+\chi}=\A^0_{\lambda_1,\chi}(v)\otimes^L_{\A^0_{\lambda_1}(v)}\bullet$
is an abelian equivalence. This completes the proof.
\end{proof}

So the only non-trivial wall-crossing functors corresponding to the walls $\ker\alpha$
with real $\alpha$ are for $\alpha$ that are roots of $\mathfrak{a}^\lambda$. This is
the first indication that the representation theory of the algebras $\A_\lambda(v)$
is controlled by the algebras $\mathfrak{a}^\lambda$.

\subsection{Webster functors}\label{SS_W_fun}
\subsubsection{Special case: Webster's construction}
In \cite{Webster}, Webster introduced a quantum categorical version of Nakajima's construction, \cite[Section 10]{Nakajima}. In the case when all
$\theta_k$ are positive and for $i\in Q_0$
such that $\lambda_i\in \Z,$ he produced functors $F_i: D^b(\A^\theta_\lambda(v)\operatorname{-mod})
\leftrightarrows D^b(\A^\theta_\lambda(v+\epsilon_i)\operatorname{-mod}):E_i$ and studied their properties. We will need the construction
so we recall it first.

We start with the simplest possible case when $Q$ is a single vertex without arrows.
In this case, $\M^\theta(v)=T^*\operatorname{Gr}(v,w)$ and $\lambda$ has to be an integer
(for $\mathfrak{a}$ to be different from the Cartan subalgebra).
Our exposition follows \cite{Kaetc}.

Pick $r>0$ and set $d=w-2v+r$. Consider the incidence subvariety $C^r(d):=\operatorname{Fl}(v,v-r,w)\subset\operatorname{Gr}(v,w)\times \operatorname{Gr}(v-r,w)$.
Consider the $\delta$-function $D_{\operatorname{Gr}(v,w)\times \operatorname{Gr}(v-r,w)}$-module
$\delta_{C^r(d)}$ on $C^r(d)$ (the image of the structure sheaf on $C^r(d)$ under the Kashiwara
equivalence). Then we consider the following objects:
\begin{align*}
&\mathcal{E}^{(r)}(d)=\delta_{C^r(d)}[v(w-v)]\in D^b(D_{\operatorname{Gr}(v,w)\times \operatorname{Gr}(v-r,w)}\operatorname{-mod}),\\
&\mathcal{F}^{(r)}(d)=\delta_{C^r(d)}[(v-r)(w-v+r)]
\in D^b(D_{\operatorname{Gr}(v-r,w)\times \operatorname{Gr}(v,w)}\operatorname{-mod}).
\end{align*}
The object $\mathcal{E}^{(r)}(d)$ defines a functor $E^{(r)}(d):
D^b(D_{\operatorname{Gr}(v,w)}\operatorname{-mod})\rightarrow
D^b(D_{\operatorname{Gr}(v-r,w)}\operatorname{-mod})$ by convolving with
$\mathcal{E}^{(r)}(d)$. Similarly, we get a functor $F^{(r)}(d):
D^b(D_{\operatorname{Gr}(v-r,w)}\operatorname{-mod})\rightarrow
D^b(D_{\operatorname{Gr}(v,w)}\operatorname{-mod})$.
We write $E^{(r)}$ for $\bigoplus_d E^{(r)}(d)$,
and $F^{(r)}$ for $\bigoplus_d F^{(r)}(d)$.

The functors $E^{(r)}(d), F^{(r)}(d)$ are adjoint to one another
up to homological shifts. Namely, let us write $E^{(r)}(d)_L,E^{(r)}(d)_R$
for the left and right adjoint functors of $E^{(r)}(d)$. We have
\begin{equation}\label{eq:EF_adj}
E^{(r)}(d)_L\cong F^{(r)}(d)[-rd],\quad  E^{(r)}(d)_R\cong F^{(r)}(d)[rd].
\end{equation}

%and let $\pi^1: \operatorname{Fl}(v,v+1,w)\twoheadrightarrow \operatorname{Gr}(v+1,w),
%\pi^2: \operatorname{Fl}(v,v+1,w)\twoheadrightarrow \operatorname{Gr}(v,w)$ be the natural projections.
%When $\lambda=0$, we can consider the functors $F:=\pi^1_*\pi^{2*}: D^b(D_{\operatorname{Gr}(v,w)}\operatorname{-mod})
%\leftrightarrows D^b(D_{\operatorname{Gr}(v+1,w)}\operatorname{-mod}):E:=\pi^2_*\pi^{1*}$. \footnote{This definition is
%correct up to homological shifts that need to be fixed.}
%For a general integral $\lambda$, we get functors $E,F$ by identifying %$D^\lambda_{\operatorname{Gr}(\bullet,w)}\operatorname{-mod}$
%with $D_{\operatorname{Gr}(\bullet,w)}\operatorname{-mod}$ by means of $\mathcal{O}(\lambda)\otimes_{\mathcal{O}}\bullet$.

This construction has several extensions. For example, we get functors $$F:
D^b(D_{\operatorname{Gr}(\bullet,w)}\otimes D_{\underline{R}}\operatorname{-mod})
\leftrightarrows D^b(D_{\operatorname{Gr}(\bullet+1,w)}\otimes D_{\underline{R}}\operatorname{-mod}):E$$
for any vector space $\underline{R}$. Also if $H$ is a reductive group equipped with homomorphisms
$H\rightarrow \operatorname{GL}(w), \operatorname{GL}(\underline{R})$ and $\underline{\lambda}$
is a character of $\mathfrak{h}$, then we get functors
\begin{equation}\label{eq:W_fun}F:D^b_{H,\underline{\lambda}}(D_{\operatorname{Gr}(\bullet,w)}\otimes D_{\underline{R}}\operatorname{-mod})
\leftrightarrows D^b_{H,\underline{\lambda}}(D_{\operatorname{Gr}(\bullet+1,w)}\otimes D_{\underline{R}}\operatorname{-mod}):E
\end{equation}

Now let us proceed to the  case of a general quiver. We assume that $\theta_k>0$ for all $k\in Q_0$. Let $\underline{R},
\underline{G}$ be as in (\ref{eq:underl}), $\underline{\theta}$ be the collection of $\theta_j$ with $j\neq i$ and $\underline{\lambda}$  have the similar meaning to $\underline{\theta}$. We  reverse arrows if necessary and  assume that  $i$ is a source in $Q$.

Since $\theta_i>0$, we have $R\quo^{\theta_i} G_i=\operatorname{Gr}(v_i,\tilde{w}_i)\times \underline{R}$, where $\tilde{w}_i$ is defined by (\ref{eq:tildew}). The group $\underline{G}$ acts on $\operatorname{Gr}(v_i,\tilde{w}_i)\times \underline{R}$ diagonally, the action on $\operatorname{Gr}(v_i,\tilde{w}_i)$ is via a natural action of
$\underline{G}$ on $\tilde{W}_i$. Then we have $$
D_R\red^{\theta_i}_{\lambda_i} \GL(v_i)=D^{\lambda_i}_{\operatorname{Gr}(v_i,\tilde{w}_i)}\otimes D_{\underline{R}},\quad
\A_\lambda^\theta(v)=[D^{\lambda_i}_{\operatorname{Gr}(v_i,\tilde{w}_i)}\otimes D_{\underline{R}}]\red_{\underline{\lambda}}^{\underline{\theta}}\underline{G}.$$
Let us write $\A^{\theta_i}_{\lambda_i}(v)$ for the former reduction.

It follows from Proposition \ref{Prop:quot_Ham_loc}
that the category $\A^\theta_{\lambda}(v)\operatorname{-mod}$  is the quotient of
the category $\A^{\theta_i}_{\lambda_i}(v)\operatorname{-mod}^{\underline{G},\underline{\lambda}}$
(of $(\underline{G},\underline{\lambda})$-equivariant $\A_{\lambda_i}^{\theta_i}(v)$-modules)
by the Serre subcategory  of all modules whose singular support
is contained in the image of $\mu^{-1}(0)^{\theta_i-ss}\setminus \mu^{-1}(0)^{\theta-ss}$ in $T^*R\red^{\theta_i}\GL(v_i)$.
Lemma \ref{Lem:equiv_der_fun} shows that the same is true on the level of (equivariant)
derived categories.

As was checked by Webster, \cite[Section 4]{Webster}, the functors
$$F: D^b_{\underline{G},\underline{\lambda}}(\A_{\lambda_i}^{\theta_i}(v)\operatorname{-mod})
\rightleftarrows D^b_{\underline{G},\underline{\lambda}}(\A_{\lambda_i}^{\theta_i}(v+\epsilon_i)\operatorname{-mod}):E$$
preserve the subcategories  of all complexes whose homology are supported on the image of $\mu^{-1}(0)^{\theta_i-ss}\setminus\mu^{-1}(0)^{\theta-ss}$ in $T^*R\red^{\theta_i}\GL(v_i)$ (it is important
here that all $\theta_k$ are positive). So they descend to  endo-functors of
$\bigoplus_v D^b(\A_\lambda^\theta(v)\operatorname{-mod})$ to be denoted by $E_i,F_i$.

\begin{Rem}\label{Rem:Webster_functors_def}
Let $\chi\in \Z^{Q_0}$. By the very definition of the functors $E_i,F_i$, the equivalences
$$\A_\lambda^\theta(v)\operatorname{-mod}\xrightarrow{\sim}
\A_{\lambda+\chi}^\theta(v)\operatorname{-mod},\quad
\A_\lambda^\theta(v+\epsilon_i)\operatorname{-mod}\xrightarrow{\sim}
\A_{\lambda+\chi}^\theta(v+\epsilon_i)\operatorname{-mod}$$
intertwine these functors.
\end{Rem}

\subsubsection{Properties}\label{SSS_Webster_prop}
Let us explain some properties of the functors $E,F$ (and also of $E_i,F_i$).

The proof of the next lemma is a part of that for \cite[Theorem 3.1]{Webster}.

\begin{Lem}\label{Lem:categorif}
The functors (\ref{eq:W_fun}) define a categorical action of the 2-Kac-Moody algebra $\mathcal{U}(\sl_2)$
(we use the same version as in \cite[Section 1]{Webster}) on the category
\begin{equation}\label{eq:cat_rep}
\bigoplus_{v=0}^w D_{H,\underline{\lambda}}^b(D^\lambda_{\operatorname{Gr}(v,w)}\otimes D_{\underline{R}}\operatorname{-mod}).
\end{equation}
The divided power functors are $E^{(r)},F^{(r)}$.
\end{Lem}

Consider a 2-category $\mathsf{Q}$, a ``single vertex analog'' of the 2-category $\mathcal{Q}^\lambda$ introduced
in the end of \cite[Section 2]{Webster} (so that the 1-morphisms $E^{(r)},F^{(r)}$
in $\mathcal{U}(\slf_2)$
map to $\mathcal{E}^{(r)},\mathcal{F}^{(r)}$) that we are going to define now.
In our definition of $\mathsf{Q}$ we will need
a version of the Steinberg variety. By definition, this is the subvariety
$\mathsf{St}\subset T^*\operatorname{Gr}(v,w)\times T^*\operatorname{Gr}(v',w)$ that is
the preimage of the diagonal in $\gl(w)\times \gl(w)$ under the moment map $T^*\operatorname{Gr}(v,w)\times T^*\operatorname{Gr}(v',w)\rightarrow \gl(w)\times \gl(w)$.

To define $\mathsf{Q}$, it is enough to restrict to the case $\lambda=0$.
The collection of objects in that category is $\{0,\ldots,w\}$,
and the 1-morphisms from $v'$ to $v$ are the objects from $$D^b_{H}(\mathcal{D}_{\operatorname{Gr}(v',w)}\otimes
\mathcal{D}_{\operatorname{Gr}(v,w)}\otimes D_{\underline{R}^2})$$
with homology supported (in the sense of the singular support) on $\mathsf{St}\times \mathcal{N}^0_{diag}$,
where $\mathcal{N}^0_{diag}$ stands for the conormal bundle to the diagonal
$\underline{R}\subset \underline{R}^2$.  We remark that Webster's 2-category has, in a sense, more 1-morphisms but what we have above is sufficient for our purposes. The description of 2-morphisms in $\mathsf{Q}$ is similar to \cite[Section 2]{Webster}.  The action of $\mathsf{Q}$ on (\ref{eq:cat_rep})  (as well as the tensor structure on $\mathsf{Q}$) is defined via convolution of D-modules. The grading shift in $\mathcal{U}(\mathfrak{sl}_2)$ corresponds to the homological shift
in $\mathsf{Q}$.

The construction of \cite[Section 4]{Webster} shows that the  action in  Lemma \ref{Lem:categorif}
factors through a homomorphism $\psi$ of $2$-algebras $\mathcal{U}(\sl_2)\rightarrow \mathsf{Q}$.

Let us point out several other important properties of the functors $E_i, F_i$ that are due to Webster.

\begin{Lem}\label{Lem:W_funct_prop}
The following claims are true:
\begin{enumerate}
\item The functors $E_i,F_i$ preserve the subcategory $$\bigoplus_{v} D^b_{\rho^{-1}(0)}(\A_\lambda^{\theta}(v)\operatorname{-mod})
\subset \bigoplus_{v} D^b(\A_\lambda^{\theta}(v)\operatorname{-mod}).$$
\item Moreover, for $M\in D^b_{\rho^{-1}(0)}(\A_\lambda^{\theta}(v)\operatorname{-mod})$, we have
$$\CC(E_i M)=e_i\CC(M), \CC(F_i M)=f_i \CC(M),$$ where $e_i,f_i$ stand for the Nakajima operators.
\end{enumerate}
\end{Lem}

For the proof, see \cite[Corollary 3.4, Proposition 3.5]{Webster}.
In particular, if $\lambda\in \Z^{Q_0}$, we see that $\CC: K_0(\A^\theta_\lambda(v)
\operatorname{-mod})\rightarrow L_\omega[\nu]$ is surjective.

\subsubsection{General case}\label{SSS_Webster_general}
Now let us explain how to generalize Webster's functors to the case when $\lambda$ is not necessarily
integral and $\theta$ is not necessarily positive. We will assume that $\theta$ lies in the Tits
cone (which puts no restrictions for finite type $Q$ and results in $\langle\theta,\delta\rangle>0$
for affine type $Q$).  We will also assume that $\theta$ is generic for all $v$, in fact, for every
given $v$ and every classical chamber there is such an element there.

The element $\theta$ defines a Weyl chamber for the algebra $\a$ and hence a system of simple
roots $\Pi^\theta$ for $\a$.  For $\alpha\in \Pi^\theta$ (and all $v$) we will define functors
$$F_\alpha: D^b(\A_\lambda^\theta(v)\operatorname{-mod})\rightleftarrows
D^b(\A_\lambda^\theta(v+\alpha)\operatorname{-mod}):E_\alpha$$
generalizing the functors constructed by Webster.

Now we proceed to constructing $E_\alpha, F_\alpha$.
Let $\theta^+$ denote a stability condition with all entries positive.
Let $\sigma\in W(Q)$ be such that $\sigma \theta^+$ lies in the same
Weyl chamber for $\a$ as $\theta$. The stability conditions $\theta,\sigma\theta^+$
are separated only by non-essential walls of the form $\ker\beta$ for real roots $\beta$
and so, by Proposition \ref{Prop:wall_non_essent} and Theorem \ref{Thm:wc_decomp_short}, we can identify the categories
$\A_\lambda^\theta(v)\operatorname{-mod}$ and $\A_{\lambda}^{\sigma\theta^+}(v)\operatorname{-mod}$
by means of the wall-crossing functor $\WC_{\theta\rightarrow \sigma\theta^+}$ (for all $v$).
Also recall, \ref{SSS_LMN_quant}, that $\sigma$ gives rise to the quantum LMN isomorphism $\sigma:
\A^{\theta^+}_{\lambda'}(v')\xrightarrow{\sim}\A^{\sigma \theta^+}_\lambda(v)$, where
$v':=\sigma^{-1}\bullet v, \lambda':= \sigma^{-1}\bullet^{v}\lambda$, and hence an abelian equivalence
$\sigma_*:\A^{\theta^+}_{\lambda'}(v')\operatorname{-mod}\xrightarrow{\sim}
\A^{\sigma \theta^+}_\lambda(v)\operatorname{-mod}$.

Now let $\alpha\in \Pi^\theta$. In general,  $\sigma^{-1}\alpha$ is not
a simple root. However, we can modify $\theta$ staying in the same Weyl chamber
for $\a$ and in the Tits cone for $\g(Q)$ (using a wall-crossing functor through
non-essential walls) so that the $\g(Q)$-chamber of $\theta$
is adjacent to a wall for $\a$. Then we can, in addition, assume $\sigma^{-1}(\alpha)$ is a simple root
for $\g(Q)$, say $\alpha^i$.

\begin{defi} The functors $F_\alpha,E_\alpha$ are, by definition,
obtained by transferring Webster's functors $F_i,E_i$ using equivalences
\begin{align*}&\sigma_*:\A^{\theta^+}_{\lambda'}(v')\operatorname{-mod}\xrightarrow{\sim}
\A^{\sigma \theta^+}_\lambda(v)\operatorname{-mod}, \\
&\sigma_*:\A^{\theta^+}_{\lambda''}(v'+\alpha^i)\operatorname{-mod}\xrightarrow{\sim}
\A^{\sigma \theta^+}_\lambda(v+\alpha)\operatorname{-mod}.\end{align*}
Here  $\lambda',\lambda''$ are given
by $\lambda'=\sigma^{-1}\bullet^v \lambda, \lambda''=\sigma^{-1}\bullet^{v+\alpha}\lambda$.
Note that $\lambda''-\lambda'\in \Z^{Q_0}$.

More precisely,
$$F_\alpha:= \sigma_*\circ F_i\circ\mathcal{T}_{\lambda',\lambda''-\lambda'}\circ\sigma_*^{-1}$$
and $E_\alpha$ is defined in  a similar fashion.
\end{defi}

By the construction and (1) of Lemma \ref{Lem:W_funct_prop} the functors $E_\alpha,F_\alpha$ preserve
$$\bigoplus_v D^b_{\rho^{-1}(0)}(\A_\lambda^\theta(v)\operatorname{-mod}).$$

\begin{Rem}
Of course, our construction of the functors $E_\alpha,F_\alpha$ depends on the choice
of a suitable $\g(Q)$-chamber in the $\a$-chamber of $\theta$. In a subsequent paper
the second named author plans to check that the functors $E_\alpha, F_\alpha$
are well-defined and, in fact, give a categorical $\a$-action on
$\bigoplus_v D^b(\A_\lambda^\theta(v)\operatorname{-mod})$, at least when $\a$
is simply laced. It is expected that this result will allow to compute the Euler
characteristics of $R\Gamma(M)$ for
$M\in \operatorname{Irr}(\A_\lambda^\theta(v)\operatorname{-mod}_{\rho^{-1}(0)})$.
\end{Rem}

%\subsubsection{Crystal operators}

\section{Roadmap}\label{S_outline}
In this section we will explain key ideas and steps in the proof of Theorem
\ref{Thm:verymain} that will be carried out in the subsequent sections.

Recall that $\lambda\in \paramq$ is chosen in such a way that $\A_\lambda(v)$ has finite homological dimension.
By Proposition \ref{Prop:abelian_loc_BPW} we can always achieve that by replacing $\lambda$ with some
element of $\lambda+\Z^{Q_0}$.

Conjecture \ref{Conj:main} boils down to the following three claims.
\begin{itemize}
\item[(I)] The image of $\CC_\lambda^\theta$ contains $L^{\a}_\omega[\nu]$.
\item[(II)] The image of $\CC^\theta_\lambda$ is contained in $L^{\a}_\omega[\nu]$.
\item[(III)] The map $\CC^\theta_\lambda: K_0(\A_\lambda(v)\operatorname{-mod}_{fin})\rightarrow L_\omega[\nu]$
is injective.
\end{itemize}

\subsection{Outline of proof: lower bound on the image}\label{SS_CC_image_lower_bound}
We proceed to outlining the key ideas of our proof of Theorem \ref{Thm:verymain} starting with
(I): $\operatorname{im}\CC^\theta_\lambda\supset L_\omega^{\a}$ (we will also see that $\CC^\theta_\lambda$
does not depend on the choice of $\theta$). For this we need to establish a generalization of
(2) of Lemma \ref{Lem:W_funct_prop} that will be proved in Section \ref{SS:K_0_action}.

\begin{Prop}\label{Prop:a_CC_intertw}
For $M\in D^b(\A_\lambda^\theta(v)\operatorname{-mod}_{\rho^{-1}(0)})$, we have
$\CC(E_\alpha M)=\pm e_\alpha \CC(M)$ and $\CC(F_\alpha M)=\pm f_\alpha \CC(M)$.
\end{Prop}

In fact, we can do even better and prove an analog of this proposition for $K_0(\Coh_{\rho^{-1}(0)}\M^\theta(v))$
instead of $H_{mid}(\M^\theta(v))$ and the degeneration map $[M]\mapsto [\gr M]$ instead of $\CC$,
this is done in Section \ref{SS:K_0_action}.

Let us explain ideas behind a proof of Proposition \ref{Prop:a_CC_intertw} (similar ideas are used
to prove a stronger result from Section \ref{SS:K_0_action}, Proposition \ref{Prop:a_actions_intertwined}).
Since the homologies of all $\M^\theta(v)$, for generic $\theta$, are identified (see \ref{SS_homol_ident}), the LMN isomorphisms give rise to a $W(Q)$-action on $L_\omega$. We also have an action of a suitable  extension (by a 2-torsion group) of $W(Q)$ on $L_\omega$ coming from the $\g(Q)$-action. If we knew that on each weight space
the two actions coincide up to a sign, then we would have $\sigma_*^{-1} \CC(E_\alpha) \sigma_*=c_v e_{\sigma^{-1}(\alpha^i)}=c_v e_\alpha$ (for $c_v=\pm 1$) and similarly for $F$'s. So, to prove
Proposition \ref{Prop:a_CC_intertw}, we need to establish the coincidence of the two group actions. Of course, it is enough to check the equality for simple reflections, $s_i$.

We will check that rather indirectly: on a categorical level. Namely, assume that $\theta_k>0 $ for all $k$
and $\lambda_i$ is integral. Set $v':=s_i\bullet v$. Recall that the functors $E_i,F_i$ give rise to a categorical $\sl_2$-action and
hence produce derived equivalences (convolutions with Rickard complexes)
$\Theta_i: D^b(\A_\lambda^\theta(v)\operatorname{-mod})
\xrightarrow{\sim} D^b(\A_\lambda^\theta(s_i\bullet v)\operatorname{-mod})$.
On the other hand, suppose that $(\lambda,\theta)\in \mathfrak{AL}(s_i\bullet v)$
(and hence $(\lambda',s_i\theta)\in \mathfrak{AL}(v)$ for $\lambda'=s_i\bullet^{v'}\lambda$).
Then we can consider the wall-crossing functor $$\WC_{\lambda\rightarrow \lambda'}: D^b(\A_\lambda^\theta(v)\operatorname{-mod})\xrightarrow{\sim} D^b(\A^{s_i\theta}_{\lambda'}(v)\operatorname{-mod}).$$

\begin{Thm}\label{Thm:WC}
Assume that $(\lambda,\theta)\in \mathfrak{AL}(v), \lambda_i\in \Z_{\geqslant 0}$ and $\theta_k>0$
for all $k\in Q_0$. Then we have an isomorphism of functors $\Theta_i=s_{i*}\circ\WC_{\lambda\rightarrow
\lambda'}$.
\end{Thm}

%In fact, we will prove a slightly stronger result.

We are going to use Theorem \ref{Thm:WC} to show that $\CC(E_\alpha)$ acts as $\pm e_\alpha$ on $\operatorname{Im}\CC_v$
and that $\CC(F_\alpha)$ acts as $\pm f_\alpha$ on $\operatorname{Im}\CC_{v+\epsilon_i}$.
For this we need to check that $\CC(\Theta_i)$ acts by $\operatorname{Im}\CC_v$ by $\pm s_i$
(by $s_i$ we denote an operator on $L_\omega$ induced by the $\g(Q)$-action, it is defined
up to a sign). This follows from (2) of Lemma \ref{Lem:W_funct_prop}
when we use the stability condition $\theta^+$ but this is not straightforward for $\sigma\theta$.
On the other hand, we need to show that $\CC(s_{i*}\circ\WC_{\lambda\rightarrow \lambda'})$ coincides with $s_{i*}$ (the operator of the $W(Q)$-action on the middle homology). Both claims follow
from the next proposition. Let us write $\CC^\theta_{\lambda,v}$ for the characteristic cycle map
$K_0(\A_\lambda(v)\operatorname{-mod}_{fin})\rightarrow L_\omega[\nu]$
defined using the stability condition $\theta$.

\begin{Prop}\label{Prop:WC_vs_CC}
The following statements are true:
\begin{enumerate}
\item
If the homological dimension
of $\A_\lambda(v)$ is finite and $\lambda\in \paramq^{ISO}$, then the map $\CC^\theta_{\lambda,v}$
is independent of $\theta$.
\item For $M\in D^b_{\rho^{-1}(0)}(\A_{\lambda'}^{\theta'}(v)\operatorname{-mod})$ we have $\CC(M)=\CC(\WC_{\lambda'\rightarrow \lambda} M)$.
\end{enumerate}
\end{Prop}

Again, we have an analog of this proposition (and of the equality $s_i=\pm [s_{i*}]$) on
$\bigoplus_v K_0(\operatorname{Coh}_{\rho^{-1}}(0)(\M^\theta(v)))$, see Sections
\ref{SS_WC_K0} and \ref{SS:K_0_action}.

We will deduce the coincidence of the two group actions from Theorem \ref{Thm:WC} and Proposition \ref{Prop:WC_vs_CC}.
Also Theorem \ref{Thm:WC} that can be regarded as a formula for $\WC_{\lambda\rightarrow s_i\bullet\lambda}$
will play an important role in proving (II), see the next section.

\subsection{Outline of proof: upper bound on the image}\label{SS_upper_bound_outline}
We prove (II), the inclusion $\operatorname{im}\CC_\lambda\subset L_\omega^{\a}$, using  wall-crossing
functors.

More precisely, we will prove the following claim. Let $\mathcal{C}$ denote the full subcategory
of $\bigoplus_v \A_\lambda^\theta(v)\operatorname{-mod}_{\rho^{-1}(0)}$ consisting of all modules
that appear in the homology of complexes of the form $\mathcal{F}L_0$,
where $\mathcal{F}$ is a monomial in the functors $E_\alpha, F_\alpha, \alpha\in \Pi^\theta,$
(here, as usual, $\Pi^\theta$ is the simple root system for $\a$
defined in \ref{SSS_Webster_general})
and $L_0\in \A_\lambda^\theta(\sigma\bullet w)$, where $\sigma\in W(Q)$ is such that $\sigma \omega$
is dominant for $\a$.

\begin{Prop}\label{Prop:fin_dim_cryst}
We have $\mathcal{C}=\bigoplus_v \A_\lambda^\theta(v)\operatorname{-mod}_{\rho^{-1}(0)}$.
\end{Prop}

This proposition will be proved in Section  \ref{S_proof_compl}.
In Section \ref{S_fin_WC}, we will see that Proposition \ref{Prop:fin_dim_cryst}
implies (II). Conversely, one can show that, modulo (III), (II) implies
Proposition \ref{Prop:fin_dim_cryst}.

A basic strategy of proving Proposition \ref{Prop:fin_dim_cryst} is as follows (we will elaborate on
the strategy below in this section).

\begin{enumerate}
\item Characterize the finite dimensional modules using a ``long wall-crossing functor''.
\item Reduce the study of the ``long wall-crossing functor'' to the study of ``short wall-crossing
functors'' -- between chambers sharing an essential wall.
\item Study short wall-crossing functors using Theorem \ref{Thm:WC} and categorical $\slf_2$-actions.
\item In the case of affine quivers, study the wall-crossing functor crossing the affine wall
$\ker\delta$.
\end{enumerate}

\subsubsection{Long wall-crossing functor}
Our first goal is to characterize the dimension of support of a simple $\A_\lambda(v)$-module in terms of
a functor. It turns out that the functor we need is a ``long wall-crossing functor'' defined as follows.
Let $\lambda,\theta$ be such that $(\lambda+k\theta,\theta)\in \mathfrak{AL}(v)$
for any $k\in \Z_{\geqslant 0}$. Thanks to Proposition \ref{Prop:abelian_loc_BPW}, we can find
$\lambda^-\in \lambda+\Z^{Q_0}$ such that $(\lambda^--k\theta,-\theta)\in \mathfrak{AL}(v)$ for any
$k\geqslant 0$. By a long wall-crossing functor we mean
$\WC_{\lambda\rightarrow \lambda^-}:D^b(\A_\lambda(v)\operatorname{-mod})\xrightarrow{\sim}
D^b(\A_{\lambda^-}(v)\operatorname{-mod})$.

We have recalled the definition of  holonomic modules for $\A_\lambda(v),\A_\lambda^\theta(v)$ in
Section \ref{SSS_Supp_CC}. In particular, modules from $\A_\lambda(v)\operatorname{-mod}_{fin},
\A_\lambda^\theta(v)\operatorname{-mod}_{\rho^{-1}(0)}$ are holonomic.

The following proposition is inspired by a similar result on
the BGG category $\mathcal{O}$, see \cite[Proposition 4.7]{BFO}.

\begin{Prop}\label{Prop:long_shift}
Let $M$ be a  holonomic $\A_\lambda(v)$-module. Then
\begin{enumerate} \item $H_i(\WC_{\lambda\rightarrow\lambda^-}M)=0$ if
$i<\frac{1}{2}\dim \M^\theta(v)-\dim \operatorname{Supp} M$ or $i>\frac{1}{2}\dim \M^\theta(v)$.
 Moreover, $H_i(\WC_{\lambda\rightarrow\lambda^-}M)\neq 0$ for $i=\frac{1}{2}\dim \M^\theta(v)-
\dim \operatorname{Supp} M$.
\item The functor $\WC_{\lambda\rightarrow\lambda^-}[-\frac{1}{2}\dim \M^\theta(v)]$
is an abelian equivalence $\A_\lambda(v)\operatorname{-mod}_{fin}\xrightarrow{\sim} \A_{\lambda^-}(v)\operatorname{-mod}_{fin}$.
\end{enumerate}
\end{Prop}

The minimal number $i$ such that $H_i(\WC_{\lambda\rightarrow \lambda^-}L)\neq 0$
will be called the {\it homological shift} (of $L$ under the functor
$\WC_{\lambda\rightarrow \lambda^-}$).

The proof to be given below, Section \ref{S_long_WC}, is based on using abelian localization as well as a connection
between the long wall-crossing functor and a homological duality functor.

One problem with the wall-crossing functor is that it is quite hard to study it (in particular,
computing the homological shifts) directly. Instead, we will decompose $\WC_{\lambda\rightarrow \lambda^-}$
into a composition of short wall-crossing functors (i.e., functors crossing a single wall between
two adjacent chambers) using Theorem \ref{Thm:wc_decomp_short}.  The information
about homological shifts under the short wall-crossing functors together with Theorem
\ref{Thm:wc_decomp_short} turn out to be enough to establish (II) for finite and affine quivers $Q$,
see Section \ref{SS_extrem_abs}
(in this paper we only deal with very special framing in  the affine case but
this can be generalized to arbitrary framing by suitably generalizing techniques that
we use here, see the subsequent paper \cite{perv}). The case of wild
quivers poses some additional essential difficulties, we will discuss why in
\ref{SSS_short_affine_WC}.

\subsubsection{Short wall-crossing through real wall}\label{SSS_short_wc_outline}
We want to characterize objects with zero homological shift  under the short
wall-crossing functor through a wall defined by a real root.

Let $\alpha\in \Pi^\theta$. We say that
an object $L\in \Irr(\A_\lambda^\theta(v)\operatorname{-mod}_{\rho^{-1}(0)})$ is
{\it $\alpha$-singular} if
\begin{itemize}
\item $(\nu,\alpha)\geqslant 0$ and $[L]\not \in \operatorname{im}[F_\alpha]$ or
\item $(\nu,\alpha)\leqslant 0$ and $[L]\not \in \operatorname{im}[E_\alpha]$.
\end{itemize}
Here and below we write $[L]$ for the class of $L$ in $K_0(\A_\lambda^\theta(v)\operatorname{-mod}_{\rho^{-1}(0)})$.
We write $[E_\alpha],[F_\alpha]$ for the maps between the $K_0$ spaces induced by the functors $E_\alpha,F_\alpha$.

%Note that extremal objects are singular for all $\alpha$.

There is an important alternative  characterization of $\alpha$-singular objects
based on Theorem \ref{Thm:WC}. Namely, suppose that $\theta'$ is a generic
stability condition separated from $\theta$ by the single wall $\ker\alpha$
(meaning the chambers of $\theta,\theta'$ share the common wall $\ker\alpha$).

\begin{Prop}\label{Prop:singular_equiv}
Let $L\in \operatorname{Irr}(\A_\lambda^\theta\operatorname{-mod}_{\rho^{-1}(0)})$. The following conditions are equivalent:
\begin{enumerate}
\item $L$ is $\alpha$-singular.
\item $H_0(\WC_{\theta\rightarrow \theta'}L)\neq 0$.
\end{enumerate}
Moreover, under these equivalent conditions there is a unique $\alpha$-singular simple constituent of $H_*(\WC_{\theta\rightarrow \theta'}L)$, say $L'$, and it is a quotient of $H_0(\WC_{\theta\rightarrow \theta'}L)$. The map $L\mapsto L'$ is a bijection between the sets of $\alpha$-singular simples in $\A_\lambda^\theta\operatorname{-mod}_{\rho^{-1}(0)}, \A_\lambda^{\theta'}\operatorname{-mod}_{\rho^{-1}(0)}$.
\end{Prop}
This proposition will be proved in Section \ref{SS_fin_sing}.

\subsubsection{Short wall-crossing through affine wall}\label{SSS_short_affine_WC}
Thanks to the previous paragraph, we only need to show (modulo technicalities to be
addressed later) that the wall-crossing functor through $\ker\delta$ cannot homologically shift
a simple by more than $\dim \M^\theta(v)/2-1$.

Let us explain an idea of the proof of this claim, which is an extension of what was
done in the proof of Proposition \ref{Prop:wall_non_essent}. By Proposition \ref{Lem:shift_coinc},
the wall-crossing  functor through $\ker\delta$ is given by
$\A^0_{\lambda,\chi}(v)\otimes^L_{\A^0_\lambda(v)}\bullet$ (here we assume that $\lambda,\lambda+\chi\in \paramq^{iso}$
and there are generic stability conditions $\theta,\theta'$ separated by $\ker\delta$ such that
$(\lambda,\theta),(\lambda+\chi,\theta')\in \AL(v)$). Now let us set $\paramq_0:=\lambda+\ker\delta,
\paramq_0':=\lambda+\chi+\ker\delta$. Then $\A^0_{\lambda,\chi}(v)$ is the specialization
of $\A^0_{\paramq_0,\chi}(v)$. We will show that ``homological shift behavior'' of the
functors $\A^0_{\lambda_1,\chi}(v)\otimes_{\A^0_{\lambda_1}(v)}\bullet$ is the same for
Zariski generic parameters $\lambda_1\in \paramq_0$ (this is the most non-trivial
part of the proof; the very first step here is results
from \ref{SSS_param_supp}). Then we need to show that for a {\it Weil generic}
$\lambda_1$ the homological shifts under the functor $\A^0_{\lambda_1,\chi}(v)\otimes^L_{\A^0_{\lambda_1}(v)}\bullet$
are less than $\frac{1}{2}\dim \M^\theta(v)$ (recall that ``Weil generic'' means ``lying outside
of countably many proper closed algebraic subvarieties''). The point of considering Weil generic
parameters is that here the functor $\A^0_{\lambda_1,\chi}(v)\otimes^L_{\A^0_{\lambda_1}(v)}\bullet$
becomes the long wall-crossing functor (indeed, all walls but possibly $\ker\delta$
are non-essential). Basically, we prove that the algebra $\A^0_{\lambda_1}(v)$ has no finite dimensional simples, and our claim about homological shifts follows from Proposition \ref{Prop:long_shift}.

In fact, we show more than the bound for homological shifts, we prove that $\WC_{\lambda\rightarrow \lambda+\chi}$
is {\it perverse} in the sense of Chuang and Rouquier, \cite[Section 2.6]{rouquier_ICM}. We
characterize filtrations on the categories $\A^0_\lambda(v)\operatorname{-mod}, \A^0_{\lambda+\chi}(v)\operatorname{-mod}$
that make $\WC_{\lambda\rightarrow \lambda+\chi}$ perverse and deduce our claim about homological
shifts from there. We use results on the representation theory of type A Rational Cherednik
algebras to establish the perversity, which leads to restrictions on the framing.

Let us explain the most essential reason why we restrict to finite and affine quivers.
In fact, as shown in the subsequent paper \cite{perv}, wall-crossing functors are perverse
in a much more general situation (including all wall-crossings through hyperplanes for wild quivers).
A difficulty of dealing with wild quivers is that one may need to cross many
walls defined by imaginary roots and we do not know how to control the homological
shifts of compositions in that case.

\subsubsection{Completion of the proof}
Let us define  {\it extremal} objects.

\begin{defi}\label{defi:extremal}
We say that $L\in \operatorname{Irr}(\A_\lambda^\theta(v)\operatorname{-mod}_{\rho^{-1}(0)})$
is {\it extremal} if $L$ does not lie in the category $\mathcal{C}$ from \ref{SS_upper_bound_outline}
and $v$ is minimal such that $L$ exists.
\end{defi}
%One can show that $\nu$ (corresponding to $v$) has to be dominant. One can also show that
%it does not matter what $\theta$ we take.

Of course, (II) is equivalent to the claim that no extremal objects exist.

Here are two important properties of extremal objects.

\begin{Lem}\label{Lem:extremal_sing}
An extremal object is singular for all $\alpha$.
\end{Lem}
\begin{proof}
Let $L\in \operatorname{Irr}(\A_\lambda^\theta(v)\operatorname{-mod}_{\rho^{-1}(0)})$ be extremal.
By the minimality assumption on $v$, we see that $\nu$ is dominant and
$[L]\not\in \sum_\alpha \operatorname{im} [F_\alpha]$. In particular, $L$ is $\alpha$-singular
for all  $\alpha\in \Pi^\theta$.
\end{proof}

The following is a crucial property of extremal objects. It will be proved
in Section \ref{S_proof_compl}.

\begin{Prop}\label{Prop:extremal_bij}
The bijection $L\mapsto L'$ from Proposition \ref{Prop:singular_equiv}
restricts to a bijection between the sets of extremal simples in
$\A_\lambda^\theta\operatorname{-mod}_{\rho^{-1}(0)},
\A_\lambda^{\theta'}\operatorname{-mod}_{\rho^{-1}(0)}$.
\end{Prop}

We conclude that extremal objects are not homologically shifted by short wall-crossing functors
through real walls (=walls defined by real roots). This finishes the proof of (II)
in the case when $Q$ is finite. To deal with the case of affine $Q$ (under our restrictions
on the framing) one uses results outlined in \ref{SSS_short_affine_WC}.

%Here is an alternative
%characterization of an extremal object (to be proved below) that shows that Proposition \ref{Prop:fin_dim_cryst}
%is also equivalent to the non-existence of extremal objects.
%
%\begin{Lem}\label{Lem:extremal_altern}
%An object $L\operatorname{Irr}(\A_\lambda^\theta(v)\operatorname{-mod}_{\rho^{-1}(0)})$
%is extremal if and only if $[L]\not \in \bigcup_{\alpha\in \Pi^\theta}\operatorname{im}[F_\alpha]$
%and $v$ is minimal with the property that such $L$ exists.
%\end{Lem}

\subsection{Outline of proof: injectivity of $\CC$}
Now we will explain how Proposition \ref{Prop:fin_dim_cryst} implies (III).

Suppose first, that we know that
\begin{itemize}
\item[(*)]
the endomorphisms $[E_\alpha],[F_\alpha], \alpha\in \Pi^\theta,$
define a representation of the Lie algebra $\a$ in $\bigoplus_{v}K_0(\A_\lambda^\theta(v)\operatorname{-mod}_{\rho^{-1}(0)})$
\end{itemize}
(so far we know that each pair $([E_\alpha],[F_\alpha])$ defines a representation of $\slf_2$).
So $\CC_\lambda$ becomes an epimorphism of $\a$-modules. Using Proposition \ref{Prop:fin_dim_cryst} we will show
that it is an isomorphism.

It remains to establish (*). In fact, this reduces to the case when
$\lambda$ is rational (where we have a powerful tool -- reduction to positive characteristic).
The general case will be deduced from there and Proposition \ref{Prop:fin_dim_cryst}.

To prove (*) we will argue more or less as follows.  We will show that the degeneration map, $[M]\mapsto [\gr M]$
defines an embedding $K_0(\A_\lambda^\theta(v)\operatorname{-mod}_{\rho^{-1}(0)})\hookrightarrow
K_0(\Coh_{\rho^{-1}(0)}\M^\theta(v))$. We will see that $\a$ naturally acts on
$K_0(\Coh_{\rho^{-1}(0)}\M^\theta(v))$ (in fact, the whole algebra $\g(Q)$ does)
and our embedding (after some twist) intertwines $[E_\alpha]$ with $e_\alpha$, $[F_\alpha]$ with $f_\alpha$.
The proofs here are based on $K$-theory version of results mentioned in
Section  \ref{SS_CC_image_lower_bound}. This proves (*) and finishes the proof
of Theorem \ref{Thm:verymain}.

\subsection{Subsequent content}
Let us describe the content of the following sections. Section \ref{S_quant_pos} is preparatory,
there we discuss some relatively standard results based on the reduction to characteristic $p$.
Then, in Section \ref{S_lower_bound_proof} we prove (I) (the inclusion $L_\omega^{\a}[\nu]\subset
\operatorname{im}\CC_\lambda$) as well as some stronger results needed in the proofs of (II)
and (III) (concerning $K$-theory rather than middle homology).

In the subsequent three sections we study wall-crossing functors. In Section \ref{S_long_WC}
we study the long wall-crossing functor and relate the homological shifts under this functor
to codimension of support. In Section \ref{S_fin_WC} we study short wall-crossing functors
through walls defined by real roots and their interactions with singular simples.
In Section \ref{S_affine} we study the short wall-crossing functor through the wall $\ker\delta$
and prove that it is a perverse equivalence.

Finally, in Section \ref{S_proof_compl} we finish the proofs of (II) and (III) and hence
of Theorem \ref{Thm:verymain}. We also discuss some generalizations of Conjecture \ref{Conj:main}.

\section{Quantizations in positive characteristic and applications}\label{S_quant_pos}
In this section we deal with quantizations in positive characteristic and their applications
to characteristic $0$.

Let us explain two main applications first. They concern the existence of tilting generators
on $\M^\theta(v)$ with some special properties and the injectivity of a natural map $K_0(\A_\lambda(v)\operatorname{-mod}_{fin})\rightarrow
K_0(\Coh_{\rho^{-1}(0)}(\M^\theta(v)))$ that is the composition of the localization
and degeneration (here we assume that the homological
dimension of $\A_\lambda(v)$ is finite and $\lambda\in \Q^{Q_0}$).

Let us state a result about a tilting bundle. We say that  a vector bundle
$\mathcal{P}$ on a smooth algebraic variety $X$ is a {\it tilting generator}
if $\operatorname{Ext}^i(\mathcal{P},\mathcal{P})=0$ for $i>0$ and the
homological dimension of the algebra $\operatorname{End}(\mathcal{P})$ is finite.
When $X$ is a Nakajima quiver variety (in fact, under some more general assumptions) and $\mathcal{P}$
is a tilting generator, the functor $R\Hom_{\mathcal{O}_X}(\mathcal{P},\bullet)$
is an equivalence $D^b(\operatorname{Coh}X)\xrightarrow{\sim} D^b(\operatorname{End}(\mathcal{P})^{opp}\operatorname{-mod})$
(see \cite[Proposition 2.2]{BK2}).

%Now suppose that $\mathcal{P}$ is a $\C^\times$-equivariant vector bundle on $\M^\theta(v)$
%(with respect to the contracting action) such that $\operatorname{Ext}^i(\mathcal{P},\mathcal{P})=0$
%for $i>0$. Then we can uniquely extend $\mathcal{P}$ to a $\C^\times$-equivariant
%vector on $\M^\theta_{\param}(v)$ (there is a unique extension to the formal neighborhood
%of $\M^\theta(v)$ in $\M^\theta_{\param}(v)$ thanks to the Ext vanishing and then there is
%a unique extension to $\M^\theta_{\param}(v)$ thanks to the $\C^\times$-equivariance, compare
%to \cite[Section 2.3]{BK2}).
%
%Recall the open subset $\M^\theta_{\param}(v)^{reg}\subset \M^\theta_{\param}(v)$ from the proof
%of Proposition \ref{Prop:univ_wc}, where the morphism $\M^\theta_\param(v)\twoheadrightarrow
%\M_\param(v)$ is an isomorphism. The isomorphic image of $\M^\theta_{\param}(v)$ in $\M_\param(v)$
%denoted  by $\M_\param(v)^{reg}$ is the union of open symplectic leaves in all varieties
%$\M_p(v), p\in \param$.

Here is our main result on the existence of compatible tilting generators
on the quiver varieties $\M^\theta(v)$.

\begin{Prop}\label{Prop:tilting_gen}
There is a $\C^\times$-equivariant (with respect to the contracting action)
tilting generator $\mathcal{P}^\theta$ on $\M^\theta(v)$ such that the
algebra $\End(\mathcal{P}^\theta)$ is independent of $\theta$.
\end{Prop}

Such a bundle $\mathcal{P}^\theta$ is constructed by Kaledin in \cite{Kaledin}.
Our construction is quite similar to Kaledin's and is also inspired by
an earlier construction in \cite{BK2}. Namely, one fixes a suitable quantization
of $\M^\theta(v)$ over an algebraically closed field $\Fi$ of positive characteristic.
It is an Azumaya algebra on the Frobenius twist  $\M^\theta(v)^{(1)}$
which then can be shown to split on $$\M^\theta(v)^{(1)\wedge_0}=\operatorname{Spec}(\Fi[\M(v)]^{\wedge_0}).$$
The splitting bundle then extends to a vector bundle that is shown to be tilting.
Our proof of the splitting result is easier than Kaledin's. Besides, for our
next main result of this section we need to use a particular choice of a quantization:
one obtained by Hamiltonian reduction.

Now let us proceed to the second main result in this section: on the injectivity
of the natural map $K_0(\A_\lambda(v)\operatorname{-mod}_{fin})\rightarrow
K_0(\Coh_{\rho^{-1}(0)}(\M^\theta(v)))$. Let $\lambda$
be such that $\A_\lambda(v)$ is regular so that the localization functor
$L\Loc_\lambda^\theta$ gives rise to an identification $K_0(\A_\lambda(v)\operatorname{-mod}_{fin})
\xrightarrow{\sim} K_0(\A_\lambda^\theta(v)\operatorname{-mod}_{\rho^{-1}(0)})$. We have a well-defined
map $$[M]\mapsto [\gr M]: K_0(\A_\lambda^\theta(v)\operatorname{-mod}_{\rho^{-1}(0)})
\rightarrow K_0(\Coh_{\rho^{-1}(0)}(\M^\theta(v))).$$ Let us denote the composition
$$K_0(\A_\lambda(v)\operatorname{-mod}_{fin})\rightarrow
K_0(\A_\lambda^\theta(v)\operatorname{-mod}_{\rho^{-1}(0)})\rightarrow
K_0(\Coh_{\rho^{-1}(0)}(\M^\theta(v)))$$ by $\gamma_\lambda^\theta$.

\begin{Prop}\label{Prop:inj_K0_rat}
Suppose, in addition, that $\lambda\in \Q^{Q_0}$. Then $\gamma_\lambda^\theta$
is injective.
\end{Prop}

The proposition is true for any $\lambda$ and, in fact, follows from the injectivity
of the characteristic cycle map. But at this point we are only  prove
it for rational $\lambda$.

\subsection{Quiver varieties and quantizations in characteristic $p$}\label{SS_pos_char}
We can define the GIT quotient $\M^\theta(v)$ in characteristic $p$ for $p$ large enough.
More precisely, the moment map $\mu: T^*R\rightarrow \g$ is defined over $\Z$. So we can
reduce it modulo $p$ and get $\mu_\Fi:T^*R_\Fi\rightarrow \g_\Fi$.
For $p$ large enough, this is still a moment
map and we can form the Hamiltonian reduction $\M^\theta(v)_{\Fi}$ that is a smooth symplectic
algebraic variety over $\Fi$.

\begin{Lem}\label{Lem:quiv_var_base_change}
There is a finite localization $S$ of $\Z$ and a smooth symplectic scheme $\M^\theta(v)_S$
over $\Spec(S)$ with the following properties:
\begin{enumerate}
\item $\M^\theta(v),\M^\theta(v)_{\Fi}$ are obtained from $\M^\theta(v)_S$ by base change
(for every $S$-algebra $\Fi$ that is an algebraically closed field).
\item $\C[\M^\theta(v)],\Fi[\M^\theta(v)_{\Fi}]$ are obtained from $S[\M^\theta(v)_S]$ by base change.
\item $H^i(\M^\theta(v)_S, \mathcal{O}_{\M^\theta(v)_S})=0$,
$H^i(\M^\theta(v)_{\Fi}, \mathcal{O}_{\M^\theta(v)_{\Fi}})=0$ for $i>0$, where $\Fi$ is as in (1).
\end{enumerate}
\end{Lem}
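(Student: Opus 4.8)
The plan is to use a standard spreading-out argument together with flatness and base change for cohomology. First I would recall that the moment map $\mu:T^*R\to\g^*$ is defined over $\Z$ by explicit polynomial formulas, and the $G$-action, the character $\theta$, and the semistable locus $(T^*R)^{\theta-ss}$ (cut out by the non-vanishing of finitely many $(G,n\theta)$-semiinvariants, which are polynomials with integer coefficients) are all defined over $\Z$. Hence $\mu^{-1}(0)^{\theta-ss}$ is a $\Z$-scheme, and one forms the GIT quotient $\M^\theta(v)_\Z$ as $\operatorname{Proj}$ of the $\Z$-algebra $\bigoplus_{n\geqslant 0}\Z[\mu^{-1}(0)]^{G,n\theta}$. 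Since forming $G$-invariants of a $\Z$-flat module need not commute with base change in general, the first real task is to invert finitely many primes: I would choose a finite localization $S$ of $\Z$ over which $\M^\theta(v)_S$ becomes smooth over $\Spec(S)$ (the generic smoothness over $\Q$, which follows from genericity of $(\lambda,\theta)$ and the results of \ref{SSS_gen_param}, spreads out to an open dense subset of $\Spec(\Z)$), and over which the formation of the invariants $S[\mu^{-1}(0)_S]^{G,n\theta}$ commutes with all base change (this uses that $G$ is reductive, so that for $p$ large the representation theory of $G_\Fi$ behaves as in characteristic $0$, i.e. the relevant $G$-modules stay completely reducible — one shrinks $S$ so that the finitely many $G$-modules appearing in low enough degrees are defined and semisimple over $S$). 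Along the way one also inverts the finitely many primes dividing the resultants that guarantee $\mu_S$ is flat onto its image and that $\M^\theta(v)_S\to\M^\theta(v)_{0,S}$ is a resolution fiberwise, exactly as in Proposition \ref{SSS_sing_resol}; shrinking $S$ further realizes (1) and (2).

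For part (3), the key input is Corollary \ref{Cor:prop_Mv}(1): over $\C$, the higher cohomology of $\mathcal{O}_{\M^\theta(v)}$ vanishes. The plan is to propagate this to $\Spec(S)$ by semicontinuity. Since $\M^\theta(v)_S\to\Spec(S)$ is projective (the map to the affine $\M^\theta(v)_{0,S}$ is projective and $\M^\theta(v)_{0,S}$ is affine over $S$) and $\mathcal{O}_{\M^\theta(v)_S}$ is flat over $S$, the function $\mathfrak{p}\mapsto \dim_{\kappa(\mathfrak{p})} H^i(\M^\theta(v)_{\kappa(\mathfrak{p})},\mathcal{O})$ is upper semicontinuous on $\Spec(S)$. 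It vanishes at the generic point (the $\C$-point, by Corollary \ref{Cor:prop_Mv}(1), after a further flat base change from $\Q$ to $\C$ which does not change cohomology dimensions), so it vanishes on a nonempty open subset of $\Spec(S)$; shrinking $S$ once more, it vanishes identically, giving $H^i(\M^\theta(v)_S,\mathcal{O})=0$ for $i>0$. Then cohomology-and-base-change (Grothendieck) applies: because all higher direct images vanish and $\mathcal{O}$ is $S$-flat, $R\Gamma$ commutes with base change, which simultaneously upgrades (2) to the statement that $S[\M^\theta(v)_S]$ base-changes to $\Fi[\M^\theta(v)_\Fi]$ and $\C[\M^\theta(v)]$, and gives $H^i(\mathcal{O}_{\M^\theta(v)_\Fi})=0$ for every algebraically closed $S$-field $\Fi$, in particular for $\Fi$ of characteristic $p$ for all $p\gg 0$.

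The main obstacle I expect is the compatibility of GIT quotients (equivalently, of $G$-invariants) with base change to characteristic $p$: $\Z$-flatness of a $G$-module does not imply $\Fi\otimes_\Z M^G=(\Fi\otimes_\Z M)^{G_\Fi}$, and this genuinely fails for small $p$. The resolution is to work degree by degree in the graded algebra $\bigoplus_n\Z[\mu^{-1}(0)]^{G,n\theta}$: each graded piece is the space of invariants in a fixed finite-dimensional $G$-module, and since only finitely many isomorphism classes of $G$-modules are needed to generate the algebra (Noetherianity over $\Z$), one can invert the finitely many "bad" primes for exactly those modules — for $p$ larger than the relevant bound, Weyl modules are simple and everything is as in characteristic $0$. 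This is precisely the point where the "$p$ large enough" hypothesis is used, and it is the step that requires care rather than a formal invocation; everything else (smoothness, flatness of $\mu$, the resolution property, vanishing of higher cohomology) spreads out by routine Noetherian/constructibility arguments.
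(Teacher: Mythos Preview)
There is a genuine error in your argument for (3): the claim that $\M^\theta(v)_S\to\Spec(S)$ is projective is false. A projective morphism followed by an affine morphism is not projective in general (consider $\mathbb{P}^1\times\mathbb{A}^1\to\mathbb{A}^1\to\Spec\C$), and $\M^\theta(v)$ is never proper over the base field unless it is a point. Without properness over $\Spec(S)$ the semicontinuity and cohomology-and-base-change theorems do not apply as you invoke them; the groups $H^i(\M^\theta(v)_{\kappa(\mathfrak{p})},\mathcal{O})$ are not a priori finite-dimensional over $\kappa(\mathfrak{p})$. The repair is to work with the proper morphism $\rho:\M^\theta(v)_S\to\M^0(v)_S$ instead: the higher direct images $R^i\rho_*\mathcal{O}$ are coherent on the affine scheme $\M^0(v)_S$, hence finitely generated over $S[\M^0(v)_S]$; their base change to $\mathbb{Q}$ vanishes by Corollary~\ref{Cor:prop_Mv}(1), so each is annihilated by a single nonzero integer, and after inverting those integers the $R^i\rho_*\mathcal{O}$ vanish and base change for $\rho_*\mathcal{O}$ follows. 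The paper phrases this equivalently as a \v{C}ech complex computation for a fixed finite open affine cover of $\M^\theta(v)_S$.

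For (1) the paper takes a route that sidesteps your invariant-theory concerns entirely. Since $\theta$ is generic the $G$-action on $\mu^{-1}(0)^{\theta-ss}$ is free, so $\mu^{-1}(0)^{\theta-ss}\to\M^\theta(v)$ is a principal $G$-bundle; because $G=\prod_k\GL(v_k)$ is special in the sense of Serre, this bundle is Zariski locally trivial. One then spreads out a finite Zariski trivialization together with its transition functions to a finite localization $S$ of $\Z$, and constructs the quotient over $S$ by gluing affine charts. Formation of this gluing commutes with arbitrary base change tautologically, so (1) follows without ever needing to control base change of $G$-invariants in small characteristic --- the obstacle you correctly flagged as the delicate point of your approach simply does not arise.
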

\begin{proof}
We remark that $\mu^{-1}(0)^{\theta-ss}\rightarrow \M^\theta(v)$ is a principal $G$-bundle,
in particular, it is locally trivial in the Zariski topology. It is defined over some
finite localization $S$ of $\Z$. After a finite localization, $\mu_S^{-1}(0)^{\theta-ss}$ --
the stable locus of $\Spec(S[T^*R_S]/(\mu^*_S(\g_S)))$ --
becomes the total space of this principal bundle. (i) follows.

Fix an open affine cover of $\M^\theta(v)_S$. After a finite localization all cocycle groups
in the positive degree part of the  \v{C}ech complex for $\mathcal{O}_{\M^\theta(v)_S}$  coincide with the corresponding
coboundary groups and they are free over $S$.  (2) and (3) follow.
\end{proof}

This result can be generalized to $\M^\theta_{\param}(v)$ in a straightforward way.

Quantizations of $\M^\theta(v)_{\Fi}$ were studied in \cite{BFG}, see Sections 3,4,6 there.
Take $\lambda\in \Fi_p^{Q_0}$. The algebra $D(R)_\Fi$ is Azumaya over the Frobenius twist $\Fi[T^*R]^{(1)}$ so we can view $D_{R,\Fi}$ as a coherent sheaf on $(T^*R_\Fi)^{(1)}$.
According to \cite[Section 3]{BFG}, $$\A_\lambda^\theta(v)_\Fi:=[\mathcal{Q}_{\Fi,\lambda}|_{(T^*R_{\Fi})^{(1),\theta-ss}}]^{G_\Fi}$$
is a sheaf of Azumaya algebras on $\M^\theta(v)_{\Fi}^{(1)}$. If we consider this sheaf in the conical topology,
it becomes filtered, and the associated graded is $\operatorname{Fr}_*\mathcal{O}_{\M^\theta(v)_\Fi}$.

There is an extension of this construction to $\lambda\in \Fi^{Q_0}$. The difference is that $\A_\lambda^\theta(v)_{\Fi}$ is now an Azumaya algebra over $\M^\theta_{\operatorname{AS}(\lambda)}(v)$, where $\operatorname{AS}$ is the Artin-Schreier map, see \cite[Section 3.2]{BFG}. We also have a version that works in families.
We get a sheaf $\A^\theta_{\paramq}(v)_\Fi$ of Azumaya algebra over $\param_{\Fi}\times_{\param_{\Fi}^{(1)}}\M^\theta_{\param}(v)^{(1)}_{\Fi}$
that specializes to $\A^\theta_{\lambda}(v)_\Fi$ for any $\lambda\in \Fi^{Q_0}$.

Let us write $\A_\lambda(v)_{\Fi}$ for the global sections of $\A^\theta_\lambda(v)_\Fi$.

\begin{Lem}\label{Lem:quant_base_change}
Fix $\lambda^\circ\in \Q^{Q_0}$.  There is a finite localization $S$ of $\Z$ with the following
 property: for any $\lambda\in \lambda^\circ+\Z^{Q_0}$ there exists a filtered $S$-algebra $\A_\lambda(v)_S$ such that
\begin{enumerate}
\item $\gr\A_\lambda(v)_S=S[\M^\theta(v)_S]$.
\item $\C\otimes_S \A_\lambda(v)_S=\A_\lambda(v)$.
\item $\Fi\otimes_S \A_\lambda(v)_S=\A_\lambda(v)_{\Fi}$.
\end{enumerate}
\end{Lem}
\begin{proof}
We may assume that Lemma \ref{Lem:quiv_var_base_change} holds for $S$ and moreover that $\lambda^\circ\in S^{Q_0}$ and that $\mu_S$ is flat. We can define the microlocal quantizations $\A^\theta_{\lambda}(v)_S$ of $\M^\theta(v)_S$ in the same way as was done for the complex numbers. Set $\A_\lambda(v)_S:=\Gamma(\A^\theta_\lambda(v)_S)$.

(1) follows from (2) and (3) of Lemma \ref{Lem:quiv_var_base_change}. The microlocal quantization $\A_\lambda^\theta(v)$ is obtained from $\A^\theta_\lambda(v)_S$ by the base change to $\C$ followed by a suitable completion (needed to preserve the condition that the sheaf is still complete and separated with respect to the filtration). Note that the algebras $\A_\lambda(v)_S,\A_\lambda(v),
\A_\lambda(v)_{\Fi}$
are $\Z_{\geqslant 0}$-filtered so completing with respect to the filtration
does not change these algebras.
(2) follows. To prove (3), we notice that we have a natural homomorphism $\A^\theta_\lambda(v)_\Fi\rightarrow \operatorname{Fr}_*(\Fi\otimes_S \A^\theta_\lambda(v)_S)$. On the level of the associated graded sheaves, it is the identity automorphism of
$\operatorname{Fr}_*\mathcal{O}_{\M^\theta(v)_{\Fi}}$.  So it gives rise to an isomorphism
$\A_\lambda(v)_\Fi=\Gamma\left(\M^\theta(v)_\Fi, \Fi\otimes_S \A^\theta_\lambda(v)_S\right)=\Fi\otimes_S \A_\lambda(v)_S$.
\end{proof}

\subsection{Splitting}\label{SS_splitting}%\footnote{Check if we actually complete at $0\in \param$}
We set
$$\M(v)_{\Fi}^{(1)}:=\Spec(\Fi[\M^\theta(v)_{\Fi}^{(1)}]), \M_{\param}(v)^{(1)}_\Fi:=\param_{\Fi}\times_{\param_{\Fi}^{(1)}}\Spec(\Fi[\M_{\param}^\theta(v)^{(1)}_{\Fi}]).$$
In this section and the next one we write $ \M^\theta_{\param}(v)_\Fi^{(1)\wedge_0}$ for the formal neighborhood of $\M^{\theta}(v)^{(1)\wedge_0}_\Fi$ in \begin{equation}\label{eq:scheme}\M^\theta_\param(v)^{(1)}_\Fi\times_{\M^0_{\param}(v)^{(1)}_\Fi}
\M^0_\param(v)^{(1)\wedge_0}_\Fi.
\end{equation}
In particular, $\M^\theta_{\param}(v)_\Fi^{(1)\wedge_0}$ is a formal scheme and not a scheme.

\begin{Prop}\label{Prop:split}
The restrictions $\A^\theta_\lambda(v)_{\Fi}^{\wedge_0}$ of $\A^\theta_\lambda(v)_{\Fi}$ to $\M^\theta(v)^{(1)\wedge_0}_{\Fi}$ and  $\A^\theta_{\paramq_\Fi}(v)^{\wedge_0}$ of
$\A^\theta_{\paramq_\Fi}(v)$ to $\M_{\param_\Fi}^\theta(v)^{(1)\wedge_0}$
(where the fiber of $\A^\theta_{\paramq_\Fi}(v)$ over $0\in \param$ is $\A^\theta_\lambda(v)_{\Fi}$) split.
\end{Prop}
\begin{proof}
Let us prove the claim about $\A^\theta_\lambda(v)_{\Fi}^{\wedge_0}$ first.

{\it Step 1}. Consider the one-form $\beta$ on $\M^\theta(v)^{(1)}_\Fi$   obtained by pairing of the symplectic form $\omega$ with the vector field for the $\Fi^\times$-action (induced from the fiberwise dilation
action on $T^*R$). We claim that the class of $\A^\theta_\lambda(v)_\Fi$ in the Brauer group $\operatorname{Br}(\M^\theta(v)_{\Fi}^{(1)})$ comes from  $\beta$ (see \cite[III.4]{Milne} for a general discussion of Azumaya algebras coming from 1-forms).  Let us prove this claim. Let $\pi$ denote the quotient morphism $Z:=(\mu^{(1)}_\Fi)^{-1}(0)^{\theta-ss}\twoheadrightarrow\M^\theta(v)_\Fi^{(1)}$
(by the $G_\Fi^{(1)}$-action). By \cite[Remark 4.1.5]{BFG}, $\pi^*(\A^\theta_\lambda(v)_\Fi)$
is Morita equivalent to $D_{R,\Fi}|_Z$. The class of $D_{R,\Fi}$ comes from the canonical 1-form $\tilde{\beta}$
on $(T^*R_\Fi)^{(1)}$. The restriction of $\tilde{\beta}$ to $Z$ coincides with $\pi^*\beta$. It follows that the
class of $\pi^*\A^\theta_\lambda(v)_\Fi$ in the Brauer group coincides with the class defined by $\pi^*\beta$.
Now recall that $Z$ is a principal $G^{(1)}_{\Fi}$-bundle on $\M^\theta(v)_{\Fi}^{(1)}$ hence it is locally
trivial in Zariski topology. It follows that the restriction of the class of $\A^\theta_\lambda(v)$ to an open subset $U\subset \M^\theta(v)_{\Fi}^{(1)}$ (where the bundle trivializes) coincides with the restriction of the class of $\beta$.
Since the restriction induces an embedding $\operatorname{Br}(\M^\theta(v)_{\Fi}^{(1)})\hookrightarrow \operatorname{Br}(U)$ (\cite[III.2.22]{Milne}), the claim in the beginning of the paragraph is proved.

{\it Step 2}. An Azumaya algebra defined by a one-form $\beta'$ splits provided $\beta'=\alpha-\mathsf{C}(\alpha)$ for
some 1-form $\alpha$, where $\mathsf{C}$ stands for the Cartier map $\Omega^1_{cl}\rightarrow \Omega^1$
(here we write $\Omega^1$ for the bundle of 1-forms and $\Omega^1_{cl}$ for the bundle of closed 1-forms).
We claim that $\mathsf{C}: \Gamma(\M^\theta(v)^{(1)}_\Fi,\Omega^1_{cl})\rightarrow \Gamma(\M^\theta(v)^{(1)}_\Fi,\Omega^1)$
is surjective. This follows from  the following exact sequences of sheaves:
\begin{align}\label{eq:exact1}
& 0\rightarrow \Omega^1_{ex}\rightarrow \Omega^1_{cl}\xrightarrow{\mathsf{C}}\Omega^1\rightarrow 0,\\\label{eq:exact2}
& 0\rightarrow \mathcal{O}^p\rightarrow \mathcal{O}\rightarrow \Omega^1_{ex}\rightarrow 0.
\end{align}
Since $H^i(\M^\theta(v)^{(1)}_\Fi, \mathcal{O})=H^i(\M^\theta(v)_\Fi, \mathcal{O})=0$ for $i=1,2$, (\ref{eq:exact2})
implies $H^1(\M^\theta(v)^{(1)}_\Fi, \Omega^1_{ex})=0$.
Hence, using (\ref{eq:exact1}), we see that
$$\mathsf{C}: \Gamma(\M^\theta(v)^{(1)}_\Fi,\Omega^1_{cl})\twoheadrightarrow \Gamma(\M^\theta(v)^{(1)}_\Fi,\Omega^1).$$

{\it Step 3}. The global sections $\Gamma(\M^\theta(v)^{(1)}_\Fi,\Omega^1_{cl}), \Gamma(\M^\theta(v)^{(1)}_\Fi,\Omega^1)$ are graded with respect to the $\Fi^\times$-action (coming from the dilation action on $T^*R$), let $\Gamma(\ldots)_d$ denote the $d$th graded component. The map $\mathsf{C}$ sends $\Gamma(\M^\theta(v)^{(1)}_\Fi,\Omega^1_{cl})_{d}$ to $\Gamma(\M^\theta(v)^{(1)}_\Fi,\Omega^1)_{d/p}$ if $d$ is divisible by $p$ and to 0 else. Pick $\Fi$-linear sections $\mathsf{C}^{-1}: \Gamma(\M^\theta(v)^{(1)}_\Fi,\Omega^1)_{d}\rightarrow \Gamma(\M^\theta(v)^{(1)}_\Fi,\Omega^1_{cl})_{pd}$.

Let us point out that the degree of $\beta$ is $1$. It follows that $\alpha:=\sum_{i=0}^{+\infty} \mathsf{C}^{-i}(\beta)$
is a well-defined 1-form on $\M^\theta(v)_{\Fi}^{(1)\wedge_0}$. Indeed the $i$th summand has degree $p^{i}$ and so
$\mathsf{C}^{-i}(\beta)$ converges to zero in the topology defined by the maximal ideal of $0$ in $\Fi[\M(v)_{\Fi}^{(1)}]$
because  $\Gamma(\M^\theta(v)_{\Fi}^{(1)},\Omega^1)$
is a finitely generated $\Fi[\M^\theta(v)^{(1)}]$-module.
Clearly, $\beta=\alpha-\mathsf{C}(\alpha)$.

This finishes the proof of the claim that $\A^\theta_\lambda(v)^{\wedge_0}_{\Fi}$ splits.

Let us proceed to the splitting of $\A^\theta_{\paramq_\Fi}(v)^{\wedge_0}$.

{\it Step 4}.  We will prove a stronger statement.
Let $\tilde{\A}$ be an Azumaya algebra over scheme (\ref{eq:scheme}) whose  restriction to $\M^\theta(v)_\Fi^{(1)\wedge_0}$ splits.  We will show that then the restriction of $\tilde{\A}$
to $\M^\theta_{\param}(v)^{(1)\wedge_0}$ splits as well.

Let $\M^\theta_{\param}(v)_{\Fi}^{(1)k}$ denote the $k$th infinitesimal neighborhood of $\M^\theta(v)_\Fi^{(1)\wedge_0}$
in (\ref{eq:scheme}), a scheme over $\operatorname{Spec}(\Fi[\param]/\mathfrak{m}^{k+1})$,
where $\mathfrak{m}$ is the maximal ideal of $0$ in $\Fi[\param]$. We remark that $$H^i(\M^\theta(v)_\Fi^{(1)\wedge_0}, \mathcal{O})=0, \text{ for }i>0$$
 (to simplify the notation we just write $\mathcal{O}$ for the structure sheaf). This follows from $H^i(\M^\theta(v)^{(1)}_{\Fi},\mathcal{O})=0$ and the formal function theorem.

{\it Step 5}. We have a short exact sequence of sheaves on $\M^{\theta}_{\param}(v)_{\Fi}^{(1)\wedge_0}$:
\begin{equation}\label{eq:exact_seq_otimes}0\rightarrow S^k\mathcal{N}\rightarrow \mathcal{O}^\times_{\M^\theta_{\param_\Fi}(v)^{(1)k+1}}\rightarrow
\mathcal{O}^\times_{\M^\theta_{\param_\Fi}(v)^{(1)k}}\rightarrow 0,\end{equation}
where $\mathcal{N}$ is the normal bundle to $\M^\theta(v)^{(1)}_{\Fi}$ in $\M^{\theta}_{\param}(v)^{(1)}_{\Fi}$.

We claim that  $\mathcal{N}=\p\otimes \mathcal{O}$.
First, note that the conormal bundle to $\mu^{-1}(0)^{\theta-ss}$ in $T^*R$ is the trivial bundle with the fiber $\g$:
if $\xi_1,\ldots,\xi_m$ is a basis in $\g$, then $d\mu^*(\xi_1),\ldots,d\mu^*(\xi_m)$
is a basis in the conormal bundle. It follows the conormal bundle to $\mu^{-1}(0)^{\theta-ss}$
in $\mu^{-1}(\g^{*G})^{\theta-ss}$ is trivial with fiber $\g^G$. Since $\mathcal{N}$ is the equivariant
descent of the latter bundle, we get $\mathcal{N}=\p\otimes \mathcal{O}$.

This implies \begin{equation}\label{eq:cohom_vanish_for Br}
H^i(\M^\theta(v)^{(1)\wedge_0}_{\Fi}, S^k\mathcal{N})=0\text{ for }i>0.\end{equation}
In particular, the Picard groups of the schemes $\M^\theta_{\param}(v)_{\Fi}^{(1)k}$ -- equal to
$H^1_{et}(\M^\theta_{\param}(v)_{\Fi}^{(1)k},\mathcal{O}^\times)$-- are naturally identified.

{\it Step 6}.
To check  that the restriction of  $\tilde{\A}$ to $\M^\theta_{\param}(v)_\Fi^{(1)\wedge_0}$ splits,
it is enough to show that $\tilde{\A}|_{\M^\theta_{\param}(v)_{\Fi}^{(1)k}}$ splits
for each $k$. Indeed, let $\mathcal{P}_k$ denote a splitting bundle for
$\tilde{\A}|_{\M^\theta_{\param}(v)_{\Fi}^{(1)k}}$. Such a bundle is defined
up to a twist with a line bundle. So the restriction of $\mathcal{P}_{k+1}$
to $\M^\theta_{\param}(v)_{\Fi}^{(1)k}$ is isomorphic to $\mathcal{P}_k\otimes \mathcal{L}_k$
for some line bundle $\mathcal{L}_k$. By the last paragraph of Step 5, $\mathcal{L}_k$
lifts to a line bundle $\mathcal{L}_{k+1}$ on $\M^\theta_{\param}(v)_{\Fi}^{(1)k+1}$.
Replacing $\mathcal{P}_{k+1}$ with $\mathcal{P}_{k+1}\otimes \mathcal{L}_{k+1}^{-1}$
we achieve that the restriction of $\mathcal{P}_{k+1}$ to
$\M^\theta_{\param}(v)_{\Fi}^{(1)k}$ coincides with $\mathcal{P}_k$.
%we notice that, since the splitting bundles are defined
%up to a twist with a line bundle,  a splitting bundle lifts from $\M^\theta_{\param}(v)_{\Fi}^{(1)k}$ to %$\M^\theta_{\param}(v)_{\Fi}^{(1)k+1}$.  This gives a splitting of the restriction of $\tilde{\A}$ to the
%required formal neighborhood.

We show that $\tilde{\A}|_{\M^\theta_{\param}(v)_{\Fi}^{(1)k}}$ splits by using induction on $k$.
The base, $k=1$, has been established before in this proof. Let us establish the induction step.
Recall that $\operatorname{Br}(\M^\theta_{\param}(v)_{\Fi}^{(1)k})\hookrightarrow H^2_{et}(\M^\theta_{\param}(v)_{\Fi}^{(1)k}, \mathcal{O}^\times)$, see \cite[Theorem 2.5]{Milne}. From (\ref{eq:exact_seq_otimes}), (\ref{eq:cohom_vanish_for Br}),
it follows that $H^2_{et}(\M^\theta_{\param}(v)_{\Fi}^{(1)k}, \mathcal{O}^\times)$ and $H^2_{et}(\M^\theta_{\param}(v)_{\Fi}^{(1)k+1},\mathcal{O}^\times)$
are naturally identified. In particular, the claim that the restriction of $\tilde{\A}$ to $\M^\theta_{\param}(v)_{\Fi}^{(1)k}$
splits is equivalent to the vanishing of the class of this restriction
in $H^2_{et}(\M^\theta_{\param}(v)_{\Fi}^{(1)k}, \mathcal{O}^\times)$.
Then the class in $H^2_{et}(\M^\theta_{\param}(v)_{\Fi}^{(1)k+1}, \mathcal{O}^\times)$
of the restriction of $\tilde{\A}$  to $\M^\theta_{\param}(v)_{\Fi}^{(1)k+1}$
vanishes as well, hence that restriction splits.
\end{proof}

\subsection{Comparison for different resolutions}\label{SS_Procesi_comparison}
Let $\hat{\mathcal{P}}^\theta_{\param,\Fi}$ denote a splitting bundle for
$\A^\theta_{\paramq}(v)_{\Fi}^{\wedge_0}$.
The bundle $\hat{\mathcal{P}}^\theta_{\param,\Fi}$ has trivial higher self-extensions, compare with
\cite[Section 2.3]{BK2}, and hence has an $\Fi^\times$-equivariant structure, see \cite{Vologodsky}.

We remark that since $\M^\theta(v)_{\Fi}$ is  defined over $\Fi_p$,
we have an isomorphism $\M^\theta(v)_{\Fi}\cong \M^\theta(v)_{\Fi}^{(1)}$ of $\Fi$-schemes.
Therefore we can view $\hat{\mathcal{P}}^\theta_{\Fi}$ (the specialization of
$\hat{\mathcal{P}}^\theta_{\param,\Fi}$ to $0\in \param_{\Fi}$)
as a bundle on $\M^\theta(v)_{\Fi}^{\wedge_0}$.
Similarly, we can view $\hat{\mathcal{P}}^\theta_{\param,\Fi}$ as a bundle on $\M_{\param}^\theta(v)_{\Fi}^{\wedge_0}$.
This is because the Artin-Schreier map $\param_\Fi\rightarrow \param_{\Fi}^{(1)}$
is etale and so induces an isomorphism
of $\Fi[\param]^{\wedge_0}$ with itself.  Since the  bundle
$\hat{\mathcal{P}}^\theta_{\param,\Fi}$ has no higher self-extensions,  we can extend it to a unique $\Fi^\times$-equivariant vector bundle on $\M^\theta_{\param}(v)_{\Fi}$ to be denoted by $\mathcal{P}^\theta_{\param,\Fi}$.

One can lift $\mathcal{P}^\theta_{\param,\Fi}$ to characteristic $0$ as explained in \cite{BK2}.  Let us recall how to do
this.  The bundle $\mathcal{P}^\theta_{\param,\Fi}$  is defined over some finite field $\Fi_q$. Let $\tilde{S}$ be an algebraic extension of the ring $S$ from Section \ref{SS_pos_char} that has  $\Fi_q$ as a quotient field. Set $\M^{\theta}_{\param}(v)_{\tilde{S}}:=\Spec(\tilde{S})\times_{\Spec(S)}\M^{\theta}_{\param}(v)_S$.
Since the bundle  $\mathcal{P}^\theta_{\param,\Fi}$ has no higher Ext's it can be extended
to a unique $\mathbb{G}_m$-equivariant  bundle $\tilde{\mathcal{P}}^\theta_{\param,\Fi}$
on the formal neighborhood $\M^{\theta}_{\param}(v)^{\wedge_q}_{\tilde{S}}$ of
$\M^\theta_{\param}(v)_{\Fi_q}$ in $\M^{\theta}_{\param}(v)_{\tilde{S}}$ (the existence is guaranteed
by vanishing of $\Ext^2$, and the uniqueness by the vanishing of $\Ext^1$). Since the bundle
$\tilde{\mathcal{P}}^\theta_{\param,\Fi}$ is $\mathbb{G}_m$-equivariant, and the action
is contracting, this bundle is the completion of a unique $\mathbb{G}_m$-equivariant
bundle $\mathcal{P}^\theta_{\param_{\tilde{S}^{\wedge_q}}}$ on $\M^\theta_{\param}(v)_{\tilde{S}^{\wedge_q}}$,
where $\tilde{S}^{\wedge_q}$ stands for the completion of $S$ with respect to
the kernel of $S\rightarrow \mathbb{F}_q$.  Since the quotient field
of $\tilde{S}^{\wedge_q}$ embeds into $\C$, we get a bundle $\mathcal{P}^{\theta}_{\param}$
on $\M^\theta_\param(v)$. This bundle is $\C^\times$-equivariant and has no higher self-extensions.
We remark that its restriction to $\M^\theta(v)$ has no higher self-extensions  because
$\mathcal{P}^{\theta}_{\param}$ and $\operatorname{End}(\mathcal{P}^{\theta}_{\param})$ are flat over $\C[\param]$,
and $\mathcal{P}^\theta_{\param}$ has no higher self-extensions.

Now we want to compare the endomorphism algebras of the bundles $\mathcal{P}^\theta_\param$ for different
$\theta$.

\begin{Prop}\label{Prop:end_alg_coinc}
For any (generic) $\theta,\theta'$, we have $\operatorname{End}(\mathcal{P}^\theta_\param)\cong
\operatorname{End}(\mathcal{P}^{\theta'}_\param)$, an isomorphism of graded $\C[\param]$-algebras.
\end{Prop}
\begin{proof}
The proof is in several steps.

{\it Step 1}. Consider the locus $\M_{\param}(v)_{\Fi}^{(1)reg}\subset
\M_{\param}(v)_{\Fi}^{(1)}$ that is the union of open symplectic leaves in
the Poisson varieties $\M_\lambda(v)^{(1)}_{\Fi}$. As in the proof of
Proposition  \ref{Prop:univ_wc}, $\M_{\param}(v)_{\Fi}^{(1)reg}$ is an open
subvariety in $\M_{\param}(v)_{\Fi}^{(1)}$. Further, the morphism
$\M^\theta_{\param}(v)_{\Fi}^{(1)}\rightarrow \M_{\param}(v)_{\Fi}^{(1)}$
is an isomorphism over $\M_{\param}(v)_{\Fi}^{(1)reg}$ because it is a fiberwise
symplectic resolution of singularities. Let $\M^{\theta}_{\param}(v)_{\Fi}^{(1) reg}$
denote the isomorphic preimage of $\M_{\param}(v)_{\Fi}^{(1)reg}$ in
$\M^\theta_{\param}(v)_{\Fi}^{(1)}$. In particular, we see that
$\M^\theta_{\param}(v)_{\Fi}^{(1),reg}, \M^{\theta'}_{\param}(v)_{\Fi}^{(1),reg}$
are naturally identified.

We claim that the restrictions of the bundles $\mathcal{P}^{\theta}_{\param,\Fi}, \mathcal{P}^{\theta'}_{\param,\Fi}$ to this open subvarieties differ by a twist with a line bundle. The latter will follow if we check that
\begin{equation}\label{eq:end_equi}\End(\mathcal{P}^{\theta}_{\param,\Fi})\cong \End(\mathcal{P}^{\theta'}_{\param,\Fi}).\end{equation}
Indeed, the restriction of this endomorphism algebra to $\M^{\theta}_{\param}(v)_{\Fi}^{(1)reg}$ is Azumaya and the restrictions of both
$\mathcal{P}^{\theta}_{\param,\Fi}, \mathcal{P}^{\theta'}_{\param,\Fi}$ are splitting bundles so differ
by a twist with a line bundle.

To establish (\ref{eq:end_equi})
we will first verify that  $\End(\hat{\mathcal{P}}^{\theta}_{\param,\Fi})=\End(\hat{\mathcal{P}}^{\theta'}_{\param,\Fi})$.
By the construction in Section \ref{SS_splitting},
the left hand side is $\Gamma(\A^\theta_{\paramq_\Fi}(v)^{\wedge_0})$, while the right hand side is
$\Gamma(\A^{\theta'}_{\paramq_\Fi}(v)^{\wedge_0})$, both are isomorphic to $\A_{\paramq_\Fi}(v)^{\wedge_0}$.
Now by the formal function theorem, the completions  $\End(\mathcal{P}^{\theta}_{\param,\Fi})^{\wedge_0}, \End(\mathcal{P}^{\theta'}_{\param,\Fi})^{\wedge_0}$ are isomorphic $\Fi[[\param]]$-algebras.
We want to deduce the isomorphism $\End(\mathcal{P}^{\theta}_{\param,\Fi})\cong \End(\mathcal{P}^{\theta'}_{\param,\Fi})$
from here. This will follow if we show the following claim

\begin{itemize}
\item[(*)]
the isomorphism $\End(\mathcal{P}^{\theta}_{\param,\Fi})^{\wedge_0}\cong \End(\mathcal{P}^{\theta'}_{\param,\Fi})^{\wedge_0}$ can be made $\Fi^\times$-equivariant by twisting the actions on the indecomposable
summands of the vector bundles involved  by characters of $\Fi^\times$.
\end{itemize}

To show that we first observe that
the restrictions of the indecomposable summands of $\mathcal{P}^\theta_{\param,\Fi}$ to $\rho^{-1}_\Fi(\M_{\param}(v)_{\Fi}^{(1)\wedge_0,reg})$
are still indecomposable. This follows from the fact  that the complement to $\M_{\param}^\theta(v)^{(1)reg}_{\Fi}$ has codimension bigger than $2$. Also note that
any $\Fi^\times$-equivariantly indecomposable bundle is also indecomposable
as an ordinary bundle.

Let us show that any two $\Fi^\times$-equivariant structures on an indecomposable summand of the restriction of $\mathcal{P}^\theta_{\param,\Fi}$ to $\rho^{-1}_\Fi(\M_{\param}(v)_{\Fi}^{(1)\wedge_0,reg})$, up to an isomorphism,
differ by a twist with a character. This statement is equivalent to the analogous one
for every  indecomposable summand, say $\mathcal{E}$, of $\mathcal{P}^\theta_{\param,\Fi}$. Let us write $\mathfrak{m}$
for the maximal ideal in $\Fi[\M_{\param}(v)^{(1)}]^{\wedge_0}$. Then
$\operatorname{Aut}(\mathcal{E})$ is the preimage of the group of invertible
elements in $\operatorname{End}(\mathcal{E})/(\mathfrak{m})$ under the
epimorphism   $\operatorname{End}(\mathcal{E})\twoheadrightarrow
\operatorname{End}(\mathcal{E})/(\mathfrak{m})$. The algebra
$\operatorname{End}(\mathcal{E})/(\mathfrak{m})$ is finite dimensional,
so the group  $\operatorname{Aut}(\mathcal{E})$ is pro-algebraic. An $\Fi^\times$-equivariant
structure gives rise to a semi-direct product $\Fi^\times\ltimes \operatorname{Aut}(\mathcal{E})$,
where the action of $\Fi^\times$ on $\operatorname{Aut}(\mathcal{E})$ preserves the pro-algebraic
structure. Another equivariant structure defines an embedding
$\Fi^\times\rightarrow \Fi^\times\ltimes \operatorname{Aut}(\mathcal{E})$ of the form
$t\mapsto (t,\gamma(t))$. Conjugating by a suitable element of $\operatorname{Aut}(\mathcal{E})$
we achieve that $\gamma(t)$ commutes with $\Fi^\times$. However, since $\mathcal{E}$
is indecomposable, we have $\gamma(t)$ is a scalar in this case. (*) follows.

%This is because the automorphism group of such a vector bundle is pro-unipotent.   This implies the claim in the end of the %previous paragraph.

{\it Step 2}. So now we can assume that $$\mathcal{P}^\theta_{\param,\Fi}|_{\M^\theta_{\param}(v)_{\Fi}^{reg}}\cong
\mathcal{P}^{\theta'}_{\param,\Fi}|_{\M^\theta_{\param}(v)_{\Fi}^{reg}},$$
where we consider $\mathcal{P}^{\theta}_{\param,\Fi}$ as a bundle on $\M^\theta_\param(v)_{\Fi}$.
We claim that the first self-Ext of these isomorphic bundles vanishes. The variety $\M_{\param}(v)_{\Fi}$ is Cohen-Macaulay by (4) of Corollary \ref{Cor:prop_Mv}. Since $H^i(\M^\theta_{\param}(v)_{\Fi}, \mathcal{E}nd (\mathcal{P}^\theta_{\param,\Fi}))=0$ for $i>0$, we see that $\operatorname{End}(\mathcal{P}^\theta_{\param,\Fi})$ is a Cohen-Macaulay $\Fi[\mathcal{M}^\theta_{\param,\Fi}]$-module. %\footnote{Reference?}.
Therefore, for a subvariety $Y\subset \M_{\param}(v)_{\Fi}$ of codimension $i$, we have $H^j_Y(\M_{\param}(v)_{\Fi}, \operatorname{End}(\mathcal{P}^\theta_{\param,\Fi}))=0$ for $j<i$.
Since $\M_{\param}(v)_{\Fi}\setminus \M_{\param}(v)_{\Fi}^{reg}$ has codimension $3$, we use a standard exact sequence
for the cohomology with support to  see that
$H^i_{\M_{\param}(v)_{\Fi}\setminus \M_{\param}(v)^{reg}_{\Fi}}(\M_{\param}(v)_{\Fi}, \operatorname{End}(\mathcal{P}^\theta_{\param,\Fi}))=0$
for $i<2$. Therefore, $H^1(\M_{\param}(v)_{\Fi}^{reg}, \mathcal{E}nd(\mathcal{P}^\theta_{\param,\Fi}))=0$ and we are done.

{\it Step 3}. We have a closed subscheme $\M^\theta_{\param}(v)_{\Fi_q}^{reg}\subset \M^\theta_{\param}(v)_{\tilde{S}}^{reg}$.
Consider its formal neighborhood $\M^\theta_{\param}(v)_{\tilde{S}}^{reg,\wedge_q}$. There is a natural morphism
$\iota: \M^\theta_{\param}(v)_{\tilde{S}}^{reg,\wedge_q}\rightarrow \M^\theta_{\param}(v)_{\tilde{S}}^{\wedge_q}$ of formal schemes.
The bundles $\iota^*\tilde{\mathcal{P}}^\theta_{\param,\Fi}, \iota^*\tilde{\mathcal{P}}^{\theta'}_{\param,\Fi}$
are isomorphic by the previous step, because both deform $\mathcal{P}^\theta_{\param,\Fi}|_{\M^\theta_{\param,\Fi}(v)^{reg}}$ that has
no first  self-extensions.
The induced homomorphism $\End(\tilde{\mathcal{P}}^\theta_{\param,\Fi})\rightarrow \End(\iota^*\tilde{\mathcal{P}}^\theta_{\param,\Fi})$
is an isomorphism because both algebras are flat over the complete algebra $S^{\wedge_q}$
 and modulo the maximal ideal of $S^{\wedge_q}$ this homomorphism coincides with the isomorphism
$\End(\mathcal{P}^\theta_{\param_{\Fi}})\rightarrow \End(\mathcal{P}^\theta_{\param_{\Fi}}|_{\M^\theta_{\param_{\Fi}}(v)^{reg}})$.
%\footnote{There's a mess here with the base rings. Fix it.}

By Step 1, $\iota^*\tilde{\mathcal{P}}^\theta_{\param,\Fi}$ is independent of $\theta$.
So $\End(\tilde{\mathcal{P}}^\theta_{\param,\Fi})$ is $\mathbb{G}_m$-equivariantly
isomorphic to $\End(\tilde{\mathcal{P}}^{\theta'}_{\param,\Fi})$.
This yields an isomorphism required in this proposition.
\end{proof}

\begin{Rem}\label{Rem:theta_indep} The argument in the above proof implies that
the bundle $\mathcal{P}^\theta_{\param}|_{\M(v)_\param^{reg}}$ is independent of $\theta$
and the first self-extensions vanish.
\end{Rem}

\subsection{Proof of Proposition \ref{Prop:tilting_gen}}\label{SS_tilt_gen_proof}
Let us prove Proposition \ref{Prop:tilting_gen}. For $\mathcal{P}^\theta$
we take the restriction of $\mathcal{P}^\theta_\param$ to $\M^\theta(v)$.
This bundle has no higher self-extensions, this has already been mentioned
in Section \ref{SS_Procesi_comparison}. Because of that $\End(\mathcal{P}^\theta)=\End(\mathcal{P}^\theta_{\param})/(\param)$.
So the algebras $\End(\mathcal{P}^\theta)$ for different $\theta$
are $\C^\times$-equivariantly identified.

It remains to show that $\operatorname{End}(\mathcal{P}^\theta)$
has finite homological dimension. This is what we do in the remainder
of this section. For this algebra to have  finite homological dimension we will
need to make special choices of $\lambda$ and of $p$.

First of all, let us notice that there is $\lambda\in \mathbb{Q}^{Q_0}$ such that the homological
dimension of $\A_\lambda(v)\otimes \A_\lambda(v)^{opp}$ is finite.
Indeed, for any  $\lambda$ there is $k\in \Z$ such that the abelian localization
theorem holds  for $\A_{\lambda+k\theta}(v)\otimes
\A_{\lambda+k\theta}(v)^{opp}$ on $\M^\theta(v)\times \M^{-\theta}(v)$. It follows that
the homological dimension of $\A_{\lambda+k\theta}(v)\otimes
\A_{\lambda+k\theta}(v)^{opp}$ is finite and we replace $\lambda$
with $\lambda+k\theta$.

We claim that for $p\gg 0$, the homological dimension of $\A_\lambda(v)_{\mathbb{F}}$ is finite.
This follows from a more general result.

\begin{Lem}\label{Lem:fin_hom_char}
Let $S$ be a finite localization of $\Z$. Let $A$ be an $S$-algebra such that $A\otimes_{S}A^{op}$ is Noetherian.
Suppose that for $A_{\C}=\C\otimes_S A$, the homological dimension of
$A_\C\otimes_\C A_{\C}^{opp}$ is finite.
Then the algebra $A_{\Fi}:=\Fi\otimes_{S}A$ has finite homological dimension
for all $p\gg 0$. Moreover, the projective dimension of the regular
$A_{\Fi}$-bimodule is finite.
\end{Lem}
\begin{proof}%[Proof of Lemma \ref{Lem:fin_hom_char}]
For an algebra $\mathcal{A}$ over a field, the homological dimension is finite provided the projective dimension of
the regular bimodule $\mathcal{A}$ is. %Clearly, the projective dimension of the regular bimodule does not change under field extensions.

Let $F$ be a finitely generated free $A$-bimodule, $M$ a finitely generated $A$-bimodule and $\varphi$ a homomorphism
$F\rightarrow M$  such that the homomorphism $\varphi_{\C}$ is surjective.
Because all bimodules involved are finitely generated,
$\varphi_{S'}$ is an epimorphism for a finite localization $S'$ of $S$. So, using induction,
we reduce to showing that if $M$ is a finitely generated $A$-bimodule such that $M_{\operatorname{Frac}(S)}$
is a projective $A_{\operatorname{Frac}(S)}$-bimodule, then $M_{S'}$ is a projective $A_{S'}$-bimodule for a
finite localization $S'$ of $S$. Fix an epimorphism $\varphi:F\twoheadrightarrow M$ of $A$-bimodules.
Then $\varphi_{\operatorname{Frac}(S)}$ admits a left inverse, $\iota$. Clearly, $\iota$ is defined
over a finite localization $S'$ of $S$. It follows that $M_{S'}$ is projective.

Therefore
the projective dimension of the regular $A_{S'}$-bimodule is finite. So the projective dimension of $A_{\overline{\mathbb{F}}_p}$
is finite for $p\gg 0$ proving the lemma.
\end{proof}

Since the projective dimension of the regular $\A_\lambda(v)_\Fi$-bimodule is finite, so is the projective dimension
of the regular $\A_\lambda(v)_{\Fi}^{\wedge_0}$-bimodule, because $\A_{\lambda}(v)_{\Fi}^{\wedge_0}$
is a flat (left and right) $\A_{\lambda}(v)_{\Fi}$-module.
It follows that $\A_\lambda(v)_{\Fi}^{\wedge_0}$ has finite homological dimension. In other words,
the homological dimension of $\operatorname{End}(\hat{\mathcal{P}}^\theta_\Fi)$ is finite. Because
of the contracting $\Fi^\times$-action, the homological dimension of $\operatorname{End}(\mathcal{P}^\theta_\Fi)$ is also finite. The same holds for the deformation $\End(\tilde{\mathcal{P}}^\theta_\Fi)$ of $\operatorname{End}(\mathcal{P}^\theta_{\Fi_q})$ over $\tilde{S}^{\wedge_q}$  (here
$\tilde{\mathcal{P}}^\theta_\Fi$ is the specialization of the bundle $\tilde{\mathcal{P}}^\theta_{\param,\Fi}$
constructed in the beginning of Section \ref{SS_Procesi_comparison} to
the zero parameter). Again, thanks to the contracting action of $\mathbb{G}_m$, the homological dimension of $\End(\mathcal{P}^\theta_{\tilde{S}^{\wedge_q}})$ is finite, and we are done.

\subsection{Proof of Proposition \ref{Prop:inj_K0_rat}}
In this section we prove Proposition \ref{Prop:inj_K0_rat} using reduction to
characteristic $p$. We start with reducing finite dimensional representations mod $p$.

\subsubsection{Reduction of representations mod $p$}
Let $\lambda\in \Q^{Q_0}$ be such that $\A_\lambda(v)\otimes \A_{\lambda}(v)^{opp}$ has finite
homological dimension. Here we will discuss the reduction of the finite dimensional representations
of $\A_\lambda(v)$ modulo $p$ for $p\gg 0$.  Let $\Fi$ still be an algebraically closed field of characteristic
$p$. Note that by Lemma \ref{Lem:fin_hom_char}, the algebra $\A_\lambda(v)_{\Fi}$
has finite homological dimension. Let $S$ have the same meaning as in Lemma \ref{Lem:quant_base_change}.

Let $M\in \A_\lambda(v)\operatorname{-mod}_{fin}$. Pick an $S$-lattice $M_S\subset M$. If $p$
is invertible in $S$, then $M_{\Fi}:=\Fi\otimes_S M_S$ makes sense. It is a standard result that
$[M_\Fi]\in K_0(\A_{\lambda}(v)_\Fi\operatorname{-mod}_{fin})$ depends only on $[M]$
and the map $[M]\mapsto [M_\Fi]$ is linear.

Inside $\A_\lambda(v)_{\Fi}$ we have a central subalgebra $\Fi[\M(v)^{(1)}]$ known as the
$p$-center, see, e.g., \cite{BFG}. Consider the category $\A_{\lambda}(v)_{\Fi}\operatorname{-mod}_0$ consisting
of all finite dimensional $\A_\lambda(v)_{\Fi}$-modules supported at $0\in \M(v)_{\Fi}^{(1)}$.

Here is the main result of this section.

\begin{Prop}\label{Prop:red_mod_p}
The following is true provided $p\gg 0$:
\begin{enumerate}
\item $M_{\Fi}\in \A_{\lambda}(v)_{\Fi}\operatorname{-mod}_0$.
\item If $M$ is irreducible, then $M_{\Fi}$ is irreducible.
\item If $M^1,M^2$ are two non-isomorphic finite dimensional irreducible modules,
then $M^1_{\Fi}$ and $M^2_{\Fi}$ are non-isomorphic.
\end{enumerate}
\end{Prop}
It follows from Proposition \ref{Prop:red_mod_p} that we get an injective map
$K_0(\A_\lambda(v)\operatorname{-mod}_{fin})\hookrightarrow K_0(\A_\lambda(v)_\Fi\operatorname{-mod}_0)$.
In \ref{SSS:K0_iso} we will see that there is an isomorphism
$K_0(\A_\lambda(v)_\Fi\operatorname{-mod}_0)\xrightarrow{\sim} K_0(\Coh_{\rho^{-1}(0)}(\M^\theta(v)))$
intertwining the injective map above with $\gamma_\lambda^\theta$.

\begin{proof}[Proof of Proposition \ref{Prop:red_mod_p}]
Let $M\in \A_{\lambda}(v)\operatorname{-mod}_{fin}$ be irreducible and let $I$ be its
annihilator in $A:=\A_\lambda(v)$ so that $A/I=\End_{\C}(M)$.
Then the annihilator of $M_S$ in $A_S$ is $I_S:=A_S\cap I$.

We may assume that $M$ is a free finite rank module over $S$.
Then $I_S$ and $A_S/I_S$ are flat over $S$.
Replacing $S$ with a finite algebraic extension we achieve that
that $A_S/I_S\xrightarrow{\sim}\End_S(M_S)$. For $p$ sufficiently large, $\Fi$ is an
$S$-algebra. So, for $I_{\Fi}:=\Fi\otimes_S I_S$, we have $A_\Fi/I_\Fi\xrightarrow{\sim}
\End_\Fi(M_\Fi)$.  It follows that $M_\Fi$ is irreducible and $I_\Fi$ is the annihilator of
$M_\Fi$ in $A_\Fi$. (2) is proved.

Let $M^1,M^2$ be two non-isomorphic simples. Let $I$ be the intersection of their annihilators
in $A$. Then we can form the ideals $I_S\subset A_S$ and $I_\Fi\subset A_\Fi$ similarly to
the above. We will get $A_\Fi/I_\Fi\xrightarrow{\sim} \End_\Fi(M^1_\Fi)\oplus \End_\Fi(M^2_\Fi)$.
This implies (3).

Finally, let us prove (1). Note that $\dim \Fi[\M(v)^{(1)}_\Fi]/(I_\Fi\cap \Fi[\M(v)^{(1)}_\Fi])
\leqslant (\dim M)^2$ so is bounded with respect to $p$. On the other hand, it is easy
to see that $I_\Fi\cap \Fi[\M(v)^{(1)}_\Fi]$ is a Poisson ideal in $\Fi[\M(v)^{(1)}_\Fi]$.
Since the codimension is bounded with respect to $p$, we see that the subvariety
of  $\M(v)^{(1)}_\Fi$ defined by this ideal is the union of points that are symplectic
leaves. Since we have a contracting $\Fi^\times$-action on $\M(v)^{(1)}_\Fi$
we see that the subvariety is actually $\{0\}$. This finishes the proof of (1).
\end{proof}

\subsubsection{Isomorphism $K_0(\A_\lambda(v)_\Fi\operatorname{-mod}_0)\xrightarrow{\sim} K_0(\Coh_{\rho^{-1}(0)}(\M^\theta(v)))$}\label{SSS:K0_iso}
Note that $R\Gamma(\A_\lambda^\theta(v)_\Fi)=\A_\lambda(v)_\Fi$. By the construction,
the algebra $\A_\lambda(v)_{\Fi}$ has finite homological dimension. By \cite[Section 2.2]{BK2},
the functor $R\Gamma$ gives an equivalence $D^b(\A^\theta_\lambda(v)_{\Fi}\operatorname{-mod})
\xrightarrow{\sim} D^b(\A_\lambda(v)_{\Fi}\operatorname{-mod})$. Consider the subcategory
$D^b_0(\A_\lambda(v)_{\Fi}\operatorname{-mod})\subset D^b(\A_\lambda(v)_{\Fi}\operatorname{-mod})$
of all complexes whose homology are finite dimensional with generalized $p$-character $0$
and the similarly defined subcategory $D^b_{(\rho^{(1)})^{-1}(0)}(\A^\theta_\lambda(v)_{\Fi}\operatorname{-mod})
\subset D^b(\A^\theta_\lambda(v)_{\Fi}\operatorname{-mod})$. The equivalence $R\Gamma$
restricts to $D^b_{(\rho^{(1)})^{-1}(0)}(\A^\theta_\lambda(v)_{\Fi}\operatorname{-mod})
\xrightarrow{\sim} D^b_{0}(\A_\lambda(v)_{\Fi}\operatorname{-mod})$.

Since $\A_\lambda^\theta(v)_\Fi$ splits on $\M^\theta(v)_\Fi^{\wedge_0}$, we further get an
equivalence $$\mathcal{F}:D^b_{(\rho^{(1)})^{-1}(0)}(\Coh(\M^\theta(v)_{\Fi}^{(1)}))\xrightarrow{\sim} D^b_{0}(\A_\lambda(v)_{\Fi}\operatorname{-mod})$$ given by $N\mapsto R\Gamma(\mathcal{P}^\theta_\Fi\otimes N)$.
So we get an isomorphism $$[\mathcal{F}]:K_0(\A_\lambda(v)_\Fi\operatorname{-mod}_0)\xrightarrow{\sim} K_0(\Coh_{(\rho^{(1)})^{-1}(0)}(\M^\theta(v)_{\Fi}^{(1)})).$$ The Frobenius push-forward
for $\M^\theta(v)_{\Fi}$ induces the isomorphism $$[\operatorname{Fr}_*]:K_0(\Coh_{\rho^{-1}(0)}(\M^\theta(v)_{\Fi}))\xrightarrow{\sim}
K_0(\Coh_{(\rho^{(1)})^{-1}(0)}(\M^\theta(v)_{\Fi}^{(1)}))$$
(recall that all $K_0$'s we consider are over $\C$, the map $[\operatorname{Fr}_*]$ is not invertible over $\Z$). But $K_0(\Coh_{\rho^{-1}(0)}(\M^\theta(v)))$ is naturally identified with $K_0(\Coh_{\rho^{-1}(0)}(\M^\theta(v)_\Fi))$ since $p\gg 0$.

We get an isomorphism $K_0(\A_\lambda(v)_\Fi\operatorname{-mod}_0)\xrightarrow{\sim}
K_0(\Coh_{\rho^{-1}(0)}(\M^\theta(v)))$ given by $[\operatorname{Fr}_*]^{-1}\circ [\mathcal{F}]^{-1}$ to be denoted by $\iota$. Set $\iota'=[\mathcal{P}^{\theta}]\iota$.
Our $K_0$ is a $\C$-vector space, so the multiplication by $[\mathcal{P}^{\theta}]$ (the class of a vector bundle) is an invertible transformation.

\subsubsection{Completion of proof}
It remains to prove the following lemma.

\begin{Lem}\label{Lem:inj_K0_final}
We have $\gamma_\lambda^\theta([M])=\iota'([M_{\Fi}])$.
\end{Lem}
\begin{proof}
Note that we still have the degeneration map $$K_0(\A_\lambda^\theta(v)_\Fi\operatorname{-mod}_{\rho^{-1}(0)})
\rightarrow K_0(\Coh_{\rho^{-1}(0)}(\M^\theta(v)_{\Fi})), [N]\mapsto [\gr N].$$
So we get the map $\gamma^\theta_{\lambda,\Fi}: K_0(\A_\lambda(v)_\Fi\operatorname{-mod}_{fin})
\rightarrow K_0(\Coh_{\rho^{-1}(0)}(\M^\theta(v)_{\Fi}))$.
Let us check that $\gamma_\lambda^\theta([M_\C])=\gamma^\theta_{\lambda,\Fi}([M_\Fi])$.
Let $M_S^i:=H_i(\A^\theta_\lambda(v)_S\otimes^L_{\A_\lambda(v)_S}M_S)$. By localizing $S$
further, we can achieve that each $\gr M_S^i$ is flat over $S$. Form the base changes
$M^i_\C=\C\otimes_S M^i_S$ and $M^i_{\Fi}=\Fi\otimes_S M^i_S$. We get $[\gr M^i]=[\gr M^i_{\Fi}]$.
It follows that $\gamma_\lambda^\theta([M_{\C}])=\gamma^\theta_{\lambda,\Fi}([M_\Fi])$.

Now take $L\in \Coh_{(\rho^{(1)})^{-1}(0)}(\M^\theta(v)_\Fi^{(1)})$. The corresponding
object  $M_\Fi\in D^b_0(\A_\lambda(v)_{\Fi}\operatorname{-mod})$ is
$R\Gamma(\mathcal{P}^\theta_\Fi\otimes L)$. But \begin{equation}\label{eq:loc_functor_pos}\A^\theta_\lambda(v)_\Fi\otimes^L_{\A_\lambda(v)_\Fi}M_{\Fi}=
\mathcal{P}^\theta_\Fi\otimes L.\end{equation}
The class of the right hand side of (\ref{eq:loc_functor_pos})
is $[\mathcal{P}^\theta_{\Fi}][L]$. So $\gamma^\theta_{\lambda,\Fi}([M_\Fi])=[\mathcal{P}^\theta_{\Fi}][\operatorname{Fr}_*]^{-1}[N]$.
Since $[N]=[\mathcal{F}]^{-1}([M_{\Fi}])$ we get the required equality
$[\gamma_\lambda^\theta]([M_\Fi])=\iota'([M_\Fi])$.
\end{proof}

\section{Proof of the lower bound}\label{S_lower_bound_proof}
In this section we prove (I) from the beginning of Section \ref{S_outline} and various related statements.

In Section \ref{SS:WC_vs_Rickard} we prove Theorem \ref{Thm:WC} relating the wall-crossing functor
to a Rickard functor. In Section \ref{SS_quiv_KH} we study the K-theory of quiver varieties, in
particular, we use results of Section \ref{S_quant_pos} to
identify the $K_0$-groups $K_0(\operatorname{Coh}_{\rho^{-1}(0)}\M^\theta(v))$
for different generic $\theta$. We show that  the Chern character maps are isomorphisms
that intertwine these identifications with  the identification of homology explained in
Section \ref{SS_quiv_class}. In Section  \ref{SS_WC_K0} we prove that the identifications
$K_0(\operatorname{Coh}_{\rho^{-1}(0)}\M^\theta(v))\xrightarrow{\sim}
K_0(\operatorname{Coh}_{\rho^{-1}(0)}\M^{\theta'}(v))$ intertwine the maps
from $K_0(\A_\lambda(v)\operatorname{-mod}_{fin})$. This shows, in particular,
that $\mathsf{CC}:K_0(\A_\lambda(v)\operatorname{-mod}_{fin})\rightarrow L_\omega[\nu]$
is independent of the choice of $\theta$. Finally, in Section \ref{SS:K_0_action}
we equip $\bigoplus_v K_0(\operatorname{Coh}_{\rho^{-1}(0)}\M^\theta(v))$ with an
$\a$-action and modify the degeneration map
$$\bigoplus_v K_0(\A^\theta_\lambda(v)\operatorname{-mod}_{\rho^{-1}(0)})\rightarrow \bigoplus_v K_0(\operatorname{Coh}_{\rho^{-1}(0)}\M^\theta(v))$$
so that it intertwines $[E_\alpha]$ with $e_\alpha$ and $[F_\alpha]$ with $f_\alpha$.
We deduce Proposition \ref{Prop:a_CC_intertw} (and hence (I)) from here.

\subsection{Wall-crossing vs Rickard complexes}\label{SS:WC_vs_Rickard}
In this subsection we prove Theorem \ref{Thm:WC}.
Our proof follows the scheme of construction
of $E_i,F_i$: we use reduction in stages to reduce to what essentially is
the case of a quiver with a single vertex and no loops.
%: we first deal essentially with  the case of a quiver with a single vertex and no loops
%and then reduce the proof to that case.

We will use the notation of Section \ref{SS_W_fun} and of \ref{SSS_LMN}.
Recall that we assume that $\theta_k>0$ for all $k$ and $\lambda_i\in \Z_{\geqslant 0}$.
In the proof we will need to deal with various functors that we will now describe.

\subsubsection{Quotient functors}\label{SSS_quot_fun}
Consider the quotient functors \begin{align*} &\pi^{\theta_i}(v):D_R\operatorname{-mod}^{G,\lambda}\twoheadrightarrow D^{\lambda_i}_{\operatorname{Gr}(v_i,\tilde{w}_i)}\otimes D_{\underline{R}}\operatorname{-mod}^{\underline{G},\underline{\lambda}},\\&
\underline{\pi}^\theta(v): D^{\lambda_i}_{\operatorname{Gr}(v_i,\tilde{w}_i)}\otimes D_{\underline{R}}\operatorname{-mod}^{\underline{G},\underline{\lambda}}\twoheadrightarrow \A_\lambda^\theta(v)\operatorname{-mod}.
\end{align*}
so that $\pi_\lambda^\theta(v)=\underline{\pi}^\theta(v)\circ \pi^{\theta_i}(v)$ (below we will omit the subscript).
Recall, \ref{SSS_Ham_der_fun}, that the functor $\pi^\theta(v)$ extends to a quotient functor
$D^b_{G,\lambda}(D_R\operatorname{-mod})\rightarrow D^b(\A_\lambda^\theta(v)\operatorname{-mod})$
still denoted by $\pi^\theta(v)$. For completely similar reasons, we get quotient functors
\begin{align*} &\pi^{\theta_i}(v):D^b_{G,\lambda}(D_R\operatorname{-mod})\twoheadrightarrow D^b_{\underline{G},\underline{\lambda}}(D^{\lambda_i}_{\operatorname{Gr}(v_i,\tilde{w}_i)}\otimes D_{\underline{R}}\operatorname{-mod}),\\&\underline{\pi}^\theta(v): D^b_{\underline{G},\underline{\lambda}}(D^{\lambda_i}_{\operatorname{Gr}(v_i,\tilde{w}_i)}\otimes D_{\underline{R}}\operatorname{-mod})\twoheadrightarrow \A_\lambda^\theta(v)\operatorname{-mod}.
\end{align*}
such that $\pi^\theta(v)$ still decomposes as $\underline{\pi}^\theta(v)\circ \pi^{\theta_i}(v)$.
Assuming that $\lambda\in \paramq^{ISO}$ and $(\lambda,\theta)\in \AL(v)$,
the functor $\pi^\theta(v)$ admits a left adjoint functor $L\pi^\theta(v)^!$.
Under the same assumptions, $\underline{\pi}^\theta(v)$ admits a derived left adjoint functor
$L\underline{\pi}^\theta(v)^!$.  Further, possibly after replacing $\lambda$
with $\lambda+k\theta$ for $k>0$ we may assume, in addition, that
$\pi^{\theta_i}(v)$ admits a derived left adjoint $L\pi^{\theta_i}(v)^!$.
So we have $L\pi^{\theta}(v)^!=L\pi^{\theta_i}(v)^!\circ L\pi^{\underline{\theta}}(v)^!$.

\subsubsection{Wall-crossing functors}\label{SSS_WC_quotients}
Recall that $\lambda'$ stands for $s_i\bullet^{s_i\bullet v}\lambda$.
Consider the functor $$\WC_{\lambda\rightarrow  \lambda'}^i: D^b_{\underline{G},\underline{\lambda}}(\A^{\theta_i}_{\lambda_i}(v)\operatorname{-mod})
\rightarrow D^b_{\underline{G}, \underline{\lambda'}}(\A^{-\theta_i}_{\lambda'_i}(v)\operatorname{-mod})$$ that is the composition of $$\WC_{\lambda_i\rightarrow \lambda'_i}: D^b_{\underline{G},\underline{\lambda}}(\A^{\theta_i}_{\lambda_i}(v)\operatorname{-mod})
\rightarrow D^b_{\underline{G}, \underline{\lambda}}(\A^{-\theta_i}_{\lambda'_i}(v)\operatorname{-mod})$$ and the equivalence $$D^b_{\underline{G}, \underline{\lambda}}(\A^{-\theta_i}_{\lambda'_i}(v)\operatorname{-mod})
\xrightarrow{\sim} D^b_{\underline{G}, \underline{\lambda'}}(\A^{-\theta_i}_{\lambda'_i}(v)\operatorname{-mod})$$ (an integral change of the twisted equivariance condition). %As in Lemma \ref{Lem:der_glob_descr},
%$$\WC_{\lambda\rightarrow s_i\bullet \lambda}^i=\pi^{(s_i\theta)_i}(v)\circ
%(\C_{s_i\bullet \lambda-\lambda})L\pi^{\theta_i}(v).$$

We have $(\lambda,\theta)\in \mathfrak{AL}(v)$ by our assumptions. Also $(\lambda_i,\theta_i)\in \mathfrak{AL}(v_i)$ because $\lambda_i,\theta_i\geqslant 0$. These two observations  imply $(\lambda', s_i\theta)\in \mathfrak{AL}(s_i\bullet v)$ and $(\lambda'_i,-\theta_i)\in \mathfrak{AL}(\tilde{w}_i-v_i)$.

\begin{Lem}\label{Lem:WC_red_in_stages}
\begin{equation}\label{eq:intertw}
\WC_{\lambda\rightarrow \lambda'}= \underline{\pi}^{s_i\bullet\theta}(v) \circ \WC_{\lambda\rightarrow \lambda'}^i\circ L\underline{\pi}^{\theta}(v)^!.
\end{equation}
\end{Lem}
\begin{proof}
 Note that we have the next four functor isomorphisms \begin{align*}&\WC_{\lambda\rightarrow \lambda'}^i\cong \pi^{-\theta_i}(v)\circ(\C_{ \lambda'-\lambda}\otimes\bullet)\circ L\pi^{\theta_i}(v)^!,\\
&\WC_{\lambda\rightarrow \lambda'}\cong \pi^{s_i\theta}(v)\circ (\C_{\lambda'-\lambda}\otimes\bullet)\circ L\pi^{\theta}(v)^!,\\
&\pi^{s_i\theta}(v)\cong\underline{\pi}^{s_i\theta}(v)\circ \pi^{-\theta_i}(v),\\ &L\pi^\theta(v)^!\cong L\pi^{\theta_i}(v)^!\circ L\underline{\pi}^{\theta}(v)^!.
\end{align*}
The  last two isomorphisms were discussed in
\ref{SSS_quot_fun}. To prove the second one we use Lemma \ref{Lem:der_glob_descr} and isomorphisms of functors
$\pi^0_\lambda(v)\cong \pi^\theta_\lambda(v), \pi^0_{\lambda'}(v)\cong \pi^{s_i\theta}_{\lambda'}(v)$. The first
isomorphism is analogous.

These four equalities imply (\ref{eq:intertw}).
\end{proof}
%We remark that the translation equivalence in the second equality is obtained from that in the first equality
%by passing to quotient categories.

\subsubsection{LMN isomorphisms}
Tracking the construction of the LMN isomorphisms, see Sections \ref{SSS_LMN} and \ref{SSS_LMN_quant},
we see that \begin{equation}\label{eq:intertw_s}
s_{i*}\circ \underline{\pi}^{\sigma_i\theta}(v)=\underline{\pi}^{\theta}(s_i\bullet v)\circ \tilde{s}_{i*},
\end{equation} where
we write $\tilde{s}_{i*}$ for the equivalence $$D^b_{\underline{G},
\underline{\lambda'}}(\A_{\lambda'_i}^{-\theta_i}(v)\operatorname{-mod})\xrightarrow{\sim} D^b_{{\underline{G},\underline{\lambda}}}(\A_{\lambda_i}^{\theta_i}(s_i\bullet v)\operatorname{-mod})$$
that comes from the quantum LMN isomorphism $\A_{\lambda'_i}^{-\theta_i}(v)\xrightarrow{\sim}\A^{\theta_i}_{\lambda_i}(v)$. %Here  we set  $\A_{\lambda'_i}^{-\theta_i}(v):=D_R\red_{\lambda'_i}^{-\theta_i}\GL(v_i)$, while $\A_{\lambda_i}^{\theta_i}(s_i\bullet v)$ is an analogous reduction.
Combining (\ref{eq:intertw}) with (\ref{eq:intertw_s}), we get
\begin{align}\label{eq:intertw_1}
s_{i*}\circ \WC_{\lambda\rightarrow \lambda'}=\underline{\pi}^{\theta}(s_i\bullet v)\circ (\tilde{s}_{i*}\circ\WC_{\lambda\rightarrow \lambda'}^i)\circ L\underline{\pi}^{\theta}(v)^!.
\end{align}

\subsubsection{Rickard complexes}\label{SSS_Rickard}
We consider Rickard complexes that (in a somewhat different framework) were
suggested by Chuang and Rouquier, \cite[Section 6]{CR}.
We will use the version of  \cite[Section 8]{Kaetc}.

Set $k=v_i, N=\tilde{w}_i$.
We want to define an object $\Theta$ in the homotopy category of 1-morphisms in $\mathcal{U}(\mathfrak{sl}_2)$
(going from the object $N-2k$ to the object $2k-N$). This will be the complex
$$\Theta^m[-m]\rightarrow \Theta^{m-1}[1-m]\rightarrow\ldots \rightarrow
\Theta^1[-1]\rightarrow \Theta^0,$$ where $m=\min(k,N-k)$. Here
$$\Theta^i=\mathcal{F}^{(N-k-i)}\mathcal{E}^{(k-i)},$$
and $[?]$ denotes the grading shift in $\mathcal{U}(\mathfrak{sl}_2)$.
%\footnote{I've copied
%from \cite{Kaetc} but their definition works for $2k\leqslant N$. Maybe we need
%a different definition for $2k>N$. I'll ask Joel}.
The differentials in the complex come from adjunctions between $\mathcal{E},\mathcal{F}$.
%  Those are the following complexes
%\begin{align*}
% \Theta(d)= &\ldots\rightarrow E^{(2)}F^{(d+2)}\rightarrow E F^{(d+1)}\rightarrow F^{(d)}\\
% \Theta^{-d}=&\ldots \rightarrow E^{(d+2)}F^{(2)}\rightarrow E^{(d+1)}F \rightarrow E^{(d)}%:\\&
%\end{align*}
%where $d$ is a nonnegative integer of the same parity as $\tilde{w}_i$\footnote{This needs
%to be compared to [CDK]}.
%We also have the complexes $\Theta'$ that are (say, left) adjoint to $\Theta$.
We note that what is denoted by $\Theta$ in \cite[Section 8]{Kaetc} is
the cone of $\psi(\Theta)$ (where $\psi$ was introduced in  \ref{SSS_Webster_prop}).
%we will abuse the notation and write $\psi(\Theta)$
%for the cone.

Recall that Webster's functors $E,F$ give rise to an action, say $\alpha$, of the 2-category $\mathcal{U}(\sl_2)$
on the category \begin{equation}\label{eq:cat_rep2}
\bigoplus_{v_i=0}^{\tilde{w}_i} D_{\underline{G},\underline{\lambda}}^b(\mathcal{D}^\lambda_{\operatorname{Gr}(v_i,\tilde{w}_i)}\otimes D_{\underline{R}}\operatorname{-mod}).
\end{equation}
We get an endofunctor $\alpha(\Theta)$ of (\ref{eq:cat_rep2}).
It follows from \cite[Theorem 8.1]{Kaetc} that it is an equivalence.

\subsubsection{Comparison}
Now let us compare the wall-crossing functors and the functors coming from Rickard complexes.

\begin{Prop}\label{Prop_WC_grass}
The functor $$\alpha(\Theta): D^b_{\underline{G},\underline{\lambda}}(D^{\lambda_i}_{\operatorname{Gr}(v_i,\tilde{w}_i)}\otimes D_{\underline{R}}\operatorname{-mod})\rightarrow
D^b_{\underline{G},\underline{\lambda}}(D^{\lambda_i}_{\operatorname{Gr}(\tilde{w}_i-v_i,\tilde{w}_i)}\otimes D_{\underline{R}}\operatorname{-mod})$$
coincides with $\tilde{s}_{i*}\circ \WC_{\lambda\rightarrow \lambda'}^i$.
\end{Prop}
\begin{proof}
The statement will be deduced from a result of \cite{Kaetc} which provides an isomorphism
$I\cong \alpha'(\Theta)$ (where $\alpha'$ is a non-equivariant  analog of $\alpha$) between two equivalences $$I,\, \alpha'(\Theta):D^b(D_{\operatorname{Gr}(k,N)}\operatorname{-mod})\xrightarrow{\sim}
D^b(D_{\operatorname{Gr}(N-k,N)}\operatorname{-mod})$$ (also for the $\GL_n$-equivariant categories).
%\footnote{There's
%a sign discrepancy with \cite{Kaetc} here.}.
Here $I$ is the Radon transform functor given
by the convolution with $j_*(\mathcal{O}_U)[k(N-k)]\in D^b(D_{\operatorname{Gr}(k,N)\times\operatorname{Gr}(N-k,N)}\operatorname{-mod})$, where  $U\subset \operatorname{Gr}(k,N)\times \operatorname{Gr}(N-k,N)$ is the open $\GL_N$-orbit
and $j:U\hookrightarrow \operatorname{Gr}(k,N)\times \operatorname{Gr}(N-k,N)$
is the open embedding.

Consider the homomorphism $$\psi':\mathcal{U}(\mathfrak{sl}_2)\rightarrow
D^b(D_{\operatorname{Gr}(k,N)\times \operatorname{Gr}(N-k,N)}\operatorname{-mod})$$
that is completely analogous to $\psi$ considered in Section \ref{SS_W_fun}.
The object $$\psi'(\Theta)\in D^b(D_{\operatorname{Gr}(k,N)\times \operatorname{Gr}(N-k,N)}\operatorname{-mod})$$
is  shown in \cite[Corollary 8.6]{Kaetc}  to be isomorphic to the complex  $j_*(\mathcal{O}_U)[k(N-k)]$
of $D$-modules, where $j$ is the embedding $U\to \operatorname{Gr}(k,N)\times \operatorname{Gr}(N-k,N)$. The same is true for the $\operatorname{GL}_N$-equivariant derived category.

It remains to show how this statement implies the proposition. Clearly,
$\psi_i(\Theta)$ is identified with $\psi(\Theta)\boxtimes \delta_{\overline{R}*}(\mathcal{O})$, where
we used the obvious identification $\left( \operatorname{Gr}(v_i,\tilde w_i)\times \overline{R}\right)
\times \left( \operatorname{Gr}(\tilde{w}_i-v_i,\tilde w_i)\times \overline{R} \right)= \operatorname{Gr}(v_i,\tilde w_i)\times \operatorname{Gr}(\tilde{w}_i-v_i,\tilde w_i)\times \overline{R} ^2$. Here $\delta_{\overline{R}}:\overline{R}
\to \overline{R}^2$ is the diagonal embedding.
Since $\psi(\Theta)\cong j_*(\mathcal{O}_U[k(N-k)])\boxtimes  \delta_{\overline{R}*}(\mathcal{O})$, to complete the proof it is enough to notice that  $\tilde{s}_{i*}\circ\WC_{\lambda\rightarrow
\lambda'}^i$ is given by the convolution with $j_*(\mathcal{O}_U)\boxtimes  \delta_{\overline{R}}(\mathcal{O})$
(in the $\underline{G}$-equivaraint derived category). This is proved similarly to \cite[Theorem 12]{BB_Casselman}.
\end{proof}

\subsubsection{Completion of proof}
Let us complete the proof of Theorem \ref{Thm:WC}.  We have the endofunctor $\Theta_i$ of $\bigoplus_{v}D^b(\A_\lambda^\theta(v)\operatorname{-mod})$ induced by $\alpha(\Theta)$, so that $$\Theta_i\circ\pi^{\underline{\theta}}(v)\cong \pi^{\underline{\theta}}(s_i\bullet v)\circ \alpha(\Theta).$$
This implies
\begin{equation}\label{eq:theta_form}
\Theta_i= \pi^{\underline{\theta}}(s_i\bullet v)\circ \alpha(\Theta)\circ L\pi^{\underline{\theta}}(v)^!.
\end{equation}
Thanks to (\ref{eq:theta_form}) and (\ref{eq:intertw_1}), we see that Theorem \ref{Thm:WC} follows from  Proposition
\ref{Prop_WC_grass}.

\subsection{K-theory and cohomology of quiver varieties}\label{SS_quiv_KH}
Recall that the homology groups $H_*(\M^\theta(v))$ and the cohomology groups $H^*(\M^\theta(v))$
are independent of $\theta$, see \ref{SS_homol_ident}.
Note that we have Chern character maps $$K_0(\Coh_{\rho^{-1}(0)}(\M^\theta(v)))\rightarrow H_*(\M^\theta(v)), K_0(\Coh(\M^\theta(v)))\rightarrow H^*(\M^\theta(v))$$
\subsubsection{Results of Nakajima}
%Let $\lambda$ be such that $\A_\lambda(v)$ has finite homological dimension.
%Then we have an identification $K_0(\A_\lambda(v)\operatorname{-mod}_{fin})\xrightarrow{\sim}
%K_0(\A_\lambda^\theta(v)\operatorname{-mod}_{\rho^{-1}(0)})$ and the degeneration
%map $K_0(\A_\lambda^\theta(v)\operatorname{-mod}_{\rho^{-1}(0)})\rightarrow
%K_0(\Coh_{\rho^{-1}(0)}(\M^\theta(v)))$.
In \cite{Nakajima_JAMS}, Nakajima has proved the following results.

\begin{Prop}\label{Prop:chern_iso}
Let $\theta,\theta'$ be generic stability conditions. Then the Chern character maps
$K_0(\Coh_{\rho^{-1}(0)}(\M^?(v)))\rightarrow H_*(\M^?(v)),
K_0(\Coh(\M^?(v)))\rightarrow H^*(\M^?(v))$ are isomorphisms.
%(here $?=\theta$ or $\theta'$). Moreover, the resulting identification
%$K_0(\Coh_{\rho^{-1}(0)}(\M^\theta(v)))\xrightarrow{\sim}
%K_0(\Coh_{\rho^{-1}(0)}(\M^{\theta'}(v)))$ intertwines the maps
%from $K_0(\A_\lambda(v)\operatorname{-mod}_{fin})$.
\end{Prop}

This follows from \cite[Theorem 7.3.5]{Nakajima_JAMS} and the discussion
in the beginning of \cite[Section 7.1]{Nakajima_JAMS}.

The assignment $$(M,N)\mapsto \sum_{i=0}^\infty (-1)^i \dim \Ext^i(M,N), M\in \Coh(\M^\theta(v)),
N\in \Coh_{\rho^{-1}(0)}(\M^\theta(v))$$ descends to a pairing
\begin{equation}\label{eq:K0_pairing}\langle\cdot,\cdot\rangle: K_0(\Coh(\M^\theta(v)))\times K_0(\Coh_{\rho^{-1}(0)}(\M^\theta(v)))
\rightarrow \C.\end{equation}

\begin{Prop}[Theorem 7.4.1 in \cite{Nakajima_JAMS}]\label{Prop:K0_nondeg_pairing}
The pairing $\langle\cdot,\cdot\rangle$ is non-degenerate.
\end{Prop}

\subsubsection{Alternative identifications of $K_0's$}\label{SSS:K0_ident}
Note that the results quoted in \ref{SS_homol_ident} together with
Proposition \ref{Prop:chern_iso} give rise to  identifications
$$K_0(\Coh_{\rho^{-1}(0)}(\M^\theta(v)))\xrightarrow{\sim}
K_0(\Coh_{\rho^{-1}(0)}(\M^{\theta'}(v))), K_0(\Coh(\M^\theta(v)))\xrightarrow{\sim}
K_0(\Coh(\M^{\theta'}(v))).$$
We will call them the {\it Nakajima isomorphisms}.
We will need an alternative description of these identifications.

 Recall that we  have tilting generators
$\mathcal{P}^?$ on $\M^?(v)$ ($?$ is $\theta,\theta'$) with naturally
identified endomorphism algebras, Proposition \ref{Prop:tilting_gen}.
Set $\tilde{A}:=\End(\mathcal{P}^?)^{opp}$. This algebra has a
central subalgebra $\C[\M(v)]$ so we can consider the category
$\tilde{A}\operatorname{-mod}_0$ of all finite dimensional $\tilde{A}$-modules
supported at $0\in \M(v)$.

By \cite[Section 2.2]{BK2}, the functor $R\Hom(\mathcal{P}^?,\bullet)$
is an equivalence $D^b(\Coh(\M^?(v)))\xrightarrow{\sim} D^b(\tilde{A}\operatorname{-mod})$.
This gives an identification $K_0(\Coh(\M^?(v)))\xrightarrow{\sim} K_0(\tilde{A}\operatorname{-mod})$.
Moreover, the functor $R\Hom(\mathcal{P}^?,\bullet)$ restricts to an equivalence
$D^b_{\rho^{-1}(0)}(\Coh(\M^?(v)))\xrightarrow{\sim} D^b_0(\tilde{A}\operatorname{-mod})$.
This gives an identification $K_0(\Coh_{\rho^{-1}(0)}(\M^{?}(v)))\xrightarrow{\sim}
K_0(\tilde{A}\operatorname{-mod}_0)$. We will use the composite identification
$K_0(\Coh_{\rho^{-1}(0)}(\M^\theta(v)))\xrightarrow{\sim}
K_0(\Coh_{\rho^{-1}(0)}(\M^{\theta'}(v)))$. We will call the resulting
isomorphisms
$$K_0(\Coh_{\rho^{-1}(0)}(\M^\theta(v)))\xrightarrow{\sim}
K_0(\Coh_{\rho^{-1}(0)}(\M^{\theta'}(v))), K_0(\Coh(\M^\theta(v)))\xrightarrow{\sim}
K_0(\Coh(\M^{\theta'}(v)))$$
the {\it tilting isomorphisms}.

\subsubsection{Identifications via deformations}
We are going to give one more characterization of the tilting isomorphism and use this characterization
to show that the tilting isomorphisms are the same as the Nakajima isomorphisms.

Note that $\tilde{A}$ is a graded (with respect to the contracting $\C^\times$-action)
algebra of finite homological dimension. Then we have the following classical result.

\begin{Lem}\label{Lem:K_0_ident_deform}
Let $\tilde{\A}$ be a filtered deformation of $\tilde{A}$. Then the degeneration map
$K_0(\tilde{\A}\operatorname{-mod})\rightarrow K_0(\tilde{A}\operatorname{-mod}),
[M]\mapsto [\gr M],$ is an isomorphism. The inverse sends the class $[\tilde{A}e_i]$
of an indecomposable projective module to the class of its unique deformation.
\end{Lem}

Now let us consider the graded $\C[\param]$-algebra $\tilde{A}_{\param}=\End(\mathcal{P}^\theta_\param)^{opp}$.
It is independent of  $\theta$ by Proposition \ref{Prop:end_alg_coinc}
and its specialization at $0\in \param$ coincides with $\tilde{A}$. Let $\tilde{A}_p$
denote the specialization of $\tilde{A}_{\param}$ to $p\in \param$. The grading on
$\tilde{A}_{\param}$ gives rise to a filtration on $\tilde{A}_p$ so that $\gr\tilde{A}_p=\tilde{A}$.
For $p$ generic, we have an equivalence $R\Hom(\mathcal{P}^\theta_p,\bullet):D^b(\C[\M_p(v)]\operatorname{-mod})
\xrightarrow{\sim} D^b(\tilde{A}_p\operatorname{-mod})$ that is independent of $\theta$ because,
by Remark \ref{Rem:theta_indep},
$\mathcal{P}^\theta_p$ is.  We also have the degeneration map $K_0(\C[\M_p(v)]\operatorname{-mod})\rightarrow
K_0(\Coh(\M^\theta(v)))$.

\begin{Lem}\label{Lem:degen_diagr_commut}
The following diagram is commutative.

\begin{picture}(120,30)
\put(5,22){$K_0(\tilde{A}_p\operatorname{-mod})$}
\put(2,2){$K_0(\C[\M_p(v)]\operatorname{-mod})$}
\put(66,22){$K_0(\tilde{A}\operatorname{-mod})$}
\put(62,2){$K_0(\Coh(\M^\theta(v)))$}
\put(15,20){\vector(0,-1){13}}
\put(76,20){\vector(0,-1){13}}
\put(29,23){\vector(1,0){35}}
\put(39,3){\vector(1,0){21}}
\end{picture}

In particular, $K_0(\C[\M_p(v)]\operatorname{-mod})\xrightarrow{\sim}
K_0(\Coh(\M^\theta(v)))$.
\end{Lem}
\begin{proof}
Note that $\mathcal{P}^\theta=\gr \mathcal{P}_p^\theta$ (here we view $\mathcal{P}_p^\theta$
as a filtered sheaf on $\M^\theta(v)$). Now pick $M\in \C[\M_p(v)]\operatorname{-mod}$
and equip it with a good filtration so that we can view it as a sheaf on $\M^\theta(v)$
with coherent associated graded. It follows that we have the following equality in
$K_0(\tilde{A}\operatorname{-mod})$:
$$\sum_{i}(-1)^i[\Ext^i(\mathcal{P}^\theta, \gr M)]=\sum_{i}(-1)^i [\gr \Ext^i(\mathcal{P}^\theta_p,M)]$$
The left hand side is the image of $[M]$ under
$$K_0(\C[\M_p(v)]\operatorname{-mod})\rightarrow K_0(\Coh(\M^\theta(v)))\rightarrow K_0(\tilde{A}\operatorname{-mod}),$$
while the right hand side is the image of $[M]$ under
$$K_0(\C[\M_p(v)]\operatorname{-mod})\rightarrow K_0(\tilde{A}_p\operatorname{-mod})\rightarrow K_0(\tilde{A}\operatorname{-mod}).$$
This finishes the proof.
\end{proof}

\begin{Cor}\label{Cor:chern_char_comm_ident}
The following claims are true:
\begin{enumerate}
\item The tilting isomorphism $K_0(\Coh(\M^\theta(v)))\xrightarrow{\sim}
K_0(\Coh(\M^{\theta'}(v)))$  is the composition
$$K_0(\Coh(\M^\theta(v)))\xrightarrow{\sim} K_0(\C[\M_p(v)]\operatorname{-mod})
\xrightarrow{\sim} K_0(\Coh(\M^{\theta'}(v))).$$
\item The Chern character maps intertwine the tilting isomorphism
$$K_0(\Coh(\M^\theta(v)))\xrightarrow{\sim}
K_0(\Coh(\M^{\theta'}(v)))$$ and the isomorphism $$H^*(\M^\theta(v))\xrightarrow{\sim}
H^*(\M^{\theta'}(v))$$ from \ref{SS_homol_ident}.
\end{enumerate}
\end{Cor}
\begin{proof}
(1) is a direct corollary of Lemma \ref{Lem:degen_diagr_commut}.
(2) reduces to checking that the Chern character maps intertwine
the degeneration maps $K_0(\C[\M_p(v)]\operatorname{-mod})\xrightarrow{\sim}
K_0(\Coh(\M^\theta(v)))$ and $H^*(\M_p(v))\xrightarrow{\sim} H^*(\M^\theta(v))$,
which is a standard property of the Chern character maps (Chern characters
commute with specialization).
\end{proof}

\subsection{$\WC$ vs degeneration to $K_0(\Coh)$}\label{SS_WC_K0}
According to the previous section we have  the identifications
$K_0(\Coh_{\rho^{-1}(0)}(\M^\theta(v)))\xrightarrow{\sim}
K_0(\Coh_{\rho^{-1}(0)}(\M^{\theta'}(v)))$ for different
generic $\theta,\theta'$.

Let $\lambda$ be such that $\A_\lambda(v)$ has finite homological dimension.
Then we have the identification $$[L\Loc_\lambda^\theta]: K_0(\A_\lambda(v)\operatorname{-mod}_{fin})\xrightarrow{\sim}
K_0(\A_\lambda^\theta(v)\operatorname{-mod}_{\rho^{-1}(0)})$$ and the degeneration
map $$K_0(\A_\lambda^\theta(v)\operatorname{-mod}_{\rho^{-1}(0)})\rightarrow
K_0(\Coh_{\rho^{-1}(0)}(\M^\theta(v))).$$
Consider the composed map
$$K_0(\A_\lambda(v)\operatorname{-mod}_{fin})\rightarrow
K_0(\Coh_{\rho^{-1}(0)}(\M^\theta(v))).$$

\begin{Prop}\label{Prop:degen_indep_theta}
The  identification
$K_0(\Coh_{\rho^{-1}(0)}(\M^\theta(v)))\xrightarrow{\sim}
K_0(\Coh_{\rho^{-1}(0)}(\M^{\theta'}(v)))$ intertwines the maps
from $K_0(\A_\lambda(v)\operatorname{-mod}_{fin})$.
\end{Prop}

Note that Propositions \ref{Prop:chern_iso}, \ref{Prop:degen_indep_theta} imply (1) of Proposition
\ref{Prop:WC_vs_CC}. (2) of Proposition \ref{Prop:WC_vs_CC} follows as well because
of the formula $$\WC_{\theta'\rightarrow \theta}\cong  \mathcal{T}_{\lambda',\chi}\circ
L\Loc_{\lambda'}^\theta \circ R\Gamma^{\theta'}_{\lambda'}$$
and the observation that $\mathcal{T}_{\lambda',\chi}$ does not change the characteristic
cycle.

The proof of Proposition \ref{Prop:degen_indep_theta} occupies the rest of the section.

\subsubsection{Algebras $\tilde{\A}_\lambda$}
Consider the vector bundle $\mathcal{P}^\theta_{\param}$ on $\M^\theta_{\param}(v)$.
It has trivial self-extensions and therefore quantizes to a left $\A^\theta_{\paramq}(v)$-module
to be denoted by $\mathcal{P}^\theta_{\paramq}$. Set $\tilde{\A}_{\paramq}(v):=
\End(\mathcal{P}^\theta_{\paramq})^{opp}$. This is a filtered $\C[\paramq]$-algebra
with $\gr\tilde{\A}_{\paramq}(v)=\tilde{A}_{\param}$.

\begin{Lem}\label{Lem:alg_theta_indep} $\tilde{\A}_{\paramq}(v)$ does not depend on $\theta$.
\end{Lem}
\begin{proof}
Indeed, $\mathcal{P}^\theta_{\paramq}|_{\M_\param(v)^{reg}}$ is a filtered deformation
of $\mathcal{P}^\theta_{\param}|_{\M_\param(v)^{reg}}$, unique because the latter
bundle has zero 1st self-extensions (see Remark \ref{Rem:theta_indep}).
By the same remark, $\mathcal{P}^\theta_{\param}|_{\M_\param(v)^{reg}}=
\mathcal{P}^{\theta'}_{\param}|_{\M_\param(v)^{reg}}$.
So  $\mathcal{P}^\theta_{\paramq}|_{\M_\param(v)^{reg}}=
\mathcal{P}^{\theta'}_{\paramq}|_{\M_\param(v)^{reg}}$. By the same arguments
as in Step 1 of Proposition \ref{Prop:end_alg_coinc} and in the proof
of Proposition \ref{Prop:univ_wc}, we see that
$\End(\mathcal{P}^\theta_{\paramq})=\End(\mathcal{P}^\theta_{\paramq}|_{\M_\param(v)^{reg}})$.
This finishes the proof.
\end{proof}

\subsubsection{Equivalence $D^b(\A_\lambda(v)\operatorname{-mod})\xrightarrow{\sim}D^b(\tilde{\A}_\lambda(v)\operatorname{-mod})$}
Let $\mathcal{P}^\theta_\lambda,\tilde{\A}_\lambda(v)$ be the specializations of
$\mathcal{P}^\theta_{\paramq},\tilde{\A}_\paramq(v)$ at $\lambda\in \paramq$. So we have a functor
$R\Hom(\mathcal{P}^\theta_\lambda,\bullet):D^b(\A_\lambda^\theta(v)\operatorname{-mod})
\rightarrow D^b(\tilde{\A}_\lambda(v)\operatorname{-mod})$. Recall that  $\gr \mathcal{P}^\theta_\lambda=\mathcal{P}^\theta$
and that $R\Hom(\mathcal{P}^\theta,\bullet)$ is an equivalence.
Arguing as in \cite[Section 5]{GL},
we deduce  that $R\Hom(\mathcal{P}^\theta_\lambda,\bullet)$ is an equivalence.
By the construction, the functor $R\Hom(\mathcal{P}^\theta_\lambda,\bullet)$
restricts to an equivalence $D^b_{\rho^{-1}(0)}(\A_\lambda^\theta(v)\operatorname{-mod})
\rightarrow D^b_{fin}(\tilde{\A}_\lambda(v)\operatorname{-mod})$.

The composition $R\Hom(\mathcal{P}^\theta_\lambda,L\Loc_\lambda^\theta(\bullet))$
is an equivalence $D^b(\A_\lambda(v)\operatorname{-mod})\xrightarrow{\sim}
D^b(\tilde{\A}_\lambda(v)\operatorname{-mod})$ to be denoted by $\kappa^{(\theta)}$.
The equivalence $\kappa^{(\theta)}$ restricts to
$D^b_{fin}(\A_\lambda(v)\operatorname{-mod})\xrightarrow{\sim}
D^b_{fin}(\tilde{\A}_\lambda(v)\operatorname{-mod})$. Note that
the equivalence $\kappa^{(\theta)}$ is given by $R\Hom_{\A_\lambda(v)}(\mathcal{B}^{(\theta)}_\lambda,\bullet)$,
where
$$\mathcal{B}^{(\theta)}_\lambda:=R\Gamma((\mathcal{P}_\lambda^\theta)^*)\in D^b(\A_\lambda(v)\text{-}\tilde{\A}_\lambda(v)\operatorname{-bimod}).$$

Below we will prove that, for $M\in \A_\lambda(v)\operatorname{-mod}_{fin}$,
the class $[\kappa^{(\theta)}(M)]\in K_0(\tilde{\A}_\lambda(v)\operatorname{-mod}_{fin})$
is independent of $\theta$ and use this independence to prove
Proposition \ref{Prop:degen_indep_theta}.

\subsubsection{Harish-Chandra bimodules}\label{SSS_more_HC}
In the proof of the independence below we will need the notions of Harish-Chandra $\A_\lambda(v)$-$\tilde{\A}_\lambda(v)$ and
$\A_\paramq(v)$-$\tilde{\A}_\paramq(v)$-bimodules.

Recall that $\C[\M(v)]$ sits as a central subalgebra in $\tilde{A}$. We say that a $\A_\lambda(v)$-$\tilde{\A}_\lambda(v)$-bimodule $\B$ is HC if it admits a
{\it good filtration}, i.e., a bimodule filtration such that $\gr\B$
is a finitely generated $\C[\M(v)]$-module (meaning, in particular,
that the left action of $\C[\M(v)]$ coincides with the right action).
In particular, $H^j(\mathcal{B}^{(\theta)}_\lambda)$ are HC for all $j$, compare
to \ref{SSS_HC_transl}.

By \cite[Theorem 1.3]{B_ineq}, the regular $\A_\lambda(v)$-bimodule has finite length.
Then we argue as in the proof of \cite[Theorem 1.2]{B_ineq} to show
that every HC $\A_\lambda(v)$-$\tilde{\A}_\lambda(v)$-bimodule has finite length.

%The argument of \cite[Section 4.3]{B_ineq} generalizes to HC $\A_\lambda(v)$-$\tilde{\A}_\lambda(v)$-bimodules
%so we see that any HC bimodule still has finite length. In more detail, consider the sheaf $\tilde{\A}^\theta_\lambda(v):=
%\mathcal{E}nd(\mathcal{P}^\theta_{\paramq})$ on $\M^\theta_{\param}(v)$ and its specialization
%$\tilde{\A}^\theta_\lambda(v)$. The argument of \cite[Section 5.3]{BPW} generalizes to show that
%the global section $\Gamma: \tilde{\A}^\theta_{\lambda-n\chi}(v)^{opp}\operatorname{-mod}
%\rightarrow \tilde{\A}_{\lambda-n\chi}^{opp}\operatorname{-mod}$ is an equivalence for
%$n\gg 0$. Then we argue as in \cite[Section 4.3]{B_ineq} to first show that
%every HC $\A_{\lambda+n\chi}(v)$-$\tilde{\A}_{\lambda-n\chi}(v)$-bimodule has finite
%length and then deduce that every HC $\A_{\lambda}(v)$-$\tilde{\A}_{\lambda}$

Now consider $\A_\paramq(v)$-$\tilde{\A}_\paramq(v)$-bimodules. We filter
the algebras $\A_{\paramq}(v),\tilde{\A}_{\paramq}(v)$ as in the proof of
Lemma \ref{Lem:HC_gen_flat} so that $\C[\paramq]$ is in degree $0$.
Then we can define HC bimodules the same way as in the previous paragraph.
We can define HC bimodules over $\A_S(v):=S\otimes_{\C[\paramq]}\A_\paramq(v),
\tilde{\A}_S(v)$ for any quotient $S$ of $\C[\paramq]$.

Let us give an example of a HC $\A_{\paramq}(v)$-$\tilde{\A}_\paramq(v)$-bimodule.
Set $\B^{(\theta)}_{\paramq}:=R\Gamma((\mathcal{P}^\theta_{\paramq})^*)$.
So we get $\A_{\paramq}(v)$-$\tilde{\A}_{\paramq}(v)$-bimodules $H^j(\B^{(\theta)}_{\paramq})$.

\begin{Lem}\label{Lem:HC_example}
All $H^j(\B^{(\theta)}_\paramq)$ are HC bimodules.
\end{Lem}
\begin{proof}
Note that the sheaf $\mathcal{P}^\theta_{\paramq}$ acquires a filtration
similar to those on  $\tilde{\A}_\paramq(v),\A_\paramq(v)$ we currently
consider. We have $\gr\mathcal{P}^\theta_\paramq=\C[\paramq]\otimes \mathcal{P}^\theta$.
Now we argue as in the proof of \cite[Theorem 6.5]{BPW} (see also Proposition
\ref{Prop:univ_wc}) and complete the proof of the lemma.
\end{proof}

\subsubsection{$K_0$-class of $\B_\lambda^{(\theta)}$ is independent of $\theta$}
Consider  $\B_{\paramq}^{(\theta)}\in D^b(\A_{\paramq}(v)\text{-}\tilde{\A}_{\paramq}(v)\operatorname{-bimod})$,
by Lemma \ref{Lem:HC_example}, this is an object with Harish-Chandra cohomology.
Note that $\B_\lambda^{(\theta)}=\C_\lambda\otimes^L_{\C[\paramq]}\B^{(\theta)}_{\paramq}$.
We will need the following lemma.

\begin{Lem}\label{Lem:gener_indep}
Let $\A_\lambda(v)$ have finite homological dimension.
The class of $\B_\lambda^{(\theta)}$ in $K_0(\HC(\A_\lambda(v)$-$\tilde{\A}_\lambda(v)))$ is independent
of $\theta$.
\end{Lem}
\begin{proof}
First, let us show that
\begin{itemize}
\item
the $\A_{\paramq}(v)$-$\tilde{\A}_{\paramq}(v)$-bimodule $H^0(\B_{\paramq}^{(\theta)})$
is independent of $\theta$,
\item the higher cohomology are torsion over $\paramq$.
\end{itemize}

The claim about $H^0$ follows from the fact that $\mathcal{P}^\theta_{\paramq}|_{\M_{\param}(v)^{reg}}$
is independent of $\theta$, see the proof of Lemma \ref{Lem:alg_theta_indep}. Let us prove
that the higher cohomology are torsion. Note that $H^i(\B_{\paramq}^{(\theta)})$
is a finitely generated $\A_{\paramq}(v)$-module. By Lemma \ref{Lem:HC_gen_flat},
$\Supp_{\paramq}(H^i(\B_{\paramq}^{(\theta)}))$ is a constructibe subset of $\paramq$.
If $H^i(\B_{\paramq}^{(\theta)})$ is not torsion, then  $\Supp_{\paramq}(H^i(\B_{\paramq}^{(\theta)}))$
contains a principal Zariski open subset $\paramq^0\subset \paramq$. We may assume that
all $\C[\paramq^0]\otimes_{\C[\paramq]}H^j(\B_{\paramq}^{(\theta)})$ are free over $\C[\paramq^0]$.
It follows that for $\lambda\in \paramq^0$, we have
$H^j(\B_\lambda^{(\theta)})=H^j(\B_{\paramq}^{(\theta)})_\lambda$.
On the other hand, $H^j(\B_\lambda^{(\theta)})=0$ for $j>0$ if
$(\lambda,\theta)\in \AL(v)$. By Proposition \ref{Prop:abelian_loc_BPW}, the set
of $\lambda\in \paramq$ such that $(\lambda,\theta)\in \AL(v)$
is Zariski dense. We arrive at a contradiction with $\paramq^0
\subset \Supp_{\paramq}(H^i(\B_{\paramq}^{(\theta)}))$ that finishes
the proof of the claim in the previous paragraph.

Now the class  $[\B_\lambda^{(\theta)}]$ equals
$$\sum_{i=0}^\infty (-1)^i[\C_\lambda\otimes^L_{\C[\paramq]} H^i(\B_{\paramq}^{(\theta)})].$$
The summand with $i=0$ is independent of $\theta$ because $H^0(\B_{\paramq}^{(\theta)})$ is.
The other summands are zero because $H^i(\B_{\paramq})$ are torsion over
$\C[\paramq]$.
\end{proof}

%Now we are in position to show that $[\B^{(\theta)}_\lambda]$ is independent
%of $\theta$. We have the specialization map
%$K_0(\C(\paramq)\otimes_{\C[\paramq]}\A_{\paramq}(v)\operatorname{-mod})
%\rightarrow K_0(\A_\lambda(v)\operatorname{-mod})$. This map is characterized
%by the following property: for $M\in \A_{\paramq}(v)\operatorname{-mod}$
%it sends $[\C(\paramq)\otimes_{\C[\paramq]}M]$ to $[\C_\lambda\otimes^L_{\C[\paramq]}M]$.
%In particular, it sends $[\C(\paramq)\otimes_{\C[\paramq]}\B^{(\theta)}_\paramq]$
%to $[\B_{\lambda}^{(\theta)}]$. Lemma \ref{Lem:gener_indep} shows that
%$\C(\paramq)\otimes_{\C[\paramq]}\B^{(\theta)}_{\paramq}$ is independent of
%$\theta$. So we see that $[\B^{(\theta)}_{\paramq}]$ is independent of $\theta$.

\subsubsection{Completion of the proof}
\begin{proof}[Proof of Proposition \ref{Prop:degen_indep_theta}]
First of all, note that for any $\B\in D^b_{HC}(\A_\lambda(v)\text{-}\tilde{\A}_\lambda(v)
\operatorname{-bimod})$, the functor $R\Hom(\B,\bullet)$ restricts
to $D^b_{fin}(\A_\lambda(v)\operatorname{-mod})\rightarrow D^b_{fin}(\tilde{\A}_\lambda(v)\operatorname{-mod})$.
This gives rise to a bilinear map
$$K_0(\operatorname{HC}(\A_\lambda(v)\text{-}\tilde{\A}_\lambda(v)))\otimes
K_0(\A_\lambda(v)\operatorname{-mod}_{fin})\rightarrow K_0(\tilde{\A}_\lambda(v)\operatorname{-mod}_{fin}).$$

By Lemma \ref{Lem:gener_indep}, $[\kappa^{(\theta)}(M)]\in K_0(\tilde{\A}_\lambda(v)\operatorname{-mod}_{fin})$
depends only on $[M]\in K_0(\A_\lambda(v)\operatorname{-mod}_{fin})$.

Similarly to the proof of Lemma \ref{Lem:degen_diagr_commut}, we see that the
following diagram commutes. The horizontal arrows  are the degeneration maps, and the vertical ones come
from derived equivalences.

\begin{picture}(120,30)
\put(2,22){$K_0(\tilde{\A}_\lambda(v)\operatorname{-mod}_{fin})$}
\put(2,2){$K_0(\A_\lambda^\theta(v)\operatorname{-mod}_{\rho^{-1}(0)})$}
\put(66,22){$K_0(\tilde{A}\operatorname{-mod}_0)$}
\put(62,2){$K_0(\Coh_{\rho^{-1}(0)}(\M^\theta(v)))$}
\put(15,20){\vector(0,-1){13}}
\put(76,20){\vector(0,-1){13}}
\put(37,23){\vector(1,0){28}}
\put(41,3){\vector(1,0){20}}
\end{picture}

So the image of $[L\Loc_\lambda^\theta(M)]$ in $K_0(\Coh_{\rho^{-1}(0)}(\M^\theta(v)))$
is also independent of $\theta$. This completes the proof.
\end{proof}

\begin{Rem}\label{Rem:another_indep}
For similar reasons, $[L\Loc_\lambda^\theta(M)]\in K_0(\A_\lambda^\theta(v)\operatorname{-mod})$
is independent of $\theta$ for $M\in \A_\lambda(v)\operatorname{-mod}$.
\end{Rem}

For  later applications let us note that we also have a well-defined map
$$K_0(\operatorname{HC}(\A_\lambda(v)\text{-}\tilde{\A}_\lambda(v)))\otimes
K_0(\A_\lambda(v)\operatorname{-mod})\rightarrow K_0(\tilde{\A}_\lambda(v)\operatorname{-mod}).$$

\subsection{Actions on $K_0$}\label{SS:K_0_action}
In this section  we will produce an action of the Lie algebra $\a$ on $\bigoplus_{v}K_0(\Coh_{\rho^{-1}(0)}(\M^\theta(v)))$
and show that, after a suitable modification, degeneration maps $\A_\lambda^\theta(v)\operatorname{-mod}_{\rho^{-1}(0)}
\rightarrow K_0(\Coh_{\rho^{-1}(0)}(\M^\theta(v)))$ intertwine $[E_\alpha]$ with $e_\alpha$
and $[F_\alpha]$ with $f_\alpha$. This will imply Proposition \ref{Prop:a_CC_intertw}.

Let us produce an identification $K_0(\A_\lambda^{\theta}(v)\operatorname{-mod})\xrightarrow{\sim}
K_0(\Coh(\M^\theta(v)))$. Note that the element $[O(\chi)]$ of the algebra $ K_0(\Coh(\M^\theta(v)))$
is unipotent for any $\chi\in \Z^{Q_0}$: for any variety the operator of
multiplication by the class of any line bundle is unipotent.
So, as the class in $K_0$, $[\mathcal{O}(\lambda)]$
makes sense for \underline{any} $\lambda\in \C^{Q_0}$. We identify $K_0(\A_\lambda^\theta(v)\operatorname{-mod})$
with $K_0(\Coh(\M^\theta(v)))$  by $[M]\mapsto [\mathcal{O}(\varrho(v)-\lambda)\otimes\gr M]$. We note that
this identification is independent of the choice of the orientation on $Q$: when we change the orientation
we also shift $\lambda$ by the difference of the $\varrho$-vectors and in this sense
$\varrho(v)-\lambda$ is independent of the choice of the filtration. Also the classes of
shift functors $\mathcal{T}_{\lambda,\chi}$ from Section
\ref{SS_WC_constr} are sent to the identity. We modify the degeneration maps
$K_0(\A_\lambda^{\theta}(v)\operatorname{-mod}_{\rho^{-1}(0)})\rightarrow
K_0(\Coh_{\rho^{-1}(0)}(\M^\theta(v)))$ in a similar fashion so that our identifications
intertwine the natural maps $$K_0(\A_\lambda^\theta(v)\operatorname{-mod}_{\rho^{-1}(0)})
\xrightarrow{\sim} K_0(\A_\lambda^\theta(v)\operatorname{-mod})^*,
K_0(\Coh_{\rho^{-1}(0)}(\M^\theta(v)))\xrightarrow{\sim}
K_0(\Coh(\M^\theta(v)))^*.$$

Recall that the algebra $\g(Q)$ acts on $\bigoplus_v D^b(\A_\lambda^\theta(v)\operatorname{-mod})$
for $\lambda\in \Z^{Q_0}$ and $\theta_i>0$ for all $i>0$, see \cite{Webster}.
Now we define an action of $\g(Q)$ on $K_0(\Coh(\M^\theta(v)))$ from the identification
$\bigoplus_v K_0(\Coh(\M^\theta(v)))\cong \bigoplus_v K_0(\A_\lambda^\theta(v)\operatorname{-mod})$.
And then we define a $\g(Q)$-action on $\bigoplus_v K_0(\Coh_{\rho^{-1}(0)}(\M^\theta(v)))=
\bigoplus_v K_0(\Coh(\M^\theta(v)))^*$ so that $e_i,f_i$ act by   $$\epsilon_i f_i^*,  \epsilon_i e_i^*.$$
Here $\epsilon_i$ is the operator that acts by $(-1)^{\nu(h_i)}$ on the $\nu$-weight space.
It is straightforward to check that these operators indeed define a $\g(Q)$-action.

\begin{Prop}\label{Prop:a_actions_intertwined}
There is a choice of Serre generators $e_\alpha, f_\alpha\in \a, \alpha\in \Pi^\theta,$ such that
the modified degeneration map  $\bigoplus_v K_0(\A_\lambda^\theta(v)\operatorname{-mod}_{\rho^{-1}(0)})
\rightarrow \bigoplus_v K_0(\Coh_{\rho^{-1}(0)}(\M^\theta(v)))$ intertwines $[E_\alpha]$
with $e_\alpha$ and $[F_\alpha]$ with $f_\alpha$.
\end{Prop}
\begin{proof}
First note that $[E_i],[F_i]$ are independent of $\lambda$, this can be deduced
from Remark \ref{Rem:Webster_functors_def} and our identification of $K_0$'s. From Proposition
\ref{Prop:degen_indep_theta} and our identifications of $K_0$'s it follows that
the wall-crossing functors are the identity on the $K_0$ level. By Theorem
\ref{Thm:WC}, we have $[\Theta_i]=[s_{i*}]$. Note that $[\Theta_i]$ on $K_0(\A_\lambda^\theta(v)\operatorname{-mod})$
equals $s_i$, where $s_i$ stands for the action of image of
$\begin{pmatrix}0&1\\-1&0\end{pmatrix}\in \SL_2(\C)$ on $\bigoplus_v K_0(\A_\lambda^\theta(v)\operatorname{-mod})$.
Now, for $\alpha\in \Pi^\theta$, set
$e_\alpha= \sigma(e_i), f_\alpha=\sigma(f_i)$ if $\alpha=\sigma \alpha^i$.
By the definition of the functors $E_\alpha,F_\alpha$, we see that $[F_\alpha]=f_\alpha,
[E_\alpha]=e_\alpha$ on $\bigoplus_v K_0(\A^\theta_\lambda(v)\operatorname{-mod})$.
But we have an isomorphism $\bigoplus_v K_0(\A^\theta_\lambda(v)\operatorname{-mod})^*
\xrightarrow{\sim} \bigoplus_v K_0(\Coh_{\rho^{-1}(0)}(\M^\theta(v)))$ that intertwines the natural
map $$K_0(\A_\lambda^\theta(v)\operatorname{-mod}_{\rho^{-1}(0)})\rightarrow
K_0(\A_\lambda^\theta(v)\operatorname{-mod})^*$$ with the modified degeneration map
$$K_0(\A_\lambda^\theta(v)\operatorname{-mod}_{\rho^{-1}(0)})\rightarrow
 K_0(\Coh_{\rho^{-1}(0)}(\M^\theta(v))).$$ The isomorphism intertwines
$e_\alpha$ with $f_\alpha^*$ and $f_\alpha$ with $e_\alpha^*$. On the other
hand, the map $$\bigoplus_v K_0(\A_\lambda^\theta(v)\operatorname{-mod}_{\rho^{-1}(0)})
\rightarrow \bigoplus_v K_0(\A_\lambda^\theta(v)\operatorname{-mod})^*$$
intertwines $[E_\alpha]$ with $\epsilon_\alpha[F_\alpha]^*$ and $[F_\alpha]$  with $\epsilon_\alpha[E_\alpha]^*$,
where $\epsilon_\alpha$ acts by $(-1)^{\nu(h_\alpha)}$ on the $\nu$-weight space,
because of the adjointness properties of the functors $E_\alpha,F_\alpha$,
see (\ref{eq:EF_adj}) for the basic case of adjointness.
So it follows that the modified degeneration map $\bigoplus_v K_0(\A_\lambda^\theta(v)\operatorname{-mod}_{\rho^{-1}(0)})
\rightarrow \bigoplus_v K_0(\Coh_{\rho^{-1}(0)}(\M^\theta(v)))$ intertwines $[E_\alpha]$
with $e_\alpha$ and $[F_\alpha]$ with $f_\alpha$. This finishes the proof.
\end{proof}

\begin{proof}[Proof of Proposition \ref{Prop:a_CC_intertw}]
The equality $[\Theta_i]=[s_{i*}]$ from the proof of Proposition \ref{Prop:a_actions_intertwined} implies
$\CC(\Theta_i)=s_{i}$. Using this and (2) of Lemma \ref{Lem:W_funct_prop} we deduce
$\CC(E_\alpha)=e_\alpha, \CC(F_\alpha)=f_\alpha$ similarly to the proof of Proposition
\ref{Prop:a_actions_intertwined}.
\end{proof}

\section{Long wall-crossing and dimension of support}\label{S_long_WC}
In this section we prove Proposition \ref{Prop:long_shift}. A key ingredient is
a comparison of the long wall-crossing functor to the homological duality
functor. Then we mention some further properties of long wall-crossing bimodules
and finish the proof of (2) of Proposition \ref{Lem:ab_loc}. Throughout the section
we assume that $Q$ has finite and affine type. By Proposition \ref{Prop:der_MN},
this insures that $R\Gamma_\lambda^\theta$ is a derived equivalence for all
$\theta$ if and only if $\A_\lambda(v)$ has finite homological dimension.

\subsection{Homological duality}\label{SS:hom_dual}
Assume that $\A_\lambda(v)$ has finite homological dimension.
By homological duality functors we mean the functors \begin{align*} &D: D^b(\A_\lambda(v)\operatorname{-mod})\xrightarrow{\sim}
D^b(\A_{\lambda}(v)^{opp}\operatorname{-mod})^{opp},\\ &D^{-\theta}:D^b(\A_\lambda^{-\theta}(v)\operatorname{-mod})\xrightarrow{\sim}
D^b(\A_{\lambda}^{-\theta}(v)^{opp}\operatorname{-mod})^{opp}\end{align*} given by $$\operatorname{RHom}_{\A_\lambda(v)}(\bullet, \A_\lambda(v))[-N],
\operatorname{R}\mathcal{H}om_{\A_\lambda^{-\theta}(v)}(\bullet, \A_\lambda^{-\theta}(v))[-N],$$ where $N:=\frac{1}{2}\dim \M^\theta(v)$. Since $R\Gamma_\lambda^{-\theta}$ is a derived equivalence mapping $\A_\lambda^\theta(v)$ to $\A_\lambda(v)$, the following diagram is commutative.

\begin{picture}(120,30)
\put(2,2){$D^b(\A_\lambda(v)\operatorname{-mod})$}
\put(2,22){$D^b(\A_\lambda^{-\theta}(v)\operatorname{-mod})$}
\put(62,2){$D^b(\A_{\lambda}(v)^{opp}\operatorname{-mod})^{opp}$}
\put(62,22){$D^b(\A_{\lambda}^{-\theta}(v)^{opp}\operatorname{-mod})^{opp}$}
\put(33,3){\vector(1,0){28}}
\put(45,4){\tiny $D$}
\put(35,23){\vector(1,0){26}}
\put(45,24){\tiny $D^{-\theta}$}
\put(10,20){\vector(0,-1){13}}
\put(11,14){\tiny $R\Gamma_\lambda^{-\theta}$}
\put(70,20){\vector(0,-1){13}}
\put(71,14){\tiny $R\Gamma_{\lambda,opp}^{-\theta}$}
\end{picture}

Here $R\Gamma_{\lambda,opp}^{-\theta}$ stands for the derived global section functor for right modules.

\begin{Lem}\label{Lem:D_loc_shift}
The functor $D^{-\theta}$ gives a contravariant abelian equivalence between the categories of holonomic $\A^{-\theta}_\lambda(v)$- and $\A^{-\theta}_\lambda(v)^{opp}$-modules.
\end{Lem}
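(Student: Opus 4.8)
The plan is to verify that the functor $D^{-\theta} = \operatorname{R}\mathcal{H}om_{\A_\lambda^{-\theta}(v)}(\bullet,\A_\lambda^{-\theta}(v))[-N]$ sends a holonomic module, concentrated in a single degree, to a complex whose cohomology is again concentrated in a single degree and is holonomic. This is a purely local statement on $\M^{-\theta}(v)$, so I would argue on the level of the sheaf $\A_\lambda^{-\theta}(v)$ and its good filtrations, where $\gr \A_\lambda^{-\theta}(v) = \Str_{\M^{-\theta}(v)}$. First I would recall that for a coherent $\A_\lambda^{-\theta}(v)$-module $M$ equipped with a good filtration, one has a convergent spectral sequence relating $\gr \mathcal{E}xt^i_{\A_\lambda^{-\theta}(v)}(M,\A_\lambda^{-\theta}(v))$ to $\mathcal{E}xt^i_{\Str_{\M^{-\theta}(v)}}(\gr M,\Str_{\M^{-\theta}(v)})$; since $\M^{-\theta}(v)$ is smooth symplectic of dimension $2N$, local Grothendieck--Serre duality on the smooth variety $\M^{-\theta}(v)$ (the canonical bundle is trivial, the variety being symplectic) identifies the latter $\mathcal{E}xt$-sheaves with the local cohomology sheaves of $\gr M$ along its support, up to the shift by $\dim \M^{-\theta}(v) = 2N$.

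Next I would invoke the dimension/codimension estimate coming from Gabber involutivity (already cited in Section \ref{SS_Ham_red_fun}): the support of any coherent $\A_\lambda^{-\theta}(v)$-module is coisotropic, hence of dimension $\geqslant N$; and by definition, $M$ is holonomic precisely when $\Supp M$ is lagrangian, i.e.\ of pure dimension exactly $N$. For a Cohen--Macaulay-type reason — a coherent sheaf supported on a subvariety of pure codimension $c$ in a smooth variety has vanishing $\mathcal{E}xt^i$ for $i<c$, and the holonomicity forces $c = 2N - N = N$ — the $\mathcal{E}xt$-sheaves $\mathcal{E}xt^i_{\Str}(\gr M,\Str)$ vanish for $i \neq N$ (the upper vanishing also uses that $\gr M$, being supported in the top coisotropic dimension, is a maximal Cohen--Macaulay module over its support, which one deduces again from the involutivity estimate applied to the cohomology of the dual complex: any nonzero $\mathcal{E}xt^{i}$ with $i>N$ would have support of dimension $<N$, contradicting involutivity for the $\A_\lambda^{-\theta}(v)^{opp}$-module structure it carries). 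Therefore $\mathcal{E}xt^i_{\A_\lambda^{-\theta}(v)}(M,\A_\lambda^{-\theta}(v)) = 0$ for $i\neq N$, so $D^{-\theta}(M)$ is concentrated in homological degree $0$ after the shift $[-N]$, its single cohomology sheaf being a coherent right $\A_\lambda^{-\theta}(v)$-module whose associated graded is supported on $\Supp M$, hence again lagrangian, i.e.\ holonomic. Finally, $D^{-\theta}$ is an involution (up to biduality, which holds because $\A_\lambda^{-\theta}(v)$ has finite homological dimension — it quantizes a smooth variety — and biduality is an isomorphism for coherent modules over such sheaves), so it is an exact anti-equivalence on the full subcategories of holonomic modules.

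The main obstacle I anticipate is the upper vanishing $\mathcal{E}xt^i = 0$ for $i>N$ and, relatedly, the maximal Cohen--Macaulay property of $\gr M$: the naive codimension bound only gives vanishing below degree $N$. To handle this cleanly I would run the argument through the homological duality on the smooth variety and the biduality spectral sequence: if some $\mathcal{E}xt^{i_0}$ with $i_0>N$ were nonzero, pick the largest such $i_0$; its support has dimension $<N$; but this $\mathcal{E}xt$-sheaf is a coherent module over $\A_\lambda^{-\theta}(v)^{opp}$, and Gabber involutivity forces its support to be coisotropic, hence of dimension $\geqslant N$ — a contradiction. (The fact that $\mathcal{E}xt^{i}$ carries an $\A_\lambda^{-\theta}(v)^{opp}$-module structure whose characteristic variety is $\Supp \gr \mathcal{E}xt^i$ is standard for filtered quantizations of symplectic varieties.) This gives purity, and then the biduality isomorphism $M \xrightarrow{\sim} D^{-\theta} D^{-\theta}(M)$ — valid since $\A_\lambda^{-\theta}(v)$ has finite global dimension — upgrades the statement to an anti-equivalence of abelian categories, exactness being automatic for a contravariant functor that is homologically concentrated in one degree on the relevant subcategory.
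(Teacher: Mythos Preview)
Your proposal is correct and follows essentially the same route as the paper's proof: lower vanishing of $\mathcal{E}xt^i$ from the codimension bound on $\gr M$, the inclusion $\gr\mathcal{E}xt^i(M,\A)\hookrightarrow \mathcal{E}xt^i(\gr M,\mathcal{O})$, and upper vanishing via the standard dimension estimate $\dim\Supp\mathcal{E}xt^i(\gr M,\mathcal{O})\leqslant 2N-i$ combined with Gabber involutivity for the $\A_\lambda^{-\theta}(v)^{opp}$-module structure. The detours through Grothendieck--Serre duality and maximal Cohen--Macaulayness are unnecessary (and you correctly abandon them in your ``obstacle'' paragraph, landing on exactly the paper's argument); your explicit treatment of biduality for the equivalence claim is a useful addition that the paper leaves implicit.
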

\begin{proof}
The claim boils down to checking that if $\mathcal{N}$ is a holonomic $\A^{-\theta}_\lambda(v)$-module, then
$\mathcal{E}xt^i(\mathcal{N}, \A^{-\theta}_\lambda(v))= 0$ whenever $i\neq N$. By the standard commutative algebra,
see, e.g., \cite[Proposition 18.4]{Eisenbud},
we see that $\mathcal{E}xt^i(\gr \mathcal{N}, \mathcal{O}_{\M^{-\theta}(v)})\neq 0$ implies $i\geqslant N$. Moreover,
if $i>N$, then the support of $\mathcal{E}xt^i(\gr \mathcal{N}, \mathcal{O}_{\M^{-\theta}(v)})$ has
dimension $<N$. The space $\mathcal{E}xt^i(\mathcal{N}, \A^{-\theta}_\lambda(v))$
has a natural filtration with
$\gr\mathcal{E}xt^i(\mathcal{N}, \A^{-\theta}_\lambda(v))$ being a subquotient of
$\mathcal{E}xt^i(\gr \mathcal{N}, \mathcal{O}_{\M^{-\theta}(v)})$.
Since the filtration is separated, we see that  $\mathcal{E}xt^i(\mathcal{N}, \A^{-\theta}_\lambda(v))=0$
for $i<N$ and $$\dim \Supp\mathcal{E}xt^i(\mathcal{N}, \A^{-\theta}_\lambda(v))<N$$ for $i>N$.
Since the support of any coherent $\A^{-\theta}_{\lambda}(v)^{opp}$-module is coisotropic, see Section  \ref{SSS_Supp_CC},
it cannot have dimension less than $N$ and  we are done.
\end{proof}

Now consider the functor $D$ for the categories of $\A_\lambda(v)$-modules.

\begin{Lem}\label{Lem:D_glob_shift}
Let $\mathcal{N}$ be a simple holonomic $\A_\lambda(v)$-module.  Then the following claims are true
\begin{enumerate}
\item
$H^i(D\mathcal{N})=0$  for $i<N-\dim\operatorname{Supp}\mathcal{N}$ or $i>N$.
\item $H^i(D\mathcal{N})$ is a nonzero module with support of dimension $\dim \Supp \mathcal{N}$
when $i=N-\dim \operatorname{Supp}\mathcal{N}$.
\end{enumerate}
\end{Lem}
\begin{proof}
The algebra $\C[\M(v)]$ is Cohen-Macaulay, see Corollary
\ref{Cor:prop_Mv}. Then \cite[Proposition 18.4]{Eisenbud} implies that,
for a finitely generated $\C[\M(v)]$-module $M$,  the minimal number $r$ such that
$\Ext^r(M,\C[\M(v)])\neq 0$ equals $\dim \M(v)-\dim \operatorname{Supp}M$.
Moreover, we have
\begin{align*}
&\dim \Supp \Ext^r(M,\C[\M(v)])=\dim \operatorname{Supp}M,\\
& \dim \Supp \Ext^i(M,\C[\M(v)])<\dim \operatorname{Supp}M \text{ for }i>r.
\end{align*}

The case $i<N-\dim\operatorname{Supp}\mathcal{N}$ is done similarly to the proof of Lemma
\ref{Lem:D_loc_shift} using the facts quoted in the previous
paragraph.  To deal with the case of $i>N$ we notice that
the homological dimension of $\A_\lambda(v)$ coincides with that of $\A_\lambda^{\theta}(v)$ because
$\Gamma_\lambda^\theta$ is an abelian equivalence. The homological dimension of $\A_\lambda^\theta(v)$
does not exceed that of $\operatorname{Coh}\M^\theta(v)$ that equals $2N=\dim\M^\theta(v)$. This completes
the $i>N$ case.

Let us prove (2). As in the proof
of Lemma \ref{Lem:D_loc_shift}, we see that
$\dim \Supp H^i(D\mathcal{N})<N-\dim \Supp \mathcal{N}$ for $i>N-\dim \Supp\mathcal{N}$. Since $D^2=\operatorname{id}$,
the inequality $H^i(D\mathcal{N})\neq 0$ for $i=N-\dim \Supp\mathcal{N}$ follows.
\end{proof}

Now we note that we have an isomorphism $\A_\lambda^{-\theta}(v)^{opp}= \A_{\lambda^*}^{-\theta}(v)$
(the equality of quantizations of $\M^{-\theta}(v)$).
Here $\lambda^*:=2\varrho(v)-\lambda$.
This follows, for example, from \cite[Proposition 5.4.4]{quant}. So in the above constructions, we can replace
$\A_\lambda^{-\theta}(v)^{opp}$ with $\A_{\lambda^*}^{-\theta}(v)$ and $\A_{\lambda}(v)^{opp}$
with $\A_{\lambda^*}(v)$.

\subsection{Proof of Proposition \ref{Prop:long_shift}}\label{SS:long_wc_proof}
Thanks to Proposition \ref{Prop:abelian_loc_BPW}, replacing $\lambda$ with $\lambda+k\theta$ for $k\gg 0$, we may assume that $(\lambda^*,-\theta)\in \mathfrak{AL}(v)$ (and still $(\lambda,\theta)\in \mathfrak{AL}(v)$). Now we have the
following commutative diagram.

\begin{picture}(140,30)
\put(52,2){$D^b(\A_\lambda(v))$}
\put(52,22){$D^b(\A_\lambda^{-\theta}(v))$}
\put(102,2){$D^b(\A_{\lambda^*}(v))^{opp}$}
\put(102,22){$D^b(\A_{\lambda^*}^{-\theta}(v))^{opp}$}
\put(73,3){\vector(1,0){28}}
\put(85,4){\tiny $D$}
\put(75,23){\vector(1,0){26}}
\put(85,25){\tiny $D^{-\theta}$}
\put(60,20){\vector(0,-1){14}}
\put(61,14){\tiny $R\Gamma_\lambda^{-\theta}$}
\put(114,20){\vector(0,-1){14}}
\put(115,14){\tiny $R\Gamma_{\lambda^*}^{-\theta}$}
\put(2,2){$D^b(\A_{\lambda^-}(v))$}
\put(2,22){$D^b(\A_{\lambda^-}^{-\theta}(v))$}
\put(50,3){\vector(-1,0){27}}
\put(32,5){\tiny $\WC_{\lambda\rightarrow \lambda^-}$}
\put(50,23){\vector(-1,0){26}}
\put(32,25){\tiny $\mathcal{T}_{\lambda,\lambda^--\lambda}$}
\put(13,20){\vector(0,-1){14}}
\put(14,14){\tiny $R\Gamma_{\lambda^-}^{-\theta}$}
\end{picture}

Here we write $D^b(\A_\lambda(v))$ for $D^b(\A_\lambda(v)\operatorname{-mod})$, etc.

The functor $R\Gamma_{\lambda^-}^{-\theta}\circ \mathcal{T}_{\lambda,\lambda^--\lambda}$ is an abelian equivalence
$\A_{\lambda}^{-\theta}(v)\operatorname{-mod}\xrightarrow{\sim} \A_{\lambda^-}(v)\operatorname{-mod}$.
Both functors $R\Gamma_{\lambda^*}^{-\theta}, D^{-\theta}$ are $t$-exact on $D^b_{hol}$, so
the functor $R\Gamma_{\lambda^*}^{-\theta}\circ D^{-\theta}$ intertwines the standard $t$-structures on
$D^b_{hol}(\A_{\lambda}^{-\theta}(v)), D^b_{hol}(\A_{\lambda^*}(v))$.   So we see that the
pull-backs of the $t$-structures on $D^b_{hol}(\A_{\lambda^*}(v))$
and on $D^b_{hol}(\A_{\lambda^-}^{-\theta}(v))$ to $D^b_{hol}(\A_{\lambda}(v))$ coincide
(with the push-forward of the $t$-structure on $D^b_{hol}(\A^{-\theta}_\lambda(v))$).

Let us prove (1). Thanks to Lemmas \ref{Lem:D_loc_shift},\ref{Lem:D_glob_shift}, the functor $D$
homologically shifts a simple $M$ by $N-\dim\operatorname{Supp}M$. Part (1) now follows from the coincidence
of the $t$-structures on $D^b_{hol}(\A_{\lambda}(v))$ established in the previous paragraph.

Let us prove part (2). The functor $\WC_{\lambda\rightarrow\lambda^-}$ restricts to a derived equivalence
$$D^b_{fin}(\A_\lambda(v)\operatorname{-mod})\xrightarrow{\sim} D^b_{fin}(\A_{\lambda^-}(v)\operatorname{-mod}).$$
By Lemma \ref{Lem:D_glob_shift}, for a finite dimensional module $M$, the only nonzero homology
of $\WC_{\lambda\rightarrow\lambda^-}M$ is $H_N$. We are done.

\begin{Rem}
Equip $\A_\lambda(v)\operatorname{-mod}_{hol}$ with a filtration by the dimension of support: let $\A_\lambda(v)\operatorname{-mod}_{hol}^{\leqslant i}$ consist of all modules whose dimension of support
does not exceed $i$. The functor $\WC_{\lambda\rightarrow \lambda^-}$ sends an object of
$\A_\lambda(v)\operatorname{-mod}_{hol}^{\leqslant i}$ to a complex whose homology are
in $\A_{\lambda^-}(v)\operatorname{-mod}_{hol}^{\leqslant i}$. The arguments of the proofs of
Lemma \ref{Lem:D_glob_shift} and Proposition \ref{Prop:long_shift} imply that the functor
$H_i(\WC_{\lambda\rightarrow \lambda^-}\bullet)$
gives rise to an equivalence $\A_\lambda(v)\operatorname{-mod}_{hol}^{\leqslant i}/\A_\lambda(v)\operatorname{-mod}_{hol}^{\leqslant i-1}
\xrightarrow{\sim} \A_{\lambda^-}(v)\operatorname{-mod}_{hol}^{\leqslant i}/\A_{\lambda^-}(v)\operatorname{-mod}_{hol}^{\leqslant i-1}$.
In particular, $\WC_{\lambda\rightarrow \lambda^-}$ is a perverse equivalence
$$D^b_{hol}(\A_\lambda(v)\operatorname{-mod})\rightarrow D^b_{hol}(\A_{\lambda^-}(v)\operatorname{-mod})$$
in the sense of Chuang and Rouquier.  See Section \ref{SS_aff_res} below for a precise definition of a
perverse equivalence in the case of derived categories
(the general case of triangulated categories is completely analogous, see, e.g.,
\cite{ABM}). We do not need this result in the rest of the paper
so we do not provide details.
\end{Rem}

\subsection{Further results}
We will need some further results on long wall-crossing functors. Let $\lambda,\lambda^-,\theta$ have the same meaning as above. We will write $\A^{(-\theta)}_{\lambda\rightarrow \lambda^-}(v)$ for
$\A_{\lambda,\lambda^--\lambda}(v)$.

\begin{Lem}\label{Lem:long_wc_simple}
 The long wall-crossing $\A_{\lambda^-}(v)$-$\A_\lambda(v)$-bimodule $\A^{(-\theta)}_{\lambda\rightarrow \lambda^-}(v)$ is simple.
\end{Lem}
\begin{proof}
The HC $\A^{-\theta}_{\lambda^-}(v)$-$\A^{-\theta}_\lambda(v)$
bimodule $\A_{\lambda\rightarrow \lambda^-}^{-\theta}(v)$ is simple because its  rank
equals $1$. The categories $\operatorname{HC}(\A_{\lambda^-}^{-\theta}(v)\text{-}\A_{\lambda}^{-\theta}(v))$
(see \cite[Section 6.1]{BPW} for the definition of this category)
and $\operatorname{HC}(\A_{\lambda^-}(v)\text{-}\A_{\lambda}(v))$ of Harish-Chandra bimodules are equivalent, see
\cite[Corollary 6.6]{BPW}.
\end{proof}

\begin{Rem}\label{Rem:long_WC_opp}
We can consider $\A_{\lambda\rightarrow \lambda^-}^{(-\theta)}$ as an $\A_\lambda^{opp}$-$\A_{\lambda^-}^{opp}$-bimodule.
It is straightforward to see that it is still a long wall-crossing bimodule.
%Let us also point out that
%$\A_{\lambda\rightarrow \lambda^-}^{(-\theta)}=\A_{\lambda\rightarrow \lambda^-}^{(\theta')}$ for any
%generic $\theta'$. This follows from (2) of Lemma \ref{Prop:trans_spec} combined with
%\ref{}
\end{Rem}

\subsection{Corollaries}\label{SS_long_WC_corollaries}
Now we are ready to prove (2) of Proposition \ref{Lem:ab_loc}. To start with, let us prove a stronger version
of Proposition \ref{Prop:gen_simpl}.

\begin{Prop}\label{Prop:gen_simpl_strong}
For each indecomposable root $\alpha\leqslant v$, there is a finite subset $\Sigma_\alpha\subset \C$ such that the algebra
$\A_\lambda(v)$ is simple %and $\A^0_\lambda(v)\xrightarrow{\sim}\A_\lambda(v)$ is an isomorphism
whenever $\langle\alpha,\lambda\rangle\not\in \Sigma_\alpha+\Z$ for all $\alpha\leqslant v$.
\end{Prop}
\begin{proof}
As in the proof of Proposition \ref{Prop:gen_simpl}, we will first show that, for each root $\alpha\leqslant v$, there is a finite subset $\Sigma_\alpha(v)$ such that the algebra $\A_\lambda(v)$ have no finite dimensional representations provided $\langle\lambda,\alpha\rangle\not\in \Sigma_\alpha(v)+\Z$ for all
$\alpha\leqslant v$.

{\it Step 1}. Let us construct $\Sigma_\alpha(v)$. Pick a Zariski generic point $p\in \ker\alpha$. The variety $\M_p(v)$
has a unique minimal symplectic leaf, compare with Step 2 of the proof of Proposition \ref{Prop:wall_non_essent}.
It corresponds to a semisimple representation of the form $r_0+r_1^{\oplus k}$, where $\dim r_1=\alpha$ and
$k$ is maximal such that $(v-k\alpha,1)$ is a root of the quiver $Q^w$. So we can form the slice algebras
$\hat{\A}^0_{\hat{r}(\lambda)}(\hat{v}), \hat{\A}_{\hat{r}(\lambda)}(\hat{v})$, compare to Step 2 of the proof of Proposition \ref{Prop:wall_non_essent}.  The set of $\langle \lambda,\alpha\rangle$ such that the translation bimodules $\hat{\A}^0_{\hat{r}(\lambda),1}(\hat{v}), \hat{\A}^0_{\hat{r}(\lambda)+1,-1}(\hat{v})$ are not mutually inverse Morita equivalences between $\hat{\A}_{\hat{r}(\lambda),1}(\hat{v})$ and $\A_{\hat{r}(\lambda)+1,-1}(\hat{v})$) is finite by Proposition \ref{Prop:transl_coinc}. We take this set for $\Sigma_\alpha(v)$.

{\it Step 2}. Let $\theta,\theta'$ be two stability conditions from chambers opposite with respect to
$\ker\alpha$. Similarly to the proof of Proposition \ref{Prop:wall_non_essent} we see that
$\WC_{\theta\rightarrow \theta'}$ is an abelian equivalence provided $\langle\lambda,\alpha\rangle
\not\in \Sigma_\alpha(v)+\Z$.

{\it Step 3}. Now suppose that $\langle\lambda,\alpha\rangle\not\in \Sigma_\alpha(v)$ for all indecomposable
$\alpha\leqslant v$. By Theorem \ref{Thm:wc_decomp_short}, the long wall-crossing functor $\WC_{\theta\rightarrow -\theta}$
decomposes as a composition of short wall-crossing functors and hence is an abelian equivalence. By Theorem \ref{Prop:long_shift}, the algebra $\A_\lambda(v)$ has no finite dimensional
representations if abelian localization holds for $(\lambda,\theta)$. In general, note that if $L$ is a finite dimensional representation of $\A_\lambda(v)$, then $L\Loc_\lambda^\theta(L)$ is a nonzero object
supported on $\rho^{-1}(0)$ (indeed, the functor $R\Gamma$ is a left inverse to $L\Loc$, both functors are considered between bounded from the right derived categories). It follows that for $n\gg 0$, the algebra $\A_{\lambda+n\theta}(v)$ has a finite dimensional representation. This gives  a contradiction that completes the proof of the claim that $\A_\lambda(v)$ has no finite dimensional representations provided $\langle\lambda,\alpha\rangle\not\in \Sigma_\alpha(v)+\Z$ for
all indecomposable $\alpha\leqslant v$.

{\it Step 4}. Similarly to the proof of Proposition \ref{Prop:gen_simpl}, if all  proper slice algebras
$\hat{\A}_{\hat{r}(\lambda)}(\hat{v})$ have no finite dimensional representations, then the
algebra $\A_\lambda(v)$ is simple. Now recall that $r^{-1}(\hat{\param}^{sing})\subset \param^{sing}$,
see \ref{SSS_class_slice}. Using this we get subsets $\Sigma_\alpha\in \C$ (for each indecomposable
root $\alpha$) such that the proper slice algebras $\hat{\A}_{\hat{r}(\lambda)}(\hat{v})$ have
no finite dimensional representations when $\langle \lambda,\alpha\rangle \not\in \Sigma_\alpha+\Z$.
So for such $\lambda$, the algebra $\A_\lambda(v)$ is simple.
%
%{\it Step 5}. So we have an inclusion $\A^0_\lambda(v)\hookrightarrow \A_\lambda(v)$. Now let
%$x$ be a generic point in the image of $\M(v)$ in $\M^0(v)$. The corresponding slice algebra
%$\hat{\A}_{\hat{r}(\lambda)}(v)$ is a single point. It follows that the cokernel of $\A^0_\lambda(v)
%\hookrightarrow \A_\lambda(v)$ has proper associated variety. Since the algebra $\A_\lambda(v)$
%is simple, this means that     $\A^0_\lambda(v)\xrightarrow{\sim}\A_\lambda(v)$.
\end{proof}

\begin{proof}[Proof of (2) of Proposition \ref{Lem:ab_loc}]
Let $\chi$ be inside of the chamber of $\theta$ and such that $$H^1(\M^\theta(v),\mathcal{O}(\chi))=
H^1(\M^{-\theta}(v), \mathcal{O}(-\chi))=0.$$ If $\langle \lambda,\alpha\rangle\not\in \Sigma_\alpha+\Z$
for any indecomposable root $\alpha$, then the algebras $\A_\lambda(v),\A_{\lambda+\chi}(v)$ are simple.
Consider the  bimodule homomorphisms
\begin{align*}
&\A^0_{\lambda+\chi,-\chi}(v)\otimes_{\A_{\lambda+\chi}(v)}\A^0_{\lambda,\chi}(v)\rightarrow \A_{\lambda}(v),\\
&\A^0_{\lambda,\chi}(v)\otimes_{\A_\lambda(v)}\A^0_{\lambda+\chi,-\chi}(v)\rightarrow \A_{\lambda+\chi}(v).
\end{align*}
It is enough to show that these maps are isomorphisms.
Note that the generic rank of $\gr\A^0_{\lambda,\chi}(v)$ on $\M(v)$ is $1$.
Indeed, we have $\A^0_{\lambda,\chi}(v)=\A_{\lambda,\chi}^{(\theta)}(v)$ by Proposition \ref{Prop:trans_reln}.
The generic rank of $\gr\A_{\lambda,\chi}^{(\theta)}(v)$ on $\M(v)^{reg}$ is $1$ by the construction.
For similar reasons, the generic rank of $\gr\A^0_{\lambda+\chi,-\chi}(v)$ equals $1$. So
the bimodule homomorphisms above become isomorphisms after microlocalizing to $\M(v)^{reg}$. Since the algebras
$\A_\lambda(v),\A_{\lambda+\chi}(v)$ are simple, we deduce that the bimodule homomorphisms are indeed
isomorphisms.
\end{proof}

\section{Finite short wall-crossing}\label{S_fin_WC}
In this section we investigate various questions related to categorification functors $E_\alpha,F_\alpha$,
the wall-crossing functor through $\ker\alpha$ and connections between them. In Section \ref{SS_category_C}
we study the category $\mathcal{C}$ introduced in the beginning of Section \ref{SS_upper_bound_outline}.
In particular, we show that every simple in $\mathcal{C}$ is {\it regular holonomic} in a suitable
sense. We use this to show that Proposition \ref{Prop:fin_dim_cryst} implies (II), while (II)
and (III) imply Proposition \ref{Prop:fin_dim_cryst}. In Section \ref{SS_fin_sing} we study
singular objects from \ref{SSS_short_wc_outline} and prove Proposition \ref{Prop:singular_equiv}.
%In Section \ref{SS_fin_extr} we study extremal objects from \ref{SSS_short_wc_outline}
%and prove Corollary \ref{Lem:extremal_sign} and Proposition \ref{Prop:extremal_bij}.

\subsection{Category $\mathcal{C}$}\label{SS_category_C}
Recall that $\mathcal{C}\subset \A_\lambda^\theta(v)\operatorname{-mod}_{\rho^{-1}(0)}$
is the Serre subcategory spanned by the homology of the objects of the form
$\mathcal{F}L_0$, where $\mathcal{F}$ is some monomial in the functors $E_\alpha, F_\alpha$
and $L_0\in \A^\theta_\lambda(\sigma\bullet w)$ for $\sigma\in W(Q)$ such that $\sigma \omega$
is dominant for $\a$.

\subsubsection{Regular holonomic modules}\label{SSS_RH}
Let us define {\it regular holonomic} simples in $\A_\lambda^\theta(v)\operatorname{-mod}_{\rho^{-1}(0)}$.
An object in $\A_\lambda^\theta(v)\operatorname{-mod}$ is called {\it regular holonomic}
if it is obtained from a regular holonomic $(G,\lambda)$-equivariant $D(R)$-module by applying
$\pi^\theta_\lambda(v)$. Actually, we are not interested in all regular holonomic modules. Recall
the torus $T=(\C^\times)^{Q_1}\times (\C^\times)^{Q_0}$ from \ref{SSS_LMN} acting on $R$.
We will consider only weakly $T$-equivariant modules.

It is a standard fact that the category of regular holonomic weakly $T$-equivariant $D$-modules stays the same under changing the orientation of $R$ (partial Fourier transforms preserve weakly $T$-equivariant regular holonomic modules, \cite{Brylinski}) so the notion of a weakly $T$-equivariant regular holonomic $\A_\lambda^\theta(v)$-module is well-defined.

\begin{Lem}\label{Lem:RH}
Let $\lambda'=\sigma\bullet^v \lambda$.
Under the isomorphism $\A_{\lambda}^\theta(v)\cong \A_{\lambda'}^{\sigma \theta}(\sigma\bullet v)$,
a weakly $T$-equivariant regular holonomic module remains weakly $T$-equivariant regular holonomic.
\end{Lem}
\begin{proof}
The part concerning the $T$-action follows from the observation, see \ref{SSS_LMN_quant},
that $\sigma_*:\A_{\lambda}^\theta(v)\operatorname{-mod}\xrightarrow{\sim}
\A_{\lambda'}^{\sigma \theta}(\sigma\bullet v)\operatorname{-mod}$ is  $T$-equivariant.
It is enough to prove the claim that $s_{i*}M$ is  regular holonomic
for a simple reflection $s_i$ provided $M$ is regular holonomic.
Recall that in this case the isomorphism
$\A_{\lambda}^\theta(v)\cong \A_{\lambda'}^{\sigma\theta}(\sigma\bullet v)$
is induced  by the isomorphism
\begin{equation}\label{eq:DO_isom}
D_R\red_{\lambda_i}^{\theta_i}\GL(v_i)\cong D_R\red_{\lambda'_i}^{-\theta_i} \GL((s_i\bullet v)_i).\end{equation}
The former reduction is just $D^{\lambda_i}_{\operatorname{Gr}(v_i,\tilde{w}_i)}\otimes D_{\underline{R}}$ and so
there is an intrinsic notion of a regular holonomic module.

We claim  that a simple $D^{\lambda_i}_{\operatorname{Gr}(v_i,\tilde{w}_i)}\otimes D_{\underline{R}}$-module is regular holonomic if and only if it is obtained from a simple regular holonomic $D(R)$-module under the quotient functor
$$D(R)\operatorname{-mod}^{G,\lambda}\twoheadrightarrow
D^{\lambda_i}_{\operatorname{Gr}(v_i,\tilde{w}_i)}\otimes D_{\underline{R}}\operatorname{-mod}^{\underline{G},\underline{\lambda}}.$$

Indeed, let $L$ be a simple $(\GL(v_i),\lambda_i)$-equivariant $D$-module on $R$ whose support intersects $R^{\theta_i-ss}$. Then $L$ is regular holonomic if and only if the induced twisted $D$-module on the quotient $R\quo^{\theta_i}\GL(v_i)$ is regular holonomic. This follows from the classification of simple regular holonomic $D$-modules: those are precisely the intermediate extensions of regular holonomic local systems on smooth locally closed subvarieties, see \cite[Theorem 7.10.6, 7.12]{Borel}. Our claim in the beginning of the paragraph is proved.

Now, under the identifications of $D_R\red_{\lambda_i}^{\theta_i}\GL(v_i)\operatorname{-mod}$
and $D_R\red_{\lambda'_i}^{-\theta_i} \GL((s_i\bullet v)_i)\operatorname{-mod}$
with the category of  $D$-modules on $\operatorname{Gr}(v_i,\tilde{w}_i)\times \underline{R}$,
the equivalence induced by (\ref{eq:DO_isom}) becomes the identity, this follows from the
construction of an isomorphism. We deduce that the equivalence induced by  the isomorphism
$\A_{\lambda}^\theta(v)\cong \A_{\lambda'}^{\sigma\theta}(\sigma\bullet v)$
maps regular holonomic modules to regular holonomic ones.
\end{proof}

\begin{Cor}\label{Cor_RH}
The simples in $\mathcal{C}$ are weakly $T$-equivariant regular holonomic.
\end{Cor}
\begin{proof}
Let us show first that  Webster's functors $E_i,F_i$ preserve the category of direct sums of semisimple
weakly $T$-equivariant regular holonomic $\A_\lambda^\theta(v)$-modules with homological shifts.
The functors $E_i,F_i$ on $\bigoplus_{v_i} D^b(D^{\lambda_i}_{\operatorname{Gr}(v_i,\tilde{w}_i)}\otimes
D_{\underline{R}}\operatorname{-mod}^{\underline{G},\underline{\lambda}})$ have this property
by the construction. By the proof of Lemma \ref{Lem:RH}, a simple regular holonomic $\A_\lambda^\theta(v)$-module
is an image of such a module from  $D^{\lambda_i}_{\operatorname{Gr}(v_i,\tilde{w}_i)}\otimes
D_{\underline{R}}\operatorname{-mod}^{\underline{G},\underline{\lambda}}$ and our claim follows.

Lemma \ref{Lem:RH} and the previous paragraph  imply
that the  functors $E_\alpha,F_\alpha$ preserve semisimple complexes of weakly $T$-equivariant regular
holonomic modules when $ \alpha\in \Pi^\theta$.
Note that a unique indecomposable $\A_\lambda^\theta(\sigma\bullet w)$-module is weakly $T$-equivariant. So it is enough to check that it is regular holonomic. This is definitely true for $\sigma=1$
(the space $R$ is zero). For arbitrary $\sigma$, the claim again follows from Lemma \ref{Lem:RH}.
\end{proof}

\subsubsection{Crystal}
Consider the full subcategory $\mathcal{C}'
\subset D^b(\A_{\lambda}^\theta(v)\operatorname{-mod}_{\rho^{-1}(0)})$ consisting of all
objects $M$ such that $M\cong H_*(M)$ and $H_*(M)$ is a semisimple object of $\mathcal{C}$.
For $L\in \operatorname{Irr}(\mathcal{C})$, we write $d_\alpha(L)$
for the minimal dimension of an irreducible $\slf_2$-module in $U(\slf_2)[L]$
(where we consider the action corresponding to the operators $[E_\alpha],[F_\alpha]$)

\begin{Lem}\label{Lem:ss_cat_preserv}
The functors $E_\alpha,F_\alpha$ for all $\alpha\in \Pi^\theta$ preserve the subcategory
$\mathcal{C}'\subset D^b(\A_{\lambda}^\theta(v)\operatorname{-mod}_{\rho^{-1}(0)})$.
Furthermore, we have
\begin{equation}\label{eq:cat_action_simples}
\begin{split}
& F_\alpha L=\bigoplus_{i=0}^k \tilde{f}_\alpha L[m+2i]\oplus\bigoplus_{L', d(L')>d(L)}L'[?],\\
& E_\alpha L=\bigoplus_{i=0}^{\ell} \tilde{e}_\alpha L[n+2i]\oplus\bigoplus_{L'', d(L'')>d(L)}L''[?]
\end{split}
\end{equation}
\end{Lem}
Here $\tilde{f}_\alpha, \tilde{e}_\alpha$ are  maps  $\operatorname{Irr}(\mathcal{C})
\rightarrow \operatorname{Irr}(\mathcal{C})\sqcup \{0\}$ forming a crystal for $\slf_2$,
and $k,m,\ell,n$ are some numbers whose precise values are not important for us.

\begin{proof}
The claim that the functors $E_\alpha,F_\alpha$ preserve the category $\mathcal{C}'$
follows from the proof of Corollary \ref{Cor_RH}.
So we get a categorical $\slf_2$-action on the additive category $\mathcal{C}'$.
\cite[Theorem 5.8]{Rouquier_2Kac} applies to this action. It follows
that the basis $[L], L\in \operatorname{Irr}(\mathcal{C}),$
is a dual perfect basis for the $\slf_2$-action on $\bigoplus_v K_0(\A_\lambda^\theta(v)\operatorname{-mod}_{\rho^{-1}(0)})$ (meaning that the dual basis in $\bigoplus_v K_0(\A_\lambda^\theta(v)\operatorname{-mod}_{\rho^{-1}(0)})^*$
is perfect in the sense of Berenstein and Kazhdan, see \cite[Section 5]{BerKa}). This gives
rise to crystal operators $\tilde{e}_\alpha, \tilde{f}_\alpha$ on
$\bigsqcup_{v} \operatorname{Irr}(\A_\lambda^\theta(v)\operatorname{-mod}_{\rho^{-1}(0)})$
(\ref{eq:cat_action_simples}) follows.
\end{proof}

%The following is an immediate corollary  of Lemma \ref{Lem:ss_cat_preserv}.

\begin{Cor}\label{Cor:C_CC}
We have $\CC(K_0(\mathcal{C}))=L_\omega^{\a}$.
\end{Cor}
\begin{proof}
It is clear from the construction of $\mathcal{C}$ and Proposition \ref{Prop:a_CC_intertw}
that $L_\omega^{\a}\subset \CC(K_0(\mathcal{C}))$. So let us prove the opposite inclusion.
Let $v$ be minimal such that $\CC(K_0(\mathcal{C}_v))\supsetneq L_\omega^{\a}[\nu]$.
Then $\nu$ is dominant (otherwise $\CC(K_0(\mathcal{C}_v))\subset \sum_\alpha \operatorname{im}F_\alpha$).
Pick $L\in \operatorname{Irr}(K_0(\mathcal{C}_v))$ with $\CC(L)\not\in L_\omega^\a[\nu]$
and $\alpha\in \Pi^\theta$ such that $d_\alpha(L)$ is maximal possible over all such $L$
and $\alpha$. Since $L\in \mathcal{C}$, we have $d_\alpha(L)>0$. So if $L'\in \operatorname{Irr}(\mathcal{C})$
satisfies $d_\alpha(L')>d_\alpha(L)$, then $\CC(L')\in L_\omega^{\a}$. Apply  (\ref{eq:cat_action_simples})
for the functor $F_\alpha$ and the simple $\tilde{e}_\alpha L$.  We see that, by Proposition
\ref{Prop:a_CC_intertw},  $\CC(F_\alpha (\tilde{e}_\alpha L))=f_\alpha \CC(\tilde{e}_\alpha L)$
lies in $L_\omega^{\a}$.  The characteristic cycles of the objects
$L'$ from (\ref{eq:cat_action_simples}) are also in $L_\omega^{\mathfrak{a}}$
by the choice of $\alpha,L$.  So (\ref{eq:cat_action_simples})
implies $\CC(L)\in L_\omega^{\a}$.
\end{proof}

So, indeed, Proposition \ref{Prop:fin_dim_cryst} implies (II) (and is equivalent to (II)
modulo (III)).

\subsection{Singular simples}\label{SS_fin_sing}
Recall that singular simples were defined in \ref{SSS_short_wc_outline}.
Let us start with an easy alternative characterization of a singular object.

\begin{Lem}\label{Lem:sing_altern_easy}
Let $\alpha\in \Pi^\theta$ and
$L\in \Irr(\A_\lambda^\theta(v)\operatorname{-mod}_{\rho^{-1}(0)})$.
Then the following are equivalent.
\begin{enumerate}
\item  $L$ is $\alpha$-singular.
\item $L\not\in \operatorname{im}\tilde{f}_\alpha$ if $\langle\nu,\alpha_i^\vee\rangle\geqslant 0$
or $L\not\in \operatorname{im}\tilde{e}_\alpha$ if $\langle\nu,\alpha_i^\vee\rangle\leqslant 0$.
\end{enumerate}
\end{Lem}
\begin{proof}
This follows from (\ref{eq:cat_action_simples}).
\end{proof}

\begin{proof}[Proof of Proposition \ref{Prop:singular_equiv}]
The proof is in several steps. Thanks to the construction of the functors $E_\alpha,F_\alpha$,
we may assume that $\alpha=\alpha^i$ and $\theta=\theta^+$.
Recall the quotient functor
$\underline{\pi}:\A^{\theta_i}_{\lambda_i}(v)\operatorname{-mod}\twoheadrightarrow
\A_\lambda^\theta(v)\operatorname{-mod}$ from \ref{SSS_quot_fun}. Let $\tilde{L}$ denote the simple
in  $\A^{\theta_i}_{\lambda_i}(v)\operatorname{-mod}$ with $\underline{\pi}(\tilde{L})=L$.
Note that $L$ is $\alpha$-singular if and only if $\tilde{L}$ is singular (for the Webster
functors $E_i,F_i$). If $L_1$ is such that, say, $\tilde{f}_\alpha L_1=L$
and $\tilde{L}_1$ is the simple in $\A^{\theta_i}_{\lambda_i}(v)\operatorname{-mod}$ with $\underline{\pi}(\tilde{L}_1)=L_1$, then $\tilde{f}_i\tilde{L}_1=\tilde{L}$ because $\underline{\pi}$ intertwines $F_i$ with $F_i$ and $E_i$ with $E_i$.
%Note that by \ref{SSS_RH} (see the proof of Lemma
%\ref{Lem:RH}), $\tilde{L}$ is regular holonomic.

{\it Step 1}. Let us prove that (2) implies (1). Assume the contrary: $H_0(\WC_{\theta\rightarrow \theta'}L)\neq 0$ but $L$ is not $\alpha$-singular. Then $\tilde{L}$ is not singular.  On the other hand, $H_0(\WC^i_{\theta_i\rightarrow -\theta_i}\tilde{L})\neq 0$, this follows from \ref{SSS_WC_quotients}. Since $\tilde{L}$ is holonomic, a direct analog of Proposition \ref{Prop:long_shift} applies. Thanks to that, we see that the singular support of $\Gamma_{\lambda_i}^{\theta_i}(\tilde{L})$ intersects the open symplectic leaf in the affinization of $T^*\operatorname{Gr}(v_i,\tilde{w}_i)\times T^*\underline{R}$. Equivalently, the singular support of $\tilde{L}$ intersects $\mathbb{O}\times T^*\underline{R}$, where $\mathbb{O}$ is the open $\GL(\tilde{w}_i)$-orbit
in $T^*\operatorname{Gr}(v_i,\tilde{w}_i)$. We claim that this contradicts the condition that $\tilde{L}$
is not singular. Indeed, in the sake of being definite, assume $2v_i\leqslant \tilde{w}_i$ so that
$\tilde{L}\in \operatorname{im} \tilde{f}_i$. From the construction of the functor $F_i$,
the singular support of $\tilde{L}$ lies in the image of $Y\times T^*\underline{R}$ in
$T^*\operatorname{Gr}(v_i,\tilde{w}_i)$, where we write   $Y$ for the conormal bundle
to $\operatorname{Fl}(v_i-1,v_i;\tilde{w}_i)\subset \operatorname{Gr}(v_i-1,\tilde{w}_i)
\times \operatorname{Gr}(v_i,\tilde{w}_i)$. But the image of $Y$ does not intersect $\mathbb{O}$.
Contradiction. This finishes the proof of the implication (2)$\Rightarrow$(1).

{\it Step 2}. Let us prove that (1) implies (2). Here we will use Theorem \ref{Thm:WC} that says
that $s_{i*}\circ\WC_{\theta\rightarrow \theta'}=\Theta_i$. So (2) is equivalent to $H_0(\Theta_i L)\neq 0$. Below, to simplify the notation, we write $E,F,\Theta$ for $E_i,F_i,\Theta_i$.

In the proof we will assume that $2v_i\leqslant \tilde{w}_i$, the other case is similar.
Recall, see \ref{SSS_Rickard}, that $\Theta L$ is the iterated cone of
$$F^{(\ell)}L[-m]\rightarrow F^{(\ell+1)}EL[1-m]\rightarrow \ldots
\rightarrow F^{(\ell+m)}E^{(m)}L,$$
where $m=v_i,\ell=\tilde{w}_i-2v_i$. Then $\Theta^2 L$ is the cone of
the double complex with the term in the slot $(i-v_i,j-\tilde{w}_i+v_i)$ of the form $F^{(i)}E^{(\ell+i)}F^{(\ell+j)}E^{(j)}L[i+j-\tilde{w}_i]$. The terms with
$i>0$ do not contain $L$ in their homology because $L$ is singular. If $i=0, j>0$,
then we can commute $E^{(\ell)}$ and $F^{(\ell+j)}$ using the categorical
$\mathfrak{sl}_2$-relations, see, e.g., (iii) in  \cite[Section 2.2]{Kaetc}.
We get that these terms do not contain $L$ either.
For the same reason, the $(0,0)$ term splits into a direct sum of $L$ and some
object that does not contain $L$ in the homology. The summand $L$ will
contribute to $H_0$ of the iterated cone of $\Theta^2 L$. Now recall that
$L\mapsto \Theta L$ is right $t$-exact by Theorem \ref{Thm:WC}.
Since $H_0(\Theta^2 L)\neq 0$, we deduce that $H_0(\Theta L)\neq 0$.
This finishes the proof of the implication (1)$\Rightarrow$(2).

{\it Step 3}. Let us prove the claim about the singular simples in $H_*(\WC_{\theta\rightarrow \theta'}L)$:
only one occurs as a composition factor and it is a quotient of $H_0$.
This claim is equivalent to an analogous claim for $H_*(\WC^i_{\theta_i\rightarrow -\theta_i}\tilde{L})$.
By Step 1, the singular support of $\tilde{L}$ contains a point that is stable for $-\theta_i$.
Let $\pi_+,\pi_-$ be the quotient functors from $D_R\operatorname{-mod}^{G,\lambda}$ to the quotient categories
for the stability conditions $\theta_i,-\theta_i$. Then $\WC^i_{\theta_i\rightarrow -\theta_i}=\pi_- L\pi_+^!$, see
(\ref{eq:wc_formula}). Let $\widehat{L}$ be the simple in $D(R)\operatorname{-mod}^{G,\lambda}$
such that $\pi_+(\widehat{L})=\tilde{L}$. We see that $\pi_-(\widehat{L})$
occurs in $H_0(\WC_{\theta_i\rightarrow -\theta_i}(\tilde{L}))$ (as a quotient, in fact, because $\widehat{L}$
is a quotient of $\pi^!_+(\tilde{L})$). Clearly, the multiplicity is $1$. The other composition
factors of $H_j(\WC_{\theta_i\rightarrow -\theta_i}(\tilde{L}))$ are the images of simples in $\ker \pi_+$. So the
singular supports do not intersect $\mathbb{O}\times T^*\underline{R}$. Reversing the argument
of Step 1, we see that these simples are shifted by $\WC$ and hence by $\Theta$. By Step 2, they cannot be singular.
So we get  simples $\tilde{L}':=\pi_-(\widehat{L})\in \A_{\lambda_i'}^{-\theta_i}(v)\operatorname{-mod}$
and the corresponding simple $L'\in \A^{\theta'}_{\lambda'}(v)\operatorname{-mod}_{\rho^{-1}(0)}$.
Note that $\tilde{L}'$ singular by the argument above in this step. So $L'$ is singular.

{\it Step 4}. It remains to prove that the map $L\mapsto L'$ is a bijection between the sets of
$\alpha$-singular objects. By Step 3, $s_*\tilde{L}'$ is the only singular simple constituent
of $\bigoplus_i H_i(\Theta \tilde{L})$, it is a quotient of $H_0(\Theta \tilde{L})$. The proof
of (2)$\Rightarrow$(1) implies that $\tilde{L}$ is the only singular constituent of $\bigoplus_i H_i(\Theta (s_*\tilde{L}'))$. This gives rise to a map from the set of $\alpha$-singular simples in
$\A_{\lambda}^{s_i\theta}(v)\operatorname{-mod}_{\rho^{-1}(0)}$ to the set of singular simples in $\A_{\lambda}^{\theta}(v)\operatorname{-mod}_{\rho^{-1}(0)}$ that is the inverse to $L\mapsto L'$.
\end{proof}

\section{Affine short wall-crossing}\label{S_affine}
Here we consider the situation when $Q$ is an affine quiver, $v=n\delta, w=\epsilon_0$, and
$\A_\lambda(v)=eH_{\kappa,c}(n)e$. Let us note that $\mu$ is flat. In this section we study the wall-crossing functor through
the wall $\ker\delta$ proving in particular that the homological shifts of modules under this
functor are less than $n$ and that the functor is a perverse equivalence. The proof in
a more general situation is obtained in \cite[Section 3]{perv} (see also \cite[Section 6]{rouq_der}).

\subsection{Results}\label{SS_aff_res}
Let us introduce some conventions and notation.

Pick a generic stability condition $\theta$ in a classical chamber $C'$ that has
$\ker\delta$ as a wall. Let $C'$ denote the classical chamber sharing the wall $\ker\delta$ with $C$.

We consider a parameter $\lambda^\circ$ such that $(\lambda,\theta)\in \mathfrak{AL}(v)$
for any $\lambda\in \lambda^\circ+ (C\cap \Z^{Q_0})$, such a parameter exists by (2) of Proposition
\ref{Lem:ab_loc}.
The parameter $\lambda^\circ$ is represented in the form $(\kappa, c^\circ)$, where $\kappa=\langle \lambda^\circ,\delta\rangle$ so that $\A_{\lambda^\circ}(v)=e H_{\kappa,c^\circ}(n)e$. We view $c^\circ$ as an element of $\ker\delta$ (this includes some renormalization of the usual parameters for the SRA's).
Similarly, choose a parameter $\lambda'^\circ=(\kappa',c'^\circ)\in\lambda^\circ+\Z^{Q_0}$
such that $(\lambda',\theta')\in \AL(v)$ for any $\lambda'\in \lambda'^\circ+(C'\cap \Z^{Q_0})$.
Set $\chi:=\lambda'^\circ-\lambda^\circ$. Note that $\kappa'-\kappa=\langle \chi,\delta\rangle\in \Z$.

%We assume that $\kappa$ is a rational number with the denominator less than or equal to $n$ (which is equivalent to %$\ker\delta$ being a quantum
%wall). We also assume that $\kappa>0$ or $\kappa<-1$. Let $\kappa'$ be an element in $\kappa+\Z$
%with $\kappa'<-1$ if $\kappa>0$ and  $\kappa'>0$ if $\kappa<-1$. Set $\lambda'^\circ=(\kappa',c^\circ)$, in particular,
%$\chi:=\lambda'^\circ-\lambda^\circ\in \Z^{Q_0}$. We further assume that there are stability conditions $\theta,\theta'$ %separated by $\ker\delta$ such that $(\lambda^\circ+(C\cap \ker\delta\cap\Z^{Q_0}),\theta),(\lambda'^\circ+(C\cap %\ker\delta\cap\Z^{Q_0}),\theta')\subset \AL(n\delta)$, where we write $C$ for the classical chamber of $\theta$. By %Proposition \ref{Prop:loc_thm}, this can always be achieved by adding an element of $\Z^{Q_0}$ to $\lambda^\circ$ %preserving the sign of $\kappa$.

For $\lambda=(\kappa,c)$, we set $\A^c:=\A_\lambda(v), \A'^c:=\A_{\lambda+\chi}(v), \mathcal{B}^c:=\A^0_{\lambda,\chi}(v)$.
 We also consider the universal versions: we write $\param_0$ for $\ker\delta$   and consider the objects $\A^{\param_0}:=\A_{\lambda+\param_0}(v),\A'^{\,\param_0},\B^{\param_0}:=\A^0_{\lambda+\param_0,\chi}(v)$
 so that $\A^c,\A'^c,\B^c$ are the specializations of $\A^{\param_0},\A'^{\,\param_0},\B^{\param_0}$.

Let $m$ denote the denominator of $\kappa$ (we set $m=\infty$ if $\kappa$ is irrational)
if $\kappa\not\in \Z$. For $\kappa\in \Z$ we assume that $m=\infty$.

We will define chains of ideals $\{0\}=\J^{\param_0}_{q+1}\subsetneq \J^{\param_0}_q\subsetneq\ldots\subsetneq\J^{\param_0}_1\subsetneq \J^{\param_0}_0=\A^{\param_0}, \{0\}=\J'^{\,\param_0}_{q+1}\subsetneq \J'^{\,\param_0}_q\subsetneq\ldots\subsetneq\J'^{\,\param_0}_1\subsetneq \J'^{\,\param_0}_0=\A'^{\,\param_0}$, where $q=\lfloor n/m\rfloor$, and consider the corresponding  specializations $\J^c_i,\J'^c_i$.

One more piece of notation: $d_i:=(q+1-i)(m-1)$.

Here is our main technical result.

\begin{Thm}\label{Thm:affine_WC}
There is a principal open subset $\param^0_0\subset \param_0$ such that the HC bimodules $$\J^{\param_0}_i,\A^{\param_0}/\J_i^{\param_0}, \J_i'^{\,\param_0},\A'^{\,\param_0}/\J'^{\,\param_0},
\B^{\param_0}, \operatorname{Tor}^{\A^{\param_0}}_j(\B_{\param_0}, \A^{\param_0}/\J^{\param_0}_{i}),
\operatorname{Tor}^{\A'^{\param_0}}_j(\A'^{\param_0}/\J'^{\param_0}_{i},\B_{\param_0})$$
localized to $\param_0^0$ are free both as left and as right modules over $\C[\param^0_0]$
and moreover, for any $c\in \param_0^0$, the following claims are true:
\begin{enumerate}
\item $\J^c_i \J^c_j=\J^c_{\max(i,j)}, \J'^c_i \J'^c_j=\J'^c_{\max(i,j)}$.
\item For all $i,j$, we have $\J'^c_{i}\operatorname{Tor}^{\A^c}_j(\B^c, \A^c/\J^c_{i})=\operatorname{Tor}^{
\A'^c}_j(\A'^c/\J'^c_{i}, \B^c)\J^c_{i}=0$.
\item We have   $\operatorname{Tor}^{\A^c}_j(\B^c, \A^c/\J^c_{i})=0$
  for $j<d_i$.
\item We have $\J'^c_{i-1}\operatorname{Tor}^{\A^c}_j(\B^c,\A^c/\J^c_{i})=0$ for $j>d_i$.
\item Set $\B^c_i:=\operatorname{Tor}^{\A^c}_{d_i}(\B^c,\A^c/\J^c_{i})$.
Then $\J'^c_{i-1}\B^c_i=\B^c_i$.
\item The kernel and the cokernel of the natural homomorphism $$\B^c_{i}\otimes_{\A^c}
\operatorname{Hom}_{\A'^c}(\B^c_{i}, \A'^c/\J'^c_{i})\rightarrow \A'^c/\J'^c_{i}$$
are annihilated by $\J'^c_{i-1}$ on the left and on the right. Similarly,
the kernel and the cokernel of the natural homomorphism
$$\operatorname{Hom}_{\A^c}(\B^c_{i}, \A^c/\J_i^{c})\otimes_{\A'^c}\B^c_{i}
\rightarrow \A^c/\J^c_{i}$$ %as well as
%$$\operatorname{Tor}^{H_{c}}(\B_{c,j}, H_{c}/\J_{j,c})\otimes_{H'_c}\B_{c,j}
%\rightarrow H_{c}/\J_{j,c}$$
are annihilated on the left and on the right by $\J^c_{i-1}$.
\end{enumerate}
\end{Thm}

We remark that under the freeness condition we have imposed on $\param_0^0$, the bimodules with superscript
$c$ are the specializations of those with superscript $\param_0$ provided $c\in \param_0^0$. For example,
$\operatorname{Tor}^{\A^c}_j(\B^c, \A^c/\J^c_{i})=\operatorname{Tor}^{\A^{\param_0}}_j(\B^{\param_0}, \A^{\param_0}/\J^{\param_0}_{i})_c$. The existence of an open subset satisfying the freeness condition
follows from (2) of Corollary \ref{Cor:HC_supp}.

The scheme of the proof of Theorem \ref{Thm:affine_WC} is as follows. We first prove the theorem for the algebras $\bar{\A}_\kappa(n), \bar{\A}_{\kappa'}(n)$, where $m=n$, in Section \ref{SS_aff_typeA_easy}.
In this case we just have one proper ideal in either of these two algebras. Then, in Section \ref{SS_aff_ideals}, we construct the ideals $\J^{\param_0}_i,\J'^{\param_0}_i$ in general. After that we prove (2)-(6) of Theorem \ref{Thm:affine_WC}, first, for a Weil generic parameter $c$ and then for a Zariski generic parameter, Section \ref{SS_aff_proof1}.
%Then we prove (1)-(3) for the case when $c$ is Weil generic and extend (1)-(3) to $c$
%Zariski generic using Corollary \ref{Cor:concr_app}. After that we deduce (4).

Of course, $c^\circ+ (C_\theta\cap \ker\delta\cap \Z^{Q_0})$ intersects $\param_0^0$.
We remark that for $c\in [c^\circ+ (C_\theta\cap \ker\delta\cap \Z^{Q_0})]\cap \param_0^0$,
thanks to Lemma \ref{Lem:shift_coinc}, the functor $\B^c\otimes^L_{\A^c}\bullet$
is just $\WC_{\theta\rightarrow \theta'}$.

Theorem \ref{Thm:affine_WC} is used to prove that  $\WC_{\theta\rightarrow \theta'}:D^b(\A_{\lambda^\circ}(v)\operatorname{-mod})
\rightarrow D^b(\A_{\lambda'^\circ}(v)\operatorname{-mod})$ is a perverse equivalence.

Let us recall the general definition. Let $\mathcal{C},\mathcal{C}'$ be two abelian categories
equipped with filtrations $\{0\}= \mathcal{C}_{N+1}\subsetneq \mathcal{C}_N\subsetneq \mathcal{C}_{N-1}\subsetneq \ldots
\subsetneq \mathcal{C}_1\subsetneq\mathcal{C}_0=\mathcal{C}, \{0\}= \mathcal{C}'_{N+1}\subsetneq \mathcal{C}'_N\subsetneq\ldots\subsetneq \mathcal{C}'_1\subsetneq\mathcal{C}'_0=\mathcal{C}'$ by Serre subcategories.  Following Chuang and Rouquier, \cite[Section 2.6]{rouquier_ICM}, we say that a derived
equivalence $\varphi:D^b(\mathcal{C})\rightarrow D^b(\mathcal{C}')$ is {\it perverse} with respect to the filtrations
above if
\begin{itemize}
\item[(i)] $\varphi$ restricts to an equivalence $D^b_{\mathcal{C}_i}(\mathcal{C})\rightarrow D^b_{\mathcal{C}'_i}(\mathcal{C}')$,
where we write $D^b_{\mathcal{C}_i}(\mathcal{C})$ for the full subcategory of $D^b(\mathcal{C})$ consisting of all
complexes with homology in $\mathcal{C}_i$.
\item[(ii)] $H_j(\varphi M)=0$ for $M\in \mathcal{C}_i$ and $j<i$.
\item[(iii)] The functor $M\mapsto H_i(\varphi M)$ induces an equivalence $\mathcal{C}_i/\mathcal{C}_{i+1}\xrightarrow{\sim} \mathcal{C}'_i/\mathcal{C}'_{i+1}$. Moreover, $H_j(\varphi M)\in \mathcal{C}'_{i+1}$ for $j>i$
    and $M\in \mathcal{C}_{i}$.
\end{itemize}

We remark that, thanks to (iii), a perverse equivalence induces a natural bijection between the simple objects in $\mathcal{C}$ and $\mathcal{C}'$. We will write $S\mapsto S'$ for this bijection.

\begin{Thm}\label{Thm:perv}
Set $\mathcal{C}:=\A_{\lambda^\circ}(v)\operatorname{-mod}, \mathcal{C}':=\A_{\lambda'^\circ}(v)\operatorname{-mod}$.
Define $\mathcal{C}_i$ to be the subcategory of all modules in $\mathcal{C}$ annihilated by $\J_{q+1-\lfloor i/(m-1)\rfloor}$ (this is a Serre subcategory by (1) of Theorem \ref{Thm:affine_WC}) and $\mathcal{C}'_i\subset \mathcal{C}_i$ analogously. Then, perhaps after replacing $\lambda^\circ$ with $\lambda^\circ+\psi$ for
$\psi\in C\cap \ker\delta\cap\Z^{Q_0}$, the following holds.
\begin{enumerate}
\item $\WC_{\theta\rightarrow \theta'}$ is a  perverse equivalences with respect
to these filtrations.
\item The induced equivalence $\mathcal{C}_{j(m-1)}/\mathcal{C}_{j(m-1)+1}\rightarrow \mathcal{C}'_{j(m-1)}/\mathcal{C}'_{j(m-1)+1}$
is given by $\B^c_{q+1-j}\otimes_{\A^c}\bullet$. Moreover, for a simple $S\in \mathcal{C}_{j(m-1)}\setminus\mathcal{C}_{j(m-1)+1}$, the head of $\B^c_{q+1-j}\otimes_{\A^c}S$
coincides with $S'$.
\item The bijection $S\mapsto S'$ preserves the associated varieties of the annihilators.
\end{enumerate}
\end{Thm}

\begin{Rem}
A direct analog of Theorem \ref{Thm:perv} holds for all wall-crossing functors through faces
of classical chambers. This is proved in a subsequent paper, \cite{perv}, by the second named
author. In particular, the short wall-crossing functors through real walls studied in
Section \ref{S_fin_WC} and the bijection between singular objects $L\mapsto L'$
we considered from Proposition \ref{Prop:singular_equiv} is a restriction of the bijection
from  (3) of generalized Theorem \ref{Thm:perv}. Still, Proposition \ref{Prop:singular_equiv}
cannot be entirely replaced by a generalization of Theorem \ref{Thm:perv} as the former
relates the wall-crossing functor to the categorification functors $E_\alpha, F_\alpha$.
\end{Rem}

\subsection{HC bimodules for Symplectic reflection algebras}
Let $V,\Gamma,\mathcal{H}_c$ etc. have the same meaning as in \ref{SSS_SRA}. In this
section we recall some known facts about HC bimodules over the algebras $\mathcal{H}_c$
and $e\mathcal{H}_ce$. The most important cases are $\Gamma=\Gamma_n,\mathfrak{S}_n$.

Let us recall a description of the symplectic leaves in $V/\Gamma$. The leaves are parameterized by
conjugacy classes of stabilizers for the $\Gamma$-action on $V$. Namely, to a stabilizer $\Gamma'$
we assign the image of $\{v\in V| \Gamma_v=\Gamma'\}$ in $V/\Gamma$.

Below we will need a property of  restriction functors in the SRA setting. A similar property was obtained
in \cite[Proposition 3.7.2]{sraco} for a related, ``upgraded'', restriction functor.

Take a symplectic leaf $\mathcal{L}\subset V/\Gamma_n$ and consider the full subcategory  $\HC_{\overline{\mathcal{L}}}(\A_{\paramq}(v))\subset \HC(\A_{\paramq}(v))$ consisting of all HC bimodules $\M$ such that
$\VA(\M)\cap \M_0(v)\subset\overline{\mathcal{L}}$. Similarly, define the subcategory $\HC_{fin}(\hat{\A}_{\paramq}(\hat{v}))$.

\begin{Prop}\label{Prop:restr_adj}
For $x\in \mathcal{L}$, the functor $\bullet_{\dagger,x}: \HC_{\overline{\mathcal{L}}}(\A_{\paramq}(v))\rightarrow \HC_{fin}(\hat{\A}_{\paramq}(\hat{v}))$ admits a right adjoint
$\bullet^{\dagger,x}:\HC_{fin}(\hat{\A}_{\paramq}(\hat{v}))\rightarrow
\HC_{\overline{\mathcal{L}}}(\A_{\paramq}(v))$.
\end{Prop}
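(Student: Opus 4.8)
The plan is to construct $\bullet^{\dagger,x}$ as a quotient of the general right adjoint $\bullet^{\dagger,x}$ from Remark \ref{Rem:adjoint}, which a priori takes values in the ind-completion $\tilde{\operatorname{HC}}(\A^0_{\paramq}(v))$, and to show that on the subcategory $\HC_{fin}(\hat{\A}_{\paramq}(\hat v))$ the output is actually an honest Harish-Chandra bimodule lying in $\HC_{\overline{\mathcal L}}(\A_{\paramq}(v))$. First I would recall that $\bullet_{\dagger,x}$ restricted to $\HC_{\overline{\mathcal L}}(\A_{\paramq}(v))$ does land in $\HC_{fin}(\hat{\A}_{\paramq}(\hat v))$: by Lemma \ref{Lem:dag_assoc}, $\VA(\B_{\dagger,x})\times \mathcal{L}^{\wedge_x}=\VA(\B)^{\wedge_x}$, so $\VA(\B)\cap\M_0(v)\subset\overline{\mathcal L}$ forces $\VA(\B_{\dagger,x})$ to be supported at the point $0$, i.e.\ $\B_{\dagger,x}$ is finite-dimensional. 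So $\bullet_{\dagger,x}$ is a well-defined $\C[\paramq]$-linear exact functor between the two categories in the statement.

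Next I would produce the adjoint directly by mimicking the $\hbar$-adic construction of $\bullet_{\dagger,x}$ run backwards, exactly as in \cite[Section 3.7]{sraco}. Given $\hat{\B}\in\HC_{fin}(\hat{\A}_{\paramq}(\hat v))$, form its Rees bimodule $\hat{\B}_\hbar$, complete at $0$, tensor with $\Weyl_\hbar^{\wedge_0}$ to obtain an object $\hat{\B}_\hbar^{\wedge_0}\widehat{\otimes}_{\C[[\hbar]]}\Weyl_\hbar^{\wedge_0}$ of $\HC(\A_{\paramq}(v)_\hbar^{\wedge_x})$ via the decompositions (\ref{eq:alg_decomp}),(\ref{eq:alg_decomp0}); the derivation is $\hat{\mathsf{Eu}}+\frac{1}{\hbar}[a,\cdot]$ with $a$ as in Lemma \ref{Lem:Euler_comp}. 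Then take the $\mathsf{Eu}$-finite vectors of $\operatorname{Hom}_{\A_{\paramq}(v)_\hbar^{\wedge_x}\text{-bimod}}\!\big(\A_{\paramq}(v)_\hbar^{\wedge_x},\,\hat{\B}_\hbar^{\wedge_0}\widehat{\otimes}\Weyl_\hbar^{\wedge_0}\big)$ suitably interpreted — more precisely, the $\mathsf{Eu}$-locally finite part of the completed bimodule viewed as an $\A_{\paramq}(v)_\hbar$-bimodule — and set $\hbar=1$. The key finiteness input is that $\hat{\B}$ is finite-dimensional: this makes the completed object supported (as a sheaf on $\M_\param(v)^{\wedge_x}$) only in the $\Weyl_\hbar$ directions, so its $\mathsf{Eu}$-finite part is a finitely generated $\A_{\paramq}(v)_\hbar$-bimodule, hence descends to a genuine HC bimodule $\B:=\hat{\B}^{\dagger,x}$; and by Lemma \ref{Lem:dag_assoc} again its associated variety meets $\M_0(v)$ inside $\overline{\mathcal L}$, so $\B\in\HC_{\overline{\mathcal L}}(\A_{\paramq}(v))$. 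The adjunction $\operatorname{Hom}(\B'_{\dagger,x},\hat{\B})=\operatorname{Hom}(\B',\hat{\B}^{\dagger,x})$ for $\B'\in\HC_{\overline{\mathcal L}}(\A_{\paramq}(v))$ follows from the corresponding completed adjunction together with the fact, used repeatedly in \cite{sraco}, that passing to $\mathsf{Eu}$-finite vectors is compatible with Hom's once both the source and target are genuinely Harish-Chandra (the density statement \cite[Proposition 3.3.1]{HC}).

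The main obstacle is exactly the point flagged in Remark \ref{Rem:derived_restr} and Remark \ref{Rem:adjoint}: controlling the $\mathsf{Eu}$-action (equivalently $\hat{\mathsf{Eu}}$) on the completed Hom-bimodule. In general one cannot guarantee that $\mathsf{Eu}$ acts diagonalizably or even locally finitely, which is why the general $\bullet^{\dagger,x}$ only lands in $\tilde{\operatorname{HC}}$. The finiteness assumption $\VA(\hat{\B})=\{0\}$ is precisely what rescues the argument: then $\hat{\B}_\hbar^{\wedge_0}$ is already $\hat{\mathsf{eu}}$-locally finite (it is finitely generated over $\C[[\hbar]]$ up to the grading), and the obstruction to $\mathsf{Eu}$-local finiteness of the adjoint disappears because the completion in the $\hat{\A}_{\paramq}(\hat v)$-directions is trivial. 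So the real work is a careful bookkeeping argument showing that under the $\VA(\hat{\B})=\{0\}$ hypothesis the completed Hom is the $\Weyl_\hbar^{\wedge_0}$-completion of a finitely generated graded $\hat{\A}_{\paramq}(\hat v)_\hbar$-bimodule of Hom type, which then makes its $\mathsf{Eu}$-finite part finitely generated over $\A_{\paramq}(v)_\hbar$; once that is in place the adjunction and the membership in $\HC_{\overline{\mathcal L}}$ are formal. Finally I would note (as in \cite{sraco}) that in the affine-quiver setting one should replace $\hat{\A}_{\paramq}(\hat v)$ by $\bar{\A}_{\paramq}(\hat v)$ as arranged at the end of Section \ref{SS_compl}, so that ``$\HC_{fin}$'' really means finite-dimensional and not merely supported on a non-point leaf through $0$; this is a cosmetic change that does not affect the argument.
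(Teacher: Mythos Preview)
Your overall architecture matches the paper's: construct $\bullet^{\dagger,x}$ by running the $\hbar$-adic slice decomposition backwards as in \cite[Section~3.7]{sraco}, and then argue that on $\HC_{fin}(\hat{\A}_{\paramq}(\hat v))$ the output is genuinely HC rather than ind-HC. The paper also reduces to \cite[Proposition~3.7.2]{sraco}. So the strategies agree.

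The gap is in the step you call ``careful bookkeeping.'' You claim that $\VA(\hat{\B})=\{0\}$ makes the completed object ``supported only in the $\Weyl_\hbar$ directions'' and that therefore its $\mathsf{Eu}$-finite part is finitely generated over $\A_{\paramq}(v)_\hbar$. But this is precisely the non-trivial content, not bookkeeping. The derivation $\mathsf{Eu}$ differs from $\hat{\mathsf{Eu}}$ by $\frac{1}{\hbar}[a,\cdot]$ with $a$ an element of the \emph{completed} algebra, so the $\mathsf{Eu}$-eigenspaces are not simply the polynomial part in the $R_0$-coordinates. What has to be shown is that any HC sub-bimodule of the candidate $\hat{\B}^{\dagger,x}$, viewed through its associated graded, produces a finitely generated Poisson $\C[\overline{\mathcal L}]$-submodule of $\C[\mathcal L]^{\wedge_x}$ that is weakly $\C^\times$-equivariant, and that such a submodule is forced to lie in the \emph{uncompleted} functions on the leaf. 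This is exactly the lemma the paper invokes (\cite[Lemma~3.9]{B_ineq}); it uses the Poisson structure on $\mathcal L$ in an essential way and is not a formal consequence of $\hat{\B}$ being finite. Your argument never engages with the $R_0=\mathcal L$ directions of the completion, which is where the actual obstruction to finite generation lives; the finiteness of $\hat{\B}$ only kills the $\hat{\A}_{\paramq}(\hat v)$ directions.

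In short: you have correctly set up the adjoint and correctly located where the difficulty is, but you have misdiagnosed its nature. The missing ingredient is a Poisson-geometric statement about finitely generated submodules of completed leaf functions, not a local-finiteness argument for $\mathsf{Eu}$.
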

\begin{proof}
%The leaf $\mathcal{L}$ is the smooth part of a quotient $V'/\Gamma'$ of a symplectic vector space $V'$
%under a finite subgroup $\Gamma'\subset \operatorname{Sp}(V')$, for suitable $V',\Gamma'$.
As in the proof of \cite[Proposition 3.7.2]{sraco}, we reduce the proof to showing the following claim: a Poisson $\C[\overline{\mathcal{L}}]$-submodule of $\C[\mathcal{L}]^{\wedge_x}$ that is finitely generated over $\C[\overline{\mathcal{L}}]$ and is weakly equivariant under the action of $\C^\times$ on $\mathcal{L}$ is contained in $\C[\mathcal{L}]\subset\C[\mathcal{L}]^{\wedge_x}$.
This is a special case of \cite[Lemma 3.9]{B_ineq}.
%To prove this, we notice that the sum $M$ of such submodules is stable
%under by multiplication by $\C[V']$ and by the bracket with $\C[V']$ because $\C[V']$ is finite over %$\C[\overline{\mathcal{L}}]$.
%So $M$ is a $\C[V']$-subbimodule. But any weakly equivariant Poisson $\C[V']$-module is isomorphic to the direct sum of %$\C[V']$,
%see, for example, \cite[3.5]{sraco}. So $M$ is generated as a $\C[V']$-module by the Poisson centralizer of $\C[V']$. %Since
%the Poisson centralizer of $\C[V']$ in $\C[\mathcal{L}]^{\wedge_x}=\C[V']^{\wedge_x}$ consists of constants, which implies %our claim.
\end{proof}

Here is an application of the restriction functors obtained in \cite[Section 5]{sraco}. Consider the case when
$\Gamma=\mathfrak{S}_n$ and $V=\mathfrak{h}\oplus \mathfrak{h}^*$, where $\h=\C^{n-1}$
is the reflection representation of $\mathfrak{S}_n$. The resulting algebra $\mathcal{H}_{\kappa}(n)$
is known as the Rational Cherednik algebra of type A, in our previous notation $\mathcal{H}_{\kappa,\varnothing}(n)
=D(\C)\otimes \mathcal{H}_\kappa(n), e \mathcal{H}_\kappa(n)e=\bar{\A}_\kappa(n)$. One can describe all two-sided ideals in $\mathcal{H}_{\kappa}(n)$, see \cite[Section 5.8]{sraco}.

\begin{Prop}\label{Prop:RCA_typeA_id}
If $\kappa$ is irrational or $\kappa=\frac{r}{m}$, where $\operatorname{GCD}(r,m)=1$ and $m>n$, then the algebra $\mathcal{H}_{\kappa}(n)$
is simple. Otherwise, there are $q:=\lfloor n/m\rfloor$ proper ideals that form a chain: $\{0\}=\J_{q+1}\subsetneq
\J_q\subsetneq \J_{q-1}\subsetneq\ldots \subsetneq \J_1\subsetneq \J_0:=\mathcal{H}_{\kappa}(n)$. The associated variety
of $\mathcal{H}_{\kappa}/\J_i$ is the closure of the symplectic leaf associated to the parabolic subgroup $\mathfrak{S}^{q+1-i}_m\subset
\mathfrak{S}_n$. Moreover, we have $\J_i\J_j=\J_{\max(i,j)}$.
\end{Prop}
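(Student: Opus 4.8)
The plan is to reduce the statement to the structure theory of two-sided ideals already controlled by the restriction functors of Section \ref{SS_compl2}. First I would recall from \cite[Section 5.8]{sraco} (or reprove along those lines) that the two-sided ideals of $\mathcal{H}_\kappa(n)$ are linearly ordered: this is where the hypothesis $\kappa=\tfrac{r}{m}$ with $m\leqslant n$ enters, producing exactly $q=\lfloor n/m\rfloor$ proper nonzero ideals. The key input is that each $\mathcal{H}_\kappa(n)/\J_i$ has associated variety equal to $\overline{\mathcal{L}_i}$, the closure of the symplectic leaf of $V/\mathfrak{S}_n$ attached to the parabolic $\mathfrak{S}_m^{q+1-i}\subset\mathfrak{S}_n$; since these leaf closures form a chain $\overline{\mathcal{L}_0}\supsetneq\overline{\mathcal{L}_1}\supsetneq\cdots\supsetneq\overline{\mathcal{L}_q}=\{0\}$ (i.e.\ $\mathcal{L}_0$ is the open leaf and $\mathcal{L}_i$ is attached to $\mathfrak{S}_m^{q+1-i}$), Lemma \ref{Cor:assoc_var} forces $\VA(\mathcal{H}_\kappa(n)/\J_i)=\overline{\mathcal{L}_i}$, and $\J_i\supseteq\J_j$ whenever $i\leqslant j$. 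The simplicity assertion in the other range of $\kappa$ follows from Proposition \ref{Prop:gen_simpl} together with the fact that for $m>n$ no parabolic $\mathfrak{S}_m$ embeds in $\mathfrak{S}_n$, so $\hat{\A}_\lambda(\hat v)$ has no finite-dimensional representations at any boundary point and the argument of Proposition \ref{Prop:gen_simpl}, Step 2, applies verbatim; alternatively one invokes \cite[Corollary 4.2]{BE} through Lemma \ref{Lem:spher}.

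Next I would establish $\J_i\J_j=\J_{\max(i,j)}$. The inclusion $\J_i\J_j\subseteq\J_i\cap\J_j=\J_{\max(i,j)}$ is immediate from the chain structure, so the content is the reverse inclusion. Here the plan is to argue locally via the restriction functors: it suffices to show that the quotient $\J_{\max(i,j)}/\J_i\J_j$ vanishes, and since this is a subquotient of $\mathcal{H}_\kappa(n)$ viewed as a Harish-Chandra bimodule over itself, it is enough to check that its restriction $(\J_{\max(i,j)}/\J_i\J_j)_{\dagger,x}$ vanishes for $x$ generic in each symplectic leaf $\mathcal{L}_\ell$. By Lemma \ref{Lem:dag_assoc}, the associated variety of $\J_i\J_j$ and of $\J_{\max(i,j)}$ both meet $\overline{\mathcal{L}_{\max(i,j)}}$ and are disjoint from the larger leaves; for $x\in\mathcal{L}_\ell$ with $\ell>\max(i,j)$ the restriction of $\J_{\max(i,j)}$ is the whole slice algebra $\hat{\A}_\lambda(\hat v)$, which is a product of Cherednik algebras $\mathcal{H}_\kappa(m)$-type blocks at $\kappa=r/m$ (more precisely, $\hat Q$ has the same McKay type and $\hat v$ records the multiplicities of the parts of the partition), and $(\J_i)_{\dagger,x}$ is likewise a product of the analogous maximal ideals there. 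Thus the statement $\J_i\J_j=\J_{\max(i,j)}$ for $\mathcal{H}_\kappa(n)$ reduces, leaf by leaf, to the \emph{bottom} case where $\max(i,j)=q$, i.e.\ to showing $\J_{q}^2=\J_q$, equivalently that the ideal of finite codimension (the unique minimal nonzero ideal, whose quotient is a matrix algebra since $\hat{\A}_\lambda(\hat v)=\mathbb{C}$ at the deepest leaf) is idempotent. But a nonzero idempotent-generated ideal $\J_q$ with $\J_q^2\subsetneq\J_q$ would give a properly descending chain $\J_q\supsetneq\J_q^2\supsetneq\cdots$ of ideals with the same associated variety $\{0\}$, contradicting the fact (from \cite[Corollary 3.6]{BoKr}, used already in Proposition \ref{Prop:gen_simpl}) that ideals with zero-dimensional associated variety have finite codimension together with Lemma \ref{Lem:fin_length}; hence $\J_q^2=\J_q$, and propagating back up the chain via the restriction functors (which are exact and $\mathbb{C}[\paramq]$-linear, so commute with taking products of ideals in the obvious sense) gives $\J_i\J_j=\J_{\max(i,j)}$ in general.

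Finally I would assemble the pieces: the chain $\{0\}=\J_{q+1}\subsetneq\J_q\subsetneq\cdots\subsetneq\J_0=\mathcal{H}_\kappa(n)$ exhausts the two-sided ideals because any proper ideal $\J$ has $\VA(\mathcal{H}_\kappa(n)/\J)$ a proper Poisson subvariety of $V/\mathfrak{S}_n$ by \cite[Corollary 3.6]{BoKr}, hence contained in some $\overline{\mathcal{L}_i}$ with $i\geqslant 1$; one then shows $\J=\J_i$ for the maximal such $i$ by comparing restrictions $\J_{\dagger,x}$ and $(\J_i)_{\dagger,x}$ at a generic point $x$ of $\mathcal{L}_i$, both being of finite codimension in $\hat{\A}_\lambda(\hat v)$, and using that the slice algebra at that leaf has a unique ideal of that codimension (it is a product of matrix algebras and deeper Cherednik algebras, handled inductively on $n$). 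The main obstacle I anticipate is precisely the bookkeeping of the slice data: identifying $\hat Q,\hat v,\hat w$ at the leaf $\mathcal{L}_i$ with the quiver-variety description of the type $A$ Cherednik algebra $\mathcal{H}_\kappa(m)^{\otimes(q+1-i)}$ (tensored with a Weyl-algebra factor coming from the trivial leaf directions), so that the inductive hypothesis on $n$ can be invoked; this is a formal but somewhat delicate consequence of the combinatorics in Section \ref{SSS_class_slice} applied to the group $\mathfrak{S}_n$ acting on $\mathfrak{h}\oplus\mathfrak{h}^*$, together with the isomorphism $e\mathcal{H}_{\kappa,c}(n)e\cong\A_\lambda(v)$ from \cite[1.4]{EGGO} and Lemma \ref{Lem:spher}(2).
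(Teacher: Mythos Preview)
The paper does not give its own proof of this proposition: it is quoted from \cite[Section 5.8]{sraco} (see the sentence immediately preceding the statement). So there is nothing to compare your argument against in the paper itself; your task was really to reconstruct the argument from \cite{sraco}.

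Your overall strategy---restrict to slice algebras via $\bullet_{\dagger,x}$ and induct on $n$---is indeed the approach of \cite{sraco}, but several steps as written do not go through. First, you invoke Proposition~\ref{Prop:gen_simpl} to get simplicity when $\kappa$ is irrational or has denominator $>n$. That proposition only treats a \emph{Weil generic} parameter; it says nothing about a specific rational $\kappa$ with large denominator. The actual argument needs the input that $\mathcal{H}_\kappa(m')$ has no finite-dimensional representation for $m'\le n$ under that hypothesis on $\kappa$ (a fact from \cite{BEG}), combined with the slice description, and then the Step~2 reasoning of Proposition~\ref{Prop:gen_simpl} does apply. Lemma~\ref{Lem:spher}(2) and \cite[Corollary 4.2]{BE} concern sphericality, not simplicity, so that alternative does not help either.

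Second, and more seriously, your proof of $\J_q^2=\J_q$ is a non sequitur. Finite length of $\mathcal{H}_\kappa(n)$ as a HC bimodule (Lemma~\ref{Lem:fin_length}) only tells you the chain $\J_q\supseteq \J_q^2\supseteq\cdots$ \emph{stabilizes}; it does not force stabilization at the first step. In \cite{sraco} the idempotence $\J_i\J_j=\J_{\max(i,j)}$ is obtained differently: one first proves the classification of all two-sided ideals (so one knows in advance that $\J_i\J_j$, being a two-sided ideal with the correct associated variety, must equal some $\J_k$), and identifies $k=\max(i,j)$ by comparing associated varieties. Your ordering of the steps---trying to prove idempotence before the exhaustive classification---cannot work without an independent source of idempotence. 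Finally, be careful about circularity: the faithfulness of $\bullet_{\dagger,(m^q)}$ that you implicitly lean on is, in this paper, stated as Lemma~\ref{Lem:RCA_id_techn}, which is presented as a \emph{consequence} of the proposition you are proving.
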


We will also need the following lemma  that is a consequence of Proposition \ref{Prop:RCA_typeA_id}
and Lemma \ref{Cor:assoc_var}. Consider a point $x$ in the
symplectic leaf of $(\mathfrak{h}\oplus \mathfrak{h}^*)/\mathfrak{S}_n$ corresponding to the parabolic subgroup
$\mathfrak{S}_m^q\subset \mathfrak{S}_n$, where $q=\lfloor n/m\rfloor$.

\begin{Lem}\label{Lem:RCA_id_techn}
The functor $\bullet_{\dagger,x}$ is faithful.
\end{Lem}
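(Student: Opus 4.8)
The statement asserts that the restriction functor $\bullet_{\dagger,x}$ is faithful, where $x$ lies in the symplectic leaf of $(\mathfrak{h}\oplus\mathfrak{h}^*)^{\mathfrak{S}_n}$ attached to the parabolic $\mathfrak{S}_m^q\subset\mathfrak{S}_n$ and $n=qm$. Since $\bullet_{\dagger,x}$ is exact and $\C[\paramq]$-linear (Section \ref{SS_compl1}), to prove faithfulness it suffices to show that $\B_{\dagger,x}\neq 0$ for every nonzero HC bimodule $\B$ over the relevant algebra; exactness then upgrades this to faithfulness on morphisms in the usual way (a morphism $\varphi$ with $\varphi_{\dagger,x}=0$ has image and kernel with vanishing restriction, forcing $\varphi=0$ once we know $\bullet_{\dagger,x}$ kills no nonzero bimodule). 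So the whole problem reduces to: \emph{no nonzero HC bimodule $\B$ has $\B_{\dagger,x}=0$}.

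\textbf{Key steps.} First I would invoke Lemma \ref{Lem:dag_assoc}: $\VA(\B_{\dagger,x})$ is characterized by $\VA(\B_{\dagger,x})\times\mathcal{L}^{\wedge_x}=\VA(\B)^{\wedge_x}$, where $\mathcal{L}$ is the leaf through $x$. Hence $\B_{\dagger,x}=0$ if and only if $x\notin\VA(\B)$, i.e. if and only if $\overline{\mathcal{L}}\not\subseteq\VA(\B)$. So faithfulness of $\bullet_{\dagger,x}$ is equivalent to the geometric statement that $\overline{\mathcal{L}}\subseteq\VA(\B)$ for every nonzero HC $\A_\lambda(v)$-bimodule $\B$ (here $\A_\lambda(v)=e\mathcal{H}_\kappa(n)e$, up to the $D(\C)$-factor). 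Now by Lemma \ref{Cor:assoc_var}, $\VA(\B)=\VA(\A_\lambda(v)/\J_r)$ where $\J_r$ is the right annihilator of $\B$, and $\J_r$ is a proper two-sided ideal since $\B\neq 0$. Thus it remains to show that \emph{every proper two-sided ideal $\J$ of $e\mathcal{H}_\kappa(n)e$ (equivalently of $\mathcal{H}_\kappa(n)$, the two being Morita-related or the ideal lattices matching) has $\overline{\mathcal{L}}\subseteq\VA(\mathcal{H}_\kappa(n)/\J)$}. This is exactly where Proposition \ref{Prop:RCA_typeA_id} enters: it classifies the proper ideals $\J_q\subsetneq\cdots\subsetneq\J_1\subsetneq\J_0=\mathcal{H}_\kappa(n)$ and identifies $\VA(\mathcal{H}_\kappa/\J_i)$ as the closure of the leaf attached to $\mathfrak{S}_m^{q+1-i}$. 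The \emph{smallest} proper ideal is $\J_q$, with $\VA(\mathcal{H}_\kappa/\J_q)=\overline{\mathcal{L}}$ precisely for $\mathcal{L}$ the leaf of $\mathfrak{S}_m^q$ — our chosen $x$. Since the associated varieties of the $\mathcal{H}_\kappa/\J_i$ form an increasing chain with $\J_q$ minimal, every proper ideal $\J$ satisfies $\J\subseteq\J_q$ (as $\J$ must equal one of the $\J_i$, $i\geqslant 1$), hence $\VA(\mathcal{H}_\kappa/\J)\supseteq\VA(\mathcal{H}_\kappa/\J_q)=\overline{\mathcal{L}}$. This gives $x\in\VA(\B)$, so $\B_{\dagger,x}\neq 0$, and faithfulness follows.

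\textbf{Remaining technical point and main obstacle.} The one bookkeeping issue is the passage between ideals of $\mathcal{H}_\kappa(n)$ and HC bimodules over $\A_\lambda(v)=e\mathcal{H}_\kappa(n)e$; one must either observe that at the (possibly non-spherical) value $\kappa$ in question the argument only uses the \emph{annihilator} ideal, whose lattice of proper two-sided ideals matches that of $\mathcal{H}_\kappa(n)$ in the relevant range, or note that $\VA$ of a HC bimodule over $e\mathcal{H}_\kappa(n)e$ is computed from the same leaf stratification of $V/\Gamma_n$ and the ideal classification transports via $\mathcal{H}_\kappa e$. Since the statement of Proposition \ref{Prop:RCA_typeA_id} is about $\mathcal{H}_\kappa(n)$ while HC bimodules here are over $\bar\A_\kappa(n)=e\mathcal{H}_\kappa(n)e$, a short compatibility remark closes this gap. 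I expect the genuinely delicate part to be nothing more than this translation; the geometric core is immediate once Lemma \ref{Lem:dag_assoc}, Lemma \ref{Cor:assoc_var}, and Proposition \ref{Prop:RCA_typeA_id} are in hand. No further computation is needed.
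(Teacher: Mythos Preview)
Your overall strategy is exactly the one the paper intends (the lemma is stated there simply as a consequence of Proposition~\ref{Prop:RCA_typeA_id} and Lemma~\ref{Cor:assoc_var}, and your reduction via Lemma~\ref{Lem:dag_assoc} is the correct way to unpack that). However, the index bookkeeping in your final step is reversed. By Proposition~\ref{Prop:RCA_typeA_id} the associated variety of $\mathcal{H}_\kappa/\J_i$ is the closure of the leaf attached to $\mathfrak{S}_m^{\,q+1-i}$; hence it is $\J_1$ --- the \emph{largest} proper ideal --- for which $\VA(\mathcal{H}_\kappa/\J_1)=\overline{\mathcal{L}}$ with $\mathcal{L}$ the leaf of $\mathfrak{S}_m^q$, not $\J_q$ (whose quotient has associated variety the closure of the leaf for $\mathfrak{S}_m$, the biggest one in the chain). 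Likewise, since $\J_q$ is the smallest nonzero ideal, every proper ideal $\J$ satisfies $\J\supseteq\J_q$, not $\J\subseteq\J_q$ as you wrote.

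The corrected argument reads: every proper ideal $\J$ (including $\J=0$) lies in $\J_1$, so $\VA(\mathcal{H}_\kappa/\J)\supseteq\VA(\mathcal{H}_\kappa/\J_1)=\overline{\mathcal{L}}$, giving $x\in\VA(\B)$ as needed. Your two errors happen to cancel at the level of the conclusion, but the chain of implications as written is not valid. The bookkeeping remark about $\mathcal{H}_\kappa(n)$ versus $e\mathcal{H}_\kappa(n)e$ is harmless: the map $\J\mapsto e\J e$ preserves associated varieties, so the ideal classification transfers without change.
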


Let us use the notation from Proposition \ref{Prop:RCA_typeA_id}.
Set $\hat{\mathcal{H}}=H_{\kappa}(m)$ and let $\hat{\J}$ be the only proper two-sided ideal in
$\hat{\mathcal{H}}$. Let $\hat{\J}_i$ denote the two-sided ideal in $\hat{\mathcal{H}}^{\otimes q}$
that is obtained as the sum of all $q$-fold tensor products of $i$ copies of $\hat{\J}$
and $q-i$ copies of $\hat{\mathcal{H}}$ (in all possible orders).

\begin{Lem}\label{Lem:RCA_id_techn2}
Let $\J$ be a two-sided ideal in $\hat{\mathcal{H}}^{\otimes q}$. If
$\VA(\hat{\mathcal{H}}^{\otimes q}/\J)=\VA(\hat{\mathcal{H}}^{\otimes q}/\hat{\J}_i)$,
then $\J=\hat{\J}_i$.
\end{Lem}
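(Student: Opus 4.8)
The statement is essentially a uniqueness result: for the tensor algebra $\hat{\mathcal{H}}^{\otimes q}$, where $\hat{\mathcal{H}} = \mathcal{H}_\kappa(m)$ has exactly one proper two-sided ideal $\hat{\J}$, I want to show that a two-sided ideal $\J$ is determined by its associated variety once that variety is of the specific form $\VA(\hat{\mathcal{H}}^{\otimes q}/\hat{\J}_i)$. The strategy is to analyze the lattice of two-sided ideals in $\hat{\mathcal{H}}^{\otimes q}$ directly. First I would record that by Lemma \ref{Lem:fin_length} (applied to $\hat{\mathcal{H}}^{\otimes q}$ as a HC bimodule over itself, or rather its analog; note that $\hat{\mathcal{H}}$ is a slice algebra of the form $\mathcal{H}_\kappa(m)$) every proper quotient has finite length, so there are only finitely many two-sided ideals and one can argue by looking at minimal primes. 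The key structural input is that $\hat{\mathcal{H}}$ is prime (indeed, for the relevant $\kappa = r/m$ the algebra $\mathcal{H}_\kappa(m)$ is prime with $\hat{\J}$ its unique nonzero proper ideal, and $\hat{\mathcal{H}}/\hat{\J}$ is also prime by Proposition \ref{Prop:RCA_typeA_id} together with $\J_i \J_j = \J_{\max(i,j)}$), and $\hat{\mathcal{H}}$ has finite dimensional quotient $\hat{\mathcal{H}}/\hat{\J}$.

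\textbf{Key steps.} The heart of the argument is: for a tensor product $A = A_1 \otimes \cdots \otimes A_q$ of prime Noetherian $\C$-algebras, each of which has the property that its only proper two-sided ideal is one fixed ideal $\J^{(t)}$ with prime finite-dimensional quotient, the two-sided ideals of $A$ are exactly the sums $\hat{\J}_S := \sum_{t \in S} A_1 \otimes \cdots \otimes \J^{(t)} \otimes \cdots \otimes A_q$ over subsets $S \subseteq \{1, \dots, q\}$. I would prove this by induction on $q$: write $A = \hat{\mathcal{H}} \otimes B$ with $B = \hat{\mathcal{H}}^{\otimes(q-1)}$; given a two-sided ideal $\J \subseteq A$, consider its intersection $\J_0 := \J \cap (\hat{\J} \otimes B)$ and the image $\bar{\J}$ in $(\hat{\mathcal{H}}/\hat{\J}) \otimes B$. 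Since $\hat{\mathcal{H}}/\hat{\J}$ is a simple artinian algebra (it is the image of $\mathcal{H}_\kappa(m)$ in its finite dimensional irreducible representation, hence a matrix algebra), the two-sided ideals of $(\hat{\mathcal{H}}/\hat{\J}) \otimes B$ are $(\hat{\mathcal{H}}/\hat{\J}) \otimes \J_B$ for two-sided ideals $\J_B \subseteq B$, to which induction applies. A separate and easier argument handles $\J_0$: since $\hat{\J}$ is the unique nonzero proper ideal of the prime algebra $\hat{\mathcal{H}}$ and $\hat{\mathcal{H}} \otimes B$ is prime, $\hat{\J} \otimes B$ has a controlled ideal lattice, again reducible to $B$ by induction. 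Assembling the pieces identifies $\J$ with some $\hat{\J}_S$. Then I compute $\VA(\hat{\mathcal{H}}^{\otimes q}/\hat{\J}_S)$: by Lemma \ref{Cor:assoc_var}, $\VA$ of a quotient is governed by the associated variety, and the associated variety of $\hat{\mathcal{H}}/\hat{\J}$ is a proper closed leaf closure in the relevant slice, so $\VA(\hat{\mathcal{H}}^{\otimes q}/\hat{\J}_S)$ depends only on $|S|$ --- it is the product (over the $q$ tensor factors) of the full variety on factors not in $S$ and the small variety on factors in $S$. Hence $\VA(\hat{\mathcal{H}}^{\otimes q}/\hat{\J}_S) = \VA(\hat{\mathcal{H}}^{\otimes q}/\hat{\J}_i)$ forces $|S| = i$; but the ideals $\hat{\J}_S$ with $|S| = i$ are all equal to $\hat{\J}_i$ by the very definition ($\hat{\J}_i$ is the sum over all orderings, which is the same as: the smallest ideal containing all the $\hat{\J}_S$ with $|S| = i$; and in fact for prime $\hat{\mathcal{H}}$ one checks the individual $\hat{\J}_S$ with $|S|=i$ already coincide once we note products $\hat{\J}_S \hat{\J}_{S'} = \hat{\J}_{S \cup S'}$ and the symmetry of the situation --- more precisely, one shows directly from the ideal-lattice description that the only ideal whose variety equals that of $\hat{\J}_i$ is $\hat{\J}_i$ itself).

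\textbf{Main obstacle.} The technical crux is the classification of two-sided ideals of $\hat{\mathcal{H}}^{\otimes q}$, specifically the step where one passes to the quotient by $\hat{\J}$ in one factor: this requires knowing that $\hat{\mathcal{H}}/\hat{\J}$ is simple artinian (a matrix algebra) rather than merely prime, so that ideals of $(\hat{\mathcal{H}}/\hat{\J}) \otimes B$ factor as a tensor ideal. This follows because $\hat{\J}$, being the unique maximal two-sided ideal of the prime algebra $\mathcal{H}_\kappa(m)$ with finite dimensional quotient (from Proposition \ref{Prop:RCA_typeA_id} with $n = m$, where $q = 1$ and there is exactly one proper ideal $\J_1$ with $\VA$ the point-leaf), gives $\hat{\mathcal{H}}/\hat{\J}$ finite dimensional over $\C$; a finite dimensional prime algebra over $\C$ is simple artinian. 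The secondary obstacle is making rigorous the claim that distinct subsets $S$ of the same cardinality give the \emph{same} ideal $\hat{\J}_i$: here one uses that the conjugation/symmetry is not available inside a fixed tensor algebra, so instead one observes that $\hat{\J}_i$ as \emph{defined} in the statement is already the sum over all orderings, hence contains every $\hat{\J}_S$ with $|S| \le i$ that is "spread out," and one shows directly that any two-sided ideal with the prescribed associated variety must contain $\hat{\J}_i$ (using $\VA$-additivity and that smaller ideals have strictly larger quotient varieties) and be contained in it (using that $\hat{\J}_{i-1} \supsetneq \hat{\J}_i$ has strictly smaller variety). Once the ideal lattice is pinned down this is a finite combinatorial check.
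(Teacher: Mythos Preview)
There is a genuine gap: your classification of two-sided ideals in $\hat{\mathcal{H}}^{\otimes q}$ is incorrect. You claim the ideals are exactly the sums $\hat{\J}_S = \sum_{t\in S} \hat{\mathcal{H}}\otimes\cdots\otimes\hat{\J}\otimes\cdots\otimes\hat{\mathcal{H}}$ (one factor replaced at a time) over subsets $S\subseteq\{1,\ldots,q\}$, but already for $q=2$ the ideal $\hat{\J}\otimes\hat{\J}$ is a two-sided ideal not of this form: it is strictly contained in each of $\hat{\J}\otimes\hat{\mathcal{H}}$ and $\hat{\mathcal{H}}\otimes\hat{\J}$, strictly contains $0$, and is strictly smaller than their sum. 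Worse, in the paper's notation $\hat{\J}_2=\hat{\J}\otimes\hat{\J}$ is precisely one of the ideals the lemma is about, so your lattice misses the very objects you need to recover. More generally, every sum of pure tensors $\bigotimes_t I_t$ with $I_t\in\{\hat{\J},\hat{\mathcal{H}}\}$ is a two-sided ideal, and these are indexed (at least) by order ideals in $\{0,1\}^q$, not by subsets of $\{1,\ldots,q\}$. Your inductive step ``$\hat{\J}\otimes B$ has a controlled ideal lattice reducible to $B$'' is exactly where this breaks: sub-bimodules of $\hat{\J}\otimes B$ include $\hat{\J}\otimes J_B$ for ideals $J_B\subset B$, and these produce genuinely new ideals of $\hat{\mathcal{H}}\otimes B$ not on your list. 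The closing combinatorics is also off: your $\hat{\J}_{\{1\}}=\hat{\J}\otimes\hat{\mathcal{H}}$ and $\hat{\J}_{\{2\}}=\hat{\mathcal{H}}\otimes\hat{\J}$ are \emph{distinct} ideals with distinct (non-symmetric) associated varieties, and neither equals the paper's $\hat{\J}_1$.

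The paper avoids any full classification. It applies the restriction functor $\bullet_{\dagger,x}$ at a generic point of a component of $\VA(\hat{\mathcal{H}}^{\otimes q}/\J)$ to see that $\J$ lies in each kernel of the projections $\hat{\mathcal{H}}^{\otimes q}\to\hat{\mathcal{H}}^{\otimes ?}\otimes(\hat{\mathcal{H}}/\hat{\J})^{\otimes ?}$; a result from \cite{sraco} (Step 3 of the proof of Theorem~5.8.1 there) identifies the intersection of these kernels as $\hat{\J}_i$ and shows they are the minimal primes over $\J$, so $\hat{\J}_i=\sqrt{\J}$. The proof then finishes via the idempotency $\hat{\J}_i^2=\hat{\J}_i$ (again from \cite{sraco}): some power of $\hat{\J}_i$ lies in $\J$, hence $\hat{\J}_i\subset\J$. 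The key device you are missing is this radical-plus-idempotency argument, which makes the full ideal lattice irrelevant.
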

\begin{proof}
Let $x$ be a generic point in an irreducible component  in $\VA(\hat{\mathcal{H}}^{\otimes q}/\hat{\J}_i)$.
Recall that these components are labelled by $i$-element subsets of $\{1,\ldots,q\}$.
Then $\J_{\dagger,x}$ is a proper ideal in $\hat{\mathcal{H}}^{\otimes i}$.
It follows that $\J$ lies in the kernel of the projection of $\hat{\mathcal{H}}^{\otimes q}$
to the product of $q-i$ copies of $\hat{\mathcal{H}}$ and $i$ copies of
$\hat{\mathcal{H}}/\hat{\J}_i$ (in all possible orders). But
by Step 3 in the proof of \cite[Theorem 5.8.1]{sraco}, the intersection of
these kernels is $\hat{\J}_i$. So $\J\subset\hat{\J}_i$. Moreover, by {\it loc.cit.},
the kernels are precisely the minimal prime ideals containing $\J$.
So $\hat{\J}_i$ is the radical of $\J$. By {\it loc.cit.}, $\hat{\J}_i^2=\hat{\J}_i$.
Therefore $\hat{\J}_i=\J$.
\end{proof}

\subsection{Type A: case $\kappa=\frac{r}{n}$}\label{SS_aff_typeA_easy}
Here we assume that $n=m, \kappa=\frac{r}{n}$ with $\operatorname{GCD}(r,n)=1$
and $\kappa\not\in (-1,0)$.  In this case we have just one proper ideal $\J\subset \mathcal{H}:=\mathcal{H}_\kappa(n)$ and the quotient
$\mathcal{H}/\mathcal{J}$ is finite dimensional, this is a special case of Proposition
\ref{Prop:RCA_typeA_id}. The algebra $\bar{\A}_\kappa(m)$ is Morita equivalent
to $\mathcal{H}$.

\begin{Lem}
Claims (2)-(6) of Theorem \ref{Thm:affine_WC} hold for the algebras $\bar{\A}_\kappa(m)$.
\end{Lem}
\begin{proof}
These claims amount to the following two claims (*) and (**), where
(*) implies those claims for $i=1$, while (**) implies them for $i=0$):
\begin{itemize}
\item[(*)] We have $\operatorname{Tor}^{\mathcal{H}}_j(\B, \mathcal{H}/\J)=
\operatorname{Tor}^{\mathcal{H}'}_j(\mathcal{H}'/\J',\B)=0$ if $j\neq n-1$.
Furthermore, $\operatorname{Tor}^{\mathcal{H}}_{n-1}(\B, \mathcal{H}/\J)=
\operatorname{Tor}^{\mathcal{H}'}_{n-1}(\mathcal{H}'/\J',\B)=\operatorname{Hom}_{\C}(L,L')$,
where we write $L$ (resp., $L'$) for the simple finite dimensional
$\mathcal{H}$-module (resp., $\mathcal{H}'$-module).
\item[(**)] The kernels and cokernels of the natural homomorphisms
$$\B\otimes_{\mathcal{H}} \operatorname{Hom}_{\mathcal{H'}}(\B,\mathcal{H}')\rightarrow \mathcal{H}',
\operatorname{Hom}_{\mathcal{H}}(\B,\mathcal{H})\otimes_{\mathcal{H}'}\B\rightarrow \mathcal{H}$$
are finite dimensional.
\end{itemize}

The Tor vanishing statement in (*) is a consequence of (1) of Proposition \ref{Prop:long_shift}
(by Remark \ref{Rem:long_WC_opp}, $\B\otimes^L_{\mathcal{H}'}\bullet$ is still the wall-crossing functor for the categories of right modules). The category of finite dimensional $\mathcal{H}$-modules
(resp., of finite dimensional $\mathcal{H}'$-modules) is a semisimple category
with a single indecomposable object $L$ (resp., $L'$). By
(2) of Proposition \ref{Prop:long_shift}, $\B\otimes^L_{\mathcal{H}}L=L'[n-1]$
and $L'\otimes^L_{\mathcal{H}'}\B=L[n-1]$. So the equality for the
$\operatorname{Tor}_{n-1}$'s follows from  the previous sentence and isomorphisms
$\mathcal{H}/\J=\Hom_{\C}(L,L), \mathcal{H}'/\J'=\Hom_{\C}(L',L')$.

Let us proceed to (**). Apply the functor $\bullet_{\dagger,x}$ to the homomorphisms
of interest, where $x\in \M_0(v)$ is generic. For the homomorphism
$$\B\otimes_{\mathcal{H}} \operatorname{Hom}_{\mathcal{H'}}(\B,\mathcal{H}')\rightarrow \mathcal{H}'$$
we get a natural homomorphism
$$\B_{\dagger,x}\otimes_{\mathcal{H}_{\dagger,x}} \operatorname{Hom}_{\mathcal{H'}_{\dagger,x}}
(\B_{\dagger,x},\mathcal{H}'_{\dagger,x})\rightarrow \mathcal{H}_{\dagger,x}.$$
But the algebras and bimodules involved are all just $\C$. So we see that the latter
homomorphism is an isomorphism. It follows that the kernel and the cokernel
of $\B\otimes_{\mathcal{H}} \operatorname{Hom}_{\mathcal{H'}}(\B,\mathcal{H}')\rightarrow \mathcal{H}'$
are killed by $\bullet_{\dagger,x}$. So they have proper associated varieties and hence are
finite dimensional.
\end{proof}

\subsection{Chain of ideals}\label{SS_aff_ideals}
For a partition $\mu=(\mu_1,\ldots,\mu_k)$ with $|\mu|\leqslant n$ set $\bar{\A}(\mu)=\bigotimes_{i=1}^k\bar{\A}_\kappa(\mu_i)$ and define $\bar{\A}'(\mu)$
similarly. Consider the restriction functors $\bullet_{\dagger,\mu}:\HC(\A^{\param_0})\rightarrow
\HC(\C[\param_0]\otimes \bar{\A}(\mu)), \HC(\A'^{\,\param_0}\operatorname{-}\A^{\param_0})\rightarrow \HC(\C[\param_0]\otimes \bar{\A}'(\mu)\operatorname{-}\C[\param_0]\otimes\bar{\A}(\mu))$ etc. Let $\bullet^{\dagger, \mu}$ denote the right adjoint functor (defined on bimodules that are finitely generated over
$\C[\param_0]$ by Proposition \ref{Prop:restr_adj}). Recall, Proposition \ref{Prop:RCA_typeA_id},
that the ideals in the algebra $\bar{\A}(qm)$ form a chain: $\bar{\A}(qm)=\bar{\J}_0(qm)\supsetneq \bar{\J}_1(qm)\supsetneq \ldots\supsetneq \bar{\J}_q(qm)\supsetneq \bar{\J}_{q+1}(qm)=\{0\}$.
We set $\bar{\J}_i(m^q):=\bar{\J}_i(qm)_{\dagger,m^q}$. These are precisely the ideals
appearing before Lemma \ref{Lem:RCA_id_techn2}.
%The ideals
%$\bar{\J}_i(m^q)$ are precisely the $\mathfrak{S}_q$-invariant ideals
%in $\bar{\A}(m^q)$.

We set $\J^{\param_0}_i$ to be the kernel of the natural map $$\A^{\param_0}\rightarrow (\C[\param_0]\otimes [\bar{\A}(m)/\bar{\J}_1(m)]^{\otimes q+1-i})^{\dagger, (m^{q+1-i})},$$
and define $\J'^{\,\param_0}_i$ similarly.

\begin{Rem}\label{Rem:Ji_defn}
Note that, by the definition of $\J^{\param_0}_i$ the following is true. If $\J\subset \A^{\param_0}$ is such that
$\J_{\dagger, (m^{q+1-i})}$ is in the kernel of $\C[\param_0]\otimes\bar{\A}(m^{q+1-i})\twoheadrightarrow
\C[\param_0]\otimes [\bar{\A}(m)/\bar{\J}_1(m)]^{\otimes q+1-i}$, then $\J\subset \J^{\param_0}_i$.
\end{Rem}

We are going to establish some properties of these ideals. First, let us describe properties that hold for all
parameters $c$.

\begin{Lem}\label{Lem:J_prop_easy}
The following is true.
\begin{itemize}
\item[(a)] $(\J^c_i)_{\dagger,(m^{q+1-i})}$ coincides with the maximal ideal of $\bar{\A}(m^{q+1-i})$.
\item[(b)] $\VA(\A^c/\J^c_i)$ coincides with $\overline{\mathcal{L}}_{(m^{q+1-i})}$, where the latter
is the closure of the symplectic leaf corresponding to the subgroup $\mathfrak{S}_m^{q+1-i}\subset \Gamma_n$.
\item[(c)] $(\J^c_i)_{\dagger, (m^q)}=\bar{\J}_i(m^q)$. Moreover,
$(\J^c_i)_{\dagger,(qm)}=\bar{\J}_i(qm)$.
\item[(d)] $\J^{\param_0}_q\subset \J^{\param_0}_{q-1}\subset \ldots\subset \J^{\param_0}_1$.
\end{itemize}
Similar claims hold for $\J'^c_i, \J'^{\,\param_0}_i$.
\end{Lem}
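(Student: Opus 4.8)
The plan is to prove the four statements (a)--(d) essentially by reducing everything to the type $A$ Rational Cherednik algebra computations already available in the excerpt (Proposition~\ref{Prop:RCA_typeA_id}, Lemmas~\ref{Lem:RCA_id_techn}, \ref{Lem:RCA_id_techn2}) together with the formal properties of the restriction functor $\bullet_{\dagger,x}$ from Sections~\ref{SS_compl}--\ref{SS_compl2}. First I would establish (a). By the construction of $\J^{\param_0}_i$ as the kernel of $\A^{\param_0}\to (\C[\param_0]\otimes[\bar\A(m)/\bar\J_1(m)]^{\otimes q+1-i})^{\dagger,(m^{q+1-i})}$, adjunction gives a nonzero map $(\J^{\param_0}_i)_{\dagger,(m^{q+1-i})}\to$ (the ideal that is the kernel of $\bar\A(m^{q+1-i})\twoheadrightarrow [\bar\A(m)/\bar\J_1(m)]^{\otimes q+1-i}$), and using exactness of $\bullet_{\dagger,x}$ plus the unit/counit identities one checks that $(\J^{\param_0}_i)_{\dagger,(m^{q+1-i})}$ is exactly this kernel. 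But $\bar\J_1(m)$ is the unique proper (hence maximal, finite codimension) ideal of $\bar\A_\kappa(m)=\bar\A(m)$ by Proposition~\ref{Prop:RCA_typeA_id} with $q=1$; so the kernel of the surjection onto $[\bar\A(m)/\bar\J_1(m)]^{\otimes q+1-i}$ is exactly the maximal ideal of $\bar\A(m^{q+1-i})$ (the ideal $\hat\J_{q+1-i}$ in the notation of Lemma~\ref{Lem:RCA_id_techn2} with all factors taken, i.e. the full sum of tensor products where every factor is $\bar\J_1(m)$). Specializing at $c$ and using that restriction is $\C[\paramq]$-linear (hence commutes with specialization), we get (a).

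Next, (b) follows from (a) together with Lemma~\ref{Lem:dag_assoc} and Lemma~\ref{Cor:assoc_var}: the associated variety of $\A^c/\J^c_i$ is determined locally near a generic point $x$ of each leaf by $\VA\big((\A^c/\J^c_i)_{\dagger,x}\big)$, and $(\A^c/\J^c_i)_{\dagger,x}=\bar\A(\text{slice partition})/(\text{ideal})_{\dagger,x}$. By (a) this quotient is finite-dimensional precisely when $x$ lies in $\overline{\mathcal L}_{(m^{q+1-i})}$ — the leaf whose slice quiver/group is $\mathfrak S_m^{q+1-i}$ — and is the whole slice algebra on smaller leaves; on larger leaves the slice partition has fewer than $q+1-i$ parts of size $m$, so the corresponding restriction of $\J^c_i$ is all of the slice algebra, forcing $x\notin\VA(\A^c/\J^c_i)$. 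Compatibility of the stratification of $\M_0(v)=\C^{2n}/\Gamma_n$ with parabolic subgroups $\mathfrak S_m^j\subset\Gamma_n$ gives the statement. For (c), apply the restriction functor to slices $(m^q)$ and $(qm)$: by Lemma~\ref{Lem:tens_dag_intertw}-style functoriality and transitivity of $\bullet_{\dagger,x}$ (restricting in two stages), $(\J^c_i)_{\dagger,(m^q)}$ is computed as the kernel of $\bar\A(m^q)\to[\bar\A(m)/\bar\J_1(m)]^{\otimes i}$ (the $q+1-i$ trivial factors disappear on this leaf), and by the definition of $\bar\J_i(m^q)$ given just before Lemma~\ref{Lem:RCA_id_techn2} together with Step~3 of the proof of \cite[Theorem 5.8.1]{sraco} (quoted inside Lemma~\ref{Lem:RCA_id_techn2}) this kernel is exactly $\bar\J_i(m^q)$. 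The same computation on the leaf $(qm)$, using $\bar\J_i(qm)_{\dagger,(m^q)}=\bar\J_i(m^q)$ and the faithfulness of $\bullet_{\dagger,(m^q)}$ from Lemma~\ref{Lem:RCA_id_techn} to lift back, gives $(\J^c_i)_{\dagger,(qm)}=\bar\J_i(qm)$.

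Finally, (d) is a formal consequence: to show $\J^{\param_0}_{i+1}\subset\J^{\param_0}_i$ it suffices, by Remark~\ref{Rem:Ji_defn}, to verify that $(\J^{\param_0}_{i+1})_{\dagger,(m^{q+1-i})}$ lies in the kernel of $\bar\A(m^{q+1-i})\twoheadrightarrow[\bar\A(m)/\bar\J_1(m)]^{\otimes q+1-i}$; but by (a) applied with $i+1$ in place of $i$, together with transitivity of restriction (restrict first to $(m^{q-i})$, then the ambient slice algebra $\bar\A(m^{q+1-i})$ further restricts), $(\J^{\param_0}_{i+1})_{\dagger,(m^{q+1-i})}$ is contained in $\hat\J_{q-i}\subset\bar\A(m^{q+1-i})$ (the sum of tensor products with $q-i$ copies of the maximal ideal), which visibly maps to $0$ in $[\bar\A(m)/\bar\J_1(m)]^{\otimes q+1-i}$. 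The statements for $\J'^c_i,\J'^{\,\param_0}_i$ are proved verbatim with $\kappa'$ in place of $\kappa$. I expect the main obstacle to be bookkeeping with the two-stage restriction and the precise identification in (c): one must be careful that $\bullet_{\dagger,x}$ for a non-generic leaf point factors correctly through restriction to an intermediate leaf and that faithfulness (Lemma~\ref{Lem:RCA_id_techn}) is available exactly where it is needed to pull identities of ideals back from slices to $\A^{\param_0}$ — this is where the type $A$ input of \cite{sraco} does the real work.
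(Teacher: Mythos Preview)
There is a genuine gap in your argument for (a). Adjunction and the triangle identities give only one inclusion: composing $(\A^{\param_0})_{\dagger}\to (N^\dagger)_\dagger$ with the counit recovers the original surjection $\phi\colon\bar\A(m^{q+1-i})\twoheadrightarrow[\bar\A(m)/\bar\J_1(m)]^{\otimes(q+1-i)}$, hence $(\J^c_i)_{\dagger,(m^{q+1-i})}\subset\ker\phi$. The reverse inclusion would require the counit $(N^\dagger)_\dagger\to N$ to be injective on the image of $(\A^{\param_0})_\dagger$, and that does not follow from the formal adjunction package. The paper proceeds differently: it first invokes Proposition~\ref{Prop:restr_adj} to conclude that $N^\dagger$, and hence its subbimodule $\A^c/\J^c_i$, has associated variety contained in $\overline{\mathcal L}_{(m^{q+1-i})}$. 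Lemma~\ref{Lem:dag_assoc} then forces $\VA\bigl(\bar\A(m^{q+1-i})/(\J^c_i)_{\dagger,(m^{q+1-i})}\bigr)$ to be a single point, and Lemma~\ref{Lem:RCA_id_techn2} identifies the ideal with the maximal one. So one inclusion of (b) is logically an \emph{input} to (a), not a consequence of it.

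Your arguments for (c) and (d) via ``transitivity of restriction'' are also not correct as written. For $i>1$ the leaf $(m^q)$ is \emph{more} singular than $(m^{q+1-i})$, so transitivity lets you factor $\bullet_{\dagger,(m^{q+1-i})}$ through $\bullet_{\dagger,(m^q)}$ followed by a further restriction inside the slice, not the other way around. Knowing a further restriction of $(\J^c_i)_{\dagger,(m^q)}$ does not by itself determine that ideal; the phrase about ``$q+1-i$ trivial factors disappearing'' and the target $[\bar\A(m)/\bar\J_1(m)]^{\otimes i}$ do not correspond to any map in the setup. The paper again routes through associated varieties: (b) together with Lemma~\ref{Lem:dag_assoc} computes $\VA\bigl(\bar\A(m^q)/(\J^c_i)_{\dagger,(m^q)}\bigr)$, and Lemma~\ref{Lem:RCA_id_techn2} then forces the ideal to equal $\bar\J_i(m^q)$; the $(qm)$ case is handled the same way using Proposition~\ref{Prop:RCA_typeA_id}. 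For (d) the paper needs only that $(\J^c_i)_{\dagger,(m^{q+2-i})}$ is a \emph{proper} ideal, which is immediate from (b) and Lemma~\ref{Lem:dag_assoc}, hence it is contained in the unique maximal ideal and Remark~\ref{Rem:Ji_defn} finishes. The associated-variety bound coming from Proposition~\ref{Prop:restr_adj} is the engine your outline is missing.
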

\begin{proof}
The ideal $(\J^c_i)_{\dagger,(m^{q+1-i})}\subset \bar{\A}(m^{q+1-i})$ is contained in the maximal ideal
as the latter is the kernel of $\bar{\A}(m^{q+1-i})\twoheadrightarrow (\bar{\A}(m)/\bar{\J}_1(m))^{\otimes (q+1-i)}$.
The inclusion $\VA(\A^c/\J^c_i)\subset\overline{\mathcal{L}}_{(m^{q+1-i})}$ follows from Proposition \ref{Prop:restr_adj}.
By Lemma \ref{Lem:dag_assoc}, $\VA(\bar{\A}^{\otimes q+1-i}/\J^c_{i,\dagger,(m^{q+1-i})})$ is a point.
%The maximal ideal in $\bar{\A}^{\otimes q+1-i}$  is the only one whose associated variety is a single point.
The equality in (a) follows from Lemma \ref{Lem:RCA_id_techn2}.
In its turn, the equality in (a) implies the equality in (b).

(b) implies that  $\VA(\bar{\A}(m^q)/(\J^c_i)_{\dagger, (m^q)})=
\VA(\bar{\A}(m^q)/\bar{\J}_i(m^q))$. Lemma \ref{Lem:RCA_id_techn2}
yields $(\J^c_i)_{\dagger, (m^q)}=\bar{\J}_i(m^q)$.
The equality $(\J^c_i)_{\dagger,(qm)}=\bar{\J}_i(qm)$ is proved similarly
using Proposition \ref{Prop:RCA_typeA_id}.

Let us prove (d). We remark that $(\J^c_i)_{\dagger,(m^{q+2-i})}$ is a proper ideal because
its associated variety (computed using Lemma \ref{Lem:dag_assoc}) is proper.
Hence $(\J^c_i)_{\dagger,(m^{q+2-i})}$ is contained in the maximal ideal of $\bar{\A}(m^{q+2-i})$.  It follows that
$(\J^{\param_0}_i)_{\dagger, (m^{q+2-i})}$ lies in the kernel of the
epimorphism $\C[\param_0]\otimes\bar{\A}(m^{q+2-i})\twoheadrightarrow \C[\param_0]\otimes (\bar{\A}(m)/\bar{\J}_1(m))^{\otimes (q+2-i)}$.
The inclusion $\J^{\param_0}_i\subset \J^{\param_0}_{i-1}$ follows from Remark
\ref{Rem:Ji_defn}.
\end{proof}

Now let us analyze what happens when $c$ is Weil generic.

\begin{Lem}\label{Prop:gen_c}
Let $c$ be Weil generic. Then the following is true:
\begin{enumerate}
\item The functor $\bullet_{\dagger,(m^q)}$ is faithful.
\item The ideals  $\J^c_i, i=1,\ldots,q,$
exhaust all proper ideals in $\A^c$.
\item $\J^c_i \J^c_j=\J^c_{\max(i,j)}$.
\end{enumerate}
\end{Lem}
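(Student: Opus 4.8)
The plan is to prove the three assertions for Weil generic $c$ by combining the structural results on type A Rational Cherednik algebras (Proposition \ref{Prop:RCA_typeA_id}), the compatibility of restriction functors with associated varieties (Lemma \ref{Lem:dag_assoc}), and the faithfulness criterion recorded in Lemma \ref{Lem:RCA_id_techn} together with its tensor-product analogue.

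\textbf{Step 1: faithfulness of $\bullet_{\dagger,(m^q)}$.}
First I would observe that the point $(m^q)$ corresponds, on the affine quiver side, to a symplectic leaf of $\M^0(n\delta)$ whose slice algebra is $\bar{\A}_\kappa(m)^{\otimes q}$, and that after the McKay-type identification $e\mathcal{H}_{\kappa,c}(n)e\cong \A_{\lambda}(n\delta)$ the restriction functor $\bullet_{\dagger,(m^q)}$ becomes (up to Morita equivalence and the usual splitting off of a Weyl algebra factor) the restriction functor for the RCA $\mathcal{H}_\kappa(n)$ at a point of the leaf of the parabolic $\mathfrak{S}_m^q\subset\mathfrak{S}_n$. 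Since $c$ is Weil generic, $c^\circ$-type terms play no role and we are genuinely in the situation of Lemma \ref{Lem:RCA_id_techn}, which asserts exactly that this functor is faithful. A small point to address: a faithful exact functor on a category of HC bimodules detects vanishing of bimodules, hence also injectivity/surjectivity of bimodule maps by applying it to kernels and cokernels; this is what we will use in Steps 2 and 3.

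\textbf{Step 2: the $\J^c_i$ exhaust the proper ideals.}
Let $\J\subset \A^c$ be a proper two-sided ideal. Then $\VA(\A^c/\J)$ is a proper Poisson subvariety of $\M(n\delta)$, hence contained in the union of the leaf closures $\overline{\mathcal{L}}_{(m^{q+1-i})}$ for various $i\geq 1$ together with possibly smaller strata; but the only leaf closures of the form "disjoint copies of the minimal nilpotent orbit of $\operatorname{Hilb}^m$" that can occur as $\VA$ of a quotient of $\A^c$ for Weil generic $c$ are exactly the $\overline{\mathcal{L}}_{(m^{q+1-i})}$, because for a Weil generic $c$ the slice algebras at all other leaves have no proper ideals (this is where we invoke that $\bar{\A}_\kappa(\ell)$ is simple for $\ell<m$ by Lemma \ref{Lem:spher}(2)/Proposition \ref{Prop:RCA_typeA_id}, combined with Lemma \ref{Cor:assoc_var}). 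So $\VA(\A^c/\J)=\overline{\mathcal{L}}_{(m^{q+1-i})}$ for some $i$. Applying $\bullet_{\dagger,(m^q)}$ and using Lemma \ref{Lem:dag_assoc} we get that $\J_{\dagger,(m^q)}$ is a two-sided ideal of $\bar{\A}(m^q)=\bar{\A}_\kappa(m)^{\otimes q}$ with the same associated variety as $\bar{\J}_i(m^q)$; Lemma \ref{Lem:RCA_id_techn2} forces $\J_{\dagger,(m^q)}=\bar{\J}_i(m^q)=(\J^c_i)_{\dagger,(m^q)}$ (the last equality is Lemma \ref{Lem:J_prop_easy}(c)). By faithfulness of $\bullet_{\dagger,(m^q)}$ (Step 1), applied to the inclusions $\J\cap\J^c_i\hookrightarrow\J$ and $\J\cap\J^c_i\hookrightarrow\J^c_i$ — both of which become equalities after $\dagger,(m^q)$ — we conclude $\J=\J^c_i$. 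This also shows the chain in Lemma \ref{Lem:J_prop_easy}(d) is strict and that these are distinct, so the list $\J^c_1\supsetneq\cdots\supsetneq\J^c_q$ is the complete list of proper ideals.

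\textbf{Step 3: multiplicativity $\J^c_i\J^c_j=\J^c_{\max(i,j)}$.}
Assume WLOG $i\leq j$, so $\J^c_j\subset\J^c_i$ and hence $\J^c_i\J^c_j\subset\J^c_j$; we must show the reverse inclusion, equivalently $\J^c_j/\J^c_i\J^c_j=0$. This is a HC $\A^c$-bimodule whose associated variety is contained in $\VA(\A^c/\J^c_i)=\overline{\mathcal{L}}_{(m^{q+1-i})}$, a proper subvariety; by faithfulness (Step 1) it suffices to show it dies after $\bullet_{\dagger,(m^q)}$. Since $\bullet_{\dagger,(m^q)}$ is exact, $\C[\param]$-linear and multiplicative on composable bimodules (Lemma \ref{Lem:tens_dag_intertw} gives compatibility with Tor$_0$, i.e. with tensor products, hence with products of ideals), we get $(\J^c_i\J^c_j)_{\dagger,(m^q)}=(\J^c_i)_{\dagger,(m^q)}(\J^c_j)_{\dagger,(m^q)}=\bar{\J}_i(m^q)\bar{\J}_j(m^q)$, and by the identity $\hat{\J}_i\hat{\J}_j=\hat{\J}_{\max(i,j)}$ recorded in Proposition \ref{Prop:RCA_typeA_id} (transported through the $\dagger$ functor, as in the proof of Lemma \ref{Lem:RCA_id_techn2}) this equals $\bar{\J}_j(m^q)=(\J^c_j)_{\dagger,(m^q)}$. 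Hence $(\J^c_j/\J^c_i\J^c_j)_{\dagger,(m^q)}=0$ and therefore $\J^c_j/\J^c_i\J^c_j=0$, proving (3). The statements for $\J'^{\,c}_i$ are identical with $\kappa'$ in place of $\kappa$.

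\textbf{Main obstacle.}
I expect the genuine difficulty to be Step 1 — verifying that the relevant restriction functor really is faithful in our quiver-variety incarnation. The cited Lemma \ref{Lem:RCA_id_techn} is stated for the RCA $\mathcal{H}_\kappa(n)$ directly, so one must carefully match the leaf of $\mathfrak{S}_m^q$, the slice algebra $\bar{\A}_\kappa(m)^{\otimes q}$ (after splitting off the Weyl algebra factor coming from the loop at the extending vertex), and the associated-variety bookkeeping of Lemma \ref{Lem:dag_assoc}, and one must confirm that "Weil generic" is exactly the hypothesis under which all intermediate slice algebras $\bar{\A}_\kappa(\ell)$, $\ell<m$, are simple so that no spurious proper ideals supported on smaller strata appear. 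Once faithfulness is in hand, Steps 2 and 3 are formal consequences of Proposition \ref{Prop:RCA_typeA_id}, Lemma \ref{Lem:RCA_id_techn2} and the multiplicativity of $\bullet_{\dagger,(m^q)}$.
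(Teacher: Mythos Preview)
There is a genuine gap in Step 1, and it propagates into Step 2.

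You write that ``after the McKay-type identification $e\mathcal{H}_{\kappa,c}(n)e\cong \A_{\lambda}(n\delta)$ the restriction functor $\bullet_{\dagger,(m^q)}$ becomes \dots\ the restriction functor for the RCA $\mathcal{H}_\kappa(n)$'', and then invoke Lemma~\ref{Lem:RCA_id_techn}. But that lemma concerns the \emph{type A} algebra $\mathcal{H}_\kappa(n)$ with $n=qm$, whereas here $\A^c$ is the spherical SRA for the wreath product $\Gamma_n=\mathfrak{S}_n\ltimes\Gamma_1^n$. These are different algebras; the source category of $\bullet_{\dagger,(m^q)}$ is $\HC(\A^c)$, not $\HC(\bar{\A}_\kappa(n))$, and there is no identification that makes Lemma~\ref{Lem:RCA_id_techn} apply directly. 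The phrase ``since $c$ is Weil generic, $c^\circ$-type terms play no role'' does not make this reduction: the parameter $c$ governs the $\Gamma_1$-part of the SRA, and for \emph{special} $c$ the algebra $\A^c$ can acquire finite-dimensional representations supported on leaves disjoint from $\mathcal{L}_{(m^q)}$, destroying faithfulness.

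What the paper actually does is supply the missing ingredient: it first shows, by a Calogero--Moser argument (namely $\M_{\param_0}(n\delta)=\M_{\param_0}(\delta)^n/\mathfrak{S}_n$, so for generic $p\in\param_0$ the minimal leaf has dimension $2$), that for Weil generic $c$ the algebras $e\mathcal{H}_{\kappa,c}(n')e$, $0<n'\leqslant n$, have no finite-dimensional representations. This forces the associated variety of any nonzero HC $\A^c$-bimodule to contain the leaf $\mathcal{L}_n$ (corresponding to $\mathfrak{S}_n\subset\Gamma_n$), since the slice algebra at any other leaf has such an SRA tensor factor. Only \emph{then} does the type A input enter: since $\mathcal{H}_\kappa(\ell)$ has a finite-dimensional irreducible iff $\kappa$ has denominator exactly $\ell$, the associated variety must in fact contain $\mathcal{L}_{(m^q)}$, and faithfulness follows from Lemma~\ref{Lem:dag_assoc}. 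Your Step 2 has a parallel error: you invoke simplicity of $\bar{\A}_\kappa(\ell)$ for $\ell<m$ to rule out other leaves, but the slice algebras at the leaves you need to exclude are not type A algebras --- they contain SRA factors $e\mathcal{H}_{\kappa,c}(n')e$, and ruling those out again requires the Weil-generic no-finite-dimensional-reps statement. Once faithfulness is correctly established, your Steps 2 and 3 (via $(m^q)$ and Lemma~\ref{Lem:RCA_id_techn2}) are a legitimate alternative to the paper's route via $(qm)$ and the chain of ideals in $\bar{\A}(qm)$.
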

\begin{proof}
Let us show that for a Weil generic $c$, the algebra $\A^c$ has no finite dimensional representations.
Similarly to the proof of Proposition \ref{Prop:gen_simpl}, we see that otherwise there is a two-sided ideal ${\bf J}\subset\A^{\param_0}$ such that $\A^{\param_0}/{\bf J}$ is generically flat and finite over $\C[\param_0]$
and  $\Supp^r_{\paramq}(\A^{\param_0}/{\bf J})=\param_0$. So, by Proposition \ref{Prop:HC_support},
for the Poisson ideal $\operatorname{gr}{\bf J}\subset\C[\M_{\param_0}(v)]$
we have $\Supp_{\param}(\C[\M_{\param_0}(v)]/\operatorname{gr}{\bf J})=\param_0$. It follows
that, for every $p\in \param_0$, the variety $\M_p(v)$ contains a point that is a symplectic leaf.  We remark that
$\M_{\param_0}(v)=\M_{\param_0}(\delta)^n/\mathfrak{S}_n$ (the power is taken over $\param_0$),
this follows from the description of $\M_{\param_0}(v)$
as the generalized Calogero-Moser space, see \cite[Section 11]{EG}.
For $p$ generic, $\M_p(\delta)$ is smooth and symplectic and so the minimal dimension of a symplectic leaf in
$\M_p(v)$ is $2$. We arrive at a contradiction that shows that $\A^c$ has no finite dimensional
representations provided $c$ is Weil generic.

Now we are in position to prove (1). This boils down to checking that the associated variety of any HC $\A^c$-bimodule (or HC $\A'^c$-$\A^c$-bimodule, etc.) contains $\mathcal{L}_{(m^q)}$. First of all, let us show that the associated variety contains $\mathcal{L}_{n}$, the symplectic leaf corresponding to $\mathfrak{S}_n\subset \Gamma_n$. Indeed, the slice algebra $\bar{\A}_\lambda(\hat{v})$ for any leaf not containing $\mathcal{L}_n$ has a tensor
factor isomorphic to $e \mathcal{H}_{\kappa,c}(n')e$ with nonzero $n'\leqslant n$.
But that algebra has no finite dimensional irreducible representations by the first
paragraph of the proof, a contradiction. So we see that the associated variety is contained in $\mathcal{L}_n$.
Hence it is $\mathcal{L}_{\mu}$ for some partition $\mu$. The slice algebra $\bar{\A}_\lambda(\hat{v})$
is the product $\bigotimes_{i=1}^k e\mathcal{H}_\kappa(\mu_i)e$.

That the associated variety contains $\mathcal{L}_{(m^q)}$ follows from the fact that the algebra $\mathcal{H}_\kappa(n')$
has a finite dimensional irreducible representation if and only if $\kappa$ has denominator precisely $n'$.
The proof of faithfulness of $\bullet_{\dagger,(m^q)}$ is now complete.

Let us proceed to the proof of (2) and (3). By (b) of Lemma \ref{Lem:J_prop_easy}, $\VA(\A^c/\J^c_i)=
\overline{\mathcal{L}}_{(m^{q+1-i})}$. The functor $\bullet_{\dagger, (qm)}$ is faithful,
this follows from the argument in the previous paragraph.
So the map $\J\mapsto \J_{\dagger,(qm)}$ embeds the poset of two-sided ideals in
$\A^c$ into that for $\bar{\A}(qm)$. (2) follows from here and
Proposition \ref{Prop:RCA_typeA_id}. To check (3) note that
$\bullet_{\dagger,(qm)}$ respects the products of ideals
and again use Proposition \ref{Prop:RCA_typeA_id}.
\end{proof}

Now let us transfer some of the properties in Lemma \ref{Prop:gen_c} to the case when $c$ is only Zariski generic.

\begin{Lem}\label{Lem:ideals_easy}
We have $\J^c_{i}\J^c_{j}=\J^c_{\max(i,j)}$ for $c$ in some non-empty Zariski open subset of $\param_0$.
\end{Lem}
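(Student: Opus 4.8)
The statement $\J^c_i\J^c_j=\J^c_{\max(i,j)}$ is a \emph{generic} refinement of the multiplicativity that we already know holds for Weil generic $c$ by (3) of Lemma \ref{Prop:gen_c}. The strategy is to reduce it to the closedness of a certain locus in $\param_0$, using the generic freeness machinery of Section \ref{SS_HC_fam} together with the restriction functors $\bullet_{\dagger,x}$ and their behaviour on products of ideals established in Lemma \ref{Lem:tens_dag_intertw} and formula (\ref{eq:transl_restr}).

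First I would observe that $\J^{\param_0}_i\J^{\param_0}_j$ is a HC $\A^{\param_0}$-sub-bimodule of $\J^{\param_0}_{\max(i,j)}$: the inclusion $\J^{\param_0}_i\J^{\param_0}_j\subset \J^{\param_0}_{\max(i,j)}$ is clear from (d) of Lemma \ref{Lem:J_prop_easy}, while the reverse containment is exactly what we want after specialization. Consider the quotient $\mathcal{B}:=\J^{\param_0}_{\max(i,j)}/\J^{\param_0}_i\J^{\param_0}_j$; this is a HC $\A^{\param_0}$-bimodule. By (2) of Corollary \ref{Cor:HC_supp}, $\Supp^r_{\param_0}(\mathcal{B})$ is constructible, and by the stronger Proposition \ref{Prop:HC_support} it is in fact closed with $\mathsf{AC}(\Supp^r_{\param_0}(\mathcal{B}))=\Supp_{\param}(\gr\mathcal{B})$. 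By Lemma \ref{Prop:gen_c}(3), the specialization $\mathcal{B}_c$ vanishes for Weil generic $c$; since $\Supp^r_{\param_0}(\mathcal{B})$ is closed and a Weil generic point lies outside it, it is a \emph{proper} closed subset of $\param_0$. Its complement is the required non-empty Zariski open subset on which $\J^c_i\J^c_j=\J^c_{\max(i,j)}$.

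The one subtlety to address is that a priori $\mathcal{B}_c$ need not equal $\J^c_{\max(i,j)}/\J^c_i\J^c_j$, because forming products of ideals and specializing need not commute, and also because $\J^c_i$ (defined via the adjoint functor $\bullet^{\dagger,(m^{q+1-i})}$ applied in the specialized setting) need not a priori be the specialization of $\J^{\param_0}_i$. Both points are handled by shrinking $\param_0$: by the generic freeness Lemma \ref{Lem:HC_gen_flat} and Corollary \ref{Cor:HC_supp}(1) applied to the finitely many HC bimodules $\J^{\param_0}_i$, $\A^{\param_0}/\J^{\param_0}_i$, and their pairwise products, there is a principal open $\param_0^{\flat}\subset\param_0$ over which all of these are $\C[\param_0^{\flat}]$-free, hence have well-behaved specializations; over such a locus the natural maps $(\J^{\param_0}_i)_c\to\J^c_i$ are isomorphisms (compatibility of $\bullet_{\dagger,x}$ and $\bullet^{\dagger,x}$ with the $\C[\param_0]$-linear structure, cf. Section \ref{SS_compl1}), and $(\J^{\param_0}_i\J^{\param_0}_j)_c=\J^c_i\J^c_j$ because tensor products of free modules specialize. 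Intersecting $\param_0^{\flat}$ with the complement of $\Supp^r_{\param_0}(\mathcal{B})$ gives the desired open set.

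\textbf{Main obstacle.} The delicate point is precisely the last one: ensuring that the ideals $\J^c_i$ appearing in the statement genuinely coincide with the specializations of the universal ideals $\J^{\param_0}_i$, so that the product $\J^c_i\J^c_j$ can be computed from the universal family. This requires knowing that the adjoint functor $\bullet^{\dagger,(m^{q+1-i})}$ commutes with base change in $c$ over a suitable open locus — which is not immediate from its definition as a right adjoint, but follows from $\C[\param_0]$-linearity (Remark \ref{Rem:adjoint}) combined with generic freeness. Once this bookkeeping is in place, the argument is a routine application of the closedness of $\param$-supports of HC bimodules.
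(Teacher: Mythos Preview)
Your proposal is correct and follows essentially the same route as the paper: form the quotient $\mathcal{B}=\J^{\param_0}_{\max(i,j)}/\J^{\param_0}_i\J^{\param_0}_j$, invoke Lemma~\ref{Prop:gen_c}(3) for Weil generic $c$, and conclude via the constructibility/closedness of the $\param_0$-support of a HC bimodule (Corollary~\ref{Cor:HC_supp}) that $\mathcal{B}_c=0$ on a Zariski open set.

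One remark on your ``main obstacle'': in this paper the ideals $\J^c_i$ are \emph{defined} as the specializations of $\J^{\param_0}_i$ (see the paragraph introducing Theorem~\ref{Thm:affine_WC}), so there is no separate intrinsic definition to reconcile with, and your appeal to $\C[\param_0]$-linearity of $\bullet^{\dagger,x}$ is not needed here. The genuine bookkeeping issue you identify---that specialization should commute with taking products of ideals, which requires generic freeness of the relevant quotients---is real; the paper handles it not in this proof but in the freeness hypotheses built into the statement of Theorem~\ref{Thm:affine_WC} and the remark immediately following it. Your treatment of this point is slightly more explicit than the paper's, which simply asserts ``this implies our claim.''
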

\begin{proof}
Consider the quotient $\J^{\param_0}_{\max(i,j)}/\J^{\param_0}_{i}\J^{\param_0}_{j}$.
By (3) of Lemma \ref{Prop:gen_c}, its specialization to a Weil generic $c$
is zero. It follows from (2) of Corollary \ref{Cor:HC_supp} that the specialization of this HC bimodule
to a Zariski generic  $c\in \param_0$ is zero. This implies our claim.
\end{proof}

\subsection{Proof of Theorem \ref{Thm:affine_WC}}\label{SS_aff_proof1}
We write $\bar{\B}$ for the wall-crossing $\bar{\A}_{\kappa'}(m)$-$\bar{\A}_\kappa(m)$-bimodule.

Without restrictions on $c$, we know that
\begin{equation}\label{eq:gen_to_A}\begin{split}
&\B^c_{\dagger,(m^{q+1-i})}=\bar{\B}^{\otimes q+1-i},\\
&\Tor^{\A^c}_j(\B, \A^c/\J^c_i)_{\dagger,(m^{q+1-i})}=
\operatorname{Tor}_j^{\bar{\A}(m^{q+1-i})}(\bar{\B}^{\otimes q+1-i},
(\bar{\A}(m)/\J(m))^{\otimes q+1-i}),\\
&\Tor^{\A'^c}_j(\A'^c/\J'^c_i,\B)_{\dagger,(m^{q+1-i})}=
\operatorname{Tor}_j^{\bar{\A}'(m^{q+1-i})}((\bar{\A}'(m)/\J'(m))^{\otimes q+1-i},
\bar{\B}^{\otimes q+1-i}).
\end{split}\end{equation}

The first equality is a special case of (\ref{eq:transl_restr}). The second and third
equalities follows from the first and Lemma \ref{Lem:tens_dag_intertw}.

\begin{proof}[Proof of Theorem \ref{Thm:affine_WC}]
First, we  assume that $c$ is Weil generic. By (b) of Lemma \ref{Lem:J_prop_easy} combined with
(2) of Lemma \ref{Prop:gen_c},
for any HC $\A'^c$-$\A^c$ bimodule $\mathcal{X}$
the following are equivalent
\begin{itemize}
\item $\mathcal{X}_{\dagger, (m^{q+1-i})}=0$,
\item $\mathcal{X}\J^c_{i-1}=0$,
\item $\J'^c_{i-1}\mathcal{X}=0$.
\end{itemize}
This, combined with (\ref{eq:gen_to_A}) and Section \ref{SS_aff_typeA_easy}, yields (4) and (6).
Further, the following conditions are equivalent as well:
\begin{itemize}
\item $\dim\mathcal{X}_{\dagger, (m^{q+1-i})}<\infty$,
\item $\mathcal{X}\J^c_{i}=0$,
\item $\J'^c_{i}\mathcal{X}=0$.
\end{itemize}
This yields (2).

Let us prove (3) and (5). Suppose that (3) is false. Pick the minimal $i$ such that there is $j<d_i$ with $\mathcal{X}:=\Tor^{\A^c}_j(\B^c, \A^c/\J^c_i)\neq 0$. Next, let $j$ be minimal for the given $i$.
Since $\mathcal{X}_{\dagger, (m^{q+1-i})}=0$, we see that
\begin{equation}\label{eq:bimod_vanish}\mathcal{X}\J^c_{i-1}=0.\end{equation} Consider the derived tensor  product
\begin{equation}\label{eq:der_tens_prod_bimod}\B^c\otimes^L_{\A^c}\A^c/\J^c_{i-1}=(\B^c\otimes^L_{\A^c}\A^c/\J^c_{i})\otimes^L_{\A^c/\J^c_i}
\A^c/\J^c_{i-1}.\end{equation}
By the choice of $j$, the $j$th homology of the right hand side
of (\ref{eq:der_tens_prod_bimod}) equals
$\mathcal{X}\otimes_{\A^c/\J^c_i}\A^c/\J^c_{i-1}=\mathcal{X}/\mathcal{X}\J^c_{i-1}$.
The latter equals $\mathcal{X}$ by (\ref{eq:bimod_vanish}). Since $j<d_{i-1}$
and the left hand side of (\ref{eq:der_tens_prod_bimod}) has non-vanishing $j$th
homology, we get a contradiction with our choice of $i$. This proves $\mathcal{X}=0$.

The equality $\Tor^{\A'^c}_j(\A'^c/\J'^c_i,\B)=0$
for $j<d_i$ is proved in the same way (using that $\B$ is a long wall-crossing bimodule
also when viewed as a $\A^{c,opp}$-$\A'^{c,opp}$-bimodule, Remark \ref{Rem:long_WC_opp}).
This completes the proof of (3).

Let us proceed to  (5) and prove $\B_i^c\J_{i-1}^c=\B_i^c$. Assume the converse, then
$\B_i^c\otimes \A^c/\J^c_{i-1}\neq 0$. Similarly to the proof of $\mathcal{X}=0$, this implies
that $\operatorname{Tor}_{d_i}^{\A^c}(\B^c,\A^c/\J^c_{i-1})\neq \{0\}$ that contradicts (3).
This completes the proof of Theorem \ref{Thm:affine_WC} for a Weil generic $c$.

Let us prove (2)-(6) for a Zariski generic $c$. We will do (2), the other claims are similar.
Consider the HC $\A'^{\param_0}$-$\A^{\param_0}$ bimodule $\J'^{\param_0}_{i}
\operatorname{Tor}^{\A^{\param_0}}_j(\B^{\param_0}, \A^{\param_0}/\J^{\param_0}_{i})$.
Its specialization to a Zariski generic parameter $c$ coincides with
$\J'^{c}_{i}\operatorname{Tor}^{\A^{c}}_j(\B^{c}, \A^{c}/\J^{c}_{i})$. So a Weil generic
specialization of this bimodule vanishes. Therefore the same is true for a Zariski generic
specialization, this is a consequence of (2) of Corollary \ref{Cor:HC_supp}.
\end{proof}

\subsection{Proof of Theorem \ref{Thm:perv}}\label{SS_perv}
Let us check (i) in the definition of a perverse equivalence. Recall that $\WC_{\theta\rightarrow \theta'}$ is $\B^c\otimes^L_{\A^c}\bullet$
and hence $\WC_{\theta\rightarrow \theta'}^{-1}$ is $R\operatorname{Hom}_{\A'^c}(\B^c,\bullet)$.
For example, let us prove $\WC_{\theta\rightarrow \theta'}^{-1}D^b_{\mathcal{C}'_i}(\mathcal{C}')\subset
D^b_{\mathcal{C}_i}(\mathcal{C})$. For $M'$ annihilated by  $\J_i'^c$, we have $R\operatorname{Hom}_{\A'^c}(\B^c,M')=R\operatorname{Hom}_{\A'^c/\J'^c_i}(\A'^c/\J'^c_i\otimes^L_{\A'^c}\B^c,M')$.
Now we use (2) of Theorem \ref{Thm:affine_WC} which says, in particular, that all homology of $\A'^c/\J'^c_i\otimes^L_{\A'^c}\B^c$ are annihilated by $\J^c_i$ on the right. This checks (i).

(ii) follows from (3) of Theorem \ref{Thm:affine_WC} and the observation that,
for $M$ annihilated by $\J^c_i$ we have $\B^c\otimes^L_{\A^c}M=
(\B^c\otimes^L_{\A^c}\A^c/\J^c_i)\otimes^L_{\A^c/\J^c_i}M$.

Let us prove (iii). By (6) of Theorem \ref{Thm:affine_WC}, the functor
$\B^c_i\otimes_{\A^c/\J^c_i}\bullet:\A^c/\J^c_i\operatorname{-mod}
\rightarrow \A'^c/\J'^c_i\operatorname{-mod}$ induces an equivalence
$\Cat_{q+1-i}/\Cat_{q+2-i}\xrightarrow{\sim}\Cat'_{q+1-i}/\Cat'_{q+2-i}$
(for example, a right inverse is given by tensoring with
$\Hom_{\A'^c}(\B^c_i, \A'^c/\J'^c)$). (6) also implies that,
for $M\in \Cat_{q+1-i}$, we have $\operatorname{Tor}^{\A^c/\J^c_i}_j(\B^c_i,
M)\in \Cat'_{q+2-i}$ for all $j>0$. Together with (4) of Theorem
\ref{Thm:affine_WC} this completes the proof of (iii). This finishes
the proof of (1) of Theorem \ref{Thm:perv} and also establishes
the first claim in (2).

To complete the proof of (2) we need to check that $\J'^c_{q-i}(\B^c_{q+1-i}\otimes_{\A^c}S)=
\B^c_{q+1-i}\otimes_{\A^c}S$. By (5) of Theorem \ref{Thm:affine_WC}, the natural homomorphism
$\J'^c_{q-i}\otimes_{\A'^c}\B^c_{q+1-i}\rightarrow \B^c_{q+1-i}$ is surjective. It follows that
the natural homomorphism
$\J'^c_{q-i}\otimes_{\A'^c}\B^c_{q+1-i}\otimes_{\A^c}S\rightarrow \B^c_{q+1-i}\otimes_{\A^c}S$
is surjective as well. This finishes the proof of (2) of Theorem \ref{Thm:perv}.

To show (3) -- that the associated varieties of the annihilators are preserved -- one can argue as follows.
Let $\I$ denote the annihilator of $S$. So $\B^c_{q+1-i}\otimes_{\A^c}S$ is a quotient of
$\B^c_{q+1-i}\otimes_{\A^c}\A^c/\I$, a HC bimodule annihilated by $\I$ on the right.
From Corollary \ref{Cor:assoc_var} one can now deduce that the associated variety of the annihilator
$\I'$ of $S'$ is  contained in  that of  $\I$.  On the other hand,  $S$ is a submodule of
$\Hom_{\A'^c}(\B^c_{q+1-i},S')=\Hom_{\A'^c}(\B^c_{q+1-i}/\I' \B^c_{q+1-i},S')$.
So the right annihilator of $\B^c_{q+1-i}/\I' \B^c_{q+1-i}$ is contained in
the annihilator of $\Hom_{\A'^c}(\B^c_{q+1-i},S')$. The latter is contained in  $\I$.
This shows that the associated variety of $\I'$  contains  that of $\I$
and completes the proof of (3).

Theorem \ref{Thm:perv} is now proved.

\section{Proof of counting result and some conjectures}\label{S_proof_compl}
\subsection{Extremal simples}\label{SS_fin_extr}
Let us start by proving Proposition \ref{Prop:extremal_bij}.

\begin{proof}[Proof of Proposition \ref{Prop:extremal_bij}]
We will need to consider the following three cases separately. Fix $\a$. Then the set of $\lambda$
with $\a^\lambda=\a$ looks as follows: we take the union of a countable discrete collection
of affine subspaces in $\paramq$ and remove another countable discrete collection of affine subspaces.
We say that $\lambda$ is {\it generic with respect to $\a$} if $\lambda$ is Weil generic
in the closure of a connected component.

The three cases we consider are as follows:
\begin{enumerate}
\item $\lambda\in \Q^{Q_0}$,
\item $\lambda$ is generic with respect to $\a$.
\item $\lambda$ is arbitrary with $\a^\lambda=\a$.
\end{enumerate}
We will also see that in (2) and (3) the endomorphisms $[E_\alpha],[F_\alpha]$
of $\bigoplus_v K_0(\A_\lambda^\theta(v)\operatorname{-mod}_{\rho^{-1}(0)})$ give rise
to an action of $\a$.

{\it Case 1}. Consider the case when $\lambda$ is rational. In this case,
by Proposition \ref{Prop:inj_K0_rat}, we see that \begin{equation}\label{eq:K0_inclusion}\bigoplus_v
K_0(\A_\lambda^\theta(v)\operatorname{-mod}_{\rho^{-1}(0)})\hookrightarrow
\bigoplus_v K_0(\Coh_{\rho^{-1}(0)}(\M^\theta(v))).\end{equation} By Proposition
\ref{Prop:a_actions_intertwined}, we see that the operators
$[E_\alpha],[F_\alpha]$ give rise to an $\a$-action on $\bigoplus_v
K_0(\A_\lambda^\theta(v)\operatorname{-mod}_{\rho^{-1}(0)})$ and (\ref{eq:K0_inclusion})
is equivariant. So we see that $K_0(\mathcal{C})=\bigoplus_\sigma U(\a)K_0(\A_\lambda^\theta(\sigma\bullet w)\operatorname{-mod})$, where the summation is taken over all $\sigma\in W(Q)$
such that $\sigma\omega$ is dominant for $\a$.

Recall, Proposition \ref{Prop:degen_indep_theta}, that  $[\WC_{\theta\rightarrow \theta'}]$ intertwines the embeddings
 $$K_0(\A_\lambda^\theta(v)\operatorname{-mod}_{\rho^{-1}(0)}),
K_0(\A_\lambda^{\theta'}(v)\operatorname{-mod}_{\rho^{-1}(0)})\hookrightarrow K_0(\Coh_{\rho^{-1}(0)}(\M^\theta(v))).$$ It follows that
$\WC_{\theta\rightarrow \theta'}$ maps $D^b_{\mathcal{C}}(\A_\lambda^\theta(v)\operatorname{-mod})$
to $D^b_{\mathcal{C}}(\A_\lambda^{\theta'}(v)\operatorname{-mod})$, where
the subscript $\mathcal{C}$ means that we consider the objects with homology
in $\mathcal{C}$.

Now consider the wall-crossing functor $\WC_{\theta\rightarrow \theta'}$ through $\ker\alpha$.
Let $L$ be extremal in $\A_\lambda^\theta(v)\operatorname{-mod}_{\rho^{-1}(0)}$.
As we have pointed out in the proof of Lemma \ref{Lem:extremal_sing}, $\nu$ is dominant for $\a$. So all constituents
of $H_*(\WC_{\theta\rightarrow \theta'}L)$ but $L'$ lie in the image of $\tilde{f}_\alpha$
and hence, by the minimality assumption on $v$, in  $\mathcal{C}$. We deduce that
$L'\not\in \mathcal{C}$. So $L'$ is extremal provided the minimality assumption on
$v$ holds for $\theta'$ as well. But if $v$ is not minimal for $\theta'$, then by
switching $\theta',\theta$ in the  argument above in this paragraph, we see that $v$
is not minimal for $\theta$ either.

{\it Case 2}. Now let $\Gamma$ be a connected component of the closure of
$\{\lambda| \a^{\lambda}=\a\}$. Let $\lambda_1\in\Q^{Q_0}$. Pick a Weil generic $\lambda\in \Gamma$.
The algebra $\A_\Gamma(v)$ and the sheaf $\A_\Gamma^\theta(v)$ are defined over $\mathbb{Q}$.
It follows that we have a specialization map $K_0(\A_{\lambda}(v)\operatorname{-mod}_{fin})\rightarrow
K_0(\A_{\lambda_1}(v)\operatorname{-mod}_{fin})$. The functors $E_\alpha,F_\alpha$ are also
defined over the rationals, so the specialization map intertwines $[E_\alpha],[F_\alpha]$.
The same is true for the wall-crossing functor $\WC_{\theta\rightarrow \theta'}$
and hence the specialization map intertwines $[\WC_{\theta\rightarrow \theta'}]$.

We claim that there is $\lambda_1\in \Q^{Q_0}$ with $\a^{\lambda_1}=\a$
such that the degeneration map
$K_0(\A_{\lambda}(v)\operatorname{-mod}_{fin})\rightarrow
K_0(\A_{\lambda_1}(v)\operatorname{-mod}_{fin})$ is an embedding.

Let $d$ be the maximal dimension of an irreducible finite dimensional $\A_{\lambda}(v)$-module.
Since $\lambda$ is Weil generic, $d$ is also the maximal dimension of a finite dimensional
irreducible for any other Weil generic parameter.
Let $A_\Gamma$ be the quotient of $\A_\Gamma(v)$ by the ideal generated by the elements
$$\sum_{\sigma\in S_{2d}}\operatorname{sgn}(\sigma)a_{\sigma(1)}\ldots a_{\sigma(2d)}.$$
The algebra $A_\Gamma$ is a finitely generated $\C[\Gamma]$-module. This is proved using
(\ref{eq:affine_decomp}) similarly to the proof of \cite[Theorem 7.2.1]{miura}, compare
to the proof of \cite[Lemma 5.1]{rouq_der}.

So the module of traces,
$A_\Gamma/[A_\Gamma,A_{\Gamma}]$ is finitely generated over $\C[\Gamma]$. Because of this there is a
Zariski open subset $\Gamma^0$ such that the specializations $A_{\lambda_2}$ with
$\lambda_2\in \Gamma^0$ have the same number of irreducible representations.
So we can take any $\lambda_1\in \Gamma^0\cap \Q^{Q_0}$. This shows that the
degeneration map $K_0(\A_{\lambda}(v)\operatorname{-mod}_{fin})\rightarrow
K_0(\A_{\lambda_1}(v)\operatorname{-mod}_{fin})$ is an inclusion that sends classes
of irreducibles to classes of irreducibles.

Since the degeneration map intertwines the operators $[E_\alpha],[F_\alpha]$, we see that $K_0(\mathcal{C}^\theta_{\lambda}(v))$ (the summand corresponding to the dimension $v$ in
the category $\mathcal{C}$ for $(\lambda,\theta)$)  gets mapped onto
$K_0(\mathcal{C}^\theta_{\lambda_1}(v))$ for any $v$ from a given fixed finite set.
It follows that, under the degeneration map the class of an extremal object goes
to the class of an extremal object. Since the degeneration map is compatible with
wall-crossing functors, we reduce the present case to Case 1. In particular, we get an $\a$-action
on $\bigoplus_v K_0(\A_{\lambda}(v)\operatorname{-mod}_{\rho^{-1}(0)})$.

{\it Case 3}. Now consider the general case. Let $\tilde{\lambda}$ denote the Weil generic
element in the connected component of $\overline{\{\lambda| \a^\lambda=\a\}}$ containing
$\lambda$. We can replace $\lambda$ with its integral shift and assume that $\lambda$
is Zariski generic in the closure of the connected component. We still have the injective degeneration maps $K_0(\A_{\tilde{\lambda}}(v)\operatorname{-mod}_{fin})
\rightarrow K_0(\A_\lambda(v)\operatorname{-mod}_{fin})$ intertwining the maps
$[E_\alpha],[F_\alpha]$ as well as the maps given by wall-crossing functors.
So again $K_0(\mathcal{C}^\theta_{\tilde{\lambda}}(v))$ maps bijectively onto
$K_0(\mathcal{C}^\theta_\lambda(v))$ for any $v$ from a given fixed finite set.
It follows that $\WC_{\theta\rightarrow \theta'}$ sends
$D^b_{\mathcal{C}}(\A_\lambda^{\theta}(v)\operatorname{-mod})$
to $D^b_{\mathcal{C}}(\A_{\lambda'}^{\theta'}(v)\operatorname{-mod})$
for any $v$ from a fixed finite set. Arguing as in Case 1, we see that
$L\mapsto L'$ sends extremal objects to extremal objects. We also see that
the operators $[E_\alpha],[F_\alpha]$ give an action of $\mathfrak{a}$ on
$K_0(\mathcal{C})$.
\end{proof}

\subsection{Absence of extremal simples}\label{SS_extrem_abs}
In this section we will use Proposition \ref{Prop:long_shift}, Theorem \ref{Thm:perv} and Proposition
\ref{Prop:extremal_bij} to complete the proof of (II) in the following two cases.
\begin{itemize}
\item[(a)] The quiver $Q$ is of finite type.
\item[(b)] $Q$ is an affine quiver, $v=n\delta, w=\epsilon_0$.
\end{itemize}
%What we need to prove is that there are no extremal simples (supported on $\rho^{-1}(0)$).

\begin{Lem}\label{Lem:fin_finish}
Let $\theta,\theta'$ be two stability conditions. Suppose that
\begin{itemize}
\item  $\theta,\theta'$ are not separated
by $\ker\alpha$, where $\alpha\leqslant v$ is an imaginary root with $\langle \alpha,\lambda\rangle\in \Z$.
\item If $\beta\leqslant v$ is an imaginary root  with $\langle\beta,\lambda\rangle\in \Z$, then $\langle \theta, \beta\rangle>0$.\end{itemize} Let $M$ be an extremal simple  $\A_\lambda^\theta(v)$-module. Then $H_0(\WC_{\theta\rightarrow \theta'}M)$ has a quotient that is an extremal simple $\A_\lambda^{\theta'}(v)$-module.
\end{Lem}
\begin{proof}
Let $\theta_1=\theta, \theta_2,\ldots,\theta_q=\theta'$ be stability conditions such that $\theta_i$ and $\theta_{i+1}$
are separated by $\ker\alpha_i$, where $\alpha_i$ is a real root with $\langle\alpha_i,\lambda\rangle\in \Z$
and $\alpha_i\leqslant v$. We assume that $q$ is minimal with this property. It follows from Proposition \ref{Prop:extremal_bij} that if $M_i$ is an extremal simple $\A_\lambda^{\theta_i}(v)$-module, then the head of $H_0(\WC_{\theta_i\rightarrow \theta_{i+1}}M_i)$ again contains an extremal simple, say $M_{i+1}$. We start with $M_1$ and produce the extremal simples $M_2,\ldots,M_q$.
By the construction,  $M_q$ is a quotient of $H_0(\WC_{\theta_1\rightarrow \theta_q}M)$.
%Moreover, there is a (unique) simple regular holonomic $(G,\lambda)$-equivariant
%$D_R$-module $\tilde{M}_i$ such that $\pi^{\theta_i}(v)(\tilde{M}_i)=M_i, \pi^{\theta_{i+1}}(v)(\tilde{M}_i)=M_{i+1}$, see %Corollary
%\ref{Cor:sing_simp}. From here we easily deduce that $\tilde{M}_0=\tilde{M}_1=\ldots=\tilde{M}_{q-1}$. So %$H_0(\WC_{\theta\rightarrow \theta'}M_0)=\pi^{\theta'}(v)(\pi^{\theta}(v)^*(M_0))\twoheadrightarrow M_q$.
\end{proof}

Now we are ready to prove (II) from the beginning of Section \ref{S_outline}.

\begin{proof}[Proof of (II)]
We need to prove that there are no extremal simples in $\A_\lambda^\theta(v)\operatorname{-mod}_{\rho^{-1}(0)}$
(in the affine case we
assume that $\langle \delta,\theta\rangle>0$). Assume the contrary.

Lemma \ref{Lem:fin_finish} together with Proposition \ref{Prop:long_shift} lead to a
contradiction in case (a).

Now let us deal with case (b) -- the SRA case. Pick an extremal simple $M\in \A_\lambda^\theta(v)\operatorname{-mod}_{\rho^{-1}(0)}$.
We can pick stability conditions $\theta_1,\ldots,\theta_q$ with the following
properties:
\begin{itemize}
\item[(a)] $\theta=\theta_j$ for some $j$.
\item[(b)] $-\theta_q$ and $\theta_1$ lie in chambers separated by $\ker\delta$
and $\langle\theta_1,\delta\rangle>0$.
\item[(c)] $\theta_i$ and $\theta_{i+1}$ are separated by a single wall defined by a real root.
\item[(d)] $q$ is minimal with this property.
\end{itemize}

Let $M_j:=M$ and find extremal simples $M_i\in \A^{\theta_i}_{\lambda_i}(v)\operatorname{-mod}, i=1,\ldots,q,$
(where $\lambda_i\in \lambda+\Z^{Q_0}$ is such that $(\lambda_i,\theta_i)\in \mathfrak{AL}(v)$)
such that $M_i$ and $M_{i+1}$ are in bijection produced by crossing the wall
between $\theta_i$ and $\theta_{i+1}$ (with $M=M_i$ and $M'=M_{i+1}$), see Proposition
\ref{Prop:extremal_bij}.
%Note that $\Gamma^{\theta_i}_{\lambda_i}(M_i)$ is finite dimensional for any $i$.
%Indeed, we have $\B_i\otimes_{\A_{\lambda_i}(v)}\Gamma_{\lambda_i}^{\theta_i}(M_i)\twoheadrightarrow %\Gamma_{\lambda_{i+1}}^{\theta_{i+1}}(M_{i+1})$
%for some HC bimodule $\B_i$. So if $\Gamma_{\lambda_i}^{\theta_i}(M_i)$ is finite dimensional,
%then so is $\Gamma_{\lambda_{i+1}}^{\theta_{i+1}}(M_{i+1})$. On the other hand,
%we get a nonzero homomorphism $\Gamma_{\lambda_i}^{\theta_i}(M_i)\hookrightarrow
%\Hom_{\A_{\lambda_{i+1}}(v)}(\B_i,\Gamma_{\lambda_{i+1}}^{\theta_{i+1}}(M_{i+1}))$.
%This shows that if $\Gamma_{\lambda_{i+1}}^{\theta_{i+1}}(M_{i+1})$ is finite dimensional,
%then so is $\Gamma_{\lambda_{i}}^{\theta_{i}}(M_{i})$.
%because $\Gamma^{\theta_i}_{\lambda_i}(M_i)$ is in the head of a tensor product of
%$\Gamma^{\theta_{i+1}}_{\lambda_{i+1}}(M_{i+1})$ with a HC bimodule, and vice versa.
%Set $\theta_0:=-\theta_q$.

Using LMN isomorphisms, we can identify $\M^{\theta_i}(v)$ with $\M^{\tilde{\theta}_i}(n\delta)$
and $\A^{\theta_i}_{\lambda_i}(v)$ with $\A^{\tilde{\theta}_i}_{\tilde{\lambda}_i}(n\delta)$
for appropriate $n, \tilde{\theta}_i,\tilde{\lambda}_i$. Note that $\tilde{\theta}_i,\tilde{\theta}_{i+1}$
are still separated by a single wall, for $i=0$, this wall is $\ker\delta$,
and for $i>0$, this is the wall defined by a real root.
Moreover, the weight $\nu$ defined by $v$ is extremal if and only if $n=0$.
Let $M_0$ be the simple in $\A^{\tilde{\theta}_0}_{\tilde{\lambda}_0}(n\delta)\operatorname{-mod}
= \A_{\lambda_0}^{\theta_0}(v)\operatorname{-mod}$ corresponding to $M_1$
under the bijection in (3) of Theorem \ref{Thm:perv}.

Consider the complex $\WC_{\tilde{\theta}_0\rightarrow \tilde{\theta}_q}(M_0)$.
By Theorem \ref{Thm:wc_decomp_short},
$$\WC_{\tilde{\theta}_0\rightarrow \tilde{\theta}_q}(M_0)=\WC_{\tilde{\theta}_1\rightarrow
\tilde{\theta}_q}\circ \WC_{\tilde{\theta}_0\rightarrow \tilde{\theta}_1}(M_0).$$
By Proposition \ref{Prop:long_shift}, the left hand side has vanishing $H_k$ for $k<n$ because $\Gamma^{\tilde{\theta}_0}_{\tilde{\lambda_0}}(M_0)$ is finite dimensional. On the other hand,
by Theorem \ref{Thm:perv}, we have $H_j(\WC_{\tilde{\theta}_0\rightarrow \tilde{\theta}_1}(M_0))\twoheadrightarrow
M_1$ for some $j<n$ and $H_k(\WC_{\tilde{\theta}_0\rightarrow \tilde{\theta}_1}(M_0))=0$
for $k<j$. From Lemma \ref{Lem:fin_finish}, we deduce that
$H_j(\WC_{\tilde{\theta}_1\rightarrow
\tilde{\theta}_q}\circ \WC_{\tilde{\theta}_0\rightarrow \tilde{\theta}_1}(M_0))\twoheadrightarrow
M_q$. We arrive at a contradiction that completes the proof.
\end{proof}

\subsection{Injectivity of $\CC$}\label{SS_CC_inj}
In this section we prove (III): the map $\CC:\bigoplus_{v}K_0(\A_\lambda^\theta(v)\operatorname{-mod}_{\rho^{-1}(0)})
\rightarrow L_\omega$ is injective.

A key step is as follows.

\begin{Lem}\label{Lem:a_action}
The operators $[E_\alpha],[F_\alpha]$ on $\bigoplus_{v}K_0(\A_\lambda^\theta(v)\operatorname{-mod}_{\rho^{-1}(0)})$
give  an $\a$-action.
\end{Lem}
\begin{proof}
It was shown in the proof of Proposition \ref{Prop:extremal_bij} (Section \ref{SS_fin_extr})
that the restrictions of $[E_\alpha],[F_\alpha]$
to $K_0(\mathcal{C})$ define an action of $\a$. On the other hand (II) proved in the previous section
shows that $\mathcal{C}=\bigoplus_v \A_\lambda^\theta(v)\operatorname{-mod}_{\rho^{-1}(0)}$.
This finishes the proof. \end{proof}

%\begin{Prop}\label{Prop:CC_inj1}
%Property (III) holds.
%\end{Prop}
\begin{proof}[Proof of (III)]
By Proposition \ref{Prop:a_CC_intertw}, the map $\CC: \bigoplus_{v}K_0(\A_\lambda^\theta(v)\operatorname{-mod}_{\rho^{-1}(0)})\rightarrow L_\omega$
is $\a$-linear. The image coincides with $L_\omega^{\a}$ by (I) and (II).
It follows from the construction of $\mathcal{C}$ and (\ref{eq:cat_action_simples})
that the $\a$-module
$K_0(\mathcal{C})$ is generated by $\bigoplus_{\sigma}  K_0(\A_\lambda^\theta(\sigma\bullet w)\operatorname{-mod}_{\rho^{-1}(0)})$, where the summation is taken
over $\sigma\in W(Q)$ such that $\sigma\omega$ is dominant for $\a$. 
Since  $\mathcal{C}=\bigoplus_{v}\A_\lambda^\theta(v)\operatorname{-mod}_{\rho^{-1}(0)}$, 
it follows that $\CC$ is injective.
%For $L_\omega^{\a}$ this follows from the definition.
%For $K_0(\mathcal{C})$, this is proved along the lines of the proof of Proposition \ref{Prop:extremal_bij}.
%For $\lambda$ as in Case 1, the claim follows from the embedding
\end{proof}

This finishes the proof of Theorem \ref{Thm:verymain}.

\begin{Rem}
Let us deduce the original conjecture of Etingof, \cite[Conjectures 6.3,6.8]{Etingof_affine},
from Conjecture \ref{Conj:main}.
According to results of \cite[Section 5]{GL}, we have a derived equivalence $D^b(\mathcal{H}_{\kappa,c}(n)\operatorname{-mod})
\xrightarrow{\sim} D^b(\A^\theta_\lambda(v)\operatorname{-mod})$ that restricts to
$D^b_{fin}(\mathcal{H}_{\kappa,c}(n)\operatorname{-mod})
\xrightarrow{\sim} D^b_{\rho^{-1}(0)}(\A^\theta_\lambda(v)\operatorname{-mod})$. So the number of finite dimensional irreducible $\mathcal{H}_{\kappa,c}(n)$-modules coincides with the number of irreducible $\A_\lambda^\theta(v)$-modules supported on $\rho^{-1}(0)$.
It is easy to see that the number given by Conjecture \ref{Conj:main} is the same as conjectured by Etingof.
\end{Rem}

\subsection{Conjectures on counting simples with arbitrary support}
In the remainder of the section we would like to discuss two counting problems that are more general
than the problem studied in this paper.

One can  pose a problem of counting $\A_{\lambda}(v)$-irreducibles with given (positive) dimension
of support. An obvious difficulty here is that the number of such modules is infinite. There are, at least, three
different approaches to the counting problem: to deal with a filtration by support on $K_0$,
to work in characteristic $p\gg 0$ or to restrict to a suitable category of modules in characteristic $0$.

\subsubsection{Category $\mathcal{O}$}
An easier special case is when there is a Hamiltonian $\C^\times$-action on $\M^\theta(v)$ with finitely many fixed
points. This action deforms to a Hamiltonian action on $\A_\lambda^\theta(v)$ and hence on $\A_\lambda(v)$. Let $h\in \A_\lambda(v)$
denote the corresponding hamiltonian so that $[h,\cdot]$ coincides with the derivation
of $\A$ induced by the $\C^\times$-action. The algebra $\A_\lambda(v)$ acquires an internal grading
by eigenspaces of $\operatorname{ad}h$, $\A_\lambda(v):=\bigoplus_{i\in \Z}\A_\lambda(v)^i$.
Then we can consider the category $\mathcal{O}_\lambda(v)$ of $\mathcal{A}_\lambda(v)$-modules consisting of all finitely generated modules
with locally nilpotent action of $\bigoplus_{i>0}\mathcal{A}_\lambda(v)^i$, compare with \cite{BGK,LOCat,GL,BLPW}.
The simples in this category are in one-to-one correspondence with the irreducible
modules over the algebra $\mathcal{A}_\lambda(v)^+:=
\mathcal{A}_\lambda(v)^0/(\bigoplus_{i>0}\A_\lambda(v)^{-i}\A_\lambda(v)^i)$. It is not difficult to see
that the algebra $\mathcal{A}_\lambda(v)^+$ is finite dimensional, compare to
\cite[Lemma 3.1.4]{GL}. Moreover, for $\lambda$ in some non-empty
Zariski open subset of $\paramq$, the algebra $\A_\lambda(v)^+$ is naturally identified with
$\C[\M^\theta(v)^{\C^\times}]$, see \cite[Section 5.1]{BLPW}. So we may assume that the irreducibles
in our category $\mathcal{O}$ are parameterized by $\M^\theta(v)^{\C^\times}$.
Also to every fixed point $p$ we can assign the corresponding Verma module, $\Delta_p:=\A_{\lambda}(v)\otimes_{\A_\lambda(v)^{\geqslant 0}}\C_p$.
Here $\C_p$ stands for the 1-dimensional $\A_\lambda(v)^+$-module corresponding to $p$, we view $\C_p$
as an  $\A_\lambda(v)^{\geqslant 0}$-module via the epimorphism $\A_\lambda(v)^{\geqslant 0}\twoheadrightarrow \A_\lambda(v)^+$. For $\lambda$ in some Zariski open subset the category $\mathcal{O}$ is highest weight
with standard objects $\Delta_p$, see \cite[Section 5.2]{BLPW}.
We identify $K_0(\mathcal{O}_\lambda(v))$ with $\C[\M^\theta(v)^{\C^\times}]$ by sending
the class $[\Delta_p]$ of $\Delta_p$ to the basis vector corresponding to $p$.
For example, suppose we consider $\M^\theta(n\delta)$ for a cyclic quiver $Q$ with $\ell$ vertices and $w=\epsilon_0$. Then we get the category $\mathcal{O}_{\kappa,c}(n)$ for  cyclotomic Rational Cherednik algebra $H_{\kappa,c}(\Gamma_n)$
with $\Gamma_n:=\mathfrak{S}_n\ltimes (\Z/\ell\Z)^n$
(at least for some Zariski open subset in $\paramq$; it was conjectured in \cite[Section 3]{GL} that the subset coincides
with the set of all spherical parameters). The Verma modules $\Delta_\tau$ in that category are indexed by the irreducible
representations $\tau$ of $\Gamma_n$ that are in a natural bijection with $\M^\theta(n\delta)^{\C^\times}$, as pointed out by Gordon in \cite[Section 5.1]{Gordon}, let us denote the fixed point corresponding to
$\tau$ by $p(\tau)$. It follows from results of \cite[Section 3]{GL} that $\Delta_\tau$ coincides with
$\Delta_{p(\tau)}$.
%The corresponding identification between $K_0(\mathcal{O}_\lambda)$
%and $\C^{\M^\theta_0(n\delta)^{\C^\times}}$ used above coincides with one induced by Gordon's identification,
%this follows from results of \cite[Section 3]{GL}.

 One can ask the question to compute the number of the irreducibles in $\mathcal{O}_\lambda(v)$
with given dimension of support. In the cyclotomic Cherednik algebra case this problem was solved
by Shan and Vasserot in \cite{shanvasserot}. We will state  a conjecture in the case when $X=\M^\theta(v)$ and $Q$ is a cyclic quiver (in this case we do have a Hamiltonian $\C^\times$-action with finitely many fixed points).
We remark that different choices of  $\C^\times$ lead to different choices of the categories $\mathcal{O}$,
but our answer should not depend on the choice.
More precisely, there are derived equivalences relating categories $\mathcal{O}$ for different choices
of $\C^\times$, see \cite{CWR}, these equivalences can be seen to preserve the supports.

Set $\mathcal{O}_\lambda:=\bigoplus_v \mathcal{O}_\lambda(v)$. We also write $\mathcal{O}^w_\lambda$
if we want to indicate the dependence on $w$.

A description of the $\C^\times$-stable points in $\M^\theta(v)$ follows, for example, from the work of Nakajima,
\cite[Sections 3,7]{Nakajima_tensor}. Namely, consider a maximal torus $T\subset \prod_{k\in Q_0}\GL(w_k)$.
Then the  $T$-invariant points on $\M^\theta\{w\}:=\bigsqcup_v\M^\theta(v,w)$
are naturally identified with $$\prod_{k\in Q_0}\M^\theta\{\epsilon_k\}^{w_k}$$
The $\C^\times$-fixed locus in $\M^\theta\{w\}$ is then the union of the fixed points in
$\prod_{k\in Q_0}\M^\theta\{\epsilon_k\}^{w_k}$, in each dimension there are finitely many of those.
The fixed points in $\M^\theta(v,\epsilon_k)$ are indexed by $\ell$-multipartitions
of $n_v$ such that $n_v \delta\in W(Q)\nu$.
%,
%the fixed points in $\M^\theta(v^{kj},\epsilon_k)\cong \M^{\theta'}(n^{kj}\delta,\epsilon_k)$ (an LMN isomorphism)
%are in one-to-one correspondence with the $Q_0$-multipartitions of $n^{kj}$.
%It follows that $[\mathcal{O}_\lambda^w]=\bigotimes_{k\in Q_0}\mathcal{F}^{\otimes w_k}$, where $\mathcal{F}$
%is the Fock space, i.e., the space with basis indexed by partitions.

We want to state a conjecture on the filtration of $K_0(\mathcal{O}_\lambda)$ by the homological shifts
under the wall-crossing functor $\WC$ through the affine wall.

We again start with the case when $Q$ is a single loop.
Then $K_0(\mathcal{O}_\lambda)=\mathcal{F}^{\otimes r}$, where $r$ stands for  the framing
and $\mathcal{F}$ is the Fock space, i.e., the space with a basis indexed by partitions.
Consider the $r$ copies of the Heisenberg Lie algebras $\mathfrak{heis}^i$ with bases $b^i_j, j\in \mathbb{Z}\setminus \{0\}$ and one more copy of the Heisenberg, $\mathfrak{heis}_{\Delta}$, with basis $b_j, j\in \mathbb{Z}\setminus \{0\}$,
embedded into $\prod_{i=1}^r \mathfrak{heis}^i$ diagonally. Inside  $\mathcal{O}_\lambda(n)$ consider the Serre
subcategory $\operatorname{F}_j \mathcal{O}_\lambda(n)$ spanned by all simples with support of codimension
at least $j$ so that $\operatorname{F}_j\mathcal{O}_\lambda(n)$ is a decreasing filtration on $\mathcal{O}_\lambda(n)$.
We view each of the $r$ copies of $\mathcal{F}$ as a standard Fock space representation of the corresponding Heisenberg
algebra $\mathfrak{heis}^i$. So $K_0(\mathcal{O}_\lambda)$ becomes a $\prod_{i=1}^r \mathfrak{heis}^i$-module
and hence a $\mathfrak{heis}_\Delta$-module.

\begin{Conj}\label{Conj:O_Gies}
Let $m$ denote the denominator of $\lambda$ (equal to $+\infty$ if $\langle\lambda,\delta\rangle\not\in \mathbb{Q}$).
The subspace $K_0(\operatorname{F}_j \mathcal{O}_\lambda)\subset K_0(\mathcal{O}_\lambda)$ is the sum of the images
of the operators $b_{mj_1}\ldots b_{mj_k}$ with $j_1,\ldots,j_k\in \Z_{>0}$ and $(rm-1)(j_1+\ldots+j_k)\geqslant j$.
\end{Conj}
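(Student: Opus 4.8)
The statement to prove is Conjecture~\ref{Conj:O_Gies}, so I will present a plan for establishing it, keeping in mind that the paper presents this as a conjecture and I am sketching an approach rather than claiming a complete argument. The plan is to reduce the statement about the filtration on $K_0(\mathcal{O}_\lambda)$ by dimension of support to the analysis of a single affine wall-crossing functor $\WC_{\lambda\rightarrow\lambda^-}$, exactly as in the proof of Theorem~\ref{Thm:affine_WC} and Theorem~\ref{Thm:perv}, but now working inside category $\mathcal{O}$ rather than with holonomic modules on all of $\M^\theta(v)$. First I would recall the highest-weight structure on $\mathcal{O}_\lambda$ (valid on a Zariski open locus by \cite[5.2]{BLPW}) and the fact that $K_0(\mathcal{O}_\lambda)=\mathcal{F}^{\otimes r}$ with standard basis indexed by the $\C^\times$-fixed points, which here are $r$-tuples of partitions. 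The subtlety that $\mathcal{O}_\lambda$ is only nicely behaved (highest weight, with the identification $\A_\lambda^+\cong\C[X^{\C^\times}]$) on an open locus is handled, as elsewhere in the paper, by first moving $\lambda$ within its $\Z^{Q_0}$-coset using Proposition~\ref{Prop:loc_thm} and the $\C[\paramq]$-linearity and generic-freeness machinery of Section~\ref{SS_HC_fam}; the key point is that the wall-crossing functor $\WC_{\theta\rightarrow\theta'}$ is independent of the choice of representative in the coset as long as both parameters lie in $\mathfrak{AL}(v)$.

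The technical heart of the argument is an analog, for category $\mathcal{O}$, of the filtration by a chain of ideals used in Section~\ref{S_aff}. I would proceed as follows. By Theorem~\ref{Thm:perv}, the affine short wall-crossing $\WC_{\theta\rightarrow\theta'}:D^b(\A_{\lambda^\circ}(v)\operatorname{-mod})\rightarrow D^b(\A_{\lambda'^\circ}(v)\operatorname{-mod})$ is a perverse equivalence with respect to the filtration $\mathcal{C}_i$ by modules annihilated by the ideals $\J_i$, and these ideals have associated varieties $\overline{\mathcal{L}}_{(m^{q+1-i})}$, i.e.\ precisely the closures of the symplectic leaves of $\M(n\delta)=\C^{2n}/\Gamma_n$ cut out by codimension $2(q+1-i)(m-1)$. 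Restricting the perverse equivalence to category $\mathcal{O}$ — one needs that $\WC$ and its inverse preserve $\mathcal{O}$, which should follow from the compatibility of $\WC$ with the Hamiltonian $\C^\times$-action and the explicit bimodule description of Lemma~\ref{Lem:wc_bimod} together with the finite-dimensionality of $\A_\lambda^+$ — one gets an induced perverse equivalence on $\mathcal{O}_{\lambda^\circ}\rightarrow\mathcal{O}_{\lambda'^\circ}$ whose perversity function is governed by the numbers $d_i=(q+1-i)(m-1)$. The dimension of support of a simple object $L$ is read off from the largest $i$ with $L\in\mathcal{C}_i$, which translates, via the leaf structure of $\C^{2n}/\Gamma_n$, into the codimension statement $(rm-1)(j_1+\dots+j_k)\geqslant j$ once one writes the codimension-$2m j$ leaf as the locus where $j$ "blocks of size $m$" collide. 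The iterated wall-crossing (moving $\lambda$ far into the chamber by Proposition~\ref{Prop:long_shift} and the decomposition of the long wall-crossing into short ones, \cite[Theorem 6.33, 6.34]{BPW}) then shows that $K_0(\operatorname{F}_j\mathcal{O}_\lambda)$ is precisely the span of simples whose support has codimension $\geqslant j$; the identification of this span with the image of products $b_{mj_1}\cdots b_{mj_k}$ of the diagonal Heisenberg $\mathfrak{heis}_\Delta$ is then a purely combinatorial computation in the Fock space $\mathcal{F}^{\otimes r}$, using that $b_{m\ell}$ adds a "column of height $m$" and that the diagonal embedding forces all $r$ factors to move in lockstep.

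Several ingredients need to be imported and checked. The first is the reduction to a Weil generic, then Zariski generic, parameter and the generic-freeness of the relevant bimodules over $\C[\param_0]$; this is exactly the pattern of Sections~\ref{SS_aff_ideals}--\ref{SS_aff_proof1} and Corollary~\ref{Cor:HC_supp}, and I would repeat it verbatim for the $\mathcal{O}$-theoretic bimodules. The second is the explicit description of the symplectic leaves of $\M(n\delta)=\C^{2n}/\Gamma_n$ and, crucially, of the slice algebras at a generic point of the leaf $\overline{\mathcal{L}}_{(m^{q+1-i})}$: by the formal-neighborhood analysis of Section~\ref{SSS_class_slice} these slices are tensor products of smaller cyclotomic Cherednik algebras, and the relevant finiteness/support statements come from the type-$A$ results recalled in Section~\ref{SS_SRA_HC} together with their cyclotomic analogs. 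The third, and what I expect to be the main obstacle, is precisely this cyclotomic input: proving the analog of Theorem~\ref{Thm:affine_WC} for a general cyclic quiver (not just the single-loop, one-dimensional-framing case treated in Section~\ref{SS_aff_typeA_easy}) requires understanding the chain of two-sided ideals in cyclotomic Rational Cherednik algebras and the behavior of translation (KZ-twist / shift) bimodules on them, which in the cyclic case is governed by the combinatorics of $e$-cores and $e$-quotients and is genuinely harder than the symmetric-group case; one would likely need to invoke the results of Shan--Vasserot \cite{shanvasserot} and Losev on cyclotomic category $\mathcal{O}$ to get the precise support filtration, and reconciling their parametrization with the geometric one here (the identification of the denominator $m$ with the relevant root-hyperplane data) is the delicate bookkeeping step. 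Once that cyclotomic analog of Theorems~\ref{Thm:affine_WC} and~\ref{Thm:perv} is in hand, the passage to Conjecture~\ref{Conj:O_Gies} is, as sketched above, essentially formal.
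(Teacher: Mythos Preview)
The paper does not prove Conjecture~\ref{Conj:O_Gies}; it is stated in Section~\ref{S_open} as an open problem, so there is no proof to compare against. Your sketch is a reasonable strategy in outline, but it has a genuine gap and a misdirection.

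The gap is that Theorems~\ref{Thm:affine_WC} and~\ref{Thm:perv}, which you invoke for the perverse structure of the affine wall-crossing, are proved only for $v=n\delta$, $w=\epsilon_0$. Conjecture~\ref{Conj:O_Gies} concerns the single-loop quiver with an \emph{arbitrary} framing $r$ (the Gieseker moduli space), and for $r>1$ the affine variety $\M^0(n)$ is no longer $\operatorname{Sym}^n(\C^2)=\C^{2n}/\mathfrak{S}_n$, the slice algebras along the relevant leaves are not the type $A$ Rational Cherednik algebras of Section~\ref{SS_SRA_HC}, and the chain-of-ideals argument of Section~\ref{SS_aff_ideals} does not carry over as written. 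The paper is explicit that the extension to arbitrary framing is deferred to \cite{count_affine}, and Conjecture~\ref{Conj:Gies_local} (on the singular locus for Gieseker) is itself left open here. So your appeal to ``$\M(n\delta)=\C^{2n}/\Gamma_n$'' and to Proposition~\ref{Prop:RCA_typeA_id} is invalid for $r>1$, and that is precisely where the work lies.

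The misdirection is your third paragraph: the ``cyclotomic input'' you worry about pertains to the multi-vertex cyclic quiver, which is the setting of Conjecture~\ref{Conj:O_shift_gen}, not Conjecture~\ref{Conj:O_Gies}. For the single loop there are no real roots and hence no finite wall-crossings at all; the long wall-crossing of Proposition~\ref{Prop:long_shift} \emph{is} the single affine wall-crossing, so the decomposition step you describe is vacuous. What is actually needed is an $r>1$ analog of Section~\ref{S_aff}: a description of the two-sided ideals of $\A_\lambda(n)$ for Gieseker with framing $r$ and a proof that the wall-crossing bimodule is perverse with the correct shifts $(rm-1)$. That is not in this paper.
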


Now let us proceed to the general case: when $Q$ is a cyclic quiver with $\ell$ vertices and the framing $w$
is arbitrary. We again want to describe  the  filtration on $K_0(\mathcal{O}_\lambda^w)$ relative to
the affine wall.  The description will still be given in terms of some Heisenberg action on $K_0(\mathcal{O}_\lambda^w)$.

Let us specify that action. As we have seen above, $K_0(\mathcal{O}^{w}_\lambda)=\bigotimes_{k\in Q_0}K_0(\mathcal{O}^{\epsilon_k}_\lambda)^{\otimes w_k}$. The space $K_0(\mathcal{O}^{\epsilon_k}_\lambda)$ can be thought as an integrable highest weight module $\widetilde{L}_{\omega_k}$, where $\omega_k$ is the fundamental weight corresponding to $k$, for the Lie algebra $\hat{\gl}_{\ell}$ (so that $\widetilde{L}_{\omega_k}=L_{\omega_k}\otimes \mathcal{F}$),
compare to \cite[Section 6]{Etingof_affine}.
Inside $\hat{\gl}_{\ell}$ consider the  Heisenberg subalgebra corresponding to the center of
$\gl_\ell$. It has a basis $b_j$ with $j\in \Z$.

\begin{Conj}\label{Conj:O_shift_gen}
Let $m$ denote the denominator of $\langle\lambda,\delta\rangle$ (equal to $+\infty$ if $\lambda\not\in \mathbb{Q}$).
Consider the subcategory $\operatorname{F}^{aff}_j\mathcal{O}^w$ consisting of all modules $M$ with
$H_i(\WC M)=0$ for $i<j$, where $\WC$ stands for the short wall-crossing functor through the affine wall.
The subspace $K_0(\operatorname{F}^{aff}_j \mathcal{O}^w_\lambda)\subset K_0(\mathcal{O}^w_\lambda)$ is the sum of the images of the operators $b_{mj_1}\ldots b_{mj_k}$ with $j_1,\ldots,j_k\in \Z_{>0},$ and $(\overline{w}m-1)(j_1+\ldots+j_k)\geqslant j$, where $\overline{w}:=\sum_{k\in Q_0}w_k$.
\end{Conj}

Modulo Conjecture \ref{Conj:O_shift_gen},  one can state a conjecture
regarding the filtration by dimension of support.

\begin{Conj}\label{Conj:O_dim_gen}
The span in $K_0(\mathcal{O}_\lambda^w)$ of the classes of all modules with dimension of support $\leqslant i$ is the sum of $\a$-submodules generated by the singular vectors in $\operatorname{F}^{aff}_{s(\overline{w}m-1)}\mathcal{O}^w(v)$,
where $v$ and $i$ are subject to the following condition:
$$w\cdot v-(v,v)/2-s(\overline{w}m-1)\leqslant i.$$
\end{Conj}

We expect that Conjecture \ref{Conj:O_dim_gen} should be an easy corollary of Conjecture \ref{Conj:O_shift_gen}
and techniques developed in Sections \ref{SS_fin_extr},\ref{SS_extrem_abs}.
We remark that it is compatible with Conjecture \ref{Conj:main} and also with the main result of \cite{shanvasserot}.

\subsubsection{Filtration on $K_0$}\label{filtrK0}
One can also work with a filtration on $K_0(\A_\lambda(v)\operatorname{-mod})$ as in
\cite[Conjectures 6.1,6.7]{Etingof_affine}. As before, we assume that
$\A_\lambda(v)$ has finite homological dimension so that $K_0$ of the category of finitely generated
$\A_\lambda(v)$-modules is naturally identified with the split $K_0$ of the category of projective
$\A_\lambda(v)$-modules. Let $\operatorname{F}_j K_0(\A_{\lambda}(v)\operatorname{-mod})$ stand for
the subspace in $K_0(\A_\lambda(v)\operatorname{-mod})$ generated by all objects $\M$
such that $\operatorname{GK-}\dim(\A_{\lambda}(v)/\operatorname{Ann}\M)\leqslant \dim \M^\theta(v)-2j$. Then one can
state a conjecture similar to Conjecture \ref{Conj:O_dim_gen}.

\subsubsection{Characteristic $p$}
Yet another setting where one can state counting conjectures is in characteristic $p\gg 0$. We use
the notation of Section \ref{SS_pos_char}. In particular, $\mathbb{F}$ stands for an algebraically
closed field of characteristic $p$.
To simplify the statement we consider the case of a rational parameter $\lambda$
such that the algebra $\A_{\lambda}(v)_{\C}\otimes \A_\lambda(v)_{\C}^{opp}$ has finite homological dimension.

Set $K^0_p:= K_0({\mathcal A}_{\lambda}(v)_{\mathbb{F}}\operatorname{-mod})$  and
$K^0_\infty=K_0({\mathcal A}_{\lambda}(v)_{\C}\operatorname{-mod})$ (recall that we consider the $K_0$
groups over $\C$). We have the specialization map $\operatorname{Sp}: K^0_\infty \to K^0_p$ for every prime $p\gg 0$
and it is an isomorphism.

Consider the category $\A_\lambda(v)_{\mathbb{F}}\operatorname{-mod}_0$ of finitely generated modules with zero generalized $p$-character.
Set $K_0^p:=K_0(\A_\lambda(v)_{\mathbb{F}}\operatorname{-mod}_0)$. Similarly to Section \ref{SS_quiv_KH},
we have an identification $K_0^p\cong K_0(\operatorname{Coh}_{\rho^{-1}(0)}\M^\theta(v))$.

We have the Ext
pairing $\chi: K^0_p\times K_0^p\to {\mathbb{C}}$, compare to the proof of Proposition \ref{Prop:K0_nondeg_pairing}.
We have basically seen in Section \ref{SS_quiv_KH} that this pairing is non-degenerate.

\begin{Conj}\label{pol1var}
a) There exist polynomials in one variable  $D_i(t)\in \Q[t]$, $i=1,\dots , \dim K_0^p$,
such that for  $p\gg 0$ the dimensions of the irreducible modules
equal $D_i(p)$.

b) Let $\A_\lambda(v)\operatorname{-mod}_0^{\leq d}$ be the
Serre subcategory generated by irreducible objects $L_i$
such that the corresponding polynomial $D_i$ satisfies: $\deg(D_i)\leq d$.
Then the induced filtration on $K^0(\A_\lambda(v)_{\mathbb{F}}\operatorname{-mod}_0)$
is dual to the filtration on $K^0(\A_\lambda(v)_\C\operatorname{-mod})$ considered in  \ref{filtrK0} with respect to the pairing  $\chi$.
\end{Conj}

Let us speculate on a possible scheme of proof.
First, we need an analog of Proposition \ref{Prop:long_shift} that is not available yet,
we believe this is the most important thing missing.
Second, we need an analog of Webster's construction in positive characteristic. The latter
is not expected to be difficult. Theorem \ref{Thm:affine_WC} and an analog of Conjecture \ref{Conj:O_shift_gen}
should carry over to positive characteristic without significant modifications.
This should be sufficient to prove the counting conjecture.

\subsection{Infinite homological dimension}
In this subsection we will state a conjecture on the number of irreducible finite dimensional
$\A_\lambda(v)$-modules in the case when the homological dimension of $\A_\lambda(v)$ is
infinite. Similar in spirit conjectures can be stated for categories $\mathcal{O}$
(or their replacements) or in positive characteristic, but we are not going to
elaborate on that.

Consider the functor $R\Gamma_\lambda^\theta: D^b(\A_\lambda^\theta(v)\operatorname{-mod})
\rightarrow D^b(\A_\lambda(v)\operatorname{-mod})$. It should be a quotient functor,
at least, this is so in the SRA situation thanks to an equivalence $D^b(\A_\lambda^\theta(v)\operatorname{-mod})
\cong D^b(\mathcal{H}_{\kappa,c}(n)\operatorname{-mod})$ established in \cite[5.1]{GL} (see 5.1.6, in particular).
Under this equivalence the functor $R\Gamma^\theta_\lambda$ becomes the abelian quotient functor $\mathcal{M}\mapsto e\mathcal{M}$. According to Conjecture \ref{Conj:loc_main}, this quotient is proper if and only if $\lambda$
lies in the finite union of hyperplanes (to be called ``singular''),  the singular hyperplanes
can be (conjecturally) described explicitly when $Q$ is of finite or of affine type, see Section
\ref{SS_loc_conj}.

So $K_0(\A_\lambda(v)\operatorname{-mod}_{fin})$ becomes a quotient of $K_0(\A_\lambda^\theta(v)\operatorname{-mod}_{\rho^{-1}(0)})$.
Our goal is to provide a conjectural description of this quotient. Our conjecture will consist of two
parts. The first (easier) will deal with the case when $\lambda$ is a Zariski generic point of
a singular hyperplane. The second (much harder) will handle the general case.

Let us deal with the Grassmanian case first. So let $Q$ be a quiver with a single vertex and no arrows.
The singular locus is $\lambda=1-w,2-w,\ldots,-1$. Assume, for convenience, that $\theta>0$ and $2v\leqslant w$.
Identify $\A_\lambda^\theta(v)\operatorname{-mod}$ with $\A_0(v)\operatorname{-mod}$.
The ideals in the latter form a chain: $\{0\}=\J_{v+1}\subsetneq \J_v\subsetneq\ldots\subsetneq \J_1\subsetneq \J_0=\A_0(v)$.
The kernel of the functor $R\Gamma_\lambda^\theta$ can be shown to consist of all modules annihilated
by $\J_i$, where \begin{equation}\label{eq:for_i}i=v+1-\min(v,-\lambda, w+\lambda)\end{equation} (or, more precisely, the complexes with such
homology). On the level of the categorical $\sl_2$-action, those should be precisely the complexes
lying in the image of $F^{v+1-i}$.

Let us return to the general setting.

\begin{Conj}\label{Conj:inf_homol_dim}
\begin{enumerate}
\item Let $\alpha$ be a real root and $\lambda$ be a Zariski generic parameter on a singular hyperplane
$\langle\lambda,\alpha\rangle=s$. Then the complexified $K_0$ of the kernel of $D^b(\A_\lambda^\theta(v)\operatorname{-mod}_{\rho^{-1}(0)})\twoheadrightarrow
D^b(\A_\lambda(v)\operatorname{-mod}_{fin})$ coincides with the image of $f_\alpha^i$, where
$i$ is determined from $s$ and $\underline{v},\underline{w}$ as in (\ref{eq:for_i}).
\item Let $\langle \alpha_j,\cdot\rangle=s_j, j=1,\ldots,k,$ be all singular hyperplanes
with \underline{real} $\alpha_j$ that contain $\lambda$. Then $\ker K_0(D^b(\A_\lambda^\theta(v)\operatorname{-mod}_{\rho^{-1}(0)})\twoheadrightarrow
D^b(\A_\lambda(v)\operatorname{-mod}_{fin}))$ is spanned (as a  vector space)
by the sum of the images  of $f_{\alpha_j}^{i_j}, j=1,\ldots,k$, where the numbers $i_j$ are determined as in (1).
\end{enumerate}
\end{Conj}

We believe that one should not include the singular hyperplanes defined by imaginary roots.
The reason is that there are no finite dimensional irreducibles for a Weil generic $\lambda$
on a hyperplane defined by an imaginary root. When one deals with modules with higher dimensional
support on should modify the conjecture to account for imaginary roots. We are not going to
elaborate on  that.

\end{document}